\definecolor{green}{rgb}{0,0.8,0} 
\definecolor{deepgreen}{cmyk}{0,4,0,0}
\newcommand{\pmat}[1]{\begin{pmatrix} #1 \end{pmatrix}}
\newcommand{\Del}[1]{}
\numberwithin{equation}{section}
\newtheorem{theorem}{Theorem}[section]
\newtheorem{corollary}[theorem]{Corollary}
\newtheorem{lemma}[theorem]{Lemma}
\newtheorem{proposition}[theorem]{Proposition}
\newtheorem{remark}[theorem]{Remark}
\newtheorem{definition}[theorem]{Definition}
\newcommand{\lem}[1]{Lemma~\ref{#1}}
\newcommand{\lems}[1]{Lemmas~\ref{#1}}
\newcommand{\prop}[1]{Proposition~\ref{#1}}
\newcommand{\props}[1]{Propositions~\ref{#1}}
\newcommand{\rem}[1]{Remark~\ref{#1}}
\newcommand{\thm}[1]{Theorem~\ref{#1}}
\newcommand{\defn}[1]{Definition~\ref{#1}}
\newcommand{\cor}[1]{Corollary~\ref{#1}}
\newcommand{\mwhere}{{\ \ \text{where} \ \ }}
\newcommand{\mand}{{\ \ \text{and} \ \  }}
\newcommand{\txtg}{{\text g}}
\renewcommand{\Re}{\mathrm{Re}}
\newcommand{\pv}{\mathrm{p.v.}}
\newcommand{\cross}{\times}
\newcommand{\bfa}{{\bf a}}
\newcommand{\bfd}{{\bf d}}
\newcommand{\bfe}{{\bf e}}
\newcommand{\bff}{{\bf f}}
\newcommand{\bfn}{{\bf n}}
\newcommand{\bfp}{{\bf p}}
\newcommand{\bfu}{{\bf u}}
\newcommand{\bfv}{{\bf v}}
\newcommand{\bfx}{{\bf x}}
\newcommand{\bfy}{{\bf y}}
\newcommand{\bfz}{{\bf z}}
\newcommand{\bfD}{{\bf D}}
\newcommand{\bfE}{{\bf E}}
\newcommand{\bfJ}{{\bf J}}
\newcommand{\bfK}{{\bf K}}
\newcommand{\bfP}{{\bf P}}
\newcommand{\bfS}{{\bf S}}
\newcommand{\bfzeta}{\boldsymbol{\zeta}}
\newcommand{\bfmu}{\boldsymbol{\mu}}
\newcommand{\bfnu}{\boldsymbol{\nu}}
\newcommand{\bfxi}{\boldsymbol{\xi}}
\newcommand{\bfphi}{\boldsymbol{\phi}}
\newcommand{\bfpsi}{\boldsymbol{\psi}}
\newcommand{\barD}{{\overline D}}
\newcommand{\barOmega}{\overline{\Omega}}
\renewcommand{\hbar}{{\underline h}}
\newcommand{\bbB}{\mathbb B}
\newcommand{\bbK}{\mathbb K}
\newcommand{\bbR}{\mathbb R}
\newcommand{\bbS}{\mathbb S}
\newcommand{\calB}{\mathcal B}
\newcommand{\calC}{\mathcal C}
\newcommand{\calD}{\mathcal D}
\newcommand{\calE}{\mathcal E}
\newcommand{\calL}{\mathcal L}
\newcommand{\calP}{\mathcal P}
\newcommand{\calR}{\mathcal R}
\newcommand{\calY}{\mathcal Y}
\newcommand{\frakb}{\mathfrak b}
\newcommand{\frakD}{\mathfrak D}
\newcommand{\frakR}{\mathfrak R}
\newcommand{\tilf}{{\tilde{f}}}
\newcommand{\tilg}{{\tilde{g}}}
\newcommand{\tilh}{{\tilde{h}}}
\newcommand{\tilr}{{\tilde{r}}}
\newcommand{\tilA}{{\tilde{A}}}
\newcommand{\tilF}{{\tilde{F}}}
\newcommand{\tilJ}{{\tilde{J}}}
\newcommand{\tilN}{{\tilde{N}}}
\newcommand{\scE}{{\mathscr{E}}}
\newcommand{\scU}{{\mathscr{U}}}
\newcommand{\vecf}{{\vec f}}
\newcommand{\vecu}{{\vec u}}
\newcommand{\vecQ}{{\vec Q}}
\newcommand{\sd}{\slashed{d}}
\newcommand{\sg}{\slashed{g}}
\newcommand{\AV}{\mathrm{AV}}
\newcommand{\AVE}{\widetilde{\mathrm{AV}}}
\newcommand{\snabla}{{\slashed{\nabla}}}
\newcommand{\rphi}{{\mathring{\phi}}}
\newcommand{\Hone}{H_{\partial\calB_1}}
\newcommand{\Kone}{K_{\partial\calB_1}}
\newcommand{\Sone}{S_{\partial\calB_{1}}}
\newcommand{\vect}{\mathrm{Vec~}}
\newcommand{\circu}{{\mathring{u}}}
\newcommand{\circf}{{\mathring{f}}}
\newcommand{\vecbff}{{\vec{\bff}}}
\newcommand{\circbff}{{\mathring{\bff}}}
\newcommand{\ringsDelta}{{\mathring{\slashed{\Delta}}}}
\newcommand{\circbfphi}{\mathring{\bfphi}}
\newcommand{\scEorbital}{\scE_{{\mathrm{orbital}}}}
\newcommand{\scEtidal}{\scE_{{\mathrm{tidal}}}}
\begin{document}
\title[Tidal Energy]{On Tidal Energy in Newtonian Two-Body Motion}

\author{Shuang Miao
\and  Sohrab Shahshahani}

\begin{abstract}
In this work, which is based on an essential linear analysis carried out by Christodoulou \cite{Ch}, we study the evolution of tidal energy for the motion of two gravitating incompressible fluid balls with free boundaries obeying the Euler-Poisson equations. The orbital energy is defined as the mechanical energy of the two bodies' center of mass. According to the classical analysis of Kepler and Newton, when the fluids are replaced by point masses, the conic curve describing the trajectories of the masses is a hyperbola when the orbital energy is positive and an ellipse when the orbital energy is negative. The orbital energy is conserved in the case of point masses. If the point masses are initially very far, then the orbital energy is positive, corresponding to hyperbolic motion. However, in the motion of fluid bodies the  orbital energy is no longer conserved because part of the conserved energy is used in deforming the boundaries of the bodies. In this case the total energy $\tilde{\scE}$ can be decomposed into a sum $\tilde{\scE}:=\widetilde{\scEorbital}+\widetilde{\scEtidal}$, with $\widetilde{\scEtidal}$ measuring the energy used in deforming the boundaries, such that if $\widetilde{\scEorbital}<-c<0$ for some absolute constant $c>0$, then the orbit of the bodies must be bounded. In this work we prove that under appropriate conditions on the initial configuration of the system, the fluid boundaries and velocity remain regular up to the point of the first closest approach in the orbit, and that the tidal energy $\widetilde{\scEtidal}$ can be made arbitrarily large relative to the total energy $\tilde{\scE}$. In particular under these conditions $\widetilde{\scEorbital}$, which is initially positive, becomes negative before the point of the first closest approach.
\end{abstract}

\maketitle

\section{Introduction}\label{sec: intro}

Consider two point masses of equal mass approaching each other from a very far distance with initial velocities $v_0$ and $-v_0$. According to Newton's laws, the masses accelerate due to the gravitational force between them. The mechanical energy (per unit mass) of the each point mass is

\begin{align}\label{eq: scE1 def}
\scE_1:=\frac{1}{2}|v|^2-\frac{GM}{4r_1},
\end{align}
where $r_1$ is the distance of each point mass to the center of mass of the system, $|v|$ is the speed of each point mass, and $G$ is the gravitational constant

\begin{align*}
G\approx 6.67\times 10^{-11}\frac{\mathrm{(meters)}^3}{\mathrm{(seconds)}^2\mathrm{(kilograms)}}.
\end{align*}
The mechanical energy $\scE_1$ is conserved in time. As shown by Kepler, the orbit of the point masses is described by a conic curve whose shape is determined by the sign of $\scE_1$: If $\scE_1<0$, then the orbit is an ellipse, if $\scE_1>0$, then the orbit is a hyperbola, and if $\scE_1=0$, then the orbit is a parabola. If the initial distance of the two point masses is very large, $\lim_{t\to-\infty}r_1(t)=\infty$, then $\scE_1>0$ and therefore the orbit of the two point masses is hyperbolic.

Suppose now that the point masses are replaced by two fluid bodies of equal mass and density, $\calB_1$ and $\calB_2$, which are initially round spheres of radius $R$. More precisely, suppose $\calB_1$ and $\calB_2$ are fluid bodies of constant density $\rho$ and volume $\frac{4\pi}{3}R^3$, with free boundaries, which satisfy the incompressible Euler equations

\begin{align}\label{eq: fluid bodies}
\begin{split}
 \begin{cases}
 \bfv_t+\bfv\cdot\nabla\bfv=-\nabla \bfp -\nabla(\bfpsi_{1}+\bfpsi_{2}),\quad\quad\,\,\, \mathrm{in } ~\calB_j(t)\\
 \nabla\cdot\bfv=0,\quad \nabla\cross\bfv=0,\quad\quad\quad\quad\quad\quad\quad\,\,\,\, \mathrm{in}~\calB_j(t)\\
 \bfp=0, \qquad\qquad\qquad\qquad\quad\quad\quad\quad\quad\quad\quad \mathrm{on } ~\partial\calB_j(t)\\
 (1,\bfv)\in T(t,\partial\calB_j(t)),
\quad\qquad\qquad\quad\quad\quad\,\,\, \,\,\mathrm{on} ~\partial\calB_j(t)
 \end{cases} ,
\end{split}
\end{align}
$j=1,2$. Here $\bfv$ denotes the fluid velocity, $\bfp$ denotes the fluid pressure, and $\bfpsi_j$, $j=1,2$, are the gravitational potentials

\begin{align*}
\bfpsi_j(t,\bfx):=-G\rho\int_{\calB_j(t)}\frac{d\bfy}{|\bfx-\bfy|},
\end{align*}
so that

\begin{align*}
\Delta\bfpsi_j(t,\bfx)=4\pi G\rho \chi_{\calB_j(t)}.
\end{align*}
For simplicity we have assumed that the fluids are irrotational and that the surface tensions are zero. For this system the \emph{total energy} of each body,

\begin{align}\label{eq: scE def}
\scE(t):=\frac{1}{2}\int_{\calB_1(t)}|\bfv(t,\bfx)|^{2}d\bfx+\frac{1}{2}\int_{\calB_1(t)}\bfpsi_1(t,\bfx)d\bfx+\frac{1}{2}\int_{\calB_1(t)}\bfpsi_2(t,\bfx)d\bfx,
\end{align}
is conserved during the evolution. Note that since the bodies are no longer point masses, $\scE$ would not agree with $\scE_1$ above even if the bodies were perfect balls moving at speed $|v|$, because the contribution of $\bfpsi_1$ to the energy is now a nonzero constant. Indeed, when $\calB_1$ is a ball of radius $R$ centered at the origin,

\begin{align*}
\begin{split}
 \frac{1}{2}\int_{\calB_1(t)}\bfpsi_1(t,\bfx)d\bfx=-\frac{G\rho}{2}\int_{B_R(0)}\int_{B_R(0)}\frac{d\bfx \,d\bfy}{|\bfx-\bfy|}=-\frac{3GM|B_R(0)|}{5R}.
\end{split}
\end{align*}
Now as the bodies approach each other their boundaries deform from their initial spherical shape. It is conceivable that the energy required to create the deformation is so large that the energy left in the center of mass motion becomes negative. That is, more significantly, it is conceivable that the orbit of the two bodies eventually becomes bounded, a phenomenon which is impossible in the point mass case, and to which we refer as \emph{tidal capture}. To better understand the situation we decompose the \emph{modified total energy} as\footnote{Note that by the incompressibility assumption the volume $|\calB_1|$ is conserved during the evolution.}

\begin{align}\label{eq: energy decomposition}
\begin{split}
\tilde{\scE}:=\frac{1}{|\calB_1|}\scE+\frac{3GM}{5R}=\widetilde{\scEorbital}+\widetilde{\scEtidal},
\end{split}
\end{align}
where

\begin{align}\label{eq: tidal def}
\begin{split}
 \widetilde{\scEtidal}:= \frac{1}{2|\calB_1|}\int_{\calB_1}|\bfv(t,\bfx)|^2d\bfx- \frac{1}{2}|\bfx_1'|^2+\frac{1}{2|\calB_1|}\int_{\calB_1}\bfpsi_1(t,\bfx)d\bfx+\frac{3GM}{5R},
\end{split}
\end{align}
and 

\begin{align}\label{eq: orbital def}
\widetilde{\scEorbital}:=\frac{1}{|\calB_1|}\scE+\frac{3GM}{5R}-\widetilde{\scEtidal}=\frac{1}{2}|\bfx_1'|^2+\frac{1}{2|\calB_1|}\int_{\calB_1}\bfpsi_2d\bfx. 
\end{align}
Note that if the initial distance between the two bodies is sufficiently large, 

\begin{align}
\frac{1}{|\calB_{1}|}\scE+\frac{3GM}{5R}=\widetilde{\scEorbital}:=e_{0}>0,\quad \widetilde{\scEtidal}=0
\end{align}
initially when the bodies are perfect balls. We will see that, as in the case of point masses, the two bodies get closer until at some point they reach their minimum distance and then start to move apart. The question of interest for us is if the bodies will continue to get arbitrarily far as in the point mass case, or if by contrast their orbit will become bounded and their distance will start to decrease again at some later time. Suppose now that we have a lower bound 

\begin{align}\label{eq: tilde tidal m0}
\begin{split}
 \widetilde{\scEtidal}\geq m_0>e_{0}, 
\end{split}
\end{align}
for all times $t>T_1$ for some $T_1$. This implies

\begin{align}\label{negative orbital energy}
\frac{1}{2}|\bfx'_{1}(t)|^{2}+\frac{1}{2|\calB_{1}|}\int_{\calB_{1}}\bfpsi_{2}(t,\bfx)d\bfx=\widetilde{\scEorbital}(t)=\frac{1}{|\calB_{1}|}\scE(t)+\frac{3GM}{5R}-\widetilde{\scEtidal}(t)\leq e_{0}-m_{0}<0.
\end{align}
Since 

\begin{align*}
\lim_{|\bfx_{1}|\rightarrow\infty}\frac{1}{2|\calB_{1}|}\int_{\calB_{1}}\bfpsi_{2}(t,\bfx)d\bfx=0,
\end{align*}
and the first term on the left hand side in \eqref{negative orbital energy} is non-negative, this would imply that the orbit cannot be unbounded. Therefore, a uniform lower bound of the form \eqref{eq: tilde tidal m0} for all times $t>T_1$ for some $T_1$, implies that tidal capture occurs. In this work we will prove that if certain relations are satisfied in the initial configuration of the system, then \eqref{eq: tilde tidal m0} holds near and up to the point of the closest approach, where $m_0$ can be made arbitrarily large relative to the initial energy $e_{0}$. To state our result more precisely, we need to describe the setup in more detail.

Consider first the case of point masses. Suppose the two point masses $\bfx_1$ and $\bfx_2$ have equal mass $M$, and satisfy 

\begin{align*}
&\lim_{t\to-\infty}\bfx_1(t)=-\lim_{t\to-\infty}\bfx_2(t)=(-b,\infty,0)\in\bbR^3,\\
&\lim_{t\to-\infty}\bfx_1'(t)=-\lim_{t\to-\infty}\bfx'_2(t)=(0,-v_0,0),\qquad v_0>0.
\end{align*}
The following dimensionless parameter plays a crucial role in the analysis in this paper:

\begin{align}\label{eq: p def}
p:=\frac{GM}{bv_0^2}.
\end{align}
The differential equation describing the motion of the point masses is given by Newton's second law

\begin{align*}
\bfx_1''(t)=-\frac{GM\bfx_{1}(t)}{4|\bfx_1(t)|^3}.
\end{align*}
By direct differentiation we see that the energy, 

\begin{align*}
\scE_1:=\frac{1}{2}|\bfx_1'(t)|^2-\frac{GM}{4|\bfx_1(t)|},
\end{align*}
and the angular momentum,

\begin{align*}
\bfJ(t)=\bfx_1(t)\cross\bfx_1'(t),
\end{align*}
are constant in time. In particular, the orbit of the point masses is confined to the $x$-$y$ coordinate plane, and we will suppress the $z$-coordinate in the remainder of the discussion for point masses. As mentioned earlier, when $\scE_1>0$ the orbit of the two point masses is a hyperbola. Indeed, a direct calculation shows that

\begin{align}\label{eq: bfe}
\bfe:=\frac{4}{GM}\left(\bfx_1'\cross\bfJ-\frac{GM}{4|\bfx_1|}\bfx_1\right),
\end{align}
is a constant vector, that is $\frac{d}{dt}\bfe=0$. The constant scalar $e:=|\bfe|$ is known as the \emph{eccentricity} of the system, and is given by

\begin{align*}
e=\frac{4}{GM}\sqrt{\left(\frac{1}{4}GM\right)^2+2\scE_1 |\bfJ|^2}=\sqrt{1+\frac{32\scE_{1}|\bfJ|^{2}}{(GM)^{2}}}.
\end{align*}
In particular,

\begin{align*}
\scE_1<0 \iff e<1\quad \mand \quad \scE_1>0\iff e>1.
\end{align*}
Let $r:=|\bfx_1|$ and $\theta$ be the angle between $\bfx_1$ and $\bfe$. It follows from \eqref{eq: bfe} that

\begin{align*}
r=\frac{4|\bfJ|^2}{GM}\frac{1}{1+e\cos\theta}.
\end{align*}
Since $e>1$ in the configuration described above, this is the polar representation of a hyperbola. By construction, one asymptote of this hyperbola, corresponding to $t\to-\infty$, is the $y$-axis. The other asymptote, $L$, corresponding to $t\to\infty$ is determined by the choice of the parameters $b$, $v_0$, $R$, and $M$. Let $\alpha$ be the angle between $L$ and the negative $y$-axis so that $\pi+\alpha$ is the angle $L$ makes with the positive $y$-axis, measured counterclowckwisely from the positive $y$-axis. See Figure \ref{fig:label}. We call $\alpha$ the \emph{scattering angle} of $\bfx_1$. If $\varphi(t)$ is the angle from the positive $y$-axis to $\bfx_1$, then

\begin{align}\label{eq: alpha 1}
\pi+\alpha=\int_{-\infty}^\infty\frac{d\varphi}{dt}dt.
\end{align}
Since $|\bfx_1'|^2=\left(\frac{dr}{dt}\right)^2+r^2\left(\frac{d\varphi}{dt}\right)^2$ and $|\bfJ|=bv_0=r^2\frac{d\varphi}{dt}$,

\begin{align*}
\left(\frac{dr}{dt}\right)^2=2(\scE_1-\scU_1),
\end{align*}
where 

\begin{align*}
\scU_1:=-\frac{GM}{4r}+\frac{|\bfJ|^2}{2r^2}.
\end{align*}
Since the orbit of $\bfx_1$ is a hyperbola, $r$ will decrease until it reaches a minimum value $r_{+}$, which we call the \emph{distance at closest approach}, and then increase again to infinity. If $t_{+}$ denotes the time at which $r(t_{+})=r_{+}$, then

\begin{align*}
&\frac{dr}{dt}=-\sqrt{2(\scE_1-\scU_1)},\qquad\, t\leq t_{+},\\
&\frac{dr}{dt}=\sqrt{2(\scE_1-\scU_1)},\quad\qquad t\geq t_{+}.
\end{align*}
Using the change of variables $\lambda=b^{-1}r$, with $\lambda_{+}:=b^{-1}r_{+}$, and recalling the definition of the parameter $p$ from \eqref{eq: p def}, we can rewrite \eqref{eq: alpha 1} as

\begin{align*}
\begin{split}
 \alpha+\pi=&\int_{-\infty}^\infty  \frac{d\varphi}{dt}dt=-\int_{\infty}^{r_+}\frac{|\bfJ|}{bv_0r\sqrt{\lambda^2+\frac{p}{2}\lambda-1}}dr+\int_{r_+}^\infty\frac{|\bfJ|}{bv_0r\sqrt{\lambda^2+\frac{p}{2}\lambda-1}}dr\\
 =&2|\bfJ|\int_{r_+}^\infty\frac{dr}{bv_0r\sqrt{\lambda^2+\frac{p}{2}\lambda-1}}
 =2\int_{\lambda_+}^\infty\frac{d\lambda}{\lambda\sqrt{\lambda^2+\frac{p}{2}\lambda-1}}.
\end{split}
\end{align*}
The quadratic polynomial $\lambda^2+\frac{p}{2}\lambda-1$ factorizes as $(\lambda-\lambda_{+})(\lambda-\lambda_{-})$, where $\lambda_{\pm}=-\frac{p}{4}\pm\sqrt{\frac{p^2}{16}+1}.$ Finally, denoting the speed at closest approach by $v_{+}:=|\bfx_1'(t_{+})|$, by conservation of energy $\scE_1$ we have $v_{+}^2=\frac{GM}{2r_{+}}\left(1+\frac{2v_0^2r_{+}}{GM}\right)$. Summarizing we get

\begin{align*}
&\lambda_{+}=\frac{1}{\frac{p}{4}+\sqrt{\frac{p^2}{16}+1}},\quad r_{+}=b\lambda_{+},\quad \alpha=2\int_{\lambda_{+}}^\infty\frac{d\lambda}{\lambda\sqrt{\lambda^2+\frac{p}{2}\lambda-1}}-\pi,\quad v_{+}^2=\frac{GM}{2r_{+}}\left(1+\frac{2v_0^2r_{+}}{GM}\right).
\end{align*}
Now consider the two limit cases where $p$ is very small and very large. First, when $p\to0$ (e.g., $bv_0^2\gg GM$) it is not surprising that the gravitational force will almost not affect the trajectories of the masses. Indeed, in this case $\lambda_+\to1$ so $r_+\to b$ and $\alpha\to 0$ and the motion  of the two masses is almost along a straight line. The more interesting case is when $p\to\infty$ (e.g., $GM\gg bv_0^2$) and in this case we have 

\begin{align*}
\begin{split}
p\lambda_+ \to 2,\quad \frac{pr_+}{b}\to2,\quad \alpha\to\pi, \quad \frac{2v_+^2 r_+}{GM}\to 1.
\end{split}
\end{align*}

\begin{figure}
\begin{tikzpicture}
  \draw[thick,->,>=latex] (-2.5,0)--(2.5,0) node[above] {$x$};
  \draw[thick,->,>=latex] (0,-4.5)--(0,4.5) node[right] {$y$};
  \draw[domain=-230:50,scale=10,samples=500,blue] plot (\x-22:{.1/(1+(1.01)*cos(\x+90))});
  \draw[domain=-230:50,scale=10,samples=500,red] plot (\x+158:{.1/(1+(1.01)*cos(-\x-90))});
  \draw[scale=1,domain=0.:4,dashed,variable=\x,blue] plot ({\x},{\x-2}) node[right] {$L$};
  \node [blue] at (-1.36,4)   {\textbullet};
   \node [blue] at (-1.36,4) [left]  {$\bfx_1$};
  \node [red] at (1.36,-4) {\textbullet};
  \node [red] at (1.36,-4) [right]  {$\bfx_2$};
  \draw [thick,decoration={brace},decorate,blue] (-1.35,4.1) -- (0,4.1) node[midway,above,yshift=.1cm] {$b$};
  \draw [thick,decoration={brace,mirror},decorate,red] (0,-4.1) -- (1.35,-4.1) node[midway,below,yshift=-.1cm] {$b$};
  \draw[blue] (0,-2.3) arc (-90:45:0.3) node[midway,right] {$\alpha$};
  \draw[ultra thick,->, blue] (-1.36,4)--(-1.36,3.2) node[left] {$-v_0$};
  \draw[ultra thick,->, red] (1.36,-4)--(1.36,-3.2) node[right] {$v_0$};
\end{tikzpicture}
\label{fig:label}
\caption{}
\end{figure}

Suppose now that the point masses are replaced by fluid bodies obeying \eqref{eq: fluid bodies}. Assume that at time $t=T_0$ the two bodies $\calB_1(T_0)$ and $\calB_2(T_0)$ are balls of radius $R$ centered at $(x,y,z)=(-b,R_0,0)$ and $(x,y,z)=(b,-R_0,0)$, respectively, with $R_0\gg b\gg R$. For future reference let $R_1:=\sqrt{b^2+R_0^2}$ be the initial distance of the center of mass of the entire system to the initial position of the center of mass of each body. We denote the center of masses of the two bodies by $\bfx_1$ and $\bfx_2=-\bfx_1$ and suppose that the initial velocities of $\bfx_1$ and $\bfx_2$ are 

\begin{align*}
\bfx_1'(T_0)=\bfv_0:=(0,-v_0,0),\qquad \bfx_2'(T_0)=-\bfv_0=(0,v_0,0),\qquad v_0>0.
\end{align*}
Note that, by symmetry, the center of mass of the entire system is always at the origin. We will discuss the local existence of a solution to equation \eqref{eq: fluid bodies} momentarily, but let us assume for now that there exists a classical solution on some interval $I:=[T_0,T_1)$. As mentioned earlier, a direct computation shows that the total energy $\scE$ defined in \eqref{eq: scE def} is conserved in $I$. Since the center of mass of each body is given by

\begin{align*}
\bfx_j=\frac{\rho}{M}\int_{\calB_j}\bfx d\bfx,
\end{align*}
 we have

\begin{align*}
\bfx_j''=\frac{\rho}{M}\int_{\calB_j}(\bfv_t+\bfv\cdot\nabla\bfv)d\bfx=-\frac{\rho}{M}\int_{\calB_j}(\nabla\bfpsi_1+\nabla\bfpsi_2)d\bfx.
\end{align*}
The modified total energy

\begin{align}\label{eq: tilde E def}
\tilde{\scE}=\frac{1}{|\calB_1|}\scE+\frac{3GM}{5R}
\end{align}
admits the decomposition

\begin{align*}
\tilde\scE=\widetilde{\scEtidal}+ \widetilde{\scEorbital},
\end{align*}
where $\widetilde{\scEtidal}$ and $\widetilde{\scEorbital}$ are defined in \eqref{eq: tidal def} and \eqref{eq: orbital def}. As in the point mass case, if the initial separation, $2R_1$, of the two bodies is sufficiently large then $\tilde{\scE}$ is positive. On the other hand, since the two bodies are initially unperturbed round balls, $\widetilde{\scEorbital}$ is positive, corresponding to hyperbolic motion in the point mass case. As discussed at the beginning of the introduction, our goal in this paper is to prove that the parameters of the problem can be set up so that $\widetilde{\scEtidal}$ becomes arbitrarily large relative to the modified total energy $\tilde{\scE}$  at the first point of closest approach, that is, when the bodies reach their minimum distance. To show this, we also need to prove that the solution of \eqref{eq: fluid bodies} does not develop singularities until the bodies get sufficiently close for $\widetilde{\scEtidal}$ to become large. To state the precise result we need to introduce some more notation. First, as in the point mass case we define

\begin{align*}
p:=\frac{GM}{bv_0^2},\qquad r_+:=\frac{2b}{p}.
\end{align*}
We also define

\begin{align*}
\eta=\eta(t):=\frac{R}{|\bfx_1(t)|},\qquad \eta_+:=\frac{R}{r_+},\qquad \beta:=\frac{b}{R}.
\end{align*}
The analysis in the point mass case shows that if $p\gg1$ the scattering angle approaches $\pi$. It is important to note that this can be achieved while keeping the distance at closest approach arbitrarily large relative to $R$. For instance, one can take $v_0\sim\sqrt{\frac{GM}{R}}\beta^{-9/10}$, $p\sim\beta^{4/5}$, and $\beta\gg1$. It follows that $r_{+}\sim R\beta^{1/5}\gg R$. It is in the context of the limiting scenario $p\gg 1$ and $r_{+}\gg R$ that we will study the evolution of fluid bodies.
We will choose $M$, $b$, and $v_0$ such that $b\gg r_+$ and $r_+ \gg R$, which in particular imply $\beta\gg1$. According to the analysis in the point mass case, the significance of the condition $b\gg r_+$ or equivalently $p\gg1$, is that the scattering angle is close to $\pi$ in the point mass analysis. The significance of the condition $r_+\gg R$ is that it will allow us to treat the deformation of the bodies perturbatively in the proof of a priori estimates. To relate the energy $\widetilde{\scEtidal}$ to the surface deformation, we introduce the Lagrangian parametrization of the surface. Let $\xi:\bbR\times S_R\to\calB_1$ be the Lagrangian parametrization of $\calB_1$ satisfying $\xi(T_0,p)=p$ for all $p$ in $S_R$, that is,

\begin{align*}
\begin{split}
 \xi_t(t,p)=\bfv(t,\xi(t,p)). 
\end{split}
\end{align*} 
Here $S_R$ is the round sphere of radius $R$. We define the \emph{height function} $h:\bbR\times S_R\to \bbR$ as

\begin{align*}
\begin{split}
 h(t,p)=|\xi(t,p)-\bfx_1(t)|-R. 
\end{split}
\end{align*}

The following theorem is the main result of this paper.


\begin{theorem}\label{thm: tidal capture}
Suppose $r_+\geq C R$ where $C>0$ is sufficiently large. Then $|\bfx_1(t)|$ is decreasing on any time interval $I_T:=[T_0,T)$ such that 
$
|\bfx_1(t)|\geq \frac{3}{2}\, r_+  \mathrm{~for~ all~} t\in I_T,
$
and a classical solution to \eqref{eq: fluid bodies} exists on the longest time interval on which $|\bfx_1|$ is decreasing. Moreover,  there exist universal constants $c_1$ and $c_2$ such that, with $r_0$ denoting the first local minimum of $|\bfx_1|$,
$
c_1\frac{GM}{R} \eta^6 \leq \widetilde{\scEtidal} \leq c_2 \frac{GM}{R} \eta^6  \mathrm{~if~} |\bfx_1(t)|\in(2r_0,r_0),
$
and $\widetilde{\scEtidal}$ is related to the height function $h$ as

\begin{align}\label{eq: tidal waves}
\widetilde{\scEtidal}\approx \frac{GM}{R^5}\| h\|_{H^1(S_R)}^2+\frac{1}{R^{2}}\|\partial_th\|_{L^2(S_R)}^2,
\end{align}
where the constants $c_1$ and $c_2$ as well as the implicit constants in \eqref{eq: tidal waves} are independent of the initial time $T_0$ and the initial separation $R_1$. In particular, if $\eta_+^5p^2\gtrsim1$, then for some $m>2$

\begin{align}\label{eq: energy comparison}
\begin{split}
\widetilde{\scEtidal}\geq m\, \tilde{\scE}
\end{split}
\end{align}
when $|\bfx_1(t)|\in(2r_0,r_0)$.
\end{theorem}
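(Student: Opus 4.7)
The plan is to exploit the smallness of $\eta(t)=R/|\bfx_1(t)|$, which remains bounded by a constant multiple of $\eta_+\ll 1$ throughout the interval of interest, in order to treat the deformation of each body perturbatively around the round ball configuration. By the symmetry $\bfx_2=-\bfx_1$ it suffices to analyze $\calB_1$. I would parametrize $\partial\calB_1$ in Lagrangian coordinates via the height function $h(t,p)$ and Taylor expand $\bfpsi_2$ around the center of mass $\bfx_1(t)$: the monopole produces a point-mass Kepler force acting on $\bfx_1$, the dipole vanishes, and the quadrupole contributes a tidal forcing of amplitude $\sim GMR/|\bfx_1|^3$ acting on the fluid of $\calB_1$.

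The first step is to analyze the orbital trajectory. Averaging $\bfx_1''=-\tfrac{\rho}{M}\int_{\calB_1}(\nabla\bfpsi_1+\nabla\bfpsi_2)\,d\bfx$ over $\calB_1$, the self-interaction integral vanishes and the external contribution coincides with the point-mass Kepler force up to corrections of order $\eta^2$. A continuity argument bootstrapping closeness to the hyperbolic trajectory of the introduction then shows that $|\bfx_1|$ decreases strictly on any interval where it remains at least $\tfrac{3}{2}r_+$, and that the first local minimum $r_0$ satisfies $r_0\approx r_+$.

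The second step is the dispersive analysis of $h$. Linearizing \eqref{eq: fluid bodies} about the round ball, as in the analysis of Christodoulou \cite{Ch}, reduces the dynamics of $h$ to a driven second order equation of the form $\partial_t^2 h+\tfrac{GM}{R^3}\mathcal{L}h=F_{\text{tide}}$, where $\mathcal{L}$ is a positive selfadjoint operator on $L^2(S_R)$ diagonal in spherical harmonics (the modes $\ell=0,1$ being frozen by incompressibility and by the center of mass normalization), and the forcing $F_{\text{tide}}$ is a pure quadrupole of amplitude $\sim GMR/|\bfx_1(t)|^3$. The natural conserved energy of the linear equation is precisely
\begin{align*}
\tfrac{GM}{R^5}\|h\|_{H^1(S_R)}^2+\tfrac{1}{R^2}\|\partial_t h\|_{L^2(S_R)}^2,
\end{align*}
which must agree with $\widetilde{\scEtidal}$ up to a fixed constant by the definition \eqref{eq: tidal def}, proving \eqref{eq: tidal waves}. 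A Duhamel computation combined with the orbital integral $\int (R/|\bfx_1|)^3\,dt \sim \eta_+^2\,r_+/v_+$ produces the upper bound $\widetilde{\scEtidal}\lesssim\tfrac{GM}{R}\eta^6$ on the window $|\bfx_1|\in(2r_0,r_0)$. For the matching lower bound one observes that the duration of significant forcing, of order $r_+/v_+$, is much shorter than the oscillation period $R^{3/2}/\sqrt{GM}$ of $\mathcal{L}$ (precisely because $r_+\gg R$), so the quadrupole forcing acts coherently on the $\ell=2$ mode and deposits energy without destructive cancellation, giving $\widetilde{\scEtidal}\gtrsim \tfrac{GM}{R}\eta^6$. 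Finally, the parameter identities $\eta_+\approx p/\beta$ and $GM/R\sim p\beta v_0^2$ yield
\begin{align*}
\frac{\widetilde{\scEtidal}}{\tilde{\scE}}\sim \frac{GM\eta^6}{Rv_0^2}\sim p\beta\eta_+^6\sim p^2\eta_+^5,
\end{align*}
so the hypothesis $\eta_+^5 p^2\gtrsim 1$ immediately produces \eqref{eq: energy comparison} with arbitrarily large $m$.

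The principal obstacle will be closing the a priori estimates for the full nonlinear free boundary Euler-Poisson system up to the point of closest approach. Because the tidal forcing grows like $|\bfx_1|^{-3}$, a naive Grönwall argument loses control of $h$ before $r_+$ is reached; one must exploit the dispersive character of $\mathcal{L}$ (the shortness of the interaction time relative to its oscillation period) and carefully track positivity of the Rayleigh-Taylor sign to maintain uniform high-order Sobolev bounds on $h$ that justify both the perturbative expansion of $\bfpsi_2$ and the linearization of the free boundary operator. A secondary difficulty is the lower bound on $\widetilde{\scEtidal}$, which rests on a quantitative non-resonance argument showing that the coherent tidal transfer near closest approach is not cancelled by earlier contributions accumulated along the orbit.
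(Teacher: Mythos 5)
Your high-level outline tracks the paper: a Taylor expansion of $\bfpsi_2$ into monopole/dipole/quadrupole pieces, a bootstrap comparison of $\bfx_1$ to the Kepler hyperbola, a driven linear equation for the height function diagonal in spherical harmonics, and the final parameter bookkeeping ($\eta_+\sim p/\beta$, $GM/R\sim p\beta v_0^2$, so $\widetilde{\scEtidal}/\tilde\scE \sim p^2\eta_+^5$) are all the right ingredients and appear in the paper in essentially this form.

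There is, however, a genuine conceptual error in the mechanism you invoke for the lower bound. You claim the duration of significant forcing, of order $r_+/v_+$, is \emph{much shorter} than the natural oscillation period $\sim R^{3/2}/\sqrt{GM}$, and that this lets the quadrupole forcing ``act coherently'' and deposit energy impulsively in the $\ell=2$ mode. The comparison goes the other way: $v_+\sim\sqrt{GM/r_+}$ gives $r_+/v_+\sim r_+^{3/2}/\sqrt{GM}$, so the ratio of the forcing timescale to the oscillation period is $(r_+/R)^{3/2}=\eta_+^{-3/2}\gg1$. The forcing is therefore \emph{slow} relative to the $\ell=2$ oscillation, which puts the system in the adiabatic/quasi-static regime, not the impulsive one. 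The lower bound accordingly does not come from coherent accumulation of a short pulse; it comes from the quasi-static response $h_2\approx a_2^{-1}f_2$, with the oscillatory Duhamel tail being an error term. This is exactly what Proposition~\ref{prop: h ell} encodes: after three integrations by parts one isolates the quasi-static term $f_\ell'(t)/a_\ell$ as the leading contribution to $\partial_t h_\ell$, and the remaining oscillatory integrals are estimated away using the smallness of $f''',f''$, etc.\ relative to powers of $\sqrt{a_\ell}$. If you instead tried to prove the lower bound by treating the forcing as a short coherent pulse, the estimate would not close.

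A second, less serious gap is the claim that the comparison $\widetilde{\scEtidal}\approx \frac{GM}{R^5}\|h\|_{H^1}^2+\frac{1}{R^2}\|\partial_t h\|_{L^2}^2$ holds ``by definition.'' The definition \eqref{eq: tidal def} is a volume integral over $\calB_1$, and converting $\int_{\calB_1}|\bfv-\bfx_1'|^2$ and $\int_{\calB_1}\bfpsi_1\,d\bfx+\frac{3GM|\calB_1|}{5R}$ into surface quantities in $h$ and $\partial_t h$ is a substantive calculation, not a tautology. The paper does this in the proof of Theorem~\ref{thm: tidal capture} by (i) representing the relative velocity potential $\phi-\rphi$ via the single-layer potential solution of a Neumann problem with data $n\cdot u$, yielding $\phi-\rphi \approx -\tfrac{R}{2}\partial_t h_2$ modulo error terms, and (ii) using the Clifford-analytic formula $\psi_1+\tfrac{GM}{R}=\txtg(I-3\Kone)h+\frakR_3$ of \eqref{eq: psi1}; the $\ell=2$ eigenvalue of $I-3\bfK$ then produces the coefficient $\tfrac{\txtg}{5}$. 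Both steps use the a priori estimates to control the error terms, and the fact that $h$ is essentially a pure $\ell=2$ harmonic (Proposition~\ref{prop: h ell}) is what lets one freely move between $L^2$, $H^{1/2}$, and $H^1$ norms in the final statement.
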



\begin{remark}\label{rem: initial data}

Condition \eqref{eq: energy comparison} in \thm{thm: tidal capture} shows that by choosing the parameters of the problem appropriately we can guarantee that the orbital energy $\widetilde{\scEorbital}$, which is initially positive, becomes negative, which as described above is a significant step in understanding the phenomenon of tidal capture. In fact, we can make $m$ in \eqref{eq: energy comparison} arbitrarily large. To see this, suppose we choose $v_0\sim \sqrt{\frac{GM}{R}}\beta^{-\alpha}$, which implies that $r_+\sim \beta^{2-2\alpha}R$, and $\eta_{+}^5p^2\sim \beta^{14\alpha-12}$. Then, for both of the conditions $r_{+}\gg R$ and $\eta_{+}^5p^2\gtrsim1$ to be satisfied, we should choose $\alpha$ in the range $[\frac{6}{7},1]$. In particular choosing $v_0=\sqrt{\frac{\mu GM}{R}}\beta^{-1}$ with $\mu\gg1$, we get $r_+\sim \mu R\gg R$, and 

\begin{align*}
\widetilde{\scEtidal}\sim \frac{GM}{R}\eta_+^6\sim \mu^{-6}\frac{GM}{R},\qquad \tilde{\scE}\sim|v_0|^2\sim \mu \beta^{-2}\frac{GM}{R}.
\end{align*}
Therefore choosing $\beta$ sufficiently large relative to $\mu$ we can make $m$ in \eqref{eq: energy comparison} arbitrarily large.

\end{remark}


\begin{remark}\label{rem: tidal waves}

The significance of the comparison \eqref{eq: tidal waves} can be explained as follows. As explained in \rem{rem: initial data} above, \thm{thm: tidal capture} implies that the tidal energy $\widetilde{\scEtidal}$ can be made arbitrarily large relative to the total energy $\tilde{\scE}$ at the time of closest approach. By \eqref{eq: tidal waves} the tidal energy is comparable to the $H^1\times L^2$ norm of $(h,\partial_th)$ of the height function (and this remains true as long as the tides remain small). Therefore, if $\|(h,\partial_th)\|_{H^1\times L^2}$ retains a nontrivial portion of its size at closest approach as the bodies move away from each other, we can argue as in the beginning of this introduction to conclude that tidal capture will indeed happen.

\end{remark}



\begin{remark}\label{rem: tidal waves}

It is important to have constants that do not depend on the initial time $T_0$ and the initial separation $R_1$ in \thm{thm: tidal capture}. The reason is that it makes sense for the bodies to be perfect balls only when their distance is infinite. The fact that for us the constants are independent of $T_0$ and $R_1$ allows us to take the limit $|T_0|,\,R_1\to\infty$ in \thm{thm: tidal capture}. It follows from our analysis that in this case the estimate $c_1\frac{GM}{R} \eta^6 \leq \widetilde{\scEtidal} \leq c_2 \frac{GM}{R} \eta^6 $ is valid for all times before the first closest approach, that is, before $|\bfx_1(t)|=r_0$.

\end{remark}


\subsection{Discussion of the Proof}
We divide the proof of \thm{thm: tidal capture} into two parts. The first part consists of proving a-priori estimates, which in particular show that the solution remains regular as long as $|\bfx_1|$ remains larger than $\frac{3}{2}r_+$. More importantly we derive a precise description of the evolution of the bodies up to closest approach. The second part of the proof is an analysis of the tidal energy $\widetilde{\scEtidal}$. Here we use the precise description and a-priori estimates from the first part of the proof to derive a lower bound for the tidal energy. This lower bound depends on the distance of the two bodies, and comparing it with the conserved total energy we will see that if $|\bfx_1|$ is sufficiently close to $r_{+}$  and $p^2\eta_{+}^5\gtrsim1$, then \eqref{eq: energy comparison} holds.


\subsubsection{A-priori estimates} Note that since the fluid velocity is a harmonic function inside $\calB_1$ and $\calB_2$, to prove regularity of the solution it suffices to prove regularity of the boundary and the velocity on the boundary. Let $\xi:\bbR\times S_R\to\partial\calB_1(t)$ be the Lagrangian parametrization of the boundary of the first body, that is, $\xi_t=\bfv(t,\xi)$, with $\xi(T_0,\cdot)=\mathrm{Id}$, and let $\zeta:=\xi-\bfx_1$ and $u:=\zeta_t$. Local existence of a solution starting at $t=T_0$ follows from the local well-posedness of the system\footnote{Local well-posedness in Sobolev spaces for the free boundary problem of the incompressible Euler equation with constant gravity was proved in \cite{Wu97, Wu99}. For the particular model at hand with only one body this was established in \cite{LinNor1,Nor1}, and the presence of a second body does not affect the local well-posedness of the system, since its acts as a lower order source term in the equation.}. However, as we are interested in understanding the dynamics of the motion up to the point of closest approach, local existence is far from sufficient for our purposes. To prove the necessary long time existence result, we derive a-priori bounds using energy estimates to  which we now turn. The first step  is to derive a quasilinear equation for $u$. Since $\bfp\equiv 0$ on $\partial\calB_1$, the gradient $\nabla\bfp\vert_{\partial\calB_1}$ points in the direction of the normal to the boundary. With $n$ denoting the exterior normal (in Lagrangian coordinates), let $a$ be defined by the relation

\begin{align*}
-\nabla\bfp(t,\xi)=an.
\end{align*}
Note that the positivity of $a$ corresponds to the Taylor sign condition (cf. \cite{GTay1}). With this notation, we write the quasilinear equation for $u$, which is derived in Section~\ref{sec: Lagrange}, in the schematic form

\begin{align}\label{eq: u eq intro}
\partial_{t}^2u+an\cross\nabla u +\frac{GM}{R^3}u-\frac{3GM}{R^3}\calP u= F+N.
\end{align}
We explain the notation. On the right hand side, the term $F$ consists of the principal contribution of the gravity from the second body $\calB_2$, and represents the contribution from the \emph{tidal accelreation} from the body $\calB_{2}$ acting on $\calB_{1}$. The error from considering only the principal contribution of the gravity of the second body as well as the genuinely nonlinear terms are contained in $N$. The operator $n\cross\nabla$, which acts componentwisely on $u$, is intrinsic to the surface $\partial\calB_1$. Indeed, for a function $f$ defined on $\calB_1$, $n\cross\nabla f$ depends only on the restriction of $f$ to $\partial\calB_1$ as the cross product with $n$ annihilates the normal component of $\nabla f$. In (orientation preserving) local coordinates $(\alpha,\beta)$ on $S_R$,

\begin{align*}
n\cross\nabla u=\frac{1}{|N|}(\xi_\beta u_\alpha-\xi_\alpha u_\beta),
\end{align*}
where $N=\xi_\alpha\cross\xi_\beta$. The operator $\calP$ is a non-local operator which maps $u$ to the projection of $(u\cdot n)n$ into the space of curl and divergence free vectorfields on $\partial\calB_1$\footnote{More precisely, vectorfields on $\partial\calB_1$ which can be written as the restriction of a curl and divergence free vectorfield in $\calB_1$. We will often simply refer to such vector fields as curl and divergence free.}. For the reader who is familiar with Clifford analysis, the projection $\calP$ admits the explicit representation

\begin{align*}
\calP u=\frac{1}{2}(I+\Hone)\left((u\cdot n)n\right),
\end{align*}
where $\Hone$ denotes the Hilbert transform

\begin{align}\label{def Hilbert}
\Hone f(\xi)=-\frac{\pv}{2\pi}\int_{\partial\calB_1}\frac{\xi'-\xi}{|\xi'-\xi|^3}n(\xi')f(\xi')dS(\xi').
\end{align}
We now discuss a novel part of our analysis which is the most delicate point in the proof of a-priori estimates, and is related to the correct bootstrap assumptions for the energy and the unknowns. Unlike the usual lifespan estimates for a small data quasilnear system, where one assumes a bootstrap bound of the form $\calE\leq 2C\epsilon$ on the energy $\calE$, where $C>0$ and $\epsilon\ll1$ are determined by the initial data, here we impose a bootstrap assumption of the form $\calE_u(t)\leq C(t)$ where $C(t)$ is a function that decays to zero as $t\to-\infty$, and $\calE_u(t)$ is the energy functional to be defined below. The reason for this choice is that the initial data for the problem are zero, so initially $\calE_u(T_0)=0$, and we need to prove the existence of a solution for infinite time rather than on a time interval depending on the size of the data. In fact, it is more natural to think of the decay in terms of the distance of the bodies from each other rather than the time of evolution. To be able to close this type of bootstrap assumption, we need to determine the correct decay rate for the energy. This is based on a careful analysis of the source term $F$ and the decay behavior of various \emph{small quantities}\footnote{That is, quantities that would be zero if $\calB_1$ were a non-accelerating ball of radius $R$.} appearing in the nonlinearity $N$. By analyzing the source term in the equations satisfied by $u$ and the height function $h$ (see Subsection \ref{intro: proof tidal capture}) we pose the bootstrap assumptions

\begin{align}\label{eq: bootstrap intro}
\|h(t)\|_{L^2(S_R)}\leq C R^2 \eta^3(t),\qquad \|u(t)\|_{L^2(S_R)}\leq C R \eta^4(t)|\bfx_1'(t)|.
\end{align}
Note that according to \eqref{eq: bootstrap intro} the various terms appearing in the nonlinearity have different decay rates, an observation which is crucial in controlling the contribution of the nonlinearity $N$ in the energy estimates. Moreover, as shown in the final section of this article where the tidal energy is analyzed, the decay rates above for $u$ and $h$ are in fact sharp. The assumptions \eqref{eq: bootstrap intro} then lead to the following bootstrap assumption for the energy: 

\begin{align}\label{eq: energy bootstrap intro}
\calE_u(t)\leq CR\,\eta^8(t)|\bfx_1'(t)|^2.
\end{align}
Recall that $\eta(t):=\frac{R}{r_1(t)}$, so \eqref{eq: bootstrap intro} and \eqref{eq: energy bootstrap intro} are formulated in terms of the position and speed of the center of masses of the bodies. Therefore, to be able to prove a-priori estimates we need to control the center of mass motion of the bodies to obtain bounds on $|\bfx_1(t)|$ and $|\bfx_1'(t)|$ when the bodies are sufficiently far. This requires analyzing the point mass mass system and the error resulting from approximating the motion of the bodies by that of point masses up to the point of closest approach.

To define the energy $\calE_u$ we take the inner product of  \eqref{eq: u eq intro} with $\frac{u_t}{a}$, which leads to the following definition:

\begin{align}\label{eq: energy def intro}
\calE_u:=\frac{1}{2}\int_{\partial\calB_1}\frac{|u_t|^2}{a} dS+\frac{1}{2}\int_{\partial\calB_1}(n\cross\nabla u)\cdot u\, dS+\frac{GM}{2R^3}\int_{\partial\calB_1}\frac{|u|^2}{a}dS-\frac{3GM}{2R^3}\int_{\partial\calB_1}\frac{(n\cdot u)^2}{a}dS.
\end{align} 
This energy satisfies

\begin{align*}
\frac{d\calE_u}{dt}=\int_{\partial\calB_1}(F+N)\frac{u_t}{a}dS+\frac{1}{2}\int_{\partial\calB_1} \frac{1}{|N|}\partial_t\left(\frac{|N|}{a}\right)|u_t|^2dS+\dots \,\,.
\end{align*}
We have not written out all the error terms on the right hand side as they are not relevant for our discussion of the main challenges in this introduction, but the precise statement can be found in \prop{prop: energy id}. The definition of our energy and the energy identity are similar to the ones used in \cite{Wu99, Wu11} to study water waves, but here several new ingredients are needed in using them to prove a-priori estimates for equation \eqref{eq: u eq intro}. These are needed to deal with the different geometry of the domain as well as the new linear and nonlinear contributions from the gravitational force which, in contrast to the water wave problem, is not a constant. The first issue to discuss is the coercivity of $\calE_u$. It is not hard to see that for a curl and divergence free vectorfield $f$ defined on $\calB_1$,

\begin{align*}
n\cross\nabla f=\nabla_n f
\end{align*}
where $\nabla_n$ denotes the Dirichlet-Neumann operator on $\calB_1$. This shows that with $\bfu:=\bfv-\bfx_1'$

\begin{align*}
\int_{\partial\calB_1}(n\cross\nabla u )\cdot u dS=\int_{\calB_1}|\nabla \bfu|^2d\bfx.
\end{align*}
However, in view of the negative sign of the last term on the right hand side of \eqref{eq: energy def intro}, it is not clear that the energy $\calE_u$ is positive in general. To show that this negative term can be controlled by the other positive terms in the definition of the energy, first note that since $\int_{\calB_1}\bfu \,d\bfx=0$, by the Poincar\'e estiamate

\begin{align*}
\int_{\calB_1}|\nabla \bfu|^2 d\bfx\geq C\int_{\calB_1}|\bfu|^2d\bfx.
\end{align*}
Combining this with a careful computation using the trace embedding $H^1(\calB_1)\hookrightarrow L^2(\partial\calB_1)$, we are able to show the coercivity of the energy, and that in particular

\begin{align*}
\calE_u\gtrsim \int_{\partial\calB_1}(n\cross\nabla u)\cdot u \,dS+\frac{GM}{R^3}\int_{\partial \calB_1}\frac{|u|^2}{a} dS.
\end{align*}
To derive this estimate we need to have bounds on the constants in the trace embedding and Poincar\'e estimates, but such bounds are available so long as the surface $\partial\calB_1$ is close to $S_R$ in the appropriate sense. These statements are made precise in \lems{lem: Poincare}, \ref{lem: trace}, and \ref{lem: pos energy}.

The last point to discuss regarding energy estimates is commuting derivatives with equation \eqref{eq: u eq intro} to estimate higher derivatives of $u$. Since the spatial domain of the Lagrangian variables is $S_R$, to prove higher regularity for $u$ we need to estimate $\Omega^ku$, where $\Omega^k$ denotes $k$ differentiations using any combination of the restriction of three rotational vectorfields

\begin{align*}
\Omega_{ij}=\bfx_i\partial_j-\bfx_j \partial_i,\quad 1\leq i <j\leq3,
\end{align*}
to $S_R$. However, to preserve the structure of the equation we need the derivative we commute to have a small commutator with the operator $\calP$ in \eqref{eq: u eq intro}, and for the derivative of $u$ to be approximately curl and divergence free. For this purpose we introduce a modified Lie derivative $\frakD u$ which satisfies these properties, and such that control of $u$ and $\frakD u$ give us control of $\Omega u$. To motivate our definition of the modified Lie derivative, suppose for the moment that $\partial\calB_1$ is a round sphere $S_R$ centered at the origin, and that $u$ is the restriction to $S_R$ of a curl and divergence free vectorfield, $\bfu$, defined in $B_R(0)$. It is then easy to see that $\calL_\Omega\bfu$ is also curl and divergence free, where $\calL_\Omega$ denotes the Lie derivative with respect to $\Omega$. Moreover, since the normal vector to $S_R$ is the radial vectorfield $\partial_r$,  we have $\calL_\Omega n=0$. Recalling the definition of $\calP$ as the projection of $(n\cdot u)n$ into the space of curl and divergence free vectorfields, we see that $\calL_\Omega$ commutes with $\calP$. Now in general, when $\calB_1$ is not $S_R$, the Lie derivative $\calL_\Omega$ is not well-defined. Instead, we observe that in $\bbR^3$, if $\bfe$ is the axis of rotation for $\Omega$, we have the simple relation

\begin{align*}
\calL_\Omega f= \Omega f-\bfe\cross f,
\end{align*}
for any vectorfield $f$. Motivated by this we define the differential operator

\begin{align*}
\frakD_\Omega=\Omega-\bfe\cross.
\end{align*}
This operator will then have a small commutator with $\calP$, and $\frakD^k u$ is almost curl and divergence free, in a sense that is made precise in Section~\ref{sec: Lagrange}.

\subsubsection{The Tidal Energy}\label{intro: proof tidal capture} Having proved the existence of a solution to \eqref{eq: fluid bodies} and estimates on the velocity and height function, $h=|\zeta|-R$, we can turn to the study of the dynamics of the equation and the proof of \eqref{eq: energy comparison}. Using the already-established a-priori estimates, it is not hard to show that

\begin{align*}
&\int_{\calB_1}|\bfv-\bfx_1'|^2d\bfx\approx R\|\partial_th\|_{L^2(S_R)}^2,\\
&\int_{\calB_1}\bfpsi_1 d\bfx+\frac{3GM|\calB_1|}{5R}\approx \frac{GM}{R^2}\|h\|_{L^2(S_R)}^2,
\end{align*}
 proving \eqref{eq: tidal waves}. Therefore, to prove \eqref{eq: energy comparison} we need to obtain a \emph{lower bound} on $$GMR^{-2}\|h\|_{L^2(S_R)}^2+R\|h_t\|_{L^2(S_R)}^2.$$ To obtain this lower bound, we study the linearized equation for the height function $h$ (see equation \eqref{eq: h intro 2} below) to derive lower bounds for $\|h\|_{L^2(S_R)}$ and $\|\partial_th\|_{L^2(S_R)}$ in the linearized setting. We then use the a-priori estimates to show that these lower bounds remain valid even after considering nonlinear effects. To derive the desired linearized equation we first derive an equation for $h$ in Lagrangian coordinates which we schematically write as

\begin{align}\label{eq: h intro}
\partial_t^2 h +\nabla_n(I-3\Kone)h = F+N,
\end{align}
where the source term $F$ contains the contribution of the gravitational force from $\calB_2$ and $N$ contains the genuinely nonlinear terms. Here $F$ and $N$ are not the same as in \eqref{eq: u eq intro}. $\nabla_n$ denotes the Dirichlet-Neumann map of $\calB_1$ and  the non-local operator $\Kone$ is the double-layered potential for $\calB_1$, defined as 

\begin{align*}
\Kone f(\xi)=\frac{\pv}{2\pi}\int_{\partial\calB_1}\frac{(\xi'-\xi)}{|\xi'-\xi|^3}\cdot n(\xi') f(\xi') dS(\xi')
\end{align*}
for any real-valued function $f$. To derive a lower bound for $\|\partial_th\|_{L^2(S_R)}$ we do not rely on energy estimates for \eqref{eq: h intro}  but use the fundamental solution for this equation instead. More precisely, we first transfer the equation to an equation on $\bbR\times S_R$, by replacing $\nabla_n$ and $\Kone$ by $\bfD$ and $\bfK$, respectively, where $\bfD$ and $\bfK$ are the Dirichlet-Neumann map and double-layered potential for $S_R$. The resulting equation is 

\begin{align}\label{eq: h intro 2}
\partial_t^2 h +\bfD(I-3\bfK)h=F+\tilN,
\end{align}
where $\tilN$ contains the new error terms which were created in passing to the equation on $\bbR\times S_R$. Note that some of these error terms (in fact, also some of the error terms in $N$) are of highest order in terms of regularity, that is equation \eqref{eq: h intro 2} is fully nonlinear. However, since we have already established higher regularity and a-priori estimates, regularity is not relevant in the analysis of \eqref{eq: h intro 2}. We can now use the fundamental solution of \eqref{eq: h intro 2} by decomposing $h$, $F$, and $\tilN$ into spherical harmonics $h_\ell$, $F_\ell$, and $\tilN_\ell$. Studying the source term $F$, we find that the main contribution to the equation comes from the second harmonic $F_2$, where $h_2$ satisfies

\begin{align}\label{eq: h2 intro}
\begin{split}
&\partial_t^2 h_2+a_2 h_{2}=F_2+\tilN_2,\\
&\lim_{t\to T_0}h_2(t)=\lim_{t\to T_0}\partial_th_2(t)=0,
\end{split}
\end{align}
for some constant coefficient $a_2>0$. The analysis of the system \eqref{eq: h2 intro}, using the explicit representation of $F_2$, is carried out in Section~\ref{sec: tidal capture}. This analysis relies heavily on the fact that the frequency of oscillation of the source term $F_2$ is much smaller than the natural frequency of the system \eqref{eq: h2 intro}. Using the explicit representation of $F_2$, we then derive the lower bounds

\begin{align*}
\|\partial_th_2\|_{L^2(S_R)}\gtrsim R\eta^4|\bfx_1'|, \qquad \|h_2\|_{L^2(S_R)}\gtrsim R^2\eta^3.
\end{align*}
Moreover, we show that the contribution of the other harmonics, $h_\ell$ and $\partial_t h_\ell$, $\ell\neq 2$, are smaller and do not affect these lower bounds, so

\begin{align*}
\|\partial_t h\|_{L^2(S_R)}\gtrsim R\eta^4|\bfx_1'|,  \qquad \|h\|_{L^2(S_R)}\gtrsim R^2\eta^3.
\end{align*}
Note that these lower bound are consistent with the bootstrap assumptions on $h$ and $\partial_t h$ in the discussion of a-priori estimates above, proving the sharpness of our bootstrap assumptions on the decay rates of $h$ and $u$. Now since the total energy is conserved during the evolution, a comparison of the implied lower bound on the tidal energy with the initial total energy allows us to conclude the proof of \thm{thm: tidal capture}.  In fact, it follows from the estimates above that the main contribution to the tidal energy up to the point of closest approach is from the the potential energy which contributes $GMR^{-5}\|h\|_{L^2(S_R)}^2$.

\subsection{Organization of the Paper}

In Section~\ref{sec: Lagrange} we derive the quasilinear system for $u:=\zeta_t$. Higher derivatives $\frakD^ku$ and the equations they satisfy are discussed in Subsection~\ref{subsec: Dk u}. The energy estimates are carried out in Section~\ref{sec: energy}. In Subsection~\ref{subsec: energy id} we define the energy and derive the basic energy identity. Here we also discuss the coercivity of the energy as discussed above. Subsection \ref{subsec: x_1 J} is devoted to the analysis of the motion of center of mass $\bfx_1$ of $\calB_1$, and estimates on $\frac{d|\bfx_1|}{dt}$ and $|\bfx_1'|$ are derived. In Subsection~\ref{subsec: bootstrap} we introduce the bootstrap assumptions and show how the various terms appearing in the nonlinearity can be controlled in terms of the energy, under the bootstrap assumptions. Finally, we close the energy estimates in Subsection~\ref{subsec: apriori}. Section~\ref{sec: tidal capture} is devoted to the analysis of the heigh function $h$ and the proof of \thm{thm: tidal capture}. The equation for $h$ is derived in Subsection~\ref{subsec: h}, where we also write the equation in spherical harmonics and obtain the key lower bounds on $h$ and $\partial_th$. This analysis is then used in Subsection~\ref{subsec: tidal capture} to complete the proof of \thm{thm: tidal capture}. There are two appendices to this article where several technical ingredients are discussed. In Appendix~\ref{app: Clifford} we recall a few basic definitions from Clifford analysis as well as the definition of layered-potentials. The notation and terminology introduce in this appendix are used throughout the paper. We also carry out some spherical harmonic decompositions which are used in Section~\ref{sec: tidal capture}. Appendix~\ref{app: CM} contains classical important singular integral estimates which are used regularly in the proof of a-priori estimates.

\subsection*{Acknowledgements}

This work is based on a linear analysis carried out by Demetrios Christodoulou \cite{Ch}. We thank him for pointing us to this interesting direction and for many insightful discussions. We thank Sijue Wu for many insightful discussions about free boundary problems for incompressible fluids. We also thank Lydia Bieri for many stimulating conversations.

\section{The Equations in Lagrangian Coordinates}\label{sec: Lagrange}

In this section we derive the main equations of motion in Lagrangian coordinates, starting with the system
\begin{align}\label{eq: Euler}
\begin{split}
 \begin{cases}
 \bfv_t+\bfv\cdot\nabla\bfv=-\nabla \bfp -\nabla\bfpsi,\quad\quad\, \mathrm{in } ~\calB(t)\\
 \nabla\cdot\bfv=0,\quad \nabla\cross\bfv=0,\quad\quad\quad\,\,\,\, \mathrm{in}~\calB(t)\\
 \bfp=0, \qquad\qquad\qquad\qquad\quad\quad\quad \mathrm{on } ~\partial\calB(t)\\
(1,\bfv)\in T(t,\partial\calB(t)),
\quad\qquad\qquad \,\mathrm{on} ~\partial\calB(t)
 \end{cases} .
\end{split}
\end{align}

For this we work with the time differentiated equation, taking the Lagrangian velocity as our main unknown. In deriving the desired equation, we will freely use the basic notation and concepts from Clifford analysis reviewed in Appendix~\ref{app: Clifford}.

\subsection{The Equation for u}

We start by introducing the notation

\begin{align*}
r_1:=|\bfx_1|,\quad \bfxi_1=\frac{\bfx_1}{|\bfx_1|},\quad \eta=\frac{R}{|\bfx_1|}.
\end{align*}
Let $\xi:\bbR\times S_R\to \partial\calB_1(t)$ be the Lagrangian parametrization of $\partial\calB_1=\partial\calB_1(t)$, such that $\xi(T_0,\cdot)-\bfx_1(T_0)$ is the identity map of $S_R\subseteq\bbR^3$, and define

\begin{align*}
\zeta:\bbR\times S_R\to \partial\calB_1-\bfx_1,\qquad \zeta(t,p)  =\xi(t,p)-\bfx_1.
\end{align*}
Occasionally we write

\begin{align*}
\zeta=\sum_{i=1}^3\zeta^i \bfe_i,
\end{align*}
where $\zeta^i=\bfx^i\circ\zeta=\bfe_i\cdot\zeta$. We will denote the exterior unit normal to $\partial\calB_1$ by $\bfn$ and let

\begin{align*}
n(t,p)=\bfn(t,\xi(t,p)).
\end{align*}
In arbitrary (orientation preserving) local coordinates $(\alpha,\beta)$ on $S_R$ we have 

\begin{align*}
n=\frac{N}{|N|},\quad\mwhere\quad N=\xi_\alpha\cross\xi_\beta=\zeta_\alpha\cross\zeta_\beta.
\end{align*}
If $\bff:\calB_1\to\bbR$ is a (possibly time-dependent) differentiable function, and $f=\bff\circ\xi$, then by a slight abuse of notation we write

\begin{align*}
\nabla f=(\nabla\bff)\circ\xi,\quad df=(df)\circ\xi,
\end{align*}
where $d$ denotes the exterior differentiation operators on $\partial\calB_1$. With this notation, and using the fact that $N=\zeta_\alpha\cross\zeta_\beta$

\begin{align*}
n\cross\nabla f:=(\bfn\cross\nabla\bff)\circ\xi=\frac{\xi_\beta f_\alpha-\xi_\alpha f_\beta}{|N|}=\frac{\zeta_\beta f_\alpha-\zeta_\alpha f_\beta}{|N|}.
\end{align*}
Note that this definition is independent of the extension $\bff$ of $\bff\vert_{\partial\calB_1}$ to the interior of $\calB_1$.

The fluid velocity in Lagrangian coordinates will be denoted by $v$, that is,

\begin{align*}
v(t,p)=\bfv(t,\xi(t,p))=\xi_t(t,p).
\end{align*}
We also let

\begin{align*}
u(t,p)=\bfv(t,\xi(t,p))-\bfx_1=\zeta_t(t,p).
\end{align*}
The gravity potential $\bfpsi$ is written as $\bfpsi=\bfpsi_1+\bfpsi_2$ with

\begin{align*}
\bfpsi_i(t,\bfx)=-G\rho\int_{\calB_j(t)}\frac{d\bfy}{|\bfx-\bfy|},
\end{align*}
and in Lagrangian coordinates we set

\begin{align*}
\psi_j(t,p):=\bfpsi_j(t,\xi(t,p)).
\end{align*}
Since $\bfp=0$ on $\partial\calB_1$,

\begin{align*}
-\nabla\bfp=\bfa\bfn,\quad\mwhere\quad\bfa:=-\nabla\bfp\cdot\bfn,
\end{align*}
and we let
\begin{align*}
a(t,p)=\bfa(t,\xi(t,p)).
\end{align*}

\begin{remark}
Suppose $\bff:\partial\calB_1\to\bbR$ is a function defined on the boundary of the fluid domain, and let $f=\bff\circ\xi$. By a slight abuse of notation, we often write integrals on the boundary $\partial\calB_1$ in terms of $f$ instead of $\bff$. For instance we write  $\int_{\partial\calB_1}f dS$ to mean $\int_{\partial\calB_1} \bff dS$, even though $f$ is a function with domain $S_R$.
\end{remark}


With the notation above the first equation in \eqref{eq: Euler} becomes

\begin{align}\label{eq: zeta eq 1}
\zeta_{tt}=an-\nabla\psi_1-(\nabla\psi_2+\bfx_1'').
\end{align}
To state the main result of this section we need to introduce some more notation. Let

\begin{align*}
h(t,p)=|\zeta(t,p)|-R,\quad\mand\quad \tilh(t,p)=|\zeta(t,p)|^2-R^2.
\end{align*}
Both $h$ and $\tilh$ vanish when $\partial\calB_1$ is a round sphere, and in general we expect them to be small during the evolution. In computations it is often convenient to replace $\zeta$ by $R^3|\zeta|^{-3}\zeta$. The reason for this is that the function $\frac{\bfx-\bfx_1}{|\bfx-\bfx_1|^3}$ has zero curl and divergence outside $\calB_1$ so $\Hone(|\zeta|^{-3}\zeta)=-|\zeta|^{-3}\zeta$. Here $\Hone$ denotes the Hilbert transform introduced in \eqref{def Hilbert}. The error that is generated from replacing $\zeta$ by $R^3|\zeta|^{-3}\zeta$ is encoded  in

\begin{align}\label{eq: mu def}
\mu:=1-\frac{R^3}{|\zeta|^3}.
\end{align}
Taking common denominators gives

 \begin{align}\label{eq: mu nu}
\mu=\frac{3}{2R^2}\tilh+\nu \tilh,
\end{align}
where

\begin{align}\label{eq: nu def}
\nu=\frac{|\zeta|^2+R|\zeta|+R^2}{|\zeta|^3(|\zeta|+R)}-\frac{3}{2R^2}.
\end{align}
Note that in view of the discussion above

\begin{align*}
(I+\Hone)\zeta=(I+\Hone)(\mu\zeta).
\end{align*}

We now state the main result of this section.


\begin{proposition}\label{prop: u eq}
$u$ satisfies

\begin{equation}\label{eq: u}
\begin{split}
\partial_t^2u+an\cross\nabla u +\frac{GM}{R^3}u-\frac{3GM}{2R^{3}}(I+\Hone)((u\cdot n)n)=&-F_t-\partial_tE_1+E_2+\partial_t\left(\frac{a}{|N|}\right)N\\
&+\partial_t\left(\frac{1}{|\calB_{1}|}\int_{\calB_{1}}\bfE_1(t,\bfx)d\bfx\right),
\end{split}
\end{equation}
where with $\bfzeta=\bfx-\bfx_1$ and $\bfzeta'=\bfy-\bfx_2$

\begin{align}\label{eq: F}
\begin{split}
  F:=\frac{GM\eta^3}{8R^3}(\zeta-3(\zeta\cdot\bfxi_1)\bfxi_1),
\end{split}
\end{align}

\begin{align}\label{eq: E1}
\begin{split}
E_1 =G\rho\,\eta^2\int_{\calB_2}\left[\frac{\eta(\zeta-(\bfy-\bfx_2))+2R\bfxi_1}{|\eta(\zeta-(\bfy-\bfx_2))+2R\bfxi_1|^3} -\frac{\bfxi_1}{4R^2}-\frac{\eta}{8R^3}\left(\zeta-(\bfy-\bfx_2)-3((\zeta-(\bfy-\bfx_2))\cdot\bfxi_1)\,\bfxi_1\right)\right]d\bfy,
\end{split}
\end{align}

\begin{align*}
\begin{split}
 \bfE_1(t,\bfx) =G\rho\,\eta^2\int_{\calB_2}\left[\frac{\eta(\bfzeta-\bfzeta')+2R\bfxi_1}{|\eta(\bfzeta-\bfzeta')+2R\bfxi_1|^3} -\frac{\bfxi_1}{4R^2}-\frac{\eta}{8R^3}\left(\bfzeta-\bfzeta'-3((\bfzeta-\bfzeta')\cdot\bfxi_1)\,\bfxi_1\right)\right]d\bfy,
\end{split}
\end{align*}

and 

\begin{align}\label{eq: E2}
\begin{split}
E_2:=&\frac{3GM}{2R^5}(I+\Hone)((u\cdot (\zeta-Rn))\zeta)+\frac{3GM}{2R^4}(I+\Hone)((u\cdot n)(\zeta-Rn))\\
&+\frac{GM}{2R^3}(I+\Hone)(\mu u)+\frac{GM}{2R^3}(I+\Hone)((\tilh_t\nu+\tilh\nu_t)\zeta)+\frac{GM}{2R^3}[\partial_t,\Hone](\mu\zeta).
\end{split}
\end{align}
\end{proposition}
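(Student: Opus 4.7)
We differentiate \eqref{eq: zeta eq 1} once in $t$ and process the three resulting source terms separately, writing $u_{tt}=\partial_t(an)-\partial_t\nabla\psi_1-\partial_t(\nabla\psi_2+\bfx_1'')$. Each piece contributes either to the linear operator on the left of \eqref{eq: u} or to one of the error terms on the right.

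\emph{Tidal contribution from $\calB_2$.} Since $\int_{\calB_1}\nabla\bfpsi_1\,d\bfx=0$ by Newton's third law, one has $\bfx_1''=-|\calB_1|^{-1}\int_{\calB_1}\nabla\bfpsi_2\,d\bfx$. For $\bfx\in\calB_1$ and $\bfy\in\calB_2$ we rewrite $\bfx-\bfy=2R\eta^{-1}\bfxi_1+(\bfx-\bfx_1)-(\bfy-\bfx_2)$, factor $\eta^2$ from the resulting expression for $\nabla\bfpsi_2(\bfx)$, and Taylor-expand in the small parameter $\eta$. The zeroth-order term produces $\nabla\bfpsi_2(\bfx_1)$, the first-order term (using $\int_{\calB_2}(\bfy-\bfx_2)\,d\bfy=0$) produces exactly the tidal force $F$ of \eqref{eq: F}, and the remainder is precisely the integrand of $\bfE_1$ (respectively $E_1$ when evaluated on the boundary via $\zeta$). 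In the combination $\nabla\psi_2+\bfx_1''$ the constant piece $\nabla\bfpsi_2(\bfx_1)$ cancels, and applying $\partial_t$ yields the right-hand side contributions $-F_t$, $-\partial_tE_1$, and $\partial_t\bigl(|\calB_1|^{-1}\int_{\calB_1}\bfE_1\,d\bfx\bigr)$.

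\emph{Self-gravity from $\calB_1$.} For a round sphere of radius $R$ centered at $\bfx_1$, the interior gradient of the Newtonian potential is exactly $\frac{GM}{R^3}\bfzeta$, which produces the linear term $\frac{GM}{R^3}u$ after $\partial_t$. The deviation from the spherical case is organized via the Clifford/Hilbert-transform machinery of Appendix~\ref{app: Clifford}. The key input is that $\bfzeta/|\bfzeta|^3$ is the gradient of the harmonic function $-|\bfzeta|^{-1}$ on the complement of $\calB_1$ and hence Clifford-monogenic there, so its boundary trace is annihilated by $I-\Hone$ with the appropriate sign convention; this yields the identity $(I+\Hone)\zeta=(I+\Hone)(\mu\zeta)$ with $\mu$ as in \eqref{eq: mu def}--\eqref{eq: mu nu}. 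Differentiating $\mu\zeta$ in time, we use $\partial_t\tilh=2\zeta\cdot u$ together with the decomposition $\zeta=Rn+(\zeta-Rn)$ to extract $(u\cdot n)n$ as the leading contribution, producing the nonlocal operator $-\frac{3GM}{2R^{3}}(I+\Hone)((u\cdot n)n)$ on the left of \eqref{eq: u}. The commutator $[\partial_t,\Hone]$, the quadratic part $\nu\tilh$ of $\mu$ from \eqref{eq: mu nu}, the factor $\mu$ applied to $u$, and the errors generated by replacing $\zeta$ with $Rn$ in the normal projection are collected into the five terms defining $E_2$ in \eqref{eq: E2}.

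\emph{Pressure term and the main obstacle.} For $\partial_t(an)$ we split $\partial_t(an)=\partial_t(a/|N|)\,N+(a/|N|)N_t$ with $N_t=u_\alpha\cross\zeta_\beta+\zeta_\alpha\cross u_\beta$. Using the Clifford-product formula $n\cross\nabla u=|N|^{-1}(\xi_\beta u_\alpha-\xi_\alpha u_\beta)$ recalled at the start of Section~\ref{sec: Lagrange}, together with the fact that $\nabla\bfp=-a\bfn$ is normal to $\partial\calB_1$, we rearrange $(a/|N|)N_t$ into $-an\cross\nabla u$, which places the operator $an\cross\nabla u$ on the left of \eqref{eq: u} and leaves $\partial_t(a/|N|)N$ as the sole remainder on the right. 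The main obstacle of the derivation is the self-gravity step: isolating the operator $(I+\Hone)((u\cdot n)n)$ with the precise coefficient $-\frac{3GM}{2R^{3}}$, and verifying that every remainder produced by differentiating $\mu\zeta$, commuting $\partial_t$ past $\Hone$, and interchanging $\zeta$ with $Rn$ assembles exactly into the form of $E_2$ stated in \eqref{eq: E2}, with no additional contribution leaking into the main linear operator.
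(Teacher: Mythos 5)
Your proposal is correct and follows essentially the same route as the paper: differentiate \eqref{eq: zeta eq 1} in $t$, use $N_t=-|N|\,n\cross\nabla u$ to extract $-an\cross\nabla u$ and the remainder $\partial_t(a/|N|)N$ from $\partial_t(an)$, use \eqref{eq: nabla psi1} together with $(I+\Hone)\zeta=(I+\Hone)(\mu\zeta)$ and the splitting $\zeta=Rn+(\zeta-Rn)$ to isolate $-\frac{3GM}{2R^3}(I+\Hone)((u\cdot n)n)$ with the remainders collected into $E_2$, and expand $\nabla\psi_2+\bfx_1''$ via Lemmas~\ref{lem: nabla psi} and \ref{lem: acce x1} to obtain $F$, $E_1$, and $\frac{1}{|\calB_1|}\int_{\calB_1}\bfE_1\,d\bfx$. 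One small imprecision: the identity $N_t=-|N|\,n\cross\nabla u$ rests on $u$ being curl- and divergence-free (so that the scalar part of the Clifford product $\xi_\beta u_\alpha-\xi_\alpha u_\beta$ vanishes, as in \eqref{eq: nabla cross dot n}), not on the normality of $\nabla\bfp$; the latter only enters earlier, in writing $-\nabla\bfp=a\bfn$ in \eqref{eq: zeta eq 1}.
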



\begin{remark}
Note that the left-hand side of equation \eqref{eq: u} is a pure vector, that is $(I+\Hone)((n\cdot u)n)$ has no real part. Indeed

\begin{align*}
\begin{split}
 \Hone ((n\cdot u )n)=\pv\int_{\partial\calB_1} Kn'n'(n'\cdot u')dS'=-\pv\int_{\partial\calB_1}K(n'\cdot u') dS'
\end{split}
\end{align*}
is a pure vector.
\end{remark}


To derive equation \eqref{eq: u} we will differentiate equation \eqref{eq: zeta eq 1} in time. To do this efficiently we need more convenient expressions for $\nabla\psi_1$ and $\nabla\psi_2$, which are derived in the following lemma.


\begin{lemma}\label{lem: nabla psi} 
\begin{enumerate}

\item Let $\nabla\psi_1(t,p):=\nabla\bfpsi_1(t,\xi(t,p))$. Then
\begin{align}\label{eq: nabla psi1}
\nabla\psi_1=\frac{GM}{2R^3}(I-\Hone)\zeta=\frac{GM}{R^3}\zeta-\frac{GM}{2R^3}(I+\Hone)\zeta.
\end{align}

\item Let $\bfx$ be any point in $\calB_2^c$. Then

\begin{align}\label{eq: nabla bfpsi2}
\nabla\bfpsi_2(t,\bfx)= \frac{GM\eta^2}{4R^2}\bfxi_1+\frac{GM\eta^3}{8R^3}\left((\bfx-\bfx_1)-3((\bfx-\bfx_1)\cdot\bfxi_1)\,\bfxi_1\right)+\bfE_1(t,\bfx),
\end{align}
where with $\bfzeta=\bfx-\bfx_1$ and $\bfzeta'=\bfy-\bfx_2$

\begin{align}\label{eq: bfE1}
\begin{split}
 \bfE_1(t,\bfx) =G\rho\,\eta^2\int_{\calB_2}\left[\frac{\eta(\bfzeta-\bfzeta')+2R\bfxi_1}{|\eta(\bfzeta-\bfzeta')+2R\bfxi_1|^3} -\frac{\bfxi_1}{4R^2}-\frac{\eta}{8R^3}\left(\bfzeta-\bfzeta'-3((\bfzeta-\bfzeta')\cdot\bfxi_1)\,\bfxi_1\right)\right]d\bfy.
\end{split}
\end{align}

\item Let $\nabla\psi_2(t,p):=\nabla\bfpsi_2(t,\xi(t,p))$ and $E_1(t,p):=\bfE_1(t,\xi(t,p))$. Then

\begin{align}\label{eq: nabla psi2}
\nabla\psi_2= \frac{GM\eta^2}{4R^2}\bfxi_1+\frac{GM\eta^3}{8R^3}\left(\zeta-3(\zeta\cdot\bfxi_1)\,\bfxi_1\right)+E_1.
\end{align}

\end{enumerate}

\end{lemma}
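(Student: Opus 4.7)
My plan is to prove parts (2) and (3) by a direct Taylor expansion of the kernel $\frac{\bfx-\bfy}{|\bfx-\bfy|^3}$ in the small parameter $\eta$, and to handle the more subtle identity (1) via a Borel--Pompeiu (Cauchy--Pompeiu) representation from Clifford analysis applied to the pure-vector function $\bff(\bfy)=\bfy-\bfx_1$ on the domain $\calB_1$.

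For part (1), the key observation is that the Dirac derivative $D\bff = \sum_i e_i^2$ is a scalar constant. The Borel--Pompeiu formula therefore represents $\zeta(\bfx)$, for $\bfx$ interior to $\calB_1$, as the sum of a boundary Cauchy integral of $\zeta$ and a volume integral of the fundamental solution of $D$ against that scalar constant. The volume integral is a fixed multiple of $\int_{\calB_1}\frac{\bfx-\bfy}{|\bfx-\bfy|^3}d\bfy$, which by the definition of the Newtonian potential equals $(G\rho)^{-1}\nabla\bfpsi_1(\bfx)$. Taking the limit $\bfx \to \partial\calB_1$ from the inside and applying the Plemelj--Sokhotski jump relation converts the boundary Cauchy integral into a constant multiple of $(I+\Hone)\zeta$, where the precise sign is forced by the paper's normalization of $\Hone$. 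Solving algebraically for $\nabla\bfpsi_1$ and invoking $\frac{4\pi G\rho}{3}=\frac{GM}{R^3}$ produces $\nabla\psi_1=\frac{GM}{2R^3}(I-\Hone)\zeta$; the second equality in \eqref{eq: nabla psi1} is the algebraic identity $\frac{1}{2}(I-\Hone)=I-\frac{1}{2}(I+\Hone)$. As a sanity check, when $\calB_1=B_R(\bfx_1)$, a direct single-layer computation on $\partial B_R$ yields $\Hone\zeta=-\zeta$, so $(I-\Hone)\zeta=2\zeta$ and the formula reduces to the classical interior-ball gradient $\nabla\bfpsi_1=\frac{GM}{R^3}\zeta$.

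For parts (2) and (3), set $\bfzeta=\bfx-\bfx_1$, $\bfzeta'=\bfy-\bfx_2$ and use $\bfx_2=-\bfx_1$ together with $r_1=R/\eta$ to factor
\[
\bfx-\bfy \;=\; \tfrac{1}{\eta}\bigl[\eta(\bfzeta-\bfzeta')+2R\bfxi_1\bigr], \qquad \frac{\bfx-\bfy}{|\bfx-\bfy|^3}\;=\;\eta^{2}\,\frac{\eta(\bfzeta-\bfzeta')+2R\bfxi_1}{|\eta(\bfzeta-\bfzeta')+2R\bfxi_1|^{3}}.
\]
Taylor-expanding the right-hand side in powers of $\eta(\bfzeta-\bfzeta')/R$ yields the constant term $\bfxi_1/(4R^2)$ and the linear (tidal) term $\tfrac{\eta}{8R^3}\bigl[(\bfzeta-\bfzeta')-3((\bfzeta-\bfzeta')\cdot\bfxi_1)\bfxi_1\bigr]$. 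Integrating in $\bfy$ over $\calB_2$, the constant term contributes $\frac{GM\eta^2}{4R^2}\bfxi_1$ via $\rho|\calB_2|=M$; for the linear term, the vanishing of the first moment $\int_{\calB_2}\bfzeta'\,d\bfy=0$, which is precisely the definition of $\bfx_2$ as the center of mass of $\calB_2$, permits replacing $\bfzeta-\bfzeta'$ by $\bfzeta$ after integration, yielding $\frac{GM\eta^3}{8R^3}(\bfzeta-3(\bfzeta\cdot\bfxi_1)\bfxi_1)$. The remainder is by construction exactly $\bfE_1(t,\bfx)$ as in \eqref{eq: bfE1}, and part (3) is the pullback of (2) along the Lagrangian map $\xi(t,\cdot)$. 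The only genuinely delicate step is the Clifford sign- and normalization-bookkeeping in (1); once the final identity is matched against the round-ball case, everything is pinned down.
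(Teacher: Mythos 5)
Your proofs of parts (2) and (3) are essentially the same as the paper's: same change of variables $\bfzeta=\bfx-\bfx_1$, $\bfzeta'=\bfy-\bfx_2$ together with $\bfx_2=-\bfx_1$ and $|\bfx_1|=R/\eta$, same factoring of $\eta^2$, same Taylor expansion of the kernel in $\eta$, same appeal to $\int_{\calB_2-\bfx_2}\bfzeta'\,d\bfzeta'=0$ and $\rho|\calB_2|=M$, and part (3) as a pullback of (2). Nothing to flag there.

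Your proof of part (1) is a genuinely different route. The paper uses the Hardy-projection characterization from \thm{thm: Cuachy} twice: since $\nabla\bfpsi_1-\frac{4\pi G\rho}{3}\bfzeta$ is curl and divergence free in $\calB_1$, $(I-\Hone)\nabla\psi_1=\frac{4\pi G\rho}{3}(I-\Hone)\zeta$; and since $\nabla\bfpsi_1$ is curl and divergence free in $\calB_1^c$ and vanishes at infinity, $(I+\Hone)\nabla\psi_1=0$. Adding the two halves of $\nabla\psi_1=\frac12(I-\Hone)\nabla\psi_1+\frac12(I+\Hone)\nabla\psi_1$ gives the identity. Your Borel--Pompeiu argument applied to $\zeta$ (with $\calD\zeta=\sum_{i=1}^3 e_i^2=-3$, a scalar) is a purely interior argument: the volume remainder in the representation is a constant multiple of $\nabla\bfpsi_1$, and the Plemelj boundary trace converts the Cauchy integral into $\frac12(I+\Hone)\zeta$, after which you solve algebraically. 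Both are sound and give the same answer. The trade-off is that your route invokes Borel--Pompeiu, which the paper's Appendix~B does not state (\thm{thm: Cuachy} gives only the Cauchy formula for Clifford-analytic data), so you would need to supply it or cite it, whereas the paper stays inside the tools already set up; on the other hand, your route never needs the exterior decay of $\nabla\bfpsi_1$. Your round-ball sanity check ($\Hone\zeta=-\zeta$ on a centered sphere, recovering $\nabla\bfpsi_1=\frac{GM}{R^3}\zeta$) correctly pins down the sign and normalization, which is the only delicate point.
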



\begin{remark}
Note that the integrand in the definition \eqref{eq: bfE1} of $\bfE_1$ consists the $O(\eta^2)$ terms in the Taylor expansion of 

\begin{align*}
\begin{split}
 \frac{\eta(\bfzeta-\bfzeta')+2R\bfxi_1}{|\eta(\bfzeta-\bfzeta')+2R\bfxi_1|^3} 
\end{split}
\end{align*}
in $\eta$, and it is for this reason that $\bfE_1$ is regarded as an error term.
\end{remark}


\begin{proof}[Proof of Lemma~\ref{lem: nabla psi}]
We start with the equation for $\nabla\psi_1$. Recall that $\bfpsi_1$ satisfies

\begin{align*}
\Delta\bfpsi_1=4\pi G\rho \chi_{\calB_1}.
\end{align*}
It follows that 

\begin{align*}
\nabla\cdot \left(\nabla\bfpsi_1-\frac{4\pi G\rho}{3}\bfx\right)=0\quad\mand\quad \nabla\cross \left(\nabla\bfpsi_1-\frac{4\pi G\rho}{3}\bfx\right)=0
\end{align*}
in $\calB_1$ and hence

\begin{align*}
(I-\Hone)(\nabla\psi_1)=\frac{4\pi G\rho}{3}(I-\Hone)\xi.
\end{align*}
Similarly since $\nabla\bfpsi_1$ is curl and divergence-free outside of $\calB_1$,

\begin{align*}
(I+\Hone)\nabla\psi_1=0.
\end{align*}
It follows that

\begin{align*}
\nabla\psi_1=&\frac{1}{2}(I-\Hone)\nabla\psi_1 + \frac{1}{2}(I+\Hone)\nabla\psi_1=\frac{2\pi G\rho}{3}(I-\Hone)\xi=\frac{GM}{2R^3}(I-\Hone)\xi\\
=&\frac{GM}{R^3}\xi-\frac{GM}{2R^3}(I+\Hone)\xi=\frac{GM}{R^3}(\xi-\bfx_1)-\frac{GM}{2R^3}(I+\Hone)(\xi-\bfx_1)\\
=&\frac{GM}{R^3}\zeta-\frac{GM}{2R^3}(I+\Hone)\zeta,
\end{align*}
proving \eqref{eq: nabla psi1}.  Next we turn to the gravity of the second body. The formula for $\nabla\psi_2$ follows directly from that of $\nabla\bfpsi_2$ so we concentrate on the latter. If $\bfx\in\calB_2^c$ is an arbitrary point, then (suppressing the time variable from the notation)

\begin{align*}
\begin{split}
 \bfpsi_2(\bfx)=-G\rho\int_{\calB_2}\frac{d\bfy}{|\bfy-\bfx|}. 
\end{split}
\end{align*}
Since $\bfx\in\calB_2^c$, the integration kernel is nonsingular and we can differentiate inside the integral to get

\begin{align*}
\begin{split}
 \nabla\bfpsi_2(\bfx)=G\rho\int_{\calB_2}\frac{\bfx-\bfy}{|\bfx-\bfy|^3}d\bfy. 
\end{split}
\end{align*}
Introducing the notation $\bfzeta=\bfx-\bfx_1$ and $\bfzeta'=\bfy-\bfx_2$ and recalling that $\bfx_1=-\bfx_2$ we can rewrite this as

\begin{align}\label{eq: nabla psi temp 1}
\begin{split}
 \nabla\bfpsi_2(\bfx)=&G\rho\int_{\calB_2-\bfx_2}
 \frac{(\bfzeta-\bfzeta')+2\bfx_1}{|(\bfzeta-\bfzeta')+2\bfx_1|^3} d\bfzeta'\\
= &G\rho\eta^2\int_{\calB_2-\bfx_2}
\frac{\eta(\bfzeta-\bfzeta')+2R\bfxi_1}{|\eta(\bfzeta-\bfzeta')+2R\bfxi_1|^3}d\bfzeta'.
\end{split}
\end{align}
Now Taylor expansion in $\eta$ gives

\begin{align*}
\begin{split}
 G\rho\frac{\eta(\bfzeta-\bfzeta')+2R\bfxi_1}{|\eta(\bfzeta-\bfzeta')+2R\bfxi_1|^3}=\frac{G\rho}{4R^2}\bfxi_1+\frac{G\rho\,\eta}{8R^3}\left((\bfzeta-\bfzeta')-3((\bfzeta-\bfzeta')\cdot\bfxi_1)\,\bfxi_1\right)+\widetilde{\bfE}_1,
\end{split}
\end{align*}
where

\begin{align*}
\begin{split}
 \widetilde{\bfE}_1=G\rho\frac{\eta(\bfzeta-\bfzeta')+2R\bfxi_1}{|\eta(\bfzeta-\bfzeta')+2R\bfxi_1|^3} -\frac{G\rho}{4R^2}\bfxi_1-\frac{G\rho\,\eta}{8R^3}\left((\bfzeta-\bfzeta')-3((\bfzeta-\bfzeta')\cdot\bfxi_1)\,\bfxi_1\right)
\end{split}
\end{align*}
denotes the remainder of order $\eta^2$ in the Taylor expansion. Note that by definition,

\begin{align*}
\begin{split}
 \int_{\calB_2-\bfx_2}\bfzeta'd\bfzeta'=0,\quad\mand\quad \rho\int_{\calB_2-\bfx_2}d\bfzeta'=M,
\end{split}
\end{align*}
so plugging the expansion above into \eqref{eq: nabla psi temp 1} gives the desired identity

\begin{align*}
\begin{split}
 \nabla\bfpsi_2(\bfx)= \frac{GM\eta^2}{4R^2}\bfxi_1+\frac{GM\eta^3}{8R^3}\left((\bfx-\bfx_1)-3((\bfx-\bfx_1)\cdot\bfxi_1)\,\bfxi_1\right)+\bfE_1.
\end{split}
\end{align*}

\end{proof}


The following simple lemma provides an expression for $\bfx_1''$.


\begin{lemma}\label{lem: acce x1}
The acceleration of $\bfx_{1}$ is given by the average of $-\nabla\bfpsi_{2}$ over $\calB_{1}$. In other words, 

\begin{align*}
\bfx''_{1}(t)=-\frac{1}{|\calB_{1}|}\int_{\calB_{1}}\nabla\bfpsi_{2}(t,\bfx)d\bfx.
\end{align*}
\end{lemma}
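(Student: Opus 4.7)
The plan is to reduce the claim to the vanishing of the self-gravitational contribution via a symmetry argument. Starting point: immediately before the lemma statement, it is already shown (using $\bfx_1=\frac{\rho}{M}\int_{\calB_1}\bfx\,d\bfx$, the Reynolds transport theorem, the incompressibility $\nabla\cdot\bfv=0$, and the Euler equation $\bfv_t+\bfv\cdot\nabla\bfv=-\nabla\bfp-\nabla\bfpsi_1-\nabla\bfpsi_2$) that
\begin{equation*}
\bfx_1''(t)=-\frac{\rho}{M}\int_{\calB_1}\bigl(\nabla\bfpsi_1+\nabla\bfpsi_2\bigr)d\bfx,
\end{equation*}
where the pressure contribution has already been discarded using $\bfp=0$ on $\partial\calB_1$ together with the divergence theorem. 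Since $M=\rho\,|\calB_1|$, the prefactor $\rho/M$ equals $1/|\calB_1|$, so it remains only to show that the self-gravitational part cancels, i.e.
\begin{equation*}
\int_{\calB_1}\nabla\bfpsi_1(t,\bfx)\,d\bfx=0.
\end{equation*}

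To establish this vanishing, I would differentiate under the integral (the integrand $|\bfx-\bfy|^{-1}$ has only a weakly singular gradient, and the resulting singular integral is absolutely convergent on $\calB_1\times\calB_1$) to write
\begin{equation*}
\int_{\calB_1}\nabla\bfpsi_1(t,\bfx)\,d\bfx = G\rho\int_{\calB_1}\!\int_{\calB_1}\frac{\bfx-\bfy}{|\bfx-\bfy|^3}\,d\bfy\,d\bfx.
\end{equation*}
The integrand is odd under the interchange $\bfx\leftrightarrow\bfy$, while the domain $\calB_1\times\calB_1$ is symmetric under this interchange, so by Fubini the double integral vanishes. This is nothing more than the classical statement that internal (Newtonian pair-wise) forces cannot accelerate the center of mass of a body. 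Combining these two ingredients yields
\begin{equation*}
\bfx_1''(t)=-\frac{1}{|\calB_1|}\int_{\calB_1}\nabla\bfpsi_2(t,\bfx)\,d\bfx,
\end{equation*}
which is the assertion of the lemma.

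There is essentially no obstacle: the only mildly technical point is justifying differentiation under the integral in the definition of $\nabla\bfpsi_1$, but this is routine since $|\bfx-\bfy|^{-2}$ is locally integrable on $\calB_1\times\calB_1$ in three dimensions, and the resulting double integral converges absolutely, so Fubini applies and the antisymmetry kills it.
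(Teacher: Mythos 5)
Your proof is correct and follows essentially the same route as the paper's: compute $\bfx_1''$ from the Euler equation, discard the pressure term by the divergence theorem with $\bfp\vert_{\partial\calB_1}=0$, and kill the self-gravity term $\int_{\calB_1}\nabla\bfpsi_1\,d\bfx$ by the antisymmetry of $(\bfx-\bfy)/|\bfx-\bfy|^3$ under $\bfx\leftrightarrow\bfy$ via Fubini. The only difference is cosmetic: you add a sentence justifying the Fubini interchange, which the paper leaves implicit.
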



\begin{proof}
We start by computing $\bfx''_{1}(t)$:

\begin{align*}
\bfx''_{1}=&\frac{1}{|\calB_{1}|}\frac{d^{2}}{dt^{2}}\left(\int_{\calB_{1}}\bfx d\bfx\right)=\frac{1}{|\calB_{1}|}
\int_{\calB_{1}}\left(\partial_{t}\bfv+\bfv\cdot\nabla\bfv\right)
d\bfx\\
=&-\frac{1}{|\calB_{1}|}\int_{\calB_{1}}\left(\nabla\bfp
+\nabla\bfpsi_{1}+\nabla\bfpsi_{2}\right)d\bfx.
\end{align*}
Using divergence theorem, we have

\begin{align*}
\int_{\calB_{1}}\nabla\bfp d\bfx=\int_{\partial\calB_{1}}\bfp\bfn dS=0.
\end{align*}
For the contribution from $\nabla\bfpsi_{1}$, we have

\begin{align*}
\int_{\calB_{1}}\nabla\bfpsi_{1}d\bfx=&G\rho\int_{\calB_{1}}\int_{\calB_{1}}
\frac{\bfx-\bfy}{|\bfx-\bfy|^{3}}d\bfy d\bfx
=G\rho\int_{\calB_{1}}\int_{\calB_{1}}
\frac{\bfx-\bfy}{|\bfx-\bfy|^{3}}d\bfx d\bfy=-G\rho\int_{\calB_{1}}\int_{\calB_{1}}
\frac{\bfx-\bfy}{|\bfx-\bfy|^{3}}d\bfy d\bfx=0.
\end{align*}

\end{proof}


We now turn to the proof of Proposition~\ref{prop: u eq}.


\begin{proof}[Proof of Proposition~\ref{prop: u eq}]

The statement will follow from differentiating \eqref{eq: zeta eq 1} in time.  We start by differentiating $\nabla\psi_1$. Using equations \eqref{eq: mu def}, \eqref{eq: mu nu}, and \eqref{eq: nabla psi1} we have

\begin{align}\label{eq:  u eq temp 1}
\begin{split}
-\partial_t\nabla\psi_1=&-\frac{GM}{R^3}u+\frac{GM}{2R^3}\partial_t(I+\Hone)(\mu\zeta)=-\frac{GM}{R^3}u+\frac{GM}{2R^3}[\partial_t,\Hone](\mu\zeta)+\frac{GM}{2R^3}(I+\Hone)\partial_t(\mu\zeta)\\
=&-\frac{GM}{R^3}u+\frac{3GM}{4R^5}(I+\Hone)(\tilh_t\zeta)\\
&+\frac{GM}{2R^3}(I+\Hone)(\mu u)+\frac{GM}{2R^3}(I+\Hone)((\tilh_t\nu+\tilh\nu_t)\zeta)+\frac{GM}{2R^3}[\partial_t,\Hone](\mu\zeta)\\
=&-\frac{GM}{R^3}u+\frac{3GM}{2R^5}(I+\Hone)((u\cdot\zeta)\zeta)\\
&+\frac{GM}{2R^3}(I+\Hone)(\mu u)+\frac{GM}{2R^3}(I+\Hone)((\tilh_t\nu+\tilh\nu_t)\zeta)+\frac{GM}{2R^3}[\partial_t,\Hone](\mu\zeta)\\
=&-\frac{GM}{R^3}u+\frac{3GM}{2R^3}(I+\Hone)((u\cdot n)n)+\frac{3GM}{2R^5}(I+\Hone)((u\cdot (\zeta-Rn))\zeta)\\
&+\frac{3GM}{2R^4}(I+\Hone)((u\cdot n)(\zeta-Rn))+\frac{GM}{2R^3}(I+\Hone)(\mu u)+\frac{GM}{2R^3}(I+\Hone)((\tilh_t\nu+\tilh\nu_t)\zeta)\\
&+\frac{GM}{2R^3}[\partial_t,\Hone](\mu\zeta)\\
=&-\frac{GM}{R^3}u+\frac{3GM}{2R^3}(I+\Hone)\left((u\cdot n)n\right)+E_2.
\end{split}
\end{align}
Next we differentiate $an$. Since $u$ is curl and divergence free, by \eqref{eq: nabla cross dot n}

\begin{align}\label{eq: Nt}
\begin{split}
 N_t= -(\xi_\beta \cross u_\alpha-\xi_\alpha\cross u_\beta)= -|N|n\cross\nabla u.
\end{split}
\end{align}
It follows that 

\begin{align}\label{eq: u eq temp 2}
\begin{split}
 \partial_t(an)=\partial_t\left(\frac{a}{|N|}\right)N -an\cross \nabla u.
\end{split}
\end{align}
Finally to compute $\partial_t(\nabla\psi_2+\bfx_1'')$, we note that by Lemma \ref{lem: acce x1} $\bfx_1''(t)=-\frac{1}{|\calB_{1}|}\int_{\calB_{1}}\nabla\bfpsi_2(t,\bfx)d\bfx$, and therefore by Lemma~\ref{lem: nabla psi}

\begin{align*}
\begin{split}
 \nabla\psi_2+\bfx_1''=F+E_1-\frac{1}{|\calB_{1}|}\int_{\calB_{1}}\bfE_1(t,\bfx)d\bfx,
\end{split}
\end{align*}
so

\begin{align}\label{eq: u eq temp 3}
\begin{split}
  \partial_t (\nabla\psi_2+\bfx_1'')=F_t+\partial_tE_1-\partial_t\left(\frac{1}{|\calB_{1}|}\int_{\calB_{1}}\bfE_1(t,\bfx)d\bfx\right).
\end{split}
\end{align}
Equation \eqref{eq: u} now follows by combining \eqref{eq: u eq temp 1}, \eqref{eq: u eq temp 2}, and \eqref{eq: u eq temp 3}.
\end{proof}


Before deriving the equations for higher derivatives of  $u$ we clarify the structure of the nonlinearity in equation \eqref{eq: u} a bit more. We start with deriving a formula for $\partial_t\left(\frac{a}{|N|}\right)N$ which is also of independent interest for the energy estimates.


\begin{proposition}\label{prop: at}

There holds

\begin{align}\label{eq: at}
\begin{split}
-(I+\Kone^\ast)\left(|N| \partial_t\left(\frac{a}{|N|}\right)\right)=&\Re n[\partial_t^2+an\cross\nabla,\Hone]u-\frac{GM}{2R^3}\Re n[\partial_t,\Hone](I+\Hone)\zeta\\
&+\Re n[\partial_t,\Hone](\nabla\psi_2+\bfx_1'').
\end{split}
\end{align}
\end{proposition}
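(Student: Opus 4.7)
The plan is to apply the Hilbert transform $\Hone$ to the quasilinear equation \eqref{eq: u} for $u$, subtract the original equation, and then take the scalar part via $\Re n\cdot$. The key algebraic ingredients are: (i) $\Hone u = u$, since $u$ is the boundary trace of the curl and divergence free field $\bfu = \bfv - \bfx_1'$ in $\calB_1$; (ii) $\Hone^2 = I$, which gives both $\Hone(I+\Hone) = I+\Hone$ and $(\Hone - I)(I+\Hone) = 0$; and (iii) for scalar $f$ the Clifford identities $n^2 = -1$ and $\Re[n\,\Hone(f n)] = \Kone^* f$ combine to give $\Re n(\Hone - I)(f n) = (I+\Kone^*) f$. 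It is exactly (iii), applied to the normal vectorfield $\partial_t(a/|N|)N = (|N|\partial_t(a/|N|))\,n$, that produces the operator $(I+\Kone^*)$ on the LHS of the proposition.

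First, I would use Lemma \ref{lem: nabla psi}(iii) and Lemma \ref{lem: acce x1} to collapse $-F_t - \partial_t E_1 + \partial_t\bigl(\frac{1}{|\calB_1|}\int_{\calB_1}\bfE_1 d\bfx\bigr)$ into $-\partial_t(\nabla\psi_2 + \bfx_1'')$, so the RHS of \eqref{eq: u} reads $\partial_t(a/|N|)N + E_2 - \partial_t(\nabla\psi_2 + \bfx_1'')$. Applying $\Hone$ and subtracting the original equation, the terms $\frac{GM}{R^3}u$ and $\frac{3GM}{2R^3}(I+\Hone)((u\cdot n)n)$ cancel by (i) and (ii), leaving
\begin{align*}
[\Hone,\, \partial_t^2 + a\,n\cross\nabla]\, u \;=\; (\Hone - I)\bigl(\partial_t(a/|N|)N + E_2 - \partial_t(\nabla\psi_2 + \bfx_1'')\bigr).
\end{align*}
Taking $\Re n\cdot$, the LHS becomes $-\Re n[\partial_t^2 + a\,n\cross\nabla, \Hone]u$, and the $N$-term on the RHS becomes $(I+\Kone^*)(|N|\partial_t(a/|N|))$ by (iii).

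For the $\nabla\psi_2 + \bfx_1''$ contribution, note that $\bfpsi_2$ is harmonic in $\calB_1$ (since $\calB_1 \cap \calB_2 = \emptyset$) and $\bfx_1''$ is spatially constant, so both are curl and divergence free in $\calB_1$; hence $\Hone(\nabla\psi_2 + \bfx_1'') = \nabla\psi_2 + \bfx_1''$, and differentiating in $t$ gives $(\Hone - I)\partial_t(\nabla\psi_2 + \bfx_1'') = -[\partial_t, \Hone](\nabla\psi_2 + \bfx_1'')$. Applied with the minus sign in front of this term and the outer $\Re n$, this reproduces the last term of the proposition.

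Finally, every term of $E_2$ in \eqref{eq: E2} except $\frac{GM}{2R^3}[\partial_t, \Hone](\mu\zeta)$ has the form $(I+\Hone)(\cdots)$, hence is annihilated by $\Hone - I$ via (ii). For the remaining commutator the crucial observation is that $(\mu - 1)\zeta = -R^3|\zeta|^{-3}\zeta$ is the boundary trace of the vector field $R^3(\bfx-\bfx_1)/|\bfx-\bfx_1|^3$, which is curl and divergence free outside $\calB_1$; thus $\Hone((\mu-1)\zeta) = -(\mu-1)\zeta$, which forces $[\partial_t, \Hone]((\mu-1)\zeta) = -(I+\Hone)\partial_t((\mu-1)\zeta)$ and so is also killed by $\Hone - I$. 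Consequently $(\Hone - I)[\partial_t, \Hone](\mu\zeta) = (\Hone - I)[\partial_t, \Hone]\zeta$, and a short manipulation using $\Hone u = u$ together with (ii) identifies this with $-[\partial_t, \Hone](I+\Hone)\zeta$, producing the middle term on the RHS with the correct coefficient. I expect the principal obstacle to be exactly this last bookkeeping step; its success rests on the same exterior-harmonicity of $|\zeta|^{-3}\zeta$ that motivated the insertion of $\mu$ in the definition of $E_2$.
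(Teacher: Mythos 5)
Your proof is correct, but it routes through the quasilinear equation \eqref{eq: u} rather than through the raw, once-differentiated $\zeta$-equation, and this is a genuinely different (if equivalent) path. The paper's proof goes back to \eqref{eq: zeta eq 1}, rewrites the self-gravity term via \eqref{eq: nabla psi1}, differentiates once in time to get $\partial_t\bigl(\tfrac{a}{|N|}\bigr)N = (\partial_t^2 + an\cross\nabla)u + \tfrac{GM}{R^3}u - \tfrac{GM}{2R^3}\partial_t(I+\Hone)\zeta + \partial_t(\nabla\psi_2 + \bfx_1'')$, and then applies $(I-\Hone)$, so the middle term $[\partial_t,\Hone](I+\Hone)\zeta$ appears immediately and $E_2$ is never touched. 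You instead apply $\Hone - I$ to the already-processed equation \eqref{eq: u}, which forces you to \emph{unwind} Proposition \ref{prop: u eq}: you regroup the source into $-\partial_t(\nabla\psi_2 + \bfx_1'')$ via Lemmas \ref{lem: nabla psi} and \ref{lem: acce x1}, observe that $\Hone - I$ annihilates every term of $E_2$ of the form $(I+\Hone)(\cdot)$, then split $\mu\zeta = (\mu-1)\zeta + \zeta$, use exterior harmonicity of $|\zeta|^{-3}\zeta$ to kill the $(\mu-1)\zeta$ piece, and finally check $(\Hone - I)[\partial_t,\Hone]\zeta = -[\partial_t,\Hone](I+\Hone)\zeta$. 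That last identity is indeed correct (both sides equal $(\Hone-I)\partial_t\Hone\zeta$, using $\Hone u = u$ and $\Hone(I+\Hone)=I+\Hone$), so every step closes. What the paper's route buys is brevity and independence from the detailed structure of $E_2$, since it starts upstream of the manipulations that created $E_2$; what your route buys is a structural explanation of \emph{why} $E_2$ was engineered to have that form --- namely, so that $\Hone - I$ kills almost all of it.
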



\begin{proof}

We go back to equation \eqref{eq: zeta eq 1} which using \eqref{eq: nabla psi1} we rewrite as

\begin{align*}
\begin{split}
\partial_t^2\zeta-an= -\frac{GM}{R^3}\zeta+\frac{GM}{2R^3}(I+\Hone)\zeta-(\nabla\psi_2+\bfx_1'').
\end{split}
\end{align*}
Differentiating in time and using $N_t=-N\cross\nabla u$ we get

\begin{align*}
\begin{split}
 \partial_t\left(\frac{a}{|N|}\right)N =(\partial_t^2+an\cross\nabla)u+\frac{GM}{R^3}u-\frac{GM}{2R^3}\partial_t(I+\Hone)\zeta+\partial_t(\nabla\psi_2+\bfx_1'').
\end{split}
\end{align*}
Since $(I-\Hone)u=(I-\Hone)\nabla\psi_2=(I-\Hone)\bfx_1''=0$, applying $(I-\Hone)$ to this equation gives

\begin{align*}
\begin{split}
 (I-\Hone) \left( \partial_t\left(\frac{a}{|N|}\right)N\right)=[\partial_t^2+an\cross\nabla,\Hone]u-\frac{GM}{2R^3}[\partial_t,\Hone](I+\Hone)\zeta+[\partial_t,\Hone](\nabla\psi_2+\bfx_1'').
\end{split}
\end{align*}
Multiplying the equation by $n$ on both sides we get

\begin{align*}
\begin{split}
 -(I+\Hone^\ast)\left(|N|\partial_t\left(\frac{a}{|N|}\right)\right) =&n[\partial_t^2+an\cross\nabla,\Hone]u-\frac{GMn}{2R^3}[\partial_t,\Hone](I+\Hone)\zeta\\
 &+n[\partial_t,\Hone](\nabla\psi_2+\bfx_1'').
\end{split}
\end{align*}
The desired identity now follows from taking real parts.

\end{proof}


Finally we use equation \eqref{eq: zeta eq 1}  to derive expressions for $a-\frac{GM}{R^2}$ and $Rn-\zeta$ which will allow us to estimate these terms in the energy estimates.


\begin{lemma}\label{lem: a}
Let

\begin{align*}
\begin{split}
 w:=u_t-\frac{GM}{2R^3}(I+\Hone)\zeta+(\nabla\psi_2+\bfx_1''), 
\end{split}
\end{align*}
and

\begin{align*}
\begin{split}
 b:=|w|^2+\frac{2GM}{R^3}\zeta\cdot w+\frac{G^{2}M^{2}}{R^{6}}\left(|\zeta|^{2}-R^{2}\right).
\end{split}
\end{align*}
Then

\begin{align}\label{eq: a}
\begin{split}
  a-\frac{GM}{R^2}=\frac{b}{\frac{GM}{R^2}+\sqrt{\left(\frac{GM}{R^2}\right)^2+b}},
\end{split}
\end{align}
and

\begin{align}\label{eq: Rn zeta}
\begin{split}
  Rn-\zeta=\frac{R^3}{GM}u_t-\frac{R^3}{GM}\left(a-\frac{GM}{R^2}\right)n-\frac{1}{2}(I+\Hone)\zeta+\frac{R^3}{GM}(\nabla\psi_2+\bfx_1'').
\end{split}
\end{align}
\end{lemma}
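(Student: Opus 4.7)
The plan is to obtain both identities by straightforward algebraic manipulation of equation \eqref{eq: zeta eq 1}, rewritten in terms of $u_t$ via \eqref{eq: nabla psi1}. The key observation is that $w$ is designed to be exactly the quantity such that $an = \frac{GM}{R^3}\zeta + w$, i.e., $an - \frac{GM}{R^3}\zeta = w$.

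First I would substitute \eqref{eq: nabla psi1} into \eqref{eq: zeta eq 1} to obtain
\begin{equation*}
\zeta_{tt} = an - \frac{GM}{R^3}\zeta + \frac{GM}{2R^3}(I+\Hone)\zeta - (\nabla\psi_2 + \bfx_1''),
\end{equation*}
and then using $\zeta_{tt}=u_t$ and the definition of $w$ rearrange this to the identity $an = \frac{GM}{R^3}\zeta + w$. Since $n$ is a unit vector, taking the squared magnitude gives
\begin{equation*}
a^2 = \Bigl|\tfrac{GM}{R^3}\zeta + w\Bigr|^2 = \tfrac{G^2M^2}{R^6}|\zeta|^2 + \tfrac{2GM}{R^3}\zeta\cdot w + |w|^2 = \tfrac{G^2M^2}{R^4} + b,
\end{equation*}
by the definition of $b$. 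Since $a>0$ by the Taylor sign condition, we have $a = \sqrt{\tfrac{G^2M^2}{R^4}+b}$, so factoring $a^2 - \bigl(\tfrac{GM}{R^2}\bigr)^2 = b$ as a difference of squares and dividing by $a + \tfrac{GM}{R^2}$ yields \eqref{eq: a}.

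For \eqref{eq: Rn zeta}, I would return to the identity $an = \frac{GM}{R^3}\zeta + w$ and write $an = \tfrac{GM}{R^2} n + \bigl(a - \tfrac{GM}{R^2}\bigr)n$, which rearranges to
\begin{equation*}
\tfrac{GM}{R^3}(Rn - \zeta) = w - \bigl(a - \tfrac{GM}{R^2}\bigr)n.
\end{equation*}
Multiplying through by $R^3/(GM)$ and substituting the definition of $w$ (splitting the $-\tfrac{GM}{2R^3}(I+\Hone)\zeta$ term so that the $\tfrac{R^3}{GM}$ prefactor turns it into $-\tfrac{1}{2}(I+\Hone)\zeta$) gives exactly \eqref{eq: Rn zeta}.

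There is no real obstacle here; the content of the lemma is the clever choice of auxiliary quantities $w$ and $b$, after which the identities reduce to bookkeeping. The only point worth checking carefully is the sign convention in \eqref{eq: zeta eq 1} and the positivity of $a$ so that the square root in \eqref{eq: a} is unambiguous.
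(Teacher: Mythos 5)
Your proof is correct and follows essentially the same route as the paper's: rewrite \eqref{eq: zeta eq 1} with \eqref{eq: nabla psi1} to obtain $an = \frac{GM}{R^3}\zeta + w$, take magnitudes and use $a>0$ to get $a = \sqrt{(GM/R^2)^2 + b}$, and then factor the difference of squares; \eqref{eq: Rn zeta} is obtained by rearranging the same identity $an = \frac{GM}{R^3}\zeta + w$. Your write-up spells out the derivation of \eqref{eq: Rn zeta} more explicitly than the paper, which merely states it ``follows directly,'' but this is an expansion, not a different argument.
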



\begin{proof}
First by the definition of $a=-\nabla\bfp\cdot\bfn$ we know $a\geq0$. Using \lem{lem: nabla psi} we rearrange \eqref{eq: zeta eq 1} to get 

\begin{align*}
\begin{split}
  a=\left| u_t+\frac{GM}{R^3}\zeta-\frac{GM}{2R^3}(I+\Hone)\zeta+(\nabla\psi_2+\bfx_1'')\right|=\left|\frac{GM}{R^3}\zeta+w\right|.
\end{split}
\end{align*}
A direct computation shows that $\left|\frac{GM}{R^3}\zeta+w\right|=\sqrt{b+\left(\frac{GM}{R^2}\right)^2}$ so

\begin{align*}
\begin{split}
 a-\frac{GM}{R^2}= \sqrt{b+\left(\frac{GM}{R^2}\right)^2}-\sqrt{\left(\frac{GM}{R^2}\right)^2}=\frac{b}{\frac{GM}{R^2}+\sqrt{\left(\frac{GM}{R^2}\right)^2+b}} .
\end{split}
\end{align*}
This proves \eqref{eq: a} and \eqref{eq: Rn zeta} follows directly from rearranging equation \eqref{eq: zeta eq 1} and using \lem{lem: nabla psi}.
\end{proof}


\begin{remark}
 Lemma \ref{lem: a} shows that under the bootstrap assumptions to be stated in Section~\ref{sec: energy}, $a\sim\frac{GM}{R^{2}}$, which is more precise than $a\geq0$.
 \end{remark}


\subsection{The Equation for Derivatives of u}\label{subsec: Dk u}

To obtain higher regularity, we need to commute spatial derivatives with equation \eqref{eq: u}. Let $f:\bbR\times S_R\to\bbR^3$ be a vectorfield (typically $f$ is of the form $f(t,p)=\bff(t,\xi(t,p))$ for some vectorfield $\bff:\bbR\times \partial\calB_1\to\bbR^3$, which is not necessarily tangent to $\partial\calB_1$ or $S_R$). Motivated by the discussion in the introduction we define

\begin{align}\label{eq: frakD vector}
\begin{split}
 \frakD_i  f:=\Omega_i f -\bfe_i\cross f.
\end{split}
\end{align}
Here $\Omega_1$ is the rotational vectorfield about the $\bfe_i$ axis in $\bbR^3$ and $\Omega_if=\sum_{j=1}^3(\Omega_if^j)\bfe_j$ is computed componentwisely. We also extend $\frakD_i$ to real-valued functions as

\begin{align}\label{eq: frakD scalar}
\begin{split}
  \frakD_i f:=\Omega_i f,
\end{split}
\end{align}
and if $f$ is a general Clifford algebra-valued function the we let

\begin{align*}
\begin{split}
  \frakD_i f:=\frakD_i\Re f +\frakD_i\vect f.
\end{split}
\end{align*}
Often, the choice of axis of symmetry $\bfe_i$ is irrelevant in our computations, so we simply write $\Omega$ instead of $\Omega_i$, $\frakD$ instead of $\frakD_i$, and $\bfe$ instead of $\bfe_i$ so for instance if $f$ is vector-valued, then

\begin{align*}
\begin{split}
  \frakD f =\Omega f-\bfe\cross f.
\end{split}
\end{align*}
 Finally, if $\alpha$ is any multi-index $\alpha=(i_1,\dots, i_k)$ we let 
 
 \begin{align*}
\begin{split}
 \frakD^\alpha=\frakD_{i_1}\dots\frakD_{i_k} 
\end{split}
\end{align*}
and if only the size $|\alpha|=k$ is important we simply write $\frakD^k$ instead of $\frakD^\alpha$.
 
 Before computing the equation satisfied by $\frakD u$ we record a simple product rule for $\frakD$ and an integration-by-parts formula for $\Omega$.

 
 \begin{lemma}\label{lem: product rule}
 
\begin{enumerate} 

\item If $f$ and $g$ are Clifford algebra-valued functions then
 
 \begin{align*}
\begin{split}
 \frakD (fg)=(\frakD f)g +f(\frakD  g).
\end{split}
\end{align*}
 Moreover, if $f$ and $g$ are vector-valued, then
 
 \begin{align*}
\begin{split}
 \frakD (f\cross g)=\frakD f\cross g+f\cross\frakD g, 
\end{split}
\end{align*}
and

\begin{align*}
\begin{split}
 \frakD(f\cdot g)=\frakD f\cdot g+f\cdot\frakD g. 
\end{split}
\end{align*}

\item For any differentiable $f$ and $g$ and with $|\sg|=\sqrt{\det\sg}$ denoting the volume element on $S_R$,

\begin{align*}
\begin{split}
 \int_{\partial\calB_1}f\Omega g dS =&-\int_{\partial\calB_1}(\Omega f)g dS-\int_{\partial\calB_1} fg \frac{1}{|N||\sg|^{-1}}\Omega(|N||\sg|^{-1})dS.
\end{split}
\end{align*}
Here $\sg$ is induced Euclidean metric on $S_R$.
\end{enumerate}

 \end{lemma}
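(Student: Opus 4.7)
The plan is to prove Part (1) by reducing each Leibniz rule to an algebraic identity about the rotation axis $\bfe$, and to prove Part (2) by pulling the surface integral back to $S_R$ and using that $\Omega$ is a Killing vector field there.

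For Part (1), the $\Omega$-piece is automatic: since $\Omega$ is a first-order differential operator acting componentwise, the usual product rule gives $\Omega(fg)=(\Omega f)g+f(\Omega g)$ for any of the bilinear products in question (Clifford product, $\cross$, or $\cdot$). The content of the lemma is that the correction $-\bfe\cross(\cdot)$ distributes in the same way. For vector-valued $f,g$ and the cross product this is exactly the Jacobi identity $\bfe\cross(f\cross g) = (\bfe\cross f)\cross g + f\cross(\bfe\cross g)$, which when combined with the $\Omega$-Leibniz rule yields $\frakD(f\cross g)=\frakD f\cross g + f\cross\frakD g$. For the dot product, $f\cdot g$ is scalar so $\frakD(f\cdot g)=\Omega(f\cdot g)$, and the needed cancellation $(\bfe\cross f)\cdot g + f\cdot(\bfe\cross g)=0$ is the antisymmetry of the scalar triple product $\det[\bfe,f,g]$. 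For the full Clifford product one can either expand in the grade decomposition (real $\oplus$ vector) and apply the two identities above componentwise, or observe more structurally that $\frakD$ is the differential at the identity of the lift of rotation about $\bfe$ to an algebra automorphism of the Clifford algebra, and that any such automorphism preserves products.

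For Part (2), I would pull the integral on $\partial\calB_1$ back to $S_R$ via the Lagrangian parametrization $\xi$. In local coordinates $(\alpha,\beta)$ one has $dS_{\partial\calB_1}=|N|\,d\alpha\,d\beta$ and $dS_{S_R}=|\sg|\,d\alpha\,d\beta$, so $dS_{\partial\calB_1}=(|N||\sg|^{-1})\,dS_{S_R}$. On $S_R$ the restriction of the rotational vector field $\Omega$ is Killing, hence divergence-free with respect to $dS_{S_R}$; since $S_R$ is closed, classical integration by parts yields $\int_{S_R}F(\Omega g)\,dS_{S_R}=-\int_{S_R}(\Omega F)g\,dS_{S_R}$ for any reasonable $F$. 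Applying this with $F=f(|N||\sg|^{-1})$, using the scalar Leibniz rule to expand $\Omega(F)$, and converting back to a surface integral on $\partial\calB_1$, one obtains the stated identity, where the extra term is precisely the Jacobian contribution $\Omega(|N||\sg|^{-1})/(|N||\sg|^{-1})$.

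The only mildly delicate point is the Clifford-product case in Part (1), which requires extending $\bfe\cross(\cdot)$ from a map on vectors to a derivation of the whole Clifford algebra; this is handled by the conventions set up in Appendix~\ref{app: Clifford}. Once those identifications are in place, both statements reduce to the formal manipulations described above.
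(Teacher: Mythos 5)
Your proposal is correct and follows essentially the same route as the paper. The only cosmetic difference is in Part (1): the paper verifies the Clifford-product Leibniz rule directly (expanding both sides of $\frakD(fg)=(\frakD f)g+f(\frakD g)$ for vector-valued $f,g$, which in the course of the expansion uses exactly the Jacobi identity $\bfe\cross(f\cross g)=(\bfe\cross f)\cross g+f\cross(\bfe\cross g)$ and the cancellation $(\bfe\cross f)\cdot g+f\cdot(\bfe\cross g)=0$) and then reads off the $\cross$ and $\cdot$ rules by comparing real and vector parts, whereas you prove the $\cross$ and $\cdot$ rules first and then assemble the Clifford rule; the underlying algebraic facts are identical. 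Part (2) matches the paper's proof verbatim in spirit: pull back to $S_R$ via the Jacobian $|N||\sg|^{-1}$, use that $\Omega$ is divergence-free on $S_R$ to integrate by parts, apply the scalar product rule, and push forward again.
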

 
 
 \begin{proof}
 
 \begin{enumerate}

\item The first statement follows from the usual product rule if either $f$ or $g$ are scalar-valued, so we assume that both $f$ and $g$ are vector-valued. Then
 
 \begin{align*}
\begin{split}
 \frakD(fg)=\frakD(-f\cdot g+f\cross g)=-\Omega f\cdot g-f\cdot\Omega g+\Omega f\cross g+f\cross\Omega g-\bfe\cross(f\cross g). 
\end{split}
\end{align*}
On the other hand

\begin{align*}
\begin{split}
 (\frakD f)g+f(\frakD g)=& -(\Omega f-\bfe\cross f)\cdot g-f\cdot(\Omega g-\bfe\cross g)+(\Omega f-\bfe\cross f)\cross g+f\cross(\Omega g-\bfe\cross g)\\
 =&-\Omega f\cdot g-f\cdot\Omega g+\Omega f\cross g+f\cross\Omega g-(\bfe\cross f)\cross g-f\cross(\bfe\cross g)\\
 =&-\Omega f\cdot g-f\cdot\Omega g+\Omega f\cross g+f\cross\Omega g-\bfe\cross(f\cross g)=\frakD(fg)
\end{split}
\end{align*}
according to the previous computation. This proves the first statement of the lemma, and comparing the real and vector parts proves the last two statements. 

\item This follows from the following computation, where $\snabla$ and $dS_R$ are the gradient and volume form on $S_R$ respectively:

\begin{align*}
\begin{split}
 \int_{\partial\calB_1}f\Omega g dS=&\int_{S_R} f(\Omega g) |N||\sg|^{-1}dS_R = \int_{S_R} f(\Omega\cdot\snabla g)|N||\sg|^{-1} dS_R\\
 =&-\int_{\partial\calB_1}(\Omega f)g dS-\int_{\partial\calB_1} fg \frac{1}{|N||\sg|^{-1}}\Omega(|N||\sg|^{-1})dS.
\end{split}
\end{align*}

\end{enumerate}

 \end{proof}


 We now derive the equation for the higher derivatives of $u$ in a slightly more abstract setting.

 
\begin{proposition}\label{prop: Du eq}
 Suppose $f=\bff\circ\xi$ where $\bff:\bbR\times\partial\calB_1\to\bbR^3$ is such that $\Hone f=f$. If $f$ satisfies
 
\begin{align*}
\begin{split}
\partial_t^2f+an\cross\nabla f +\frac{GM}{R^3}f-\frac{3GM}{2R^{3}}(I+\Hone)((n\cdot f)n)=g_0.
\end{split}
\end{align*}
Then for any positive integer $k$, $\frakD^k f$ satisfies
 
 \begin{align}\label{eq: Df eq}
\begin{split}
\partial_t^2\frakD^k f+an\cross\nabla \frakD^k f +\frac{GM}{R^3}\frakD^k f-\frac{3GM}{2R^{3}}(I+\Hone)((\frakD^k f\cdot n)n)=g_k&
\end{split}
\end{align}
where

\begin{align}\label{eq: gk}
\begin{split}
 g_k:=&\frakD^k g_0-\sum_{j=1}^k(\frakD^j a) n\cross\nabla \frakD^{k-j} f-\sum_{j=1}^k(\frakD^j a) [\frakD^{k-j},n\cross\nabla] f-a[\frakD^k,n\cross\nabla]f\\
&+\frac{3GM}{2R^3}[\frakD^k,\Hone]((n\cdot f)n)+\frac{3GM}{2R^3}\sum_{1\leq i+j\leq k}(I+\Hone)((\frakD^i n\cdot \frakD^{k-i-j} f)\frakD^j n).
\end{split}
\end{align}

 \end{proposition}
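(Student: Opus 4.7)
The plan is to proceed by induction on $k$. The base case $k=1$ follows by applying $\frakD$ directly to the equation for $f$, and the inductive step has the same structure: assuming $\frakD^{k-1}f$ satisfies \eqref{eq: Df eq} with source $g_{k-1}$, apply $\frakD$ once more and rearrange. Three commutation facts drive the bookkeeping. First, $[\frakD,\partial_t^2]=0$, since $\frakD=\Omega-\bfe\cross$ is purely spatial. Second, by part (1) of Lemma~\ref{lem: product rule}, $\frakD$ is a derivation for Clifford products and for cross and dot products. Third, $\frakD$ does not commute with $\Hone$ nor with multiplication by $n$, so the commutator $[\frakD,\Hone]$ and the corrections generated by $\frakD n$ are collected into the source.

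Using these facts I would compute, for any $h$,
\[
\frakD(a\,n\cross\nabla h)=(\frakD a)(n\cross\nabla h)+a(n\cross\nabla \frakD h)+a[\frakD,n\cross\nabla]h,
\]
\[
\frakD\bigl((I+\Hone)((n\cdot h)n)\bigr)=(I+\Hone)((n\cdot \frakD h)n)+(I+\Hone)\bigl((\frakD n\cdot h)n+(n\cdot h)\frakD n\bigr)+[\frakD,\Hone]((n\cdot h)n).
\]
Taking $h=\frakD^{k-1}f$, the terms $a\,(n\cross\nabla \frakD^k f)$ and $(I+\Hone)((n\cdot \frakD^k f)n)$ combine with $\partial_t^2 \frakD^k f$ and $\frac{GM}{R^3}\frakD^k f$ to form the left-hand side of \eqref{eq: Df eq}, while the remaining pieces are absorbed into $g_k$.

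To close the induction, I would track the two sums in \eqref{eq: gk} separately. In the $a$-sum, differentiating an existing factor $\frakD^j a$ yields $\frakD^{j+1}a$, while differentiating the $f$-factor raises the power of $\frakD$ on $f$; the bare coefficient $a$ appearing in the principal term at level $k-1$ contributes the new $j=1$ summand, producing the full range $j=1,\dots,k$. In the $n$-sum, $\frakD$ acting on $\frakD^i n$ or $\frakD^j n$ advances the corresponding exponent by one, $\frakD$ acting on $\frakD^{k-1-i-j} f$ advances its power, and the freshly differentiated principal term contributes the new boundary cases $(i,j)=(1,0)$ and $(0,1)$, so the index set reaches exactly $1\le i+j\le k$. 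The commutators $[\frakD^k,n\cross\nabla]$ and $[\frakD^k,\Hone]$ are carried as single packaged expressions and updated via the Leibniz identity $[\frakD^k,T]=[\frakD,T]\frakD^{k-1}+\frakD\,[\frakD^{k-1},T]$ for $T\in\{n\cross\nabla,\Hone\}$.

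The main obstacle is organisational rather than analytic: one has to ensure that each term produced by the derivation property is routed to the correct sum in \eqref{eq: gk}, and that the boundary cases of the index sets arise precisely from the freshly differentiated principal terms at each step. No estimate is required at this stage; \eqref{eq: Df eq} is an algebraic identity whose proof reduces to a careful application of the product rule from Lemma~\ref{lem: product rule}.
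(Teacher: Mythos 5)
Your inductive reformulation is equivalent to the paper's one-line argument of applying $\frakD^k$ directly and invoking the product rule in Lemma~\ref{lem: product rule}; the Leibniz bookkeeping you spell out (tracking how differentiating the principal terms supplies the $j=1$ and $(i,j)\in\{(1,0),(0,1)\}$ boundary cases, and how $[\frakD^k,T]=[\frakD,T]\frakD^{k-1}+\frakD[\frakD^{k-1},T]$ propagates the commutators) is exactly what the paper leaves implicit. The proof is correct.
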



\begin{proof}
This follows from applying $\frakD^k$ to the equation satisfied by $f$ and using the product rules in Lemma~\ref{lem: product rule}.
\end{proof}

 
 In the following lemma we derive formulas for the commutators $[\frakD,n\cross\nabla]$ and $[\frakD,\Hone]$ appearing on the right-hand side of \eqref{eq: Df eq}.

 
 \begin{lemma}\label{lem: frakD com}

\begin{enumerate}

\item For any differentiable function $f$
 \begin{align}\label{comm D H final}
 [\frakD,\Hone]f=-\int_{\partial\calB_1}K(\frakD'\zeta'-\frakD\zeta)\cross(n'\cross\nabla f')dS.
\end{align}
 
 \item  For any $C^2$ function $f$ 
 
  \begin{align}\label{eq: D n cross nabla}
\begin{split}
   [\frakD,n\cross\nabla]f=& -\frac{\Omega(|N||\sg|^{-1})}{|N||\sg|^{-1}}n\cross\nabla f+\frac{1}{|N|}\left((\partial_{\beta}(\frakD\xi))f_{\alpha}-(\partial_{\alpha}(\frakD\xi))f_{\beta}\right).
 \end{split}
\end{align}

 \end{enumerate}
 
 \end{lemma}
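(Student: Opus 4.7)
Both identities follow from direct computations that exploit the fact that $\Omega$ is the restriction to $S_R$ of a rotational Killing field of $\bbR^3$ and that the Cauchy kernel $K(w)=w/|w|^3$ is rotationally equivariant. The definition $\frakD=\Omega-\bfe\cross$ (instead of $\Omega$) is precisely engineered so that the $\bfe\cross$-valued pieces produced by each computation cancel cleanly, leaving a tangential error term.

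For part (ii) I would work in orientation-preserving local coordinates $(\alpha,\beta)$ on $S_R$, using the formula $n\cross\nabla f = |N|^{-1}(\xi_\beta f_\alpha-\xi_\alpha f_\beta)$, and compute $\Omega(n\cross\nabla f)$ by Leibniz. Writing $\Omega = A^\alpha\partial_\alpha+A^\beta\partial_\beta$ locally, I would expand $\Omega\xi_\gamma = \partial_\gamma(\Omega\xi)+[\Omega,\partial_\gamma]\xi$ and analogously for $\Omega f_\gamma$. The $[\Omega,\partial_\gamma]$ contributions together with the Leibniz action of $\Omega$ on $|N|^{-1}$ collect into a multiple of $n\cross\nabla f$ with coefficient $(\partial_\alpha A^\alpha+\partial_\beta A^\beta)-\Omega|N|/|N|$. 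Because $\Omega$ is Killing for the induced metric on $S_R$, its divergence with respect to the Riemannian volume $|\sg|$ vanishes, equivalently $\partial_\alpha A^\alpha+\partial_\beta A^\beta = -\Omega|\sg|/|\sg|$; combining with the $\Omega(|N|^{-1})$ term produces exactly the coefficient $-\Omega(|N||\sg|^{-1})/(|N||\sg|^{-1})$ in \eqref{eq: D n cross nabla}. Finally, substituting $\Omega\xi = \bfe\cross\xi+\frakD\xi$ splits off a $\bfe\cross(n\cross\nabla f)$ contribution absorbed by the $-\bfe\cross$ in $\frakD$, leaving the $\partial_\gamma\frakD\xi$ terms, and subtracting $n\cross\nabla(\frakD f)=n\cross\nabla(\Omega f)$ isolates the commutator as stated.

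For part (i), the starting point is $\Hone f(p) = -(2\pi)^{-1}\,\pv\int_{\partial\calB_1} K(\xi'-\xi(p))\,n'f'\,dS'$ with $K(w)=w/|w|^3$. The equivariance $K(Rw)=RK(w)$ gives, infinitesimally, $\Omega^{(w)}K(w) = \bfe\cross K(w)$, which together with the chain rule applied at both $\xi(p)$ and $\xi(p')$ yields the key identity
$$\Omega^{(p)}K(\xi'-\xi) = -\Omega^{(p')}K(\xi'-\xi)+\bfe\cross K+\nabla_w K\cdot(\frakD\xi'-\frakD\xi).$$
I would substitute this into $\Omega\Hone f$: the $\bfe\cross K$ piece contributes $\bfe\cross\Hone f$, cancelling the $-\bfe\cross\Hone f$ in $\frakD\Hone f = \Omega\Hone f-\bfe\cross\Hone f$. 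The $\Omega^{(p')}K$ piece is integrated by parts on $\partial\calB_1$; using once again that $\Omega$ is Killing on $S_R$ and that the Lagrangian pullback of $dS$ is $|N||\sg|^{-1}dS_{S_R}$ (so that integration by parts produces the factor $-\Omega(|N||\sg|^{-1})/(|N||\sg|^{-1})$, which contributes harmlessly to the error), combined with a second $\bfe\cross$ matching to turn $\Omega f$ into $\frakD f$, this term assembles into $\Hone(\frakD f)$. The remaining piece $(\nabla_w K)\cdot(\frakD\xi'-\frakD\xi)$ is then recast using one further integration by parts on the closed surface, together with the fact that $K$ is curl and divergence free away from the origin (so the kernel gradient can be traded against the tangential-curl action $n'\cross\nabla$ on $f'$); after using $\frakD\xi'-\frakD\xi=\frakD\zeta'-\frakD\zeta$ (since $\bfx_1$ is independent of $p$) this produces exactly the right-hand side of \eqref{comm D H final}.

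The principal technical obstacle is the bookkeeping in (i): combining the surface integration by parts with the Clifford ordering of $K n' f'$ and tracking every $\bfe\cross$ cancellation so that the final derivative of $f$ emerges precisely as $n'\cross\nabla f'$ and no residual $\bfe\cross f'$ terms survive. The single geometric identity $\Omega^{(w)}K=\bfe\cross K$ drives the whole argument, and the Killing property of $\Omega$ on $S_R$ is what allows the integration-by-parts corrections on the Lagrangian-pulled-back measure to combine into the stated form.
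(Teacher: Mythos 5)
Your part (ii) is essentially the paper's proof: Leibniz expansion in orientation-preserving coordinates, isolation of $[\Omega,\partial_\gamma]$ commutator contributions, and their cancellation against the $\Omega(|N||\sg|^{-1})$ correction via $\snabla\cdot\Omega=0$. That part of the proposal is sound.

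Part (i) has the right building blocks but a real gap where you wave your hands. Your key identity
\[
\Omega^{(p)}K(\xi'-\xi) = -\Omega^{(p')}K(\xi'-\xi)+\bfe\cross K+\big(\frakD'\xi'-\frakD\xi\big)\cdot\nabla K
\]
is exactly what the paper uses when it rewrites $(\Omega+\Omega')K-\bfe\cross K$ as $(\frakD'\zeta'-\frakD\zeta)\cdot\nabla K$, and your ``trade the kernel gradient against the tangential curl'' step is precisely the Wu identity \eqref{eq: Wu2 3.5} (which is the algebraic consequence of $K$ being curl-- and divergence--free that you invoke). The problem is the sentence in which you say the Jacobian factor $-\Omega'(|N'||\sg'|^{-1})/(|N'||\sg'|^{-1})$ ``contributes harmlessly to the error.'' There is no error term in the target formula \eqref{comm D H final}; it has exactly one clean integral on the right. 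So the Jacobian corrections from integrating $\Omega'$ by parts, and the additional $[\Omega',\partial_{\gamma'}]\zeta'$ commutator terms produced when $\partial_{\gamma'}$ falls on $\frakD'\zeta'$ inside the Wu-identity step, must cancel \emph{identically}, not be discarded. In the paper this is the ``the last two lines cancel'' argument after their equation \eqref{eq: D H temp 4}: one writes $\Omega=\Omega^\alpha\partial_\alpha+\Omega^\beta\partial_\beta$, observes
\[
[\Omega,\partial_\alpha]\zeta\cross\zeta_\beta+\zeta_\alpha\cross[\Omega,\partial_\beta]\zeta+|\sg|\,\Omega(|\sg|^{-1})\,N=-(\snabla\cdot\Omega)N=0,
\]
and this kills the sum of those leftover integrals. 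Your sketch does not account for this; as written it would leave a residual integral in \eqref{comm D H final}. A second, more minor, omission is that the paper first uses $\Hone 1=1$ (hence $\frakD\,\pv\!\int K n'\,dS'=0$) to replace $f'$ by $f'-f$ before differentiating and integrating by parts; without that subtraction the intermediate kernels are $O(|\xi'-\xi|^{-3})$ and the p.v.\ integration by parts you perform is not obviously justified.
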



 \begin{proof}
 
 \begin{enumerate}

  \item First note that $\Hone1=1$ which in particular implies $\frakD\left(\pv\int_{\partial\calB_1}Kn'dS'\right)=0$. Using this observation and two applications of Lemma~\ref{lem: frakD kernel} we get (where some of the integrals below need to be interpreted in the principal value sense)

 \begin{align}\label{eq: D H temp 3}
\begin{split}
 [\frakD,\Hone]f=&\frakD\int_{\partial\calB_1}Kn'f'dS'-\int_{\partial\calB_1}K\frakD'(n'f')dS'+\int_{\partial\calB_1}K(\frakD'n')f'dS'\\ 
 =&\int_{\partial\calB_1}((\Omega+\Omega')K-\bfe\cross K)n'f'dS'+\int_{\partial\calB_1}K(\frakD'n')f'dS' +\int_{\partial\calB_1}\frac{Kn'f'}{|N'||\sg'|^{-1}}\Omega'(|N'||\sg'|^{-1})dS'\\
 =&\int_{\partial\calB_1}((\Omega+\Omega')K-\bfe\cross K)n'(f'-f)dS'+\int_{\partial\calB_1}((\Omega+\Omega')K-\bfe\cross K)n'dS' f\\
 &+\int_{\partial\calB_1}K(\frakD'n')f'dS' +\int_{\partial\calB_1}\frac{Kn'f'}{|N'||\sg'|^{-1}}\Omega'(|N'||\sg'|^{-1})dS'\\
 =&\int_{\partial\calB_1}((\Omega+\Omega')K-\bfe\cross K)n'(f'-f)dS'\\
 &+\int_{\partial\calB_1}K(\frakD'n')(f'-f)dS'+\int_{\partial\calB_1}\frac{Kn'(f'-f)}{|N'||\sg'|^{-1}}\Omega'(|N'||\sg'|^{-1})dS'\\
  =&\int_{\partial\calB_1}((\frakD'\zeta'-\frakD\zeta)\cdot\nabla K)n'(f'-f)dS'\\
 & +\int_{\partial\calB_1}\frac{Kn'(f'-f)}{|N'||\sg'|^{-1}}\Omega'(|N'||\sg'|^{-1})dS'+\int_{\partial\calB_1}K(\frakD'n')(f'-f)dS'\\
  =&\int_{\partial\calB_1}((\frakD'\zeta'-\frakD\zeta)\cdot\nabla K)n'(f'-f)dS'\\
 & +\int_{\partial\calB_1}\frac{Kn'(f'-f)}{|\sg'|^{-1}}\Omega'(|\sg'|^{-1})dS'+\iint K(\frakD'(\zeta'_{\alpha'}\cross\zeta'_{\beta'}))(f'-f)d\alpha'd\beta'.
\end{split}
\end{align}
Now using \eqref{eq: Wu2 3.5} we write (the integration by parts here can be justified by choosing specific coordinates on $S_R$ or invariantly as in \lem{lem: int by parts} below)

\begin{align*}
\begin{split}
 \int_{\partial\calB_1}((\frakD'\zeta'-\frakD\zeta)\cdot\nabla K) n'(f'-f)dS' =&\iint\partial_{\alpha'}K((\frakD'\zeta'-\frakD\zeta)\cross\zeta'_{\beta'})(f'-f)d\alpha'd\beta'\\
 &+\iint\partial_{\beta'}K(\zeta'_{\alpha'}\cross(\frakD'\zeta'-\frakD\zeta))(f'-f)d\alpha'd\beta'\\
 =&-\int_{\partial\calB_1}K(\frakD'\zeta'-\frakD\zeta)\cross(n'\cross\nabla f')dS'\\
 &+\iint K(\zeta'_{\beta'}\cross\partial_{\alpha'}\frakD'\zeta'-\zeta'_{\alpha'}\cross\partial_{\beta'}\frakD'\zeta')(f'-f)d\alpha'd\beta'.
\end{split}
\end{align*}
Plugging this back into \eqref{eq: D H temp 3} we get

\begin{align}\label{eq: D H temp 4}
\begin{split}
 [\frakD,\Hone]f=&-\int_{\partial\calB_1}K(\frakD'\zeta'-\frakD\zeta)\cross(n'\cross\nabla f')dS'\\
 & +\int_{\partial\calB_1}\frac{Kn'(f'-f)}{|\sg'|^{-1}}\Omega'(|\sg'|^{-1})dS'\\
 &+\iint K([\Omega', \partial_{\alpha'}]\zeta'\cross\zeta'_{\beta'}+\zeta'_{\alpha'}\cross[\Omega',\partial_{\beta'}]\zeta')(f'-f)d\alpha'd\beta'.
\end{split}
\end{align}
We claim that the last two lines cancel. To see this we write $\Omega=\Omega^\alpha\partial_\alpha+\Omega^\beta\partial_\beta$ so that

\begin{align*}
\begin{split}
 [\Omega,\partial_\alpha]=-\partial_\alpha\Omega^\alpha\partial_\alpha -\partial_\alpha\Omega^\beta\partial_\beta,\quad\mand \quad [\Omega,\partial_\beta]=-\partial_\beta\Omega^\alpha\partial_\alpha-\partial_\beta\Omega^\beta\partial_\beta.
\end{split}
\end{align*}
It follows that

\begin{align*}
\begin{split}
[\Omega, \partial_{\alpha}]\zeta\cross\zeta_{\beta}+\zeta_{\alpha}\cross[\Omega,\partial_{\beta}]\zeta+|\sg|\Omega(|\sg|^{-1})N=&-\left(\sum_{\mu=1}^2|\sg|^{-1}\partial_\mu(|\sg|\Omega^\mu)\right)N=-(\snabla\cdot \Omega)N=0,
\end{split}
\end{align*}
because $\Omega$ is divergence free. Going back to \eqref{eq: D H temp 4} we conclude that

\begin{align*}
\begin{split}
 [\frakD,\Hone]f=-\int_{\partial\calB_1}K(\frakD'\zeta'-\frakD\zeta)\cross(n'\cross\nabla f')dS,
\end{split}
\end{align*}
as desired.
\item Using the product rule for $\frakD$ from \lem{lem: product rule} we write $\frakD \left(n \cross\nabla f\right)$ as

\begin{align*}
\frakD \frac{1}{|N|}(\xi_\beta f_\alpha-\xi_\alpha f_\beta)=-\frac{\Omega|N|}{|N|}n\cross\nabla f+\frac{1}{|N|}\frakD(\xi_\beta f_\alpha-\xi_\alpha f_\beta).
\end{align*}
We rewrite the second term as

\begin{align*}
\frakD(\xi_\beta f_\alpha-\xi_\alpha f_\beta)=&(\xi_\beta \partial_\alpha \frakD f-\xi_\alpha\partial_\beta\frakD f)+((\partial_\beta\frakD\xi) f_\alpha-(\partial_\alpha\frakD\xi) f_\beta)\\
&+([\Omega,\partial_\beta]\xi) f_\alpha-([\Omega,\partial_\alpha]\xi) f_\beta+\xi_\beta[\Omega,\partial_\alpha]f-\xi_\alpha[\Omega,\partial_\beta]f.
\end{align*}
For the first term we have

\begin{align*}
\frac{1}{|N|}\Omega|N|=\frac{1}{|N||\sg|^{-1}}\Omega(|\sg|^{-1}|N|)-|\sg|\Omega|\sg|^{-1}.
\end{align*}
Therefore

\begin{align*}
\frakD n\cross\nabla f=&n\cross\nabla\frakD f-\frac{\Omega(|N||\sg|^{-1})}{|N||\sg|^{-1}}n\cross\nabla f+\frac{1}{|N|}\left((\partial_{\beta}(\frakD\xi))f_{\alpha}-(\partial_{\alpha}(\frakD\xi))f_{\beta}\right)\\
&+\frac{1}{|N|}\left(([\Omega,\partial_\beta]\xi) f_\alpha-([\Omega,\partial_\alpha]\xi) f_\beta+\xi_\beta[\Omega,\partial_\alpha]f-\xi_\alpha[\Omega,\partial_\beta]f\right)+|\sg|\Omega|\sg|^{-1} n\cross\nabla f.
\end{align*}
Now using an argument similar to the one following \eqref{eq: D H temp 4} above we see that the last line is zero. 

 \end{enumerate}
 \end{proof}

Note that in view of \prop{prop: at} we will also need to commute $\frakD$ with $(I+\Kone^\ast)^{-1}$ in order to estimate the higher derivatives of the time derivative of $a$. Since $\Kone=\Re \Hone$ and in the case where $\partial\calB_1$ is a round sphere $\Hone^\ast=\Hone$, where $\Hone^\ast=n\Hone n$ is as defined in Appendix~\ref{app: Clifford}, it suffices to compute the commutator between $\frakD$ and $(I+\Kone)^{-1}$. This is an abstract computation which is presented in the next lemma.
 
 
 \begin{lemma}\label{lem: D com K}
Let $f$ be a real-valued function. Then

\begin{align}\label{eq: D com K}
\begin{split}
[\frakD,\Kone]f=\Re[\frakD,\Hone]f
\end{split}
\end{align} 
and

\begin{align}\label{eq: D com (I+K)-1}
\begin{split}
    [\frakD,(I+\Kone)^{-1}]f=-(I+\Kone)^{-1}[\frakD,\Kone](I+\Kone)^{-1}f.
\end{split}
\end{align}
\end{lemma}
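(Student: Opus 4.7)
My plan is to treat the two identities separately; both come down to short algebraic manipulations rather than geometric estimates, so the proof should be quite brief.

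For the first identity, the plan is to exploit the interaction between $\frakD$ and the operation of taking real parts. Recall that on a Clifford-algebra-valued function $g$ the operator is defined by $\frakD g = \Omega \Re g + (\Omega - \bfe\cross)\vect g$. Since $\bfe\cross\vect g$ is a pure vector and $\Omega \vect g$ is also a pure vector, $\Re \frakD g = \Omega \Re g = \frakD(\Re g)$, so $\frakD$ commutes with $\Re$. Applied to $g=\Hone f$ for real-valued $f$, and using $\Kone = \Re\,\Hone$ on real inputs together with $\frakD f = \Omega f$ on real $f$, this gives
\begin{align*}
\Re[\frakD,\Hone]f = \Re\frakD(\Hone f) - \Re\Hone(\frakD f) = \frakD\Re(\Hone f) - \Kone\Omega f = \Omega\Kone f - \Kone\Omega f = [\frakD,\Kone]f,
\end{align*}
which is exactly \eqref{eq: D com K}.

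For the second identity the plan is to use the standard formula $[\frakD, A^{-1}] = -A^{-1}[\frakD, A]A^{-1}$ valid for any invertible linear operator $A$ and any derivation $\frakD$. I would derive it in one line by writing $\frakD A^{-1} = A^{-1}(A\frakD - [\frakD,A])A^{-1} = A^{-1}\frakD - A^{-1}[\frakD,A]A^{-1}$, which rearranges to the claimed formula. Applying this with $A = I+\Kone$ and using $[\frakD, I] = 0$ yields
\begin{align*}
[\frakD,(I+\Kone)^{-1}]f = -(I+\Kone)^{-1}[\frakD,I+\Kone](I+\Kone)^{-1}f = -(I+\Kone)^{-1}[\frakD,\Kone](I+\Kone)^{-1}f,
\end{align*}
which is \eqref{eq: D com (I+K)-1}. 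The only subtle point is that $(I+\Kone)^{-1}$ should be known to exist as an operator on the relevant function space of real-valued functions on $\partial\calB_1$; this follows from standard potential-theoretic facts about the double-layer potential on a Lipschitz surface close to $S_R$, and I would simply invoke that here. Overall I do not anticipate a main obstacle: the first identity is a real-part computation and the second is a formal algebraic identity, so the proof is essentially a few lines.
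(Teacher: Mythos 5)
Your proof of \eqref{eq: D com K} matches the paper's argument exactly: both rest on the observation that $\frakD$ commutes with $\Re$, which holds because $\frakD$ acts as $\Omega$ on scalars and as $\Omega - \bfe\cross$ on vectors, and each of these preserves the scalar/vector split. For \eqref{eq: D com (I+K)-1}, your idea---the standard resolvent-commutator formula $[\frakD, A^{-1}] = -A^{-1}[\frakD, A]A^{-1}$---is also what the paper uses, though the paper unpacks it by setting $g = (I+\Kone)^{-1}f$ and manipulating $\frakD g$ directly rather than invoking the formula abstractly.

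There is, however, an algebraic slip in your one-line derivation. As written, $A^{-1}(A\frakD - [\frakD,A])A^{-1}$ expands to $\frakD A^{-1} - A^{-1}[\frakD,A]A^{-1}$, not to $A^{-1}\frakD - A^{-1}[\frakD,A]A^{-1}$; and the first equality $\frakD A^{-1} = A^{-1}(A\frakD - [\frakD,A])A^{-1}$ is then false unless the commutator term vanishes. A correct one-liner inserts $I=A^{-1}A$ on the left of the first term and $I=AA^{-1}$ on the right of the second:
$$[\frakD, A^{-1}] \;=\; \frakD A^{-1} - A^{-1}\frakD \;=\; A^{-1}A\,\frakD\, A^{-1} - A^{-1}\frakD\, A\,A^{-1} \;=\; -A^{-1}[\frakD,A]A^{-1}.$$
With that repaired the argument is complete; the invertibility of $I+\Kone$ on $L^2(\partial\calB_1)$ that you flag as the only subtle point is supplied by \thm{thm: K inverse}, as you anticipated.
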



\begin{proof}

To prove \eqref{eq: D com K} we first note that by definition $\frakD F= \frakD \mathring{F} +\frakD\vec{F}$ (in the notation of Appendix~\ref{app: Clifford}) for any Clifford algebra-valued function $F$, so $\frakD\Re F = \Re\frakD F$. It follows that

\begin{align*}
\begin{split}
 \frakD \Kone f=\frakD\Re \Hone  f=\Re \frakD \Hone f=\Re \Hone \frakD f+\Re[\frakD,\Hone]f=\Kone \frakD f+\Re[\frakD,\Hone]f,
\end{split}
\end{align*}
proving \eqref{eq: D com K}. For \eqref{eq: D com (I+K)-1} we let $g:=(I+\Kone)^{-1}f$. Then

\begin{align*}
\begin{split}
 \frakD g= \frakD(I+\Kone)^{-1}f=(I+\Kone)^{-1}\frakD f+[\frakD,(I+\Kone)^{-1}]f.
\end{split}
\end{align*}
So

\begin{align*}
\begin{split}
  [\frakD,(I+\Kone)^{-1}]f=& \frakD g-(I+\Kone)^{-1}\frakD (I+\Kone)g=\frakD g-(I+\Kone)^{-1}[\frakD,\Kone]g-\frakD g\\
  =&-(I+\Kone)^{-1}[\frakD,\Kone](I+\Kone)^{-1}f.
\end{split}
\end{align*}
\end{proof}


\prop{prop: Du eq} can be used to derive the equation for $\frakD^k u$. However, as we will see in Section~\ref{sec: energy}, for the purposes of the energy estimates it is more convenient to work with  the unknown $\vecu_k$ where

\begin{align*}
\begin{split}
 u_k:=\frac{1}{2}(I+\Hone)\frakD^k u .
\end{split}
\end{align*}
In view of \lem{lem: frakD com}, the difference between $\frakD^ku$ and $\vecu_k$ is small, but $\vecu_k$ has the advantage that it is the vector part of a Clifford analytic function. The following proposition allows us to derive the equation satisfied by $\vecu_k$.


\begin{proposition}\label{prop: vecuk}

 Suppose $f=\bff\circ\xi$ where $\bff:\bbR\times\partial\calB_1\to\bbR^3$ is such that $\Hone f=f$, and $f$ satisfies

\begin{align*}
\begin{split}
\partial_t^2f+an\cross\nabla f +\frac{GM}{R^3}f-\frac{3GM}{2R^{3}}(I+\Hone)((n\cdot f)n)=g_0.
\end{split}
\end{align*}
For any positive integer $k$ let $f_k:=\frac{1}{2} (I+\Hone)\frakD^k f$ and let $g_k$ be as defined in \eqref{eq: gk}. Then $\vecf_k$ satisfies

\begin{align*}
\begin{split}
\partial_t^2\vecf_k+an\cross\nabla \vecf_k +\frac{GM}{R^3}\vecf_k-\frac{3GM}{2R^{3}}(I+\Hone)((n\cdot \vecf_k)n)=\tilg_k,
\end{split}
\end{align*}
where

\begin{align}\label{eq: tilgk}
\begin{split}
 \tilg_k:=&g_k -\frac{1}{2}an\cross\nabla \vect[\frakD^k,\Hone]f -\frac{GM}{2R^3}\vect[\frakD^k,\Hone]f+\frac{3GM}{4R^{3}}(I+\Hone)((n\cdot \vect[\frakD^k,\Hone]f)n)\\
 &-\frac{\vect}{2}[\frakD^k,\Hone]\left(g_0-an\cross\nabla f -\frac{GM}{R^3}f+\frac{3GM}{2R^{3}}(I+\Hone)((n\cdot f)n)\right)-\frac{\vect}{2}[\partial_t^2,[\frakD^k,\Hone]]f.
\end{split}
\end{align}

\end{proposition}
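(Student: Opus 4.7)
The plan is to reduce the equation for $\vecf_k$ to the already-established equation for $\frakD^k f$ from \prop{prop: Du eq} via a single explicit commutator correction. Since $\Hone f=f$ by hypothesis, I first observe that $\Hone\frakD^k f=\frakD^k f-[\frakD^k,\Hone]f$, and hence
$$f_k=\tfrac12(I+\Hone)\frakD^k f=\frakD^k f-\tfrac12[\frakD^k,\Hone]f.$$
The input $f$ is vector-valued and the operators $\frakD_i=\Omega_i-\bfe_i\cross$ preserve vector-valuedness, so $\frakD^k f$ itself is a pure vector. Taking vector parts of the identity above therefore yields the key decomposition
$$\frakD^k f=\vecf_k+\tfrac12\,\vect[\frakD^k,\Hone]f.$$

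Next I introduce the linear operator
$$Lh:=\partial_t^2 h+an\cross\nabla h+\tfrac{GM}{R^3}h-\tfrac{3GM}{2R^3}(I+\Hone)((h\cdot n)n),$$
which is exactly the left-hand side of the equations under consideration, and apply $L$ to the decomposition. Using $L\frakD^k f=g_k$ from \prop{prop: Du eq}, this gives $L\vecf_k=g_k-\tfrac12 L\,\vect[\frakD^k,\Hone]f$. Expanding term by term, the three spatial (non-$\partial_t^2$) pieces of $L\,\vect[\frakD^k,\Hone]f$ produce precisely the contributions
$$-\tfrac12\, an\cross\nabla\,\vect[\frakD^k,\Hone]f,\qquad -\tfrac{GM}{2R^3}\vect[\frakD^k,\Hone]f,\qquad +\tfrac{3GM}{4R^3}(I+\Hone)\bigl((n\cdot\vect[\frakD^k,\Hone]f)n\bigr),$$
which appear in the stated formula for $\tilg_k$.

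What remains is to rewrite the $\partial_t^2$ contribution in a form that does not involve second-order time derivatives of $f$. For this I use the commutator expansion
$$\partial_t^2\,\vect[\frakD^k,\Hone]f=\vect[\frakD^k,\Hone]\,\partial_t^2 f+\vect[\partial_t^2,[\frakD^k,\Hone]]f,$$
and then replace $\partial_t^2 f$ by the right-hand side of the original equation it satisfies,
$$\partial_t^2 f=g_0-an\cross\nabla f-\tfrac{GM}{R^3}f+\tfrac{3GM}{2R^3}(I+\Hone)((n\cdot f)n).$$
Substituting this back yields precisely the last two terms in \eqref{eq: tilgk}, and combining everything proves the proposition.

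There is no genuine analytic obstacle in this step: the whole argument is a bookkeeping computation, and all estimates on the resulting terms of $\tilg_k$ are deferred to the later energy-estimate sections, where the commutators $[\frakD^k,\Hone]$, $[\partial_t^2,[\frakD^k,\Hone]]$, and the source $g_k$ itself are controlled using \lem{lem: frakD com} and the bootstrap assumptions. The only conceptual point worth highlighting is the simple observation that $\vecf_k$ differs from $\frakD^k f$ only by a single commutator term that can be absorbed into the effective source on the right-hand side.
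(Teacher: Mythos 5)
Your proposal is correct and follows essentially the same route as the paper: introduce the operator $\calP$, use $\Re f=0$ and the fact that $\frakD$ preserves pure-vector-valuedness to obtain $\vecf_k=\frakD^k f-\tfrac12\vect[\frakD^k,\Hone]f$, apply $\calP$ to this decomposition using $\calP\frakD^k f=g_k$, and then eliminate $\partial_t^2 f$ via the original equation. The bookkeeping matches term for term with the paper's \eqref{eq: tilgk}.
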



\begin{proof}
Let $\calP$ denote the operator on the left-hand side of the equation for $f$, and let $\calP_0$ be the spatial part of this operator, that is

\begin{align*}
\begin{split}
 \calP f:=\partial_t^2 f+\calP_0 f:=\partial_t^2f+an\cross\nabla f +\frac{GM}{R^3}f-\frac{3GM}{2R^{3}}(I+\Hone)((n\cdot f)n),
\end{split}
\end{align*}
Then since $\Hone f =f$ and $\Re f=0$

\begin{align*}
\begin{split}
 \calP \vecf_k=&\calP\frakD^kf-\frac{1}{2}\calP\vect [\frakD^k,\Hone]f \\
 =&g_k-\frac{1}{2}\calP_0\vect[\frakD^k,\Hone]f-\frac{\vect}{2}[\frakD^k,\Hone]\partial_t^2f-\frac{\vect}{2}[\partial_t^2,[\frakD^k,\Hone]]f.
\end{split}
\end{align*}
The desired identity now follows if we use the equation for $f$ to solve for $\partial_t^2f$.
\end{proof}


We can now combine \props{prop: u eq},~\ref{prop: Du eq}, and \ref{prop: vecuk} to derive the equation for higher derivatives of $u$. We record this equation below for future reference. 


\begin{corollary}\label{cor: uk eq}

Let

\begin{align*}
\begin{split}
 g_0:=-F_t-\partial_tE_1+\partial_t\left(\frac{1}{|\calB_{1}|}\int_{\calB_{1}}\bfE_1(t,\bfx)d\bfx\right)+E_2+\partial_t\left(\frac{a}{|N|}\right)N, 
\end{split}
\end{align*}
 where $F$, $\bfE_1$ and $E_2$ are defined as in \eqref{eq: F},~\eqref{eq: E1}, and \eqref{eq: E2} respectively. With this choice of $g_0$, let $g_k$ and $\tilg_k$ be defined as in \eqref{eq: gk} and \eqref{eq: tilgk} respectively, with $f=u$. Let $u_k:=\frac{1}{2}(I+\Hone)\frakD^ku$. Then $\vecu_k$ satisfies
 
 \begin{align}\label{eq: uk eq}
\begin{split}
\partial_t^2\vecu_k+an\cross\nabla \vecu_k +\frac{GM}{R^3}\vecu_k-\frac{3GM}{2R^{3}}(I+\Hone)((n\cdot \vecu_k)n)=\tilg_k,
\end{split}
\end{align}

 \end{corollary}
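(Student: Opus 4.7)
The proof is essentially a bookkeeping combination of the three preceding propositions, so the plan is simply to verify that $u$ satisfies their common hypothesis and then chain them together.

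The first step is to check that $u$ is in the image of the Hilbert transform, i.e. $\Hone u = u$. Recall $u = \zeta_t = \xi_t - \bfx_1' = v - \bfx_1'$, and $\bfv$ is curl- and divergence-free in $\calB_1$ by \eqref{eq: Euler}. Since $\bfx_1'(t)$ is spatially constant, the vectorfield $\bfv - \bfx_1'$ is also curl- and divergence-free in $\calB_1$, hence Clifford analytic there. By the characterization of boundary values of Clifford analytic functions (recalled in Appendix~\ref{app: Clifford}) this forces $\Hone u = u$ on $\partial\calB_1$. Thus $f := u$ meets the hypothesis of both \prop{prop: Du eq} and \prop{prop: vecuk}.

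Second, I would invoke \prop{prop: u eq}: the equation \eqref{eq: u} established there can be rewritten, by collecting all the terms on its right-hand side, in the form
\begin{equation*}
\partial_t^2 u + an\cross\nabla u + \frac{GM}{R^3}u - \frac{3GM}{2R^3}(I+\Hone)((n\cdot u)n) = g_0,
\end{equation*}
where $g_0$ is precisely the quantity defined in the statement of the corollary. In other words, the corollary's $g_0$ is set up so that the equation for $u$ has the abstract form required by \prop{prop: Du eq} and \prop{prop: vecuk}.

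Third, I would apply \prop{prop: Du eq} with $f = u$ and this $g_0$; since the product- and commutator-computations have already been carried out abstractly there, one immediately obtains \eqref{eq: Df eq} for $\frakD^k u$ with inhomogeneity given by the expression \eqref{eq: gk}, namely the $g_k$ named in the corollary. Finally, applying \prop{prop: vecuk} with the same $f = u$ and $g_0$ produces the equation for $\vecu_k = \vect \tfrac{1}{2}(I+\Hone)\frakD^k u$ with source term precisely the $\tilg_k$ of \eqref{eq: tilgk}. Combining these three steps yields \eqref{eq: uk eq}.

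There is no genuine obstacle in this argument; the only thing to be careful about is the verification $\Hone u = u$ (used silently when passing $\frakD^k u$ through to $\vecu_k$ in \prop{prop: vecuk}), together with the trivial but slightly tedious identification of the various source expressions. The substantive analytic content — commuting $\frakD$ with $\Hone$ and with $n\cross\nabla$, and accounting for the half-projector $\tfrac{1}{2}(I+\Hone)$ — has already been carried out in Lemmas~\ref{lem: product rule}, \ref{lem: frakD com}, \ref{lem: D com K} and in \props{prop: Du eq} and \ref{prop: vecuk}, so the corollary follows at once.
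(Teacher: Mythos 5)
Your proof is correct and follows exactly the same route as the paper, which proves the corollary by combining Propositions~\ref{prop: u eq}, \ref{prop: Du eq}, and \ref{prop: vecuk}. Your explicit verification that $\Hone u=u$ (needed as the hypothesis for \props{prop: Du eq} and \ref{prop: vecuk}) is a worthwhile addition that the paper leaves implicit.
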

 
 
 \begin{proof}
 
 This follows directly from \props{prop: u eq},~\ref{prop: Du eq}, and \ref{prop: vecuk}.
 
 \end{proof}
 

\section{Energy Estimates} \label{sec: energy}

\subsection{General Setup}\label{subsec: energy id}

Given two Clifford algebra-valued functions $f$ and $g$ we define their ``dot product" as

\begin{align*}
f\cdot g = \Re f \Re g+ \vect (f)\cdot \vect (g).
\end{align*}
Recall that

\begin{align*}
\Hone^\ast:=\bfn \Hone \bfn
\end{align*}
is the formal adjoint of $\Hone$ with respect to the pairing

\begin{align*}
\langle f,g\rangle =\int_{\partial\calB_1}f\cdot g \,dS,
\end{align*}
where $dS$ denotes the volume form on $\partial\calB_1$. Recall also that sometimes we use the notation

\begin{align*}
\begin{split}
 \vecf:=\vect{f} \quad\mand  \quad \circf=\Re f.
\end{split}
\end{align*}
We consider the model equation

\begin{equation}\label{eq: model eq}
\partial_t^2 f+ an\cross\nabla f+\frac{GM}{R^3}f-\frac{3GM}{2R^3}(I+\Hone)((n\cdot f)n)=g,
\end{equation}
where $f=\vect \tilf$ and $\tilf$ satisfies $\tilf=\Hone \tilf$, and define the following associated energy

\begin{equation}\label{eq: model energy}
\calE:=\calE_f(t):=\frac{1}{2}\int_{\partial\calB_1}\frac{|f_t|^2}{a} dS+\frac{1}{2}\int_{\partial\calB_1}(n\cross\nabla f)\cdot f dS+\frac{GM}{2R^3}\int_{\partial\calB_1}\frac{|f|^2}{a}dS-\frac{3GM}{2R^3}\int_{\partial\calB_1}\frac{(n\cdot f)^2}{a}dS.
\end{equation}
When $\tilf=u$, then of course $\Re\tilf=0$ and $\tilf$ is Clifford analytic, but after commuting derivatives with equation \eqref{eq: u} the new unknowns $\frakD^ku$ are not necessarily Clifford analytic, and we will instead work with $\vecu_k$ where $u_k=\frac{1}{2}(I+\Hone)\frakD^ku$. It is for this reason that we have defined the energies above for $\vect \tilf$ where $\tilf$ satisfies $\tilf=\Hone \tilf$. Also note that in our applications, the domain of the function $f$ is  $S_R$ rather than $\partial\calB_1$ (for instance when $f=u$). In this context we understand the notation $\calE_f$ as

\begin{align*}
\begin{split}
 \calE_f:=\calE_{f\circ\xi}. 
\end{split}
\end{align*}


\begin{remark}
Note that in the definition above $f$ was considered as a function on $\partial\calB_1$, whereas $a=\bfa\circ \xi$ is a function on $S_R$. So to be precise, we should replace $a$ by $\bfa$ in the definition of $\calE_f$, but by abuse of notation we use $a$ both as the function defined on $S_R$ and as $\bfa\vert_{\partial\calB_1}$.
\end{remark}

Before stating the main energy identity we record some integration-by-parts identities. For any two Clifford algebra-valued functions $f$ and $g$ define

\begin{equation}\label{eq: Q scalar}
Q(f,g):=\frac{1}{|N|}(f_\alpha g_\beta-f_\beta g_\alpha),
\end{equation}
Here $|N|=|\zeta_\alpha\cross\zeta_\beta|$ which makes $Q(f,g)$ coordinate-invariant. If $f$ and $g$ are vector-valued we also define 

\begin{equation}\label{eq: Q vector}
\vecQ(f,g):=\frac{1}{|N|}(f_\alpha\cross g_\beta-f_\beta\cross g_\alpha),
\end{equation}
 Note that when $f$ and $g$ are both scalars-valued  $Q(f,g)=-Q(g,f)$, and when they are both vector-valued $\vecQ(f,g)=\vecQ(g,f)$.


\begin{lemma}\label{lem: int by parts}
Let $Q$ and $\vecQ$ be defined as in \eqref{eq: Q scalar} and \eqref{eq: Q vector}.
\begin{enumerate}
\item If $f$, $g$, and $h$ are scalar-valued then

\begin{align*}
\begin{split}
 \int_{\partial\calB} Q(f,g) h dS=-\int_{\partial\calB}fQ(h,g)dS.
\end{split}
\end{align*}
\item If $f$, $g$, and $h$ are vector-valued then 

\begin{align*}
\begin{split}
 \int_{\partial\calB} \vecQ(f,g)\cdot h dS=\int_{\partial\calB}f\cdot \vecQ(h,g)dS.
\end{split}
\end{align*}
\end{enumerate}
\end{lemma}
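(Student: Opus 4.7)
The plan is to exploit the fact that in any orientation-preserving local chart $(\alpha,\beta)$ on $\partial\calB_1$ the area form is $dS = |N|\, d\alpha\, d\beta$, so that
\[
Q(f,g)\, dS = (f_\alpha g_\beta - f_\beta g_\alpha)\, d\alpha\, d\beta = df\wedge dg
\]
as scalar-valued $2$-forms on the closed surface $\partial\calB_1$ (which is diffeomorphic to $S_R$, hence has no boundary). Both sides of this identity are manifestly coordinate-invariant, so although $\partial\calB_1$ admits no global chart, the equality extends globally by matching on overlaps (or via a partition of unity). Once this is in hand, part (1) is purely Stokes' theorem and part (2) follows componentwise.

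For part (1), I would use the Leibniz rule for the exterior derivative together with $d^2 g = 0$ to write
\[
h\, df\wedge dg \;=\; d(hf\, dg) \;-\; f\, dh\wedge dg.
\]
Since $\partial\calB_1$ is closed, Stokes' theorem gives $\int_{\partial\calB_1} d(hf\, dg) = 0$, and therefore
\[
\int_{\partial\calB_1} h\, df\wedge dg \;=\; -\int_{\partial\calB_1} f\, dh\wedge dg,
\]
which, translated back through $Q(f,g)\, dS = df\wedge dg$ and $Q(h,g)\, dS = dh\wedge dg$, is exactly the claimed identity $\int Q(f,g)h\, dS = -\int fQ(h,g)\, dS$.

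For part (2), I would decompose the cross product with the Levi-Civita symbol, writing $\vecQ(f,g)^k = \epsilon^{kij}Q(f^i,g^j)$. Applying part (1) componentwise yields
\[
\int_{\partial\calB_1}\vecQ(f,g)\cdot h\, dS \;=\; \sum_{i,j,k}\epsilon^{kij}\int_{\partial\calB_1}Q(f^i,g^j)h^k\, dS \;=\; -\sum_{i,j,k}\epsilon^{kij}\int_{\partial\calB_1} f^i Q(h^k,g^j)\, dS.
\]
Using the antisymmetry $\epsilon^{kij} = -\epsilon^{ikj}$ to swap the roles of $i$ and $k$ in the Levi-Civita symbol reassembles the right-hand side as $\int_{\partial\calB_1} f\cdot \vecQ(h,g)\, dS$, giving the symmetric pairing stated in the lemma.

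There is no substantive obstacle: the only thing to be careful about is that the coordinate identity $Q(f,g)\, dS = df\wedge dg$ is local, so one must invoke either a partition of unity or the coordinate-invariance of both sides to make the Stokes' theorem step global. The sign difference between the two parts is an artifact of $Q$ being antisymmetric in $(f,g)$ for scalars, while $\vecQ$ is symmetric for vectors (because the cross product is antisymmetric in its two arguments, and this antisymmetry combines with that in $(\alpha,\beta)$); the Levi-Civita index swap in the argument above is precisely what accounts for this sign.
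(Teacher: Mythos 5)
Your proof is correct and follows essentially the same route as the paper: the key identity $Q(f,g)\,dS = df\wedge dg$ together with Stokes' theorem on the closed surface for part (1), and a componentwise reduction via the Levi--Civita symbol (equivalently $(\bfe_i\cross\bfe_j)\cdot\bfe_k$) for part (2). The only difference is that you spell out the Leibniz/antisymmetry bookkeeping a bit more explicitly than the paper does.
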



\begin{proof}
In the scalar case the identity follows by writing 

\begin{align}\label{eq: Q d}
\begin{split}
 Q(f,g)dS= (f_\alpha g_\beta -f_\beta g_\alpha)d\alpha\wedge d\beta = df\wedge dg, 
\end{split}
\end{align}
where $d$ denotes the exterior differentiation operator on $\partial\calB_1$, and using Stokes' Theorem. In the vector case we write

\begin{align*}
\begin{split}
 \vecQ(f,g)\cdot h = \sum_{i,j,k=1}^3Q(f^i,g^j)h^k  (\bfe_i\cross \bfe_j)\cdot \bfe_k,
\end{split}
\end{align*}
and then apply the scalar identity.
\end{proof}


We are now ready to prove the main energy identity.


\begin{proposition}\label{prop: energy id}
Suppose $\tilf=\Hone \tilf$, and $f:=\vect \tilf$  satisfies \eqref{eq: model eq}. Then with $\calE$ as in \eqref{eq: model energy}, and $Q$ as in \eqref{eq: Q vector},

\begin{align}\label{eq: model energy id}
\begin{split}
\frac{d\calE}{dt}=&\left\langle g,\frac{f_t}{a}\right\rangle+\frac{1}{2}\left\langle \frac{1}{|N|}\partial_t\left(\frac{|N|}{a}\right)f_t,f_t\right\rangle-\frac{1}{2}\langle \vecQ(u,f),f\rangle+\frac{GM}{2R^3}\left\langle \frac{1}{|N|}\partial_t\left(\frac{|N|}{a}\right)f,f\right\rangle\\
&-\frac{3GM}{2R^3}\left\langle\frac{1}{|N|}\partial_t\left(\frac{|N|}{a}\right)f\cdot n,f\cdot n\right\rangle-\frac{3GM}{R^3}\left\langle a^{-1}f\cdot n,f\cdot n_t\right\rangle\\
 &+\frac{3GM}{2R^3}\langle ( n\cdot f) n,[\Hone,a^{-1}\partial_t]f\rangle+\frac{3GM}{2R^3}\langle (n\cdot f)n,(\Hone^\ast-\Hone)(a^{-1}f_t)\rangle\\
 &-\frac{3GM}{2R^{3}}\langle(n\cdot f)n,a^{-1}\partial_{t}(I-H_{\partial\calB_{1}})f\rangle.
\end{split}
\end{align}

\end{proposition}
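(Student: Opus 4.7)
My plan is to differentiate each of the four terms in the definition of $\calE$ in time and then substitute for $f_{tt}$ using \eqref{eq: model eq}. Pulling back to the fixed Lagrangian parameter domain $S_R$, where $dS = |N|\,d\alpha\,d\beta$ and only $|N|$ depends on $t$, the product rule on the first, third, and fourth integrands immediately produces the three terms with prefactor $\frac{1}{|N|}\partial_t(|N|/a)$ that appear in the identity, together with $\langle f_{tt}/a,\, f_t\rangle$ from the first term, $\frac{GM}{R^3}\langle f,\, a^{-1}f_t\rangle$ from the third, and (from the fourth) the explicit $-\frac{3GM}{R^3}\langle a^{-1}f\cdot n,\, f\cdot n_t\rangle$ along with $-\frac{3GM}{R^3}\langle (n\cdot f)n,\, a^{-1}f_t\rangle$.

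The one nontrivial computation is the derivative of $\frac{1}{2}\int_{\partial\calB_1}(n\cross\nabla f)\cdot f\,dS$. Using $n\cross\nabla f = |N|^{-1}(\xi_\beta\cross f_\alpha - \xi_\alpha\cross f_\beta)$ and the fact that $\xi_t=v$ with $v_\alpha = u_\alpha$ (because $\bfx_1(t)$ is independent of $\alpha,\beta$), differentiating the integrand $(\xi_\beta\cross f_\alpha - \xi_\alpha\cross f_\beta)\cdot f$ in $t$ splits into three pieces: one where $\partial_t$ hits $\xi_\alpha,\xi_\beta$, producing $-\vecQ(u,f)\cdot f$ after the $|N|$ cancellation, and two mixed pieces of the form $(n\cross\nabla f_t)\cdot f$ and $(n\cross\nabla f)\cdot f_t$. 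By the symmetry $\vecQ(g,h)=\vecQ(h,g)$ and \lem{lem: int by parts}(2), the first of these equals the second, yielding a combined contribution of $-\tfrac{1}{2}\langle\vecQ(u,f),f\rangle + \langle n\cross\nabla f,\,f_t\rangle$.

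I then substitute $f_{tt} = g - an\cross\nabla f - (GM/R^3)f + (3GM/(2R^3))(I+\Hone)((n\cdot f)n)$ into $\langle f_{tt}/a, f_t\rangle$. The $g$-contribution yields the $\langle g, f_t/a\rangle$ of the statement; the $an\cross\nabla f$ contribution exactly cancels the $\langle n\cross\nabla f,\,f_t\rangle$ produced in the previous paragraph; and the $(GM/R^3)f$ contribution cancels the $\frac{GM}{R^3}\langle f,\, a^{-1}f_t\rangle$ from the third-term derivative. What survives is $\frac{3GM}{2R^3}\langle (I+\Hone)((n\cdot f)n),\,a^{-1}f_t\rangle$, which combined with the residual $-\frac{3GM}{R^3}\langle (n\cdot f)n,\,a^{-1}f_t\rangle$ from the fourth-term derivative collapses to $-\frac{3GM}{2R^3}\langle (I-\Hone)((n\cdot f)n),\,a^{-1}f_t\rangle$.

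The final step is to rewrite this residual as the three commutator-flavored terms displayed in the statement. Using the adjoint relation $\langle \Hone X, Y\rangle = \langle X, \Hone^\ast Y\rangle$ moves $\Hone$ off of $(n\cdot f)n$; adding and subtracting $\Hone(a^{-1}f_t)$ splits off the $(\Hone^\ast-\Hone)(a^{-1}f_t)$ piece; and applying the elementary identity $\Hone(a^{-1}f_t) = [\Hone,a^{-1}\partial_t]f + a^{-1}\partial_t(\Hone f)$ then regrouping $-\langle (n\cdot f)n, a^{-1}f_t\rangle + \langle (n\cdot f)n, a^{-1}\partial_t(\Hone f)\rangle$ as $-\langle (n\cdot f)n, a^{-1}\partial_t(I-\Hone)f\rangle$ produces exactly the three stated terms. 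The main obstacle is disciplined bookkeeping of the Clifford-valued pairings—in particular verifying that $\langle (n\cdot f)n, X\rangle$ sees only the vector part of $X$—and the careful application of \lem{lem: int by parts}(2) to collapse the mixed $(n\cross\nabla f_t)\cdot f$ term; beyond these the derivation is a mechanical product-rule expansion driven by the structure of \eqref{eq: model eq}.
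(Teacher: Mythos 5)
Your proof is correct and proceeds by essentially the same sequence of algebraic steps as the paper's: the paper pairs equation \eqref{eq: model eq} against $f_t/a$ and identifies perfect time derivatives, while you differentiate $\calE$ and then substitute for $f_{tt}$, but these differ only in bookkeeping order. The key ingredients are identical — the $\vecQ$-integration-by-parts from \lem{lem: int by parts} to handle the $n\cross\nabla$ term (your verification that $\iint(\xi_\beta\cross f_{t\alpha}-\xi_\alpha\cross f_{t\beta})\cdot f = \iint(\xi_\beta\cross f_{\alpha}-\xi_\alpha\cross f_{\beta})\cdot f_t$ via symmetry of $\vecQ$ is sound), and the decomposition of $-\frac{3GM}{2R^3}\langle(I-\Hone)((n\cdot f)n),a^{-1}f_t\rangle$ into adjoint/commutator pieces exactly as the paper does.
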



\begin{proof}

We take the inner product of \eqref{eq: model eq} with $\frac{1}{a}f_t$ and study the terms on the left hand side one by one. The contributions of the first and third terms on the left hand side are clear, so we focus on the second and fourth terms. For the second term we have

\begin{align*}
\int_{\partial\calB_1}(n\cross\nabla f)\cdot f_t dS=&\iint (\zeta_\beta\cross f_\alpha-\zeta_\alpha\cross f_\beta)\cdot f_t\,d\alpha d\beta\\
=&\partial_t\int_{\partial\calB_1}(n\cross\nabla f)\cdot f dS-\iint (\zeta_\beta \cross f_{t\alpha}-\zeta_\alpha \cross f_{t\beta})\cdot f d\alpha d\beta\\
&-\iint (u_\beta\cross f_\alpha-u_\alpha\cross f_\beta)\cdot f d\alpha d\beta\\
=&\partial_t\int_{\partial\calB_1}(n\cross\nabla f)\cdot f dS+\int_{\partial\calB_1}\vecQ(u,f)\cdot f dS\\
&-\iint f_t \cdot (\zeta_\beta \cross f_\alpha - \zeta_\alpha \cross f_\beta ) d\alpha  d\beta,
\end{align*}
where in the last step we have used Lemma~\ref{lem: int by parts}. It follows that

\begin{align*}
\int_{\partial\calB_1}(n\cross\nabla f)\cdot f_t dS=\frac{1}{2}\partial_t\int_{\partial\calB_1}(n\cross\nabla f)\cdot f dS+\frac{1}{2}\int_{\partial\calB_1}\vecQ(u,f)\cdot f dS.
\end{align*}
Finally we consider the contribution from $(I+\Hone)((n\cdot f)n)$. We have

\begin{align*}
\begin{split}
 \langle(I+\Hone)((n\cdot f)n), a^{-1}f_t \rangle =&\langle (n\cdot f)n,(I+\Hone^\ast)(a^{-1}f_t)\rangle\\
 =&\langle (n\cdot f)n,(I+\Hone)(a^{-1}f_t)\rangle+\langle (n\cdot f)n,(\Hone^\ast-\Hone)(a^{-1}f_t)\rangle\\
 =&2\langle (n\cdot f)n,a^{-1}f_t\rangle+\langle (n\cdot f)n,[\Hone,a^{-1}\partial_t]f\rangle\\
 &-\langle (n\cdot f)n,a^{-1}\partial_{t}(I-H_{\partial\calB_{1}})f\rangle 
 +\langle (n\cdot f)n,(\Hone^\ast-\Hone)(a^{-1}f_t)\rangle\\
 =&\partial_t\langle a^{-1}f\cdot n,f\cdot n\rangle-\langle|N|^{-1}\partial_t(|N|a^{-1})f\cdot n,f\cdot n\rangle-2\langle a^{-1}f\cdot n,f\cdot n_t\rangle\\
 &+\langle (n\cdot f)n,[\Hone,a^{-1}\partial_t]f\rangle+\langle (n\cdot f)n,(\Hone^\ast-\Hone)(a^{-1}f_t)\rangle\\
 &-\langle (n\cdot f)n,a^{-1}\partial_{t}(I-H_{\partial\calB_{1}})f\rangle .
\end{split}
\end{align*}
The statement of the proposition now follows by combining the previous identities.

\end{proof}


\begin{remark}
Suppose $f=\sum_{i=1}^3 f^i\bfe_i$ is the vector part of a Clifford analytic function $\tilf$, $\tilf=\Hone \tilf$, and let $\circf=\Re\tilf$. Then from \eqref{eq: nabla cross dot n}

\begin{align*}
\begin{split}
  n\cross\nabla f=\sum_{i=1}^3(\nabla_{n}f^i)\bfe_i+\nabla_n\circf-n\cross\nabla \circf,
\end{split}
\end{align*}
where $\nabla_n$ denotes the Dirichlet-Neumann map of $\partial\calB_1$. It follows that in this case

\begin{align*}
\begin{split}
 \int_{\partial\calB_1}(n\cross\nabla f)\cdot f dS\geq -\int_{\partial\calB_1}(n\cross\nabla\circf)\cdot fdS.
\end{split}
\end{align*}

\end{remark}


The energy defined in \eqref{eq: model energy} is the natural energy associated to the time symmetry of the equation, but unfortunately it is not clear that it is in general positive definite. A similar problem was encountered in \cite{BMSW1}. As in \cite{BMSW1} here we will be able to show that $\calE_f$ is positive if $\calB_1$ is a small perturbation of $S_R$ (that is under our bootstrap assumptions) and for $f$ of interest. The approach we take here is more direct than the one in \cite{BMSW1}. The following  two general results are the first steps in this direction. The first result provides a lower bound on the constant for the Poincar\'e inequality, or equivalently the first nonzero Neumann eigenvalue of the positive Laplacian.


\begin{lemma}\label{lem: Poincare}\cite{Yang}
Let $D\subseteq\bbR^3$ be a simply connected bounded domain with diameter\footnote{The diameter $d$ of $D$ is by definition $d=\sup_{\bfx,\bfy\in D}|\bfx-\bfy|$.} $d$, such that the second fundamental form of $\partial D$ with respect to the exterior normal is non-negative. Then for any function $f$ satisfying $\int_D f(\bfx)d\bfx=0$ there holds

\begin{align*}
\|f\|_{L^2(D)}\leq \frac{d}{\pi}\|\nabla f\|_{L^2(D)}.
\end{align*}
\end{lemma}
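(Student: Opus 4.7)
The plan is to reduce the inequality to a one-dimensional Poincar\'e estimate via the Payne--Weinberger slicing technique. First, I would note that since the second fundamental form of $\partial D$ is non-negative with respect to the exterior normal, the domain $D$ is convex. By the variational characterization of the first nonzero Neumann eigenvalue,
\begin{align*}
\mu_1(D)=\inf\left\{\frac{\|\nabla f\|_{L^2(D)}^2}{\|f\|_{L^2(D)}^2}\,:\,f\in H^1(D),\ \int_D f=0,\ f\not\equiv 0\right\},
\end{align*}
so the asserted inequality is equivalent to the sharp eigenvalue lower bound $\mu_1(D)\geq \pi^2/d^2$.

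Next I would carry out the slicing step. Pick a unit vector $\bfv\in\bbR^3$ realizing the diameter, so that $D\subseteq\{\bfv\cdot\bfx\in[0,L]\}$ with $L\leq d$. For $f$ with $\int_D f=0$, one argues by a continuity/bisection scheme: iteratively partition $D$ by hyperplanes normal to $\bfv$ (and by further hyperplanes after rotating inside each piece) into finitely many convex subdomains $\{D_j\}$ with $\int_{D_j}f=0$ and with each $D_j$ contained in a slab of small width. On each slab, convexity of $D_j$ ensures that the cross-sectional area function in the $\bfv$-direction is concave on its support, and therefore its $1/(n-1)=1/2$ power is concave as well.

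The heart of the proof is then a one-dimensional estimate: after integrating out the cross-sectional directions, one is reduced to showing that if $w\in H^1([0,\ell])$ satisfies $\int_0^\ell w(s)g(s)\,ds=0$ with $g\geq 0$ log-concave (in fact, with $g^{1/2}$ concave), then
\begin{align*}
\int_0^\ell w(s)^2 g(s)\,ds\leq \frac{\ell^2}{\pi^2}\int_0^\ell w'(s)^2 g(s)\,ds.
\end{align*}
This can be established by comparison with the constant density case via a rearrangement/ODE comparison argument, since the corresponding weighted Sturm--Liouville problem $-(gw')'=\mu g w$ on $[0,\ell]$ with Neumann conditions has first nonzero eigenvalue minimized (over admissible concave profiles $g^{1/2}$) by the constant profile, whose eigenvalue is exactly $\pi^2/\ell^2$. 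Summing the resulting estimates over the pieces $D_j$ and using $\ell\leq d$ yields $\|f\|_{L^2(D)}^2\leq (d^2/\pi^2)\|\nabla f\|_{L^2(D)}^2$.

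The main obstacle is the one-dimensional weighted eigenvalue comparison with the log-concave density $g$: one needs to verify that perturbing $g$ away from a constant toward any admissible log-concave profile can only increase the first nonzero eigenvalue, which requires either a direct ODE comparison or a symmetrization/rearrangement argument tailored to the class of admissible weights. Once that step is in place, the slicing bisection and passage to the limit are standard, and the final constant $d/\pi$ emerges sharply from the constant-density 1D eigenvalue.
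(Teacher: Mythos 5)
The paper does not actually prove this lemma; it is stated with a citation to \cite{Yang} and invoked as a known eigenvalue bound (it is the Payne--Weinberger estimate $\mu_1(D)\geq\pi^2/d^2$ for convex domains, or a closely related result). Your proposal is therefore not competing with an argument in the text but rather supplying a proof from scratch, and the route you take---slicing plus a one-dimensional weighted Poincar\'e comparison---is precisely the classical Payne--Weinberger argument (together with the Bebendorf-style refinement in the bisection step that makes the ``thin slab'' reduction rigorous). Your preliminary observation that non-negative second fundamental form of a compact, simply connected boundary forces $D$ to be convex is also correct (Hadamard's theorem), and it is exactly the bridge needed to apply the convex-domain estimate.

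One inaccuracy to fix: you write that convexity of $D_j$ makes the cross-sectional area $A(s)$ concave, ``and therefore'' $A^{1/2}$ concave. The first assertion is false in general (for a circular cone $A(s)\propto s^2$ is convex), and what Brunn--Minkowski actually gives is that $A^{1/(n-1)}=A^{1/2}$ is concave directly, without passing through concavity of $A$. Fortunately $A^{1/2}$ concave is exactly the hypothesis you need for the one-dimensional weighted Sturm--Liouville comparison, so your argument survives once you replace the intermediate claim with the correct Brunn--Minkowski statement. The remaining crux you identify---that among weights $g$ on $[0,\ell]$ with $g^{1/(n-1)}$ concave, the first nonzero Neumann eigenvalue of $-(gw')'=\mu gw$ is at least $\pi^2/\ell^2$---is genuinely the heart of the Payne--Weinberger proof and does require the ODE/rearrangement comparison you flag; it is a known but nontrivial lemma and should be cited or proved rather than left as a remark.
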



\begin{remark}
We will later prove that the hypotheses of Lemma~\ref{lem: Poincare} are satisfied under the bootstrap assumptions to be introduced.
\end{remark}


The next result provides a lower bound for the constant of the trace embedding $H^1(D)\hookrightarrow L^2(\partial D)$.


\begin{lemma}\label{lem: trace}\cite{TraceTh}
Let $D\subseteq\bbR^3$ be a simply connected bounded domain with $C^1$ boundary $\partial D$, and let $\bfnu$ denote the exterior normal. Suppose $\bfmu$ is a $C^1$ vectorfield defined in a neighborhood of $D$ such that $\bfmu(\bfx)\cdot\bfnu(\bfx)\geq \frakb>0$ for all $\bfx\in\partial D$. Then for any function $\bff$ in $H^1(D)$, 

\begin{align*}
\frakb\int_{\partial D}|\bff(\bfx)|^2dS(\bfx)\leq \sup_{\barD}|\bfmu|^2\int_D|\nabla \bff(\bfx)|^2d\bfx+(1+\sup_{\barD}|\nabla\cdot\bfmu|)\int_D|\bff(\bfx)|^2d\bfx.
\end{align*}
\end{lemma}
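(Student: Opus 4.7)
The plan is to base the proof on a single application of the divergence theorem to the vectorfield $|\bff|^2\bfmu$, followed by a Cauchy--Schwarz/AM--GM step to absorb a gradient factor. The hypothesis $\bfmu\cdot\bfnu\geq\frakb>0$ on $\partial D$ is what converts the boundary flux of $|\bff|^2\bfmu$ into a lower bound for $\frakb\|\bff\|_{L^2(\partial D)}^2$, while the bulk term from $\nabla\cdot(|\bff|^2\bfmu)$ will give precisely the two terms appearing on the right-hand side.

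First I would reduce to the case $\bff\in C^1(\barD)$ by density: since $\partial D$ is $C^1$, smooth functions are dense in $H^1(D)$ and both the boundary trace and the gradient norm on the right-hand side are continuous under this approximation (the trace is continuous into $L^2(\partial D)$, granted, but since this is exactly what we are trying to prove quantitatively we instead perform all estimates on the smooth approximants and pass to the limit in the established inequality). For smooth $\bff$, applying the divergence theorem to $|\bff|^2\bfmu$ gives
\begin{align*}
\int_{\partial D}|\bff|^2\,\bfmu\cdot\bfnu\,dS=\int_D 2\bff\,(\bfmu\cdot\nabla\bff)\,d\bfx+\int_D|\bff|^2\,(\nabla\cdot\bfmu)\,d\bfx.
\end{align*}

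The left-hand side is bounded below by $\frakb\int_{\partial D}|\bff|^2\,dS$ by hypothesis. For the first bulk term, I would apply pointwise Cauchy--Schwarz $|\bfmu\cdot\nabla\bff|\leq|\bfmu||\nabla\bff|$ followed by the AM--GM inequality $2|\bff||\bfmu||\nabla\bff|\leq |\bfmu|^2|\nabla\bff|^2+|\bff|^2$, which yields
\begin{align*}
\int_D 2\bff\,(\bfmu\cdot\nabla\bff)\,d\bfx\leq \sup_{\barD}|\bfmu|^2\int_D|\nabla\bff|^2\,d\bfx+\int_D|\bff|^2\,d\bfx.
\end{align*}
For the second bulk term I would simply pull out $\sup_{\barD}|\nabla\cdot\bfmu|$. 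Combining these bounds gives exactly the stated inequality; a routine density argument then recovers the estimate for arbitrary $\bff\in H^1(D)$.

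The argument is essentially mechanical and I do not anticipate any serious obstacle; the only subtlety is the choice of weighting in AM--GM, but it is forced by the requirement that the $|\nabla\bff|^2$ coefficient be exactly $\sup|\bfmu|^2$ and the $|\bff|^2$ coefficient be exactly $1+\sup|\nabla\cdot\bfmu|$, as in the statement.
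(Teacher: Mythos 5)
Your argument is correct and is essentially the same as the paper's: apply the divergence theorem to $|\bff|^2\bfmu$, use the boundary hypothesis $\bfmu\cdot\bfnu\geq\frakb$ to lower-bound the boundary flux, and absorb the bulk cross term via Cauchy--Schwarz and AM--GM. The density reduction to $C^1(\barD)$ and the equal-weight AM--GM are the natural choices and match what the paper (and the cited reference) do.
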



\begin{proof}

Because the statement is slightly different from the one in \cite{TraceTh}, we provide the proof which is directly from  \cite{TraceTh}. By the divergence theorem

\begin{align*}
2\int_{D}\bff\nabla \bff\cdot\bfmu d\bfx=\int_{D}\nabla(\bff^2)\cdot\bfmu d\bfx=-\int_{D}\bff^2\nabla \cdot\bfmu d\bfx+\int_{\partial D} \bff^2 \bfmu\cdot\bfnu dS.
\end{align*}
The desired estimate follows by rearranging and applying Cauchy-Schwarz.

\end{proof}


\begin{remark}
In our applications $\partial D$ will be close to $S_R$, in which case $\bfnu(\bfx)=R^{-1}\bfx$. The choice $\bfmu(\bfx)=\bfx$ then gives $b=R^{-1}$.
\end{remark}


To use \lem{lem: Poincare} to prove lower bounds for the energies for $\vecu_k$ we also need to show that the average of $u_k$ is small for all $k$. We will use the following notation for a Clifford algebra-valued function $\bff$ defined on $\calB_1$:

\begin{align*}
\begin{split}
  \AV(\bff):=\frac{1}{M}\int_{\calB_1}\bff dx.
\end{split}
\end{align*}


\begin{lemma}\label{lem: averages}
Let $u_k:=\frac{1}{2}(I+\Hone)\frakD^ku$, and let $\bfu_k$ be the Clifford analytic extension of $u_k$ to $\calB_1$, with $u_0=u$ and $\bfu_0=\bfu$. Then

\begin{align}\label{eq: uk av 1}
\begin{split}
 \AV (\bfu_k)=\frac{R}{3M}\int_{\partial\calB_1} u_k dS-\frac{1}{3M}\int_{\partial\calB_1}(\zeta -Rn) n u_k dS+\frac{1}{6M}\int_{\partial\calB_1}((I+\Hone)\zeta)nu_kdS,\qquad k\geq0,
\end{split}
\end{align}
and

\begin{align}\label{eq: uk av 2}
\begin{split}
 \int_{\partial\calB_1} u_k dS=\int_{\partial\calB_1}u_{k-1}\frac{\Omega(|N||\sg|^{-1})}{|N||\sg|^{-1}}dS-\bfe\cross\vect\int_{\partial\calB_1}u_{k-1}dS
 -\frac{1}{2}\int_{\partial\calB_1}[\frakD,\Hone]\frakD^{k-1}u\,dS, \qquad k\geq1.
\end{split}
\end{align}
\end{lemma}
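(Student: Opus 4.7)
The plan is to prove the two identities by distinct routes, each invoking only machinery already developed in the paper.

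For \eqref{eq: uk av 1}, my starting point is that $\bfu_k$ is Clifford analytic on $\calB_1$ with boundary trace $u_k$, so the interior Clifford Cauchy integral formula (cf.\ Appendix~\ref{app: Clifford}) represents it as
\begin{align*}
\bfu_k(\bfx_0) = -\frac{1}{4\pi}\int_{\partial\calB_1}\frac{\bfx'-\bfx_0}{|\bfx'-\bfx_0|^3}\,n(\bfx')\,u_k(\bfx')\,dS(\bfx'),\qquad \bfx_0\in\calB_1.
\end{align*}
I would integrate this over $\bfx_0\in\calB_1$ and swap the order of integration. The resulting inner integral $\int_{\calB_1}(\bfx'-\bfx_0)|\bfx'-\bfx_0|^{-3}d\bfx_0$ is, up to the factor $G\rho$, precisely $\nabla\bfpsi_1(\bfx')$; by \eqref{eq: nabla psi1} together with $M = \rho\cdot\tfrac{4\pi}{3}R^3$ this equals $\tfrac{4\pi}{3}\zeta - \tfrac{2\pi}{3}(I+\Hone)\zeta$. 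Substituting, then writing $\zeta = Rn + (\zeta - Rn)$ and using the Clifford identity $(Rn)n = -R$ (since $n^2 = -1$), the leading boundary term $\tfrac{R}{3M}\int u_k\,dS$ drops out, and the two remaining pieces are exactly the correction terms on the right-hand side of \eqref{eq: uk av 1}.

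For \eqref{eq: uk av 2}, the key algebraic move is to relate $u_k$ directly to $\frakD u_{k-1}$. Starting from $u_{k-1} = \tfrac{1}{2}(I+\Hone)\frakD^{k-1}u$ and applying $\frakD$, the operator identity $\frakD(I+\Hone) = (I+\Hone)\frakD + [\frakD,\Hone]$ gives
\begin{align*}
u_k = \tfrac{1}{2}(I+\Hone)\frakD^k u = \frakD u_{k-1} - \tfrac{1}{2}[\frakD,\Hone]\frakD^{k-1}u.
\end{align*}
I would then integrate over $\partial\calB_1$, expand $\frakD = \Omega - \bfe\,\cross$ acting on the vector part of $u_{k-1}$, and apply the integration-by-parts formula from Lemma~\ref{lem: product rule} with $f \equiv 1$ to transfer the tangential derivative $\Omega$ onto the Jacobian factor $|N||\sg|^{-1}$ of the pullback from $\partial\calB_1$ to $S_R$ (this uses that $\Omega$ is divergence-free on $S_R$ and that $S_R$ has no boundary). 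Collecting the three resulting contributions yields \eqref{eq: uk av 2}.

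The main bookkeeping obstacle is in tracking the Clifford algebraic structure. In (a) one must be careful to distinguish the Clifford product $(Rn)n = -R$ from the Euclidean dot product, so that the leading term of $\int\zeta\,n\,u_k\,dS$ produces a scalar multiple of $\int u_k\,dS$ with the correct sign. In (b) one must remember that $u_{k-1}$ is generically Clifford-valued, with a nontrivial scalar part inherited from $\Hone\frakD^{k-1}u$, which is precisely why the $\vect$ operator appears only in the middle term of \eqref{eq: uk av 2}. Neither step requires new a priori control beyond what is already in play.
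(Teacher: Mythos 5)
Your proposal is correct and reproduces the paper's own argument: for \eqref{eq: uk av 1}, the Cauchy integral representation for $\bfu_k$, Fubini, the identification of the inner integral with $\tfrac{1}{G\rho}\nabla\bfpsi_1$ via \eqref{eq: nabla psi1}, and the split $\zeta = Rn + (\zeta - Rn)$ with $n^2 = -1$; for \eqref{eq: uk av 2}, the recursion $u_k = \frakD u_{k-1} - \tfrac12[\frakD,\Hone]\frakD^{k-1}u$, the expansion $\frakD u_{k-1} = \Omega u_{k-1} - \bfe\cross\vect u_{k-1}$, and the integration-by-parts formula of Lemma~\ref{lem: product rule}. The only slip is terminological: the leading term $\tfrac{R}{3M}\int_{\partial\calB_1} u_k\,dS$ is \emph{produced} by the $Rn$ piece (it is the first term on the right of \eqref{eq: uk av 1}), not ``dropped out'' in the sense of cancelling.
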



\begin{remark}\label{rmk: application of average}
This lemma can be used inductively as follows. First, since\footnote{By definition, $\bfu=\bfv-\bfx'_{1}$. On the other hand, $\bfx'_{1}=\partial_{t}\left(\frac{1}{|\calB_{1}|}\int_{\calB_{1}}\bfx d\bfx\right)=\frac{1}{|\calB_{1}|}\int_{\calB_{1}}\bfv d\bfx$.} $\AV(\bfu)=0$, we can use \eqref{eq: uk av 1} to control $\int_{\partial\calB_1}u_0dS$. Then we use \eqref{eq: uk av 2} to bound  $\int_{\partial\calB_{1}}u_{1}dS$. We then use \eqref{eq: uk av 1} to estimate $\AV(\bfu_1)$ in terms of $\int_{\partial\calB_1}u_1 dS$, which we in turn estimate in terms of $\int_{\partial\calB_1}u_{0}dS$ according to \eqref{eq: uk av 2}. This process can now be continued inductively to estimate $\AV(\bfu_k)$ for all $k$. We summarize this process in the following chart:

\begin{align*}
 &0=\AV(\bfu_{0})\quad\rightarrow\quad \int_{\partial\calB_{1}}u_{0}dS\quad \rightarrow\quad \int_{\partial\calB_{1}}u_{1}dS\quad \rightarrow\quad \int_{\partial\calB_{1}}u_{2}dS\quad \rightarrow...\rightarrow\quad \int_{\partial\calB_{1}}u_{k}dS\quad \rightarrow...,\\
 &\int_{\partial\calB_{1}}u_{k}dS\quad \rightarrow\quad \AV(\bfu_{k})\quad \textrm{for}\quad k\geq1.
\end{align*}
\end{remark}


\begin{proof}[Proof of Lemma~\ref{lem: averages}]
Since $u_k$ is Clifford analytic, by \thm{thm: Cuachy}, for any $\bfxi'\in\calB_1$

\begin{align*}
\begin{split}
 \bfu_k(\bfxi')=\frac{1}{4\pi}\int_{\partial\calB_1} \frac{\bfxi'-\bfxi}{|\bfxi'-\bfxi|^3}\bfn(\bfxi)\bfu_k(\bfxi)dS(\bfxi),
\end{split}
\end{align*}
so

\begin{align*}
\begin{split}
 \AV(\bfu_k) =\frac{1}{4\pi M}\int_{\partial\calB_1}\left(\int_{\calB_1}\frac{\bfxi'-\bfxi}{|\bfxi'-\bfxi|^3}d\bfxi'\right)\bfn(\bfxi)\bfu_k(\bfxi)dS(\bfxi).
\end{split}
\end{align*}
Notice that the inner integral is equal to $-\frac{1}{G\rho}\nabla\bfpsi_1(\bfxi)$. So using Lemma~\ref{lem: nabla psi},

\begin{align*}
\begin{split}
 \AV(\bfu_k)=&-\frac{1}{3M}\int_{\partial\calB_1}\zeta n u_k dS+\frac{1}{6M}\int_{\partial\calB_1}((I+\Hone)\zeta)nu_kdS\\
 =&\frac{R}{3M}\int_{\partial\calB_1} u_k dS-\frac{1}{3M}\int_{\partial\calB_1}(\zeta -Rn) n u_k dS+\frac{1}{6M}\int_{\partial\calB_1}((I+\Hone)\zeta)nu_kdS.
\end{split}
\end{align*}
This proves \eqref{eq: uk av 1}, and \eqref{eq: uk av 2} follows from the identity 

\begin{align*}
\begin{split}
  u_k :=\frac{1}{2}(I+\Hone)\frakD^ku=\Omega u_{k-1}-\bfe\cross \vect u_{k-1}-\frac{1}{2}[\frakD,\Hone]\frakD^{k-1}u, \qquad k\geq1,
\end{split}
\end{align*}
and an integration by parts according to Lemma~\ref{lem: product rule}.
\end{proof}


Using Lemmas~\ref{lem: Poincare} and~\ref{lem: trace} we can prove the positivity of the energy to leading order. In fact we will show that $\calE_f$ controls the $L^2$ norm of $f$.


\begin{lemma}\label{lem: pos energy}
Suppose the second fundamental form of $\partial\calB_1$ with respect to the exterior normal $\bfn$ is non-negative. Suppose further that there is $\delta\in[0,1)$ such that $\zeta$ satisfies $\zeta\cdot n\geq (1-\delta)R$ and such that for all $\bfx\in\calB_1$, $|\bfx-\bfx_1|\leq (1+\delta)R$. Let $\bff=\vect\tilde{\bff}$ and $\bff_0=\Re\tilde{\bff}$, where $\tilde{\bff}$ is a Clifford analytic function in $\calB_1$, and let $f$ and $f_0$ be the restrictions of $\bff$ and $\bff_0$ to $\partial\calB_1$. If $\delta$ is sufficiently small and $f$ satisfies the assumptions in Proposition~\ref{prop: energy id},
 then there exist absolute constants $c,C>0$ such that

\begin{align}\label{eq: pos energy 2}
\begin{split}
\frac{1}{2}\int_{\partial\calB_1}(\bfn\cross\nabla f)\cdot f dS+&\frac{1}{2R}\int_{\partial\calB_1}|f|^2dS-\frac{3}{2R}\int_{\partial\calB_1}(n\cdot f)^2dS\\
\geq& \,\frac{c}{4R}\int_{\partial\calB_1}|f|^2dS-\frac{3}{2R}\int_{\partial\calB_1}(f\cdot n)f\cdot(n-R^{-1}\zeta)dS-\frac{CM\rho}{R^2}(\AV(\bff))^2\\
&-\frac{3}{4R}\int_{\partial\calB_1}|f|^2(R^{-1}\zeta-n)\cdot n \,dS-\frac{3}{2R^2}\int_{\partial\calB_{1}}(f\times \zeta)\cdot(n-R^{-1}\zeta)f_{0}dS\\
&-\frac{9}{4R^2}\int_{\calB_{1}}f^{2}_{0}d\bfx+\frac{3}{4R^2}\int_{\partial\calB_{1}}(\zeta\cdot n)f^{2}_{0}dS-CR\int_{\partial\calB_1}|n\cross\nabla f_0|^2dS.
\end{split}
\end{align}

Moreover,
 
 \begin{align}\label{eq: f0 L2}
\begin{split}
 \int_{\calB_1}|\bff_0|^2d\bfx\leq CR\int_{\partial\calB_1}|f_0|^2dS+ CR^3 \int_{\partial\calB_1}|\nabla_n f_0|^2dS+CM\rho(\AV(\bff_0))^2. 
\end{split}
\end{align}

\end{lemma}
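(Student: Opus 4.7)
The plan is to prove the two inequalities separately; both rely on combining \lem{lem: Poincare} and \lem{lem: trace} through the near-sphericity hypotheses on $\partial\calB_1$.

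For \eqref{eq: pos energy 2}, the idea is to rewrite the indefinite term $-\frac{3}{2R}\int(n\cdot f)^{2}dS$ via Gauss--Ostrogradsky so that the five ``error'' terms on the right appear automatically. Using the Clifford-analyticity of $\tilde\bff=\bff_0+\bff$, which yields $\nabla\cdot\bff=0$ and $\nabla\times\bff=\pm\nabla\bff_0$, one computes pointwise
\begin{align*}
\nabla\cdot\big((\bff\cdot\bfr)\bff\big)=|\bff|^{2}+\tfrac{1}{2}\bfr\cdot\nabla|\bff|^{2}\mp(\bfr\times\bff)\cdot\nabla\bff_{0},\qquad \bfr:=\bfx-\bfx_{1}.
\end{align*}
Integrating over $\calB_{1}$, applying the divergence theorem, and integrating the $\bff_{0}$ term by parts via $\nabla\cdot(\bfr\times\bff)=\mp\bfr\cdot\nabla\bff_{0}$, then decomposing $\bff\cdot\zeta=R(\bff\cdot n)+\bff\cdot(\zeta-Rn)$ and $(\zeta\times f)\cdot n=(\zeta\times f)\cdot(n-R^{-1}\zeta)$ (using $(\zeta\times f)\cdot\zeta=0$), produces an identity matching precisely the five error terms on the right-hand side of \eqref{eq: pos energy 2} together with a ``main part'' $\tfrac{1}{2}\int(n\times\nabla f)\cdot f\,dS+\tfrac{3}{4R^{2}}\int_{\calB_{1}}|\bff|^{2}d\bfx-\tfrac{1}{4R}\int|f|^{2}\,dS$.

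For the gradient contribution, the remark following \prop{prop: energy id} gives $n\times\nabla f=\sum_{i}(\nabla_{n}f^{i})\bfe_{i}+\nabla_{n}\bff_{0}-n\times\nabla\bff_{0}$; taking the inner product with $f$ and using harmonicity of each $f^{i}$ yields $\int(n\times\nabla f)\cdot f\,dS=\int_{\calB_{1}}|\nabla\bff|^{2}d\bfx-\int(n\times\nabla\bff_{0})\cdot f\,dS$. Young's inequality absorbs the last piece into $CR\int|n\times\nabla\bff_{0}|^{2}\,dS$ plus a small multiple of $R^{-1}\int|f|^{2}\,dS$. It then remains to prove
\begin{align*}
\tfrac{1}{2}\int_{\calB_{1}}|\nabla\bff|^{2}d\bfx+\tfrac{3}{4R^{2}}\int_{\calB_{1}}|\bff|^{2}d\bfx\ \geq\ \tfrac{1+c}{4R}\int|f|^{2}\,dS-\tfrac{C|\calB_{1}|}{R^{2}}(\AV\bff)^{2}.
\end{align*}
Applying \lem{lem: trace} with $\bfmu=\bfr$ gives $\int|f|^{2}dS\leq\tfrac{(1+\delta)^{2}R}{1-\delta}\int|\nabla\bff|^{2}+\tfrac{4}{(1-\delta)R}\int|\bff|^{2}$, which keeps the $|\nabla\bff|^{2}$ coefficient acceptable but leaves the $|\bff|^{2}$ coefficient in deficit. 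This deficit is covered by \lem{lem: Poincare} applied to $\bff-\AV\bff$ (the second-fundamental-form hypothesis enters here), giving $\int|\bff-\AV\bff|^{2}\leq\tfrac{4R^{2}(1+\delta)^{2}}{\pi^{2}}\int|\nabla\bff|^{2}$. The scheme closes because for $\delta$ small both $\tfrac{(1+\delta)^{2}}{4(1-\delta)}<\tfrac{1}{2}$ and $\tfrac{(1+\delta)^{2}}{\pi^{2}}<\tfrac{1}{4}$ hold, leaving a strictly positive residual multiple of $\int|\nabla\bff|^{2}$, which is returned to $\int|f|^{2}\,dS$ through the reverse direction of the trace embedding.

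For \eqref{eq: f0 L2}, Clifford-analyticity implies $\bff_{0}$ is harmonic in $\calB_{1}$, so \lem{lem: Poincare} gives $\int|\bff_{0}-\AV\bff_{0}|^{2}d\bfx\leq\tfrac{4R^{2}(1+\delta)^{2}}{\pi^{2}}\int|\nabla\bff_{0}|^{2}d\bfx$, while harmonicity and integration by parts give $\int|\nabla\bff_{0}|^{2}d\bfx=\int f_{0}\nabla_{n}\bff_{0}\,dS\leq\tfrac{1}{2R}\int f_{0}^{2}\,dS+\tfrac{R}{2}\int|\nabla_{n}\bff_{0}|^{2}\,dS$ via Young's inequality with parameter $R$. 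Combining with $\int|\bff_{0}|^{2}=\int|\bff_{0}-\AV\bff_{0}|^{2}+|\calB_{1}|(\AV\bff_{0})^{2}$ yields \eqref{eq: f0 L2}. The main obstacle is the quantitative closure in \eqref{eq: pos energy 2}: the trace embedding naturally produces a negative multiple of $\int|\bff|^{2}$ that exceeds the $\tfrac{3}{4R^{2}}\int|\bff|^{2}$ gained from the Gauss identity, and only after inserting the Poincar\'e estimate (with its sharp constant $d/\pi$, for which the geometric hypotheses are essential) do the arithmetic inequalities close with a strictly positive coefficient of $\int|\nabla\bff|^{2}$; the presence of $\AV\bff$ in the remainder is then unavoidable because Poincar\'e controls only the zero-mean part of $\bff$.
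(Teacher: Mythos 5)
Your proposal is correct and follows essentially the same path as the paper: the Gauss--Ostrogradsky rewrite of $\int(n\cdot f)^2\,dS$ via $\nabla\cdot((\bff\cdot\bfzeta)\bff)$, the boundary-to-bulk conversion $\int(n\times\nabla f)\cdot f\,dS=\int|\nabla\bff|^2-\int(n\times\nabla f_0)\cdot f\,dS$, the trace lemma with $\bfmu=\bfzeta$, and the Poincar\'e bound with the sharp constant $d/\pi$ are all the same ingredients the paper uses, and the closing arithmetic at $\delta$ small is the same $\frac14-\frac1{\pi^2}>0$ observation. The only cosmetic difference is that the paper introduces the $\frac{c}{4R}\int|f|^2$ margin by writing $-\frac{1}{4R}=-\frac{1+c}{4R}+\frac{c}{4R}$ rather than by ``returning to $\int|f|^2$ through the reverse trace embedding''; the content is identical.
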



\begin{proof}
We start by proving \eqref{eq: pos energy 2}. Let $\bff=\vect\tilde{\bff}$, where  $\tilde{\bff}$ is the Clifford analytic extension of $\tilf$ to the interior of $\calB_1$, and let $\bfzeta=\bfx-\bfx_1$ be the harmonic extension of $\zeta$ to the interior. 
First note that since $n\cross\nabla f=\nabla_nf+\nabla_{n}f_{0}-n\times \nabla f_{0}$ and $\bff$ has harmonic components,

\begin{align*}
\begin{split}
 \frac{1}{2}\int_{\partial\calB_1} (n\cross\nabla f)\cdot f dS=\frac{1}{2}\int_{\calB_1}|\nabla \bff|^2 d\bfx-\frac{1}{2}\int_{\partial\calB_{1}}(n\times\nabla f_{0})\cdot fdS,
\end{split}
\end{align*}
where $|\nabla\bff|^2=\sum_{i=1}^3|\nabla\bff^i|^2$. Next,  by assumption $\partial_i\bff^j-\partial_j\bff^i=\epsilon_{ijk}\partial_k\bff_0$, where $\epsilon_{ijk}=1$ if $(ijk)$ is an even permutation of $(123)$, $\epsilon_{ijk}=-1$ if $(ijk)$ is an odd permutation of $(123)$, and $\epsilon_{ijk}=0$ otherwise. Combining this with the facts that $\nabla\cdot\bff=0$ and $\nabla\cdot\bfzeta=3$, we write

\begin{align}\label{eq: pos energy temp 1}
\begin{split}
 \int_{\partial\calB_1}(n\cdot f)^2dS=&\frac{1}{R}\int_{\partial\calB_1} (f\cdot\zeta)(f\cdot n) dS+\int_{\partial\calB_1}(f\cdot n)f\cdot(n-R^{-1}\zeta)dS\\
 =&\frac{1}{R}\int_{\calB_1}\nabla\cdot((\bff\cdot \bfzeta)\bff)d\bfx+\int_{\partial\calB_1}(f\cdot n)f\cdot(n-R^{-1}\zeta)dS\\
 =&\frac{1}{R}\int_{\calB_1}|\bff|^2d\bfx+\frac{1}{2R}\int_{\calB_1}\bfzeta\cdot\nabla|\bff|^2d\bfx+\int_{\partial\calB_1}(f\cdot n)f\cdot(n-R^{-1}\zeta)dS\\
 &+\frac{1}{R}\int_{\calB_{1}}\bff^i\bfzeta^j\epsilon_{ijk}\partial_k f_{0}d\bfx\\
 =&-\frac{1}{2R}\int_{\calB_1}|\bff|^2d\bfx+\frac{1}{2R}\int_{\calB_1}\nabla\cdot(|\bff|^2\bfzeta)d\bfx+\int_{\partial\calB_1}(f\cdot n)f\cdot(n-R^{-1}\zeta)dS\\
 &+\frac{1}{R}\int_{\calB_{1}}\partial_{k}(\bff^{i}\bfzeta^{j}\epsilon_{ijk}f_{0})d\bfx-\frac{1}{R}\int_{\calB_{1}}\partial_{k}\bff^{i}\bfzeta^{j}\epsilon_{ijk}f_{0}d\bfx\\
 =&-\frac{1}{2R}\int_{\calB_1}|\bff|^2d\bfx+\frac{1}{2R}\int_{\partial\calB_1}|f|^2\zeta\cdot n dS+\int_{\partial\calB_1}(f\cdot n)f\cdot(n-R^{-1}\zeta)dS\\
 &+\frac{1}{R}\int_{\partial\calB_{1}}(f\times \zeta)\cdot(n-R^{-1}\zeta)f_{0}dS-\frac{1}{2R}\int_{\calB_{1}}\epsilon_{ki\ell}\epsilon_{kij}\bfzeta^{j}f_{0}\partial_{\ell}f_{0}d\bfx\\
 =&-\frac{1}{2R}\int_{\calB_1}|\bff|^2d\bfx+\frac{1}{2}\int_{\partial\calB_1}|f|^2dS\\
 &+\frac{1}{2}\int_{\partial\calB_1}|f|^2(R^{-1}\zeta-n)\cdot n dS+\int_{\partial\calB_1}(f\cdot n)f\cdot(n-R^{-1}\zeta)dS\\
 &+\frac{1}{R}\int_{\partial\calB_{1}}(f\times \zeta)\cdot(n-R^{-1}\zeta)f_{0}dS+\frac{3}{2R}\int_{\calB_{1}}f_{0}^{2}d\bfx-\frac{1}{2R}\int_{\partial\calB_{1}}(\zeta\cdot n)f^{2}_{0}dS.
\end{split}
\end{align}
To pass to the last equality, we have used the face that

\begin{align*}
-\frac{1}{2}\epsilon_{ki\ell}\epsilon_{kij}\bfzeta^j\partial_\ell\bff_0^2=&-\frac{1}{2}\partial_\ell(\bfzeta^j\bff_0^2\epsilon_{ki\ell}\epsilon_{kij})+3\bff_0^2=-\nabla\cdot(\bff_0^2\bfzeta)+3\bff_0^2.
\end{align*}
Combining this with the previous identity, and for any constant $c\in(0,1)$, we get

\begin{align}\label{eq: pos energy temp 2}
\begin{split}
 \frac{1}{2}\int_{\partial\calB_1}(\bfn\cross\nabla f)\cdot f dS+&\frac{1}{2R}\int_{\partial\calB_1}|f|^2dS-\frac{3}{2R}\int_{\partial\calB_1}(n\cdot f)^2dS\\
 &= \frac{1}{2}\int_{\calB_1}|\nabla \bff|^2 d\bfx+\frac{3}{4R^2}\int_{\calB_1}|\bff|^2d\bfx-\frac{1+c}{4R}\int_{\partial\calB_1}|f|^2dS+\frac{c}{4R}\int_{\partial\calB_1}|f|^2dS\\
 &-\frac{3}{4R}\int_{\partial\calB_1}|f|^2(R^{-1}\zeta-n)\cdot n dS-\frac{3}{2R}\int_{\partial\calB_1}(f\cdot n)f\cdot(n-R^{-1}\zeta)dS\\
 &-\frac{1}{2}\int_{\partial\calB_{1}}(n\times\nabla f_{0})\cdot f dS-\frac{3}{2R^2}\int_{\partial\calB_{1}}(f\times \zeta)\cdot(n-R^{-1}\zeta)f_{0}dS\\
 &-\frac{9}{4R^2}\int_{\calB_{1}}f^{2}_{0}d\bfx+\frac{3}{4R^2}\int_{\partial\calB_{1}}(\zeta\cdot n)f^{2}_{0}dS.
\end{split}
\end{align}
Now applying Lemma~\ref{lem: trace} with $D=\calB_1$ and $\bfmu=\bfzeta$ we get

\begin{align*}
\begin{split}
 (1-\delta)R\int_{\partial\calB_1}|f|^2dS\leq \sup_{\calB_1}|\bfzeta|^2\int_{\calB_1}|\nabla\bff|^2d\bfx+4\int_{\calB_1}|\bff|^2d\bfx \leq (1+\delta)^2R^2\int_{\calB_1}|\nabla\bff|^2d\bfx+4\int_{\calB_1}|\bff|^2d\bfx.
\end{split}
\end{align*}
It follows that

\begin{align*}
\begin{split}
  \frac{1}{2}\int_{\calB_1}|\nabla \bff|^2 d\bfx+&\frac{3}{4R^2}\int_{\calB_1}|\bff|^2d\bfx-\frac{1+c}{4R}\int_{\partial\calB_1}|f|^2dS \\
  \geq& \frac{2-(1+\delta)^2(1+c)(1-\delta)^{-1}}{4}\int_{\calB_1}|\nabla \bff|^2 d\bfx\\
  &-\frac{(4(1+c)(1-\delta)^{-1}-3)(1+\varepsilon)}{4R^{2}}\int_{\calB_1}|\bff-\rho\AV(\bff)|^2d\bfx-\frac{CM\rho}{R^2}(\AV(\bff))^{2},
\end{split}
\end{align*}
where $\varepsilon>0$ is a small constant to be chosen, and $C>0$ is an absolute constant depending only on $c$, $\varepsilon$, and $\delta$.
Now since by assumption $|\bfx-\bfx_1|<(1+\delta)R$ for all $\bfx\in\calB_1$, the diameter $d$ of $\calB_1$ satisfies $d\leq 2(1+\delta)R$. Therefore applying the Poincar\'e estimate in Lemma~\ref{lem: Poincare} to the right-hand side of the estimate above, and choosing $\delta$, $\varepsilon$, and $c$ sufficiently small we get

\begin{align*}
\begin{split}
   \frac{1}{2}\int_{\calB_1}|\nabla \bff|^2 d\bfx+&\frac{3}{4R^2}\int_{\calB_1}|\bff|^2d\bfx-\frac{1+c}{4R}\int_{\partial\calB_1}|f|^2dS \geq -\frac{CM\rho}{R^{2}}(\AV(\bff))^{2}.
\end{split}
\end{align*}
Identity \eqref{eq: pos energy 2} follows from plugging this back into \eqref{eq: pos energy temp 2} and applying Cauchy-Schwarz to $\int_{\partial\calB_1}(n\cross\nabla f_0)\cdot f dS$.

We next prove \eqref{eq: f0 L2}.  Note that by the Poincar\'e estimate in Lemma~\ref{lem: Poincare}
  
 \begin{align*}
\begin{split}
  \int_{\calB_1}|\bff_0|^2d\bfx=&\int_{\calB_1}|\bff_0-\rho\AV(\bff_0)+\rho\AV(\bff_0)|^2d\bfx\leq 2\int_{\calB_1}|\bff_0-\rho\AV(\bff_0)|^2d\bfx+2\rho^2\int_{\calB_1}|\AV(\bff_0)|^2d\bfx\\
  \lesssim&R^2\int_{\calB_1}|\nabla\bff_0|^2d\bfx+M\rho(\AV(\bff_0))^2.
\end{split}
\end{align*}
But since $\bff_0$ is harmonic in $\calB_1$,

\begin{align*}
\begin{split}
  \int_{\calB_1}|\nabla\bff_0|^2d\bfx=\int_{\partial\calB_1}f_0\nabla_nf_0 dS\leq \frac{1}{2R}\int_{\partial\calB_1}|f_0|^2dS+\frac{R}{2}\int_{\partial\calB_1}|\nabla_nf_0|^2dS,
\end{split}
\end{align*}
proving \eqref{eq: f0 L2}.

\end{proof}


Since $a$ is equal to $\frac{GM}{R^2}$ to leading order (see Lemma \ref{lem: a}), and $\bfu$ has zero average in $\calB_1$, we can use Lemma~\ref{lem: pos energy} to prove positivity of the energy $\calE_f$ for $f=u$.


 We close this section by defining the main energies, motivated by the discussion above.


\begin{definition}\label{def: energies}
Let $u_0:=u$ and $u_\alpha:=\frac{1}{2}(I+\Hone)\frakD^\alpha u$ for any multi-index $\alpha$ with $|\alpha|\geq1$. We then define

\begin{align*}
\begin{split}
 \calE_j:=\sum_{|\alpha|=j}\calE_{\vecu_\alpha}, \quad \mand\quad \calE_{\leq k}=\sum_{j=1}^k\calE_{j},
\end{split}
\end{align*}
where $\calE_f$ is defined in \eqref{eq: model energy}.
\end{definition}


\subsection{Estimates on the Center of Mass $\bfx_1$ and Angular Momentum $\bfJ$} \label{subsec: x_1 J}

 In the next section we pose our bootstrap assumptions in terms of the distance $|\bfx_1|$ between the center of mass of each body to the origin, which is the center of mass of the entire system. We expect that as long as this distance is large, the size of $\bfx_1$ and its time derivatives can be approximated by the corresponding quantities in the motion of point masses. Our goal in this section is to make this claim rigorous by providing quantitative estimates. Recall once more the definitions

\begin{align*}
r_1:=|\bfx_1|,\quad \bfxi_1=\frac{\bfx_1}{|\bfx_1|},\quad \eta=\frac{R}{|\bfx_1|},\quad \beta=\frac{b}{R}.
\end{align*}
We also define the angular momentum vector $\bfJ$ as

\begin{align*}
\bfJ:=\bfx_1\cross\bfx_1',
\end{align*}
and the velocity $v_1$ as $v_1=\bfx_1'$ so that

\begin{align}\label{eq: v1}
|v_1|^2=|\bfx_1'|^2=|r_1'|^2+\frac{J^2}{r_1^2},
\end{align}
where $J=|\bfJ|$.
Recall from Lemma \ref{lem: acce x1} that $\bfx_1$ satisfies the ODE
 
 \begin{align}\label{eq: x1'' 2}
\bfx_1''=-\frac{1}{|\calB_{1}|}\int_{\calB_{1}}\nabla\bfpsi_2(t,\bfx)d\bfx=-\frac{GM\eta^2}{4R^2}\bfxi_1-\frac{1}{|\calB_{1}|}\int_{\calB_{1}}\bfE_1(t,\bfx)d\bfx=-\frac{GM\bfx_1}{4|\bfx_1|^3}-\frac{1}{|\calB_{1}|}\int_{\calB_{1}}\bfE_1(t,\bfx)d\bfx,
\end{align}
 where $\bfE_1$ is defined in \eqref{eq: bfE1}. In the remainder of this section we write 
 
 \begin{align*}
 \AVE(\bfE_1):=\frac{1}{|\calB_{1}|}\int_{\calB_{1}}\bfE_1(t,\bfx)d\bfx.
\end{align*}
It follows from a simple computation that
 
 \begin{align}\label{eq: r1''}
r_1''=-\frac{GM}{4|\bfx_1|^2}+\frac{|\bfx_1\cross \bfx_1'|^2}{|\bfx_1|^3}-\bfxi_1\cdot\AVE(\bfE_1)=-\frac{GM}{4|\bfx_1|^2}+\frac{J^2}{|\bfx_1|^3}-\bfxi_1\cdot \AVE(\bfE_1).
\end{align}
 The point mass energy $\scE_1$ of $\bfx_1$ is by definition
 
 \begin{align*}
\scE_1:=\frac{|v_1|^2}{2}-\frac{GM}{4r_1},
\end{align*}
so 

\begin{align}\label{eq: r1'}
|r_1'|^2=2\scE_1+\frac{GM}{2r_1}-\frac{J^2}{r_1^2}.
\end{align}
To be consistent with the assumptions of \thm{thm: tidal capture}, we write the initial velocity as

\begin{align*}
\begin{split}
 v_0=|\bfx_1'(T_0))|=\kappa\beta^{-\alpha}\sqrt{\frac{GM}{R}}, 
\end{split}
\end{align*}
where $\alpha\in[\frac{6}{7},1]$,  $\kappa^2\beta^{2-2\alpha}\gg1$, and $\kappa^{-14}\beta^{14\alpha-12}\gg1$ (see \rem{rem: initial data})\footnote{In practice we first choose $\kappa$ and then $\beta$ depending on $\kappa$ so that both conditions are satisfied. For instance, when $\alpha=1$ we choose $\kappa\gg1$ and $\beta$ such that $\beta^2\gg \kappa^{14}$. When $\alpha=\frac{6}{7}$ we choose $\kappa\ll1$ and $\beta$ such that $\beta^{\frac{2}{7}}\gg \kappa^{-2}$.}. To unify the notation, we introduce the new parameter

\begin{align*}
c_0:=\kappa \beta^{\frac{6}{7}-\alpha},
\end{align*}
so that $c_0\ll 1$ for all choices of $\alpha\in[\frac{6}{7},1]$ and

\begin{align*}
v_0=c_0\beta^{-\frac{6}{7}}\sqrt{\frac{GM}{R}}.
\end{align*}
This implies that

\begin{align}\label{eq: initial energies}
\begin{split}
 J^2(T_0)=c_0^2\beta^{\frac{2}{7}}GMR,\qquad 2\scE_1(T_0)= c_0^2\beta^{-\frac{12}{7}}\frac{GM}{R}-\frac{GM}{2R_1}.
\end{split}
\end{align}
Unlike the point-mass case, the energy $\scE_1$ and angular momentum $\bfJ$ are not conserved and instead satisfy the evolution laws

\begin{align}\label{eq: J'}
\bfJ'=\AVE(\bfE_1)\cross\bfx_1,\qquad \frac{d}{dt}J^2=2\left((\AVE(\bfE_1)\cdot\bfx_1)(\bfx_1\cdot\bfx_1')-(\AVE(\bfE_1)\cdot\bfx_1')|\bfx_1|^2\right),
\end{align}
and

\begin{align}\label{eq: scE'}
\scE_1'=-\AVE(\bfE_1)\cdot \bfx_1'.
\end{align}
In the following proposition we derive estimates on the radial velocity $|r_1'|$. Since $r_1'$ is initially negative, by continuity, it will remain negative for $t$ close to $T_0$.  It follows that there exists $\tilr$ such that for any $T>T_0$
\begin{align}\label{eq: rtil}
\begin{split}
  r(t)\geq \tilr\quad \forall t\leq T,\qquad \mathrm{implies} \qquad r_1'(t)<0 \quad\forall t\leq T.
\end{split}
\end{align}
We define $r_0$ to be the smallest such $\tilr$, that is, 

\begin{align*}
\begin{split}
 r_0:=\inf\{\tilr\quad\mathrm{s.t.~}\eqref{eq: rtil}\mathrm{~holds}\}, 
\end{split}
\end{align*}
and let $t_0$, if it exits, be the first time such that $r_1(t_0)=r_0$. Then by finding the roots of the quadratic polynomial in $r_1^{-1}$ in \eqref{eq: r1'} we see that $r_0$, if it exists, satisfies, 

\begin{align}\label{eq: r0}
\begin{split}
  r_0=\frac{4J^2(t_0)}{\sqrt{G^2M^2+32\scE_1(t_0)J^2(t_0)}+GM},
\end{split}
\end{align}
whenever $G^{2}M^{2}+32\scE_{1}(t_{0})J^{2}(t_{0})\geq 0$.
We prove the estimates on $r_1$ and its derivative under the mild assumption that the diameter of the second body $\calB_2$ is bound by a constant multiple of $R$. When we apply these estimates in the proof of energy estimates, the diameter of $\calB_2$ will in fact be close to $2R$ under our bootstrap assumptions.


\begin{proposition}\label{prop: r1' v1} The following statements hold if a solution to \eqref{eq: Euler} exists and the diameter of $\calB_2$ is no larger than $10R$. 

\begin{enumerate}

\item  If $\beta$ is sufficiently large, then for some universal constant $C$ and all $r\geq r_0$

\begin{align}\label{eq: v1'}
\begin{split}
\frac{GM}{C|r_1|^2}\leq |v_1'| \leq \frac{CGM}{|r_1|^2}.
\end{split}
\end{align} 

\item If $\beta$ is sufficiently large, then $\frac{3}{2}c_0^2R\beta^{\frac{2}{7}}\leq r_0\leq \frac{5}{2}c_0^2R\beta^{\frac{2}{7}}$ and $r_1'$ satisfies the following estimates for some universal constant $C$

\begin{align}\label{eq: r1' bounds}
\begin{split}
  \frac{1}{C}\sqrt{\frac{GM}{R}}c_0\beta^{-\frac{6}{7}}\leq |r_1'|\leq C\sqrt{\frac{GM}{R}}c_0\beta^{-\frac{6}{7}}\qquad &\mathrm{if}\qquad  r_1\geq Rc_0^{-2}\beta^{\frac{12}{7}},\\
  \frac{1}{C}\sqrt{\frac{GM}{R}}\eta^{\frac{1}{2}}\leq |r_1'|\leq C\sqrt{\frac{GM}{R}}\eta^{\frac{1}{2}}\qquad &\mathrm{if}\qquad 3Rc_0^2\beta^{\frac{2}{7}}\leq r_1\leq Rc_0^{-2}\beta^{\frac{12}{7}},\\
 \frac{1}{C}\frac{\sqrt{GM}}{c_0^2R}\beta^{-\frac{2}{7}}\sqrt{r_1-r_{0}}\leq |r_1'|\leq C\frac{\sqrt{GM}}{c_0^2R}\beta^{-\frac{2}{7}}\sqrt{r_1-r_{0}}\qquad &\mathrm{if}\qquad r_0<r_1\leq 4c_0^2R\beta^{\frac{2}{7}}.
\end{split}
\end{align}

\item  If $\beta$ is sufficiently large, $v_1$ satisfies the following estimates

\begin{align}\label{eq: v1 bounds}
\begin{split}
  \frac{1}{C}\sqrt{\frac{GM}{R}}c_0\beta^{-\frac{6}{7}}\leq |v_1|\leq C\sqrt{\frac{GM}{R}}c_0\beta^{-\frac{6}{7}}\qquad &\mathrm{if}\qquad  r_1\geq Rc_0^{-2}\beta^{\frac{12}{7}},\\
  \frac{1}{C}\sqrt{\frac{GM}{R}}\eta^{\frac{1}{2}}\leq |v_1|\leq C\sqrt{\frac{GM}{R}}\eta^{\frac{1}{2}}\qquad &\mathrm{if}\qquad 3c_0^2R\beta^{\frac{2}{7}}\leq r_1\leq Rc_0^{-2}\beta^{\frac{12}{7}},\\
 \frac{c_0^{-1}}{C}\sqrt{\frac{GM}{R}}\beta^{-\frac{1}{7}}\leq |v_1|\leq Cc_0^{-1}\sqrt{\frac{GM}{R}}\beta^{-\frac{1}{7}}\qquad &\mathrm{if}\qquad r_0<r_1\leq 4c_0^2R\beta^{\frac{2}{7}}.
\end{split}
\end{align}
\end{enumerate}
\end{proposition}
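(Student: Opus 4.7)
The proof will rest on three main ingredients: a sharp estimate on the error term $\AVE(\bfE_1)$ driving the deviation from Keplerian motion, near-conservation of the reduced quantities $J$ and (to a lesser extent) $\scE_1$, and a regime-by-regime analysis of the conservation law \eqref{eq: r1'}.

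First I would estimate $|\AVE(\bfE_1)|$. Writing $\bfE_1$ in the Taylor-remainder form \eqref{eq: bfE1}, the bracketed quantity is an $O(\eta^2)$ remainder of size $\eta^2|\bfzeta-\bfzeta'|^2/R^4$; using the hypothesis that the diameter of $\calB_2$ is bounded by $10R$ so that $|\bfzeta-\bfzeta'|\lesssim R$, this gives the pointwise estimate $|\bfE_1(t,\bfx)|\lesssim (GM/R^2)\eta^4$ for $\bfx\in\calB_1$, hence $|\AVE(\bfE_1)|\lesssim (GM/R^2)\eta^4$. Combined with \eqref{eq: x1'' 2}, the main term $-(GM/4r_1^2)\bfxi_1$ has size $(GM/R^2)\eta^2$, so the relative contribution of the error is $O(\eta^2)\leq O(\eta_+^2)$, which is small for $\beta$ large (since $\eta_+\sim c_0^{-2}\beta^{-2/7}\ll 1$). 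This immediately yields part~(1).

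For parts~(2) and (3) I would first establish near-conservation of the angular momentum $J$ and pin down the location of $r_0$. Using \eqref{eq: J'} with the estimate on $\AVE(\bfE_1)$ and the change of variables $dt=dr_1/r_1'$ gives
\begin{equation*}
\left|\frac{dJ^2}{dr_1}\right|\lesssim \frac{GMR^2}{r_1^2}\cdot\frac{|v_1|}{|r_1'|}.
\end{equation*}
A bootstrap argument using $|v_1|/|r_1'|\sim 1$ in the regimes where $J^2/r_1^2$ is not dominant yields $|J^2(t)-J^2(T_0)|\lesssim GMR^2/r_0=O(\eta_+^2)J^2(T_0)$, so $J^2(t_0)\approx c_0^2\beta^{2/7}GMR$. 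A similar (cruder) bound on $\scE_1$ suffices to conclude that $32\scE_1(t_0)J^2(t_0)\ll G^2M^2$; plugging into \eqref{eq: r0} then yields $r_0\approx 2J^2(t_0)/(GM)\in [\tfrac{3}{2},\tfrac{5}{2}]c_0^2R\beta^{2/7}$. The bounds on $|r_1'|$ and $|v_1|^2=|r_1'|^2+J^2/r_1^2$ then follow by identifying the dominant term in
\begin{equation*}
|r_1'|^2=2\scE_1+\frac{GM}{2r_1}-\frac{J^2}{r_1^2}
\end{equation*}
in each of the three regimes: the constant $2\scE_1\sim c_0^2\beta^{-12/7}(GM/R)$ dominates for $r_1\geq Rc_0^{-2}\beta^{12/7}$; the Newtonian term $GM/(2r_1)$ dominates for $3Rc_0^2\beta^{2/7}\leq r_1\leq Rc_0^{-2}\beta^{12/7}$; and near $r_0$ a first-order Taylor expansion of $|r_1'|^2$ about $r_0$, with slope $2r_1''(t_0)\approx GM/(2r_0^2)$ computed directly from \eqref{eq: r1''} using $J^2/r_0\approx GMr_0/2$, gives the $\sqrt{r_1-r_0}$ behavior. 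For $|v_1|$, the angular contribution $J^2/r_1^2\sim GM/r_0$ becomes dominant near $r_0$, producing the stated $c_0^{-1}\beta^{-1/7}\sqrt{GM/R}$ scaling.

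The main obstacle will be the careful tracking of the near-conservation of $J$, because a naive bound on the accumulated change of $\scE_1$ is \emph{not} small compared to $\scE_1(T_0)$ itself. The crucial observation is that the position of $r_0$ is controlled by $J^2/(GM)$ rather than by $\scE_1$, as long as $\scE_1 J^2\ll G^2M^2$; this makes the analysis robust against the poorer control of $\scE_1$. A secondary technical point is closing the bootstrap in the third regime $r_0<r_1<4c_0^2R\beta^{2/7}$, where the factor of $3$ in the lower endpoint of the intermediate regime is required so that $GM/(2r_1)-J^2/r_1^2$ does not vanish to leading order and one remains in the linear-in-$(r_1-r_0)$ region of $|r_1'|^2$.
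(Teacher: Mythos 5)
Your proposal follows essentially the same route as the paper: the same pointwise bound $|\bfE_1|\lesssim GM\eta^4/R^2$, the same translation into bounds on $d\scE_1/dt$ and $dJ^2/dt$ via the change of variables $dt = dr_1/r_1'$, the same three regimes, and the same use of \eqref{eq: r0} to locate $r_0$. The conceptual observation you single out — that $r_0$ is controlled by $J^2/(GM)$ as long as $\scE_1 J^2\ll G^2M^2$, and that $J^2$ drifts by a small \emph{relative} amount (ratio $\sim c_0^{-4}\beta^{-4/7}\to 0$) whereas $\scE_1$ does not — is exactly the point that makes the argument go through, and you have it right. Two places where you are looser than the paper: (i) near $r_0$ you invoke a first-order Taylor expansion with slope $2r_1''(t_0)$, but in the regime $r_0<r_1\leq 4c_0^2R\beta^{2/7}$ the difference $r_1-r_0$ can be comparable to $r_0$ itself, so a Taylor expansion is not pointwise small; the paper instead factors the quadratic in $r_1^{-1}$ exactly as $\bigl(\tfrac{8\scE_1(t_0)r_1+GM+\sqrt{G^2M^2+32\scE_1(t_0)J^2(t_0)}}{4r_1^2}\bigr)|r_1-r_0|$ and observes that the bracketed factor is bounded above and below by constants, which achieves what your Taylor expansion is meant to do but uniformly over the regime. (ii) You implicitly assume $r_0$ exists, i.e., that $r_1'$ vanishes in finite time; the paper proves this separately via a short contradiction argument using the sign and size of $r_1''$ near $r_1\sim 2c_0^2\beta^{2/7}R$. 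Also, in your first regime you say $2\scE_1$ "dominates" in \eqref{eq: r1'}, but at $r_1\sim Rc_0^{-2}\beta^{12/7}$ the term $GM/(2r_1)$ is of the same order; both terms are positive so the two-sided bound still holds, but the phrasing should be "$2\scE_1$ and $GM/(2r_1)$ are the two leading terms, both $\sim c_0^2\beta^{-12/7}GM/R$, while $J^2/r_1^2$ is lower order."
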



\begin{proof}

Note that under the assumption on the diameter of $\calB_2$ and by \eqref{eq: bfE1}, there exists a universal constant $c$ such that

\begin{align}\label{eq: r1' v1 temp 1}
\begin{split}
 |\bfE_1|\leq \frac{cGM\eta^4}{R^2},\quad \Rightarrow\quad\left|\frac{1}{|\calB_{1}|}\int_{\calB_{1}}\bfE_{1}(t,\bfx)d\bfx\right|\leq \frac{cGM\eta^4}{R^2}.
\end{split}
\end{align}
Combined with \eqref{eq: J'} and \eqref{eq: scE'} this implies that

\begin{align}\label{eq: r1' v1 temp 22}
\begin{split}
 \left|\frac{d\scE_1}{dt}\right|\leq \frac{cGM\eta^4|v_1|}{R^2}\quad\mand \quad \left|\frac{d}{dt}J^2\right|\leq \frac{cGM\eta^4r_1^2|v_1|}{R^2}.
\end{split}
\end{align}
The estimate on $|v_1'|$ is a direct consequence of \eqref{eq: x1'' 2} and \eqref{eq: r1' v1 temp 1}. Next we prove the estimates on $|r_1'|$ and $|v_1|$ in the first two stages, that is, when $r\geq 4Rc_0^2\beta^{ \frac{2}{7}  }$. For this we rewrite \eqref{eq: v1} and \eqref{eq: r1'} as

\begin{align}\label{eq: r1' new}
\begin{split}
|r_1'(t)|^2=2\scE_1(T_0)+\frac{GM}{2r_1(t)}-\frac{J^2(T_0)}{r_1^2(t)}+2\int_{T_0}^t\frac{d\scE_1}{ds}ds-\frac{1}{r_1^2(t)}\int_{T_0}^t\frac{d}{ds}J^2ds,  
\end{split}
\end{align}
and

\begin{align}\label{eq: v1 new}
\begin{split}
 |v_1(t)|^2=|r_1'(t)|^2+\frac{J^2(T_0)}{r_1^2(t)}+\frac{1}{r_1^2(t)}\int_{T_0}^t\frac{d}{ds}J^2ds. 
\end{split}
\end{align}
If $R_1$ is sufficiently large, it follows from \eqref{eq: initial energies} that the desired estimates hold at $t=T_0$ with $C=1+10^{-10}$. Therefore, it suffices to assume that the estimates on $|v_1|$ and $|r_1'|$ hold for $r_1\geq r_0\geq c_0^{-2}\beta^{12/7}R$ with $C=20$, and show that the same estimates hold with $C=10$. 
To prove this we
show that the contribution of the time integrals of $\left|\frac{d\scE_1}{dt}\right|$ and $\left|\frac{d}{dt}J^2\right|$ to \eqref{eq: r1' new} and \eqref{eq: v1 new} can be estimated. If $r_1\geq r_0\geq Rc_0^{-2}\beta^{\frac{12}{7}}$ it follows from \eqref{eq: r1' v1 temp 22} and the bootstrap assumption on $|v_1|$ and $|r_1'|$ that

\begin{align}\label{eq: r1' v1 temp 2}
\begin{split}
  \left|\frac{d\scE_1}{dt}\right|\lesssim \frac{GM\eta^4|r_1'|}{R^2}\quad\mand \quad \left|\frac{d}{dt}J^2\right|\lesssim \frac{GM\eta^4r_1^2|r_1'|}{R^2},
\end{split}
\end{align}
so

\begin{align*}
\begin{split}
 \int_{T_0}^t \left|\frac{d\scE_1}{dt}\right| dt=\int_{r_1(t)}^{r_1(T_0)}\left|\frac{d\scE_1}{dt}\right| \left|\frac{dt}{dr_1}\right|dr_1\lesssim \frac{GM\eta^3}{R}\lesssim \frac{c_0^6GM\beta^{-\frac{36}{7}}}{R},
\end{split}
\end{align*}
and similarly

\begin{align*}
\begin{split}
 \int_{T_0}^t  \left|\frac{d}{dt}J^2\right| dt\lesssim GMR\eta\lesssim c_0^2GMR \beta^{-\frac{12}{7}}.
\end{split}
\end{align*}
If $\beta$ is sufficiently large these estimates allow us to close the bootstrap assumption in the region $r\geq Rc_0^{-2}\beta^{\frac{12}{7}}$. The analysis in the second region $3Rc_0^2\beta^{\frac{2}{7}}\leq r_1\leq Rc_0^{-2}\beta^{\frac{12}{7}}$ is similar.  Indeed, the estimates at $r=Rc_0^{-2}\beta^{\frac{12}{7}}$ are satisfied, because we have already proved the estimates in the first stage with $C=10$. So as in the first stage, we assume that the estimates on $|v_1|$ and $|r_1'|$ hold with $C=20$ and improve this to $C=15$. Since \eqref{eq: r1' v1 temp 22} are still valid in the region region $3Rc_0^2\beta^{\frac{2}{7}}\leq r_1\leq Rc_0^{-2}\beta^{\frac{12}{7}}$,
\begin{align*}
\begin{split}
 &\int_{T_0}^t \left|\frac{d\scE_1}{dt}\right| dt\lesssim \frac{GM\eta^3}{R}\qquad\mand \qquad \int_{T_0}^t  \left|\frac{d}{dt}J^2\right| dt\lesssim GMR\eta. 
\end{split}
\end{align*}
It follows that if $3Rc_0^{2}\beta^{\frac{2}{7}}\leq r_1(t)\leq Rc_0^{-2}\beta^{\frac{12}{7}}$

\begin{align}\label{bounds on E and J}
\begin{split}
 \left| 2\scE_1(t)-\frac{GMc_0^{2}\beta^{-\frac{9}{5}}}{R}\right|\lesssim \frac{GM\eta^3}{R},\qquad c_0^2GMR\beta^{\frac{2}{7}}-CGMR\eta\leq J^2(t)\leq c_0^2GMR\beta^{\frac{2}{7}}+CGMR\eta.
\end{split}
\end{align}
Combining this with \eqref{eq: v1} and \eqref{eq: r1'} we can close the bootstrap assumption in the second region. Here we have used the fact that in the second stage $GM(2r_1)^{-1}-J^2(T_0)r_1^{-2}$ is positive. For the last part we argue a bit differently. Since we have already closed the bootstrap assumptions in the second region, the estimates on $|r_1'|$ and $|v_1|$ hold at the starting point of the last region where $r=4R c_0^2\beta^{\frac{2}{7}}$. Let $t_2$ be the time at which $r_1(t_2)=4c_0^{2}\beta^{\frac{2}{7}}R$. We still use a continuity argument by assuming the estimates for $|v_{1}|$ and $|r'_{1}|$ in the last region hold with a larger constant. Since the estimates \eqref{eq: r1' v1 temp 22} still hold, for $r(t)>r_{0}$,

\begin{align*}
\int_{t_{2}}^{t}\left|\frac{d\scE_{1}}{dt}\right|dt=\int_{r_{1}(t)}^{r_{1}(t_{2})}\left|\frac{d\scE_{1}}{dt}\right|\left|\frac{dt}{dr_{1}}\right|dr_{1}\lesssim\frac{GMc_0^{-7}}{R^{\frac{3}{2}}}\int_{r_{1}(t)}^{r_{1}(t_{2})}\frac{\beta^{-1}}{\sqrt{r'-r_{0}}}dr'\lesssim\frac{GMc_0^{-6}}{R}\beta^{-\frac{6}{7}}\lesssim \frac{GM\eta^{3}}{R}.
\end{align*}
It follows that the estimate on $\scE_{1}$ in \eqref{bounds on E and J} still holds in the last stage. A similar argument shows that the estimate on $J$ in \eqref{bounds on E and J} also holds in the last stage. Now we can argue that $r_0$ exists as follows.  From \eqref{bounds on E and J}  and \eqref{eq: r1''}, we see that $r_1''$ is positive for $r_1\in[c_0^2\beta^{2/7}R,3c_0^2\beta^{2/7}R]$. Assume for contradiction that $r_1'$ does not become zero in finite time. We first show that in this case $r_1$ must get as small as $\frac{3}{2}c_0^2\beta^{2/7}R$. If not, then since $r_1'$ is negative, it follows that $r_1$ will remain in the interval $(\frac{3}{2}c_0^2\beta^{2/7}R,3c_0^2\beta^{2/7}R]$. But then using  \eqref{bounds on E and J}  and \eqref{eq: r1''} we conclude that $r_1''$ has a non-trivial lower bound, contradicting the fact that $r_1'$ does not vanish in finite time. Let $t_{*}$ be the time at which $r_1(t_*)=\frac{3}{2}c_0^2\beta^{2/7}R$, and $t_3$ the time at which $r_1(t_3)=3c_0^2\beta^{2/7}R$. Integrating the identity

\begin{align*}
\begin{split}
\frac{d}{dt}|r_1'|^2=2r_1' r_1'',  
\end{split}
\end{align*}
from $t_3$ to $t_*$ and using \eqref{bounds on E and J}  and \eqref{eq: r1''} we conclude that $r_1'$ becomes positive, which is the desired contradiction.  Having proved that $r_1'$ vanishes in finite time, the formula \eqref{eq: r0} together with \eqref{bounds on E and J} give the desired range for $r_{0}$.

Next, we rewrite \eqref{eq: r1'} as

\begin{align*}
\begin{split}
|r_1'(t)|^2=2\scE_1(t_0)+\frac{GM}{2r_1(t)}-\frac{J^2(t_0)}{r_1^2(t)}+2(\scE_1(t)-\scE_1(t_0))-\frac{1}{r_1^2(t)}(J^2(t)-J^2(t_0))=I+II,
\end{split}
\end{align*}
where $I:=2\scE_1(t_0)+\frac{GM}{2r_1}-\frac{J^2(t_0)}{r_1^2(t)}$ and $II:=2(\scE_1(t)-\scE_1(t_0))-\frac{1}{r_1^2(t)}(J^2(t)-J^2(t_0))$. A simple calculation using \eqref{eq: r1'} and \eqref{eq: r0} shows that

\begin{align*}
\begin{split}
  I=\left(\frac{8\scE_1(t_0) r_1+GM+\sqrt{G^2M^2+32\scE_1(t_0)J^2(t_0)}}{4r_1^2}\right)|r_1-r_0|.
\end{split}
\end{align*}
It follows that

\begin{align*}
\begin{split}
\frac{2}{C c_0^4} \frac{GM}{R^2}\beta^{-\frac{4}{7}}|r-r_0|\leq  I\leq \frac{C}{2 c_0^4} \frac{GM}{R^2}\beta^{-\frac{4}{7}}|r-r_0|,
\end{split}
\end{align*}
so it suffices to show that the contribution of $II$ can be estimated using the bootstrap assumptions. For this we use \eqref{eq: scE'} and \eqref{eq: J'} to write

\begin{align}\label{eq: r1' v1 temp 3}
\begin{split}
 II=\frac{2}{r_1^2(t)}\int_{t_0}^t \AVE(\bfE_1)(s)\cdot v_1(s) (r_1^2(s)-r_1^2(t))ds-\frac{2}{r_1^2(t)}\int_{t_0}^t\bfx_1(s)\cdot \AVE(\bfE_1)(s) r_1(s)r_1'(s)ds. 
\end{split}
\end{align}
For the second term above we use \eqref{eq: r1' v1 temp 1} to estimate

\begin{align*}
\begin{split}
 \left|\frac{2}{r_1^2(t)}\int_{t_0}^t\bfx_1(s)\cdot \AVE(\bfE_1)(s) r_1(s)r_1'(s)ds\right| \lesssim \frac{GM\eta^4}{R^2}|r-r_0|\lesssim \frac{GM\beta^{-\frac{8}{7}}}{ c_0^8R^2}|r-r_0|.
\end{split}
\end{align*}
To estimate the first term on the right-hand side in \eqref{eq: r1' v1 temp 3} we note that since $r_1(t)$ is decreasing for $t\geq t_0$, we have 

\begin{align*}
\begin{split}
  |r_1^2(t)-r_1^2(s)|=|r_1(t)-r_1(s)| |r_1(t)+r_1(s) |\leq 2r_1(t)|r_1(t)-r_0|.
\end{split}
\end{align*}
It follows that under the bootstrap assumptions

\begin{align*}
\begin{split}
 \left| \frac{2}{r_1^2(t)}\int_{t_0}^t \AVE(\bfE_1)(s)\cdot v_1(s) (r_1^2(s)-r_1^2(t))ds \right| \lesssim& \frac{GM\eta^4\beta^{-\frac{1}{7}}}{c_0R^{\frac{5}{2}}}\left(\int_{r_0}^{r_1}\frac{d\tilr_1}{\sqrt{\tilr_1-r_0}}\right)|r_1-r_0|\lesssim \frac{GM\beta^{-\frac{8}{7}}}{c_0^8R^{2}}|r-r_0|,
\end{split}
\end{align*}
which can be controlled if $\beta$ is sufficiently large. This proves the desired estimates for $|r_1'|$. The estimates on $|v_1|$ are simpler and similar to the arguments in the first two stages. We omit the details. 
 
\end{proof}


Finally for future use we record the formulas for $v_1''$ and $r_1'''$ in the following lemma.


\begin{lemma}\label{lem: 3rd derivative}
$\bfx_1'''$ and $r_1'''$ satisfy

\begin{align}\label{eq: x1'''}
\begin{split}
\bfx'''_{1}
=&-\frac{GMv_{1}}{4|\bfx_{1}|^{3}}+\frac{3GM\bfx_{1}}{4|\bfx_{1}|^{4}}r'_{1}-\frac{d}{dt}\left(\frac{1}{|\calB_{1}|}\int_{\calB_{1}}\bfE_{1}(t,\bfx)d\bfx\right),
\end{split}
\end{align} 
and 

\begin{align}\label{eq: r1'''}
\begin{split}
r'''_{1}
=&\frac{GMr'_{1}}{2|\bfx_{1}|^{3}}-\frac{3J^{2}r'_{1}}{|\bfx_{1}|^{4}}+2\left(\left(\frac{1}{|\calB_{1}|}\int_{\calB_{1}}\bfE_{1}(t,\bfx)d\bfx\right)\cdot\bfxi_{1}\right)\frac{r_1'}{|\bfx_{1}|}-2\left(\frac{1}{|\calB_{1}|}\int_{\calB_{1}}\bfE_{1}(t,\bfx)d\bfx\right)\cdot\frac{\bfx'_{1}}{|\bfx_{1}|}\\
&-\bfxi_{1}\cdot\frac{d}{dt}\left(\frac{1}{|\calB_{1}|}\int_{\calB_{1}}\bfE_{1}(t,\bfx)d\bfx\right)-\bfxi'_{1}\cdot\left(\frac{1}{|\calB_{1}|}\int_{\calB_{1}}\bfE_{1}(t,\bfx)d\bfx\right).
\end{split}
\end{align}

\end{lemma}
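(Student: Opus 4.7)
The plan is to prove both identities by direct time differentiation of the second-order identities \eqref{eq: x1'' 2} and \eqref{eq: r1''} already established in this section. There is no qualitatively new content; the only care required is bookkeeping of the chain rule applied to $|\bfx_1|^{-k}$ factors and correct use of the evolution equation \eqref{eq: J'} for $J^2$.

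For the first identity \eqref{eq: x1'''}, I would start from
\begin{align*}
\bfx_1''=-\frac{GM\bfx_1}{4|\bfx_1|^3}-\AVE(\bfE_1), \qquad \AVE(\bfE_1):=\frac{1}{|\calB_1|}\int_{\calB_1}\bfE_1(t,\bfx)d\bfx,
\end{align*}
and differentiate in $t$. Using $\tfrac{d}{dt}|\bfx_1|^3=3|\bfx_1|^2 r_1'$ and $\bfx_1'=v_1$, the Kepler term contributes
\begin{align*}
-\frac{GM}{4}\,\frac{d}{dt}\!\left(\frac{\bfx_1}{|\bfx_1|^3}\right)=-\frac{GMv_1}{4|\bfx_1|^3}+\frac{3GM\bfx_1\,r_1'}{4|\bfx_1|^4},
\end{align*}
which together with $-\tfrac{d}{dt}\AVE(\bfE_1)$ gives \eqref{eq: x1'''}.

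For the second identity \eqref{eq: r1'''}, I differentiate
\begin{align*}
r_1''=-\frac{GM}{4|\bfx_1|^2}+\frac{J^2}{|\bfx_1|^3}-\bfxi_1\cdot\AVE(\bfE_1).
\end{align*}
The first term produces $\tfrac{GM\,r_1'}{2|\bfx_1|^3}$, and the third term produces $-\bfxi_1'\cdot\AVE(\bfE_1)-\bfxi_1\cdot\tfrac{d}{dt}\AVE(\bfE_1)$. The only nontrivial step is the middle term, where I would use \eqref{eq: J'} to get
\begin{align*}
\frac{(J^2)'}{|\bfx_1|^3}-\frac{3J^2 r_1'}{|\bfx_1|^4}=\frac{2(\AVE(\bfE_1)\cdot\bfx_1)(\bfx_1\cdot\bfx_1')}{|\bfx_1|^3}-\frac{2\AVE(\bfE_1)\cdot\bfx_1'}{|\bfx_1|}-\frac{3J^2 r_1'}{|\bfx_1|^4},
\end{align*}
and then invoke the elementary identity $\bfx_1\cdot\bfx_1'=|\bfx_1|\,r_1'$ to rewrite the first term on the right as $2(\AVE(\bfE_1)\cdot\bfxi_1)\tfrac{r_1'}{|\bfx_1|}$. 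Assembling these contributions yields \eqref{eq: r1'''}.

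There is no real obstacle; the proof is a routine but careful computation. The only minor subtlety is recognizing that the seemingly asymmetric placement of $r_1'$ versus $\bfx_1'$ in the formula \eqref{eq: r1'''} comes precisely from the splitting of $(J^2)'/|\bfx_1|^3$ into its two natural pieces via $\bfx_1\cdot\bfx_1'=|\bfx_1|r_1'$, after which one must resist the temptation to further simplify $\bfxi_1'$ (which the statement leaves in that form).
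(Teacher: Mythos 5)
Your proof is correct and takes essentially the same approach as the paper's (which simply says the identities follow by differentiating \eqref{eq: x1'' 2} and \eqref{eq: r1''} and using \eqref{eq: J'}); your computation supplies the same routine bookkeeping the paper leaves implicit.
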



\begin{proof}
These formulas follow by differentiating \eqref{eq: x1'' 2} and \eqref{eq: r1''} and using \eqref{eq: J'}.
\end{proof}


We end this subsection with the following simple consequence of \prop{prop: r1' v1} which will be used many times in the remainder of the paper.


\begin{lemma}\label{lem: eta integration}
Under the assumptions of \prop{prop: r1' v1} and for $r_1\geq r_0$, the following estimate holds for any $m\geq1$, and some universal constant $C=C(m)$:

\begin{align*}
\begin{split}
 \int_{T_0}^t\eta^{m+1}(s)|v_1(s)|ds\leq C R\eta^m(t).
\end{split}
\end{align*}
\end{lemma}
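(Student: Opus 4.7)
The natural strategy is to exploit monotonicity of $r_1$ on $[T_0,t]$ to change variables from $s$ to $r_1$. Since $r_1(t)\geq r_0$ by hypothesis, the defining property \eqref{eq: rtil} of $r_0$ ensures $r_1'(s)<0$ for every $s\in[T_0,t]$. Thus $r_1$ is a strictly decreasing diffeomorphism of $[T_0,t]$ onto $[r_1(t),r_1(T_0)]$, and the substitution $ds=dr_1/r_1'$ yields
\[
\int_{T_0}^{t}\eta^{m+1}(s)|v_1(s)|\,ds=\int_{r_1(t)}^{r_1(T_0)}\eta^{m+1}(r_1)\,\frac{|v_1(r_1)|}{|r_1'(r_1)|}\,dr_1.
\]
The remaining task is to estimate the ratio $|v_1|/|r_1'|$ across the three regimes singled out in \prop{prop: r1' v1} and integrate.

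The plan is to split the range of integration into those three regimes: (i) $r_1\geq Rc_0^{-2}\beta^{12/7}$, (ii) $3c_0^2R\beta^{2/7}\leq r_1\leq Rc_0^{-2}\beta^{12/7}$, and (iii) $r_0<r_1\leq 4c_0^2R\beta^{2/7}$. In regimes (i) and (ii), the matched two-sided bounds of \eqref{eq: r1' bounds} and \eqref{eq: v1 bounds} yield $|v_1|/|r_1'|\leq C$ with $C$ universal. A second change of variables $\eta=R/r_1$ (so $dr_1=-R\,\eta^{-2}\,d\eta$) then rewrites $\eta^{m+1}\,dr_1=-R\,\eta^{m-1}\,d\eta$, and the contribution from these two regimes is controlled by $CR\int_{0}^{\eta(t)}\eta^{m-1}\,d\eta\leq CR\eta^m(t)/m$, which is of the form demanded.

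The one subtlety is regime (iii), where $|r_1'|$ vanishes at $r_0$ and the pointwise ratio $|v_1|/|r_1'|$ is singular. Here the sharp bounds of \prop{prop: r1' v1} give $|v_1|/|r_1'|\leq C\,c_0\beta^{1/7}\sqrt{R}/\sqrt{r_1-r_0}$, which is integrable near $r_0$. Because $r_1$ is confined to an interval of length $\lesssim c_0^{2}R\beta^{2/7}$ on which $\eta\sim (c_0^2\beta^{2/7})^{-1}$ (up to universal constants, using $r_0\geq \tfrac{3}{2}c_0^2R\beta^{2/7}$), the supremum of $\eta^{m+1}$ can be pulled out of the integral and $\int dr_1/\sqrt{r_1-r_0}$ evaluated explicitly on this interval, giving a contribution of size
\[
\lesssim (c_0^2\beta^{2/7})^{-(m+1)}\cdot c_0\sqrt{R}\beta^{1/7}\cdot c_0\sqrt{R}\beta^{1/7}=R(c_0^2\beta^{2/7})^{-m}.
\]
Since $r_1(t)$ lying in regime (iii) forces $\eta(t)\geq (4c_0^2\beta^{2/7})^{-1}$, this last quantity is in turn $\lesssim R\eta^m(t)$, as required. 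Summing the three regional contributions proves the bound. No serious obstacle is expected: the only delicate point is the endpoint at $r_0$, and the square-root lower bound on $|r_1'|$ in \eqref{eq: r1' bounds} was established precisely to make that endpoint integrable.
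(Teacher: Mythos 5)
Your proof is correct and follows essentially the same strategy as the paper's: change variables from $s$ to $r_1$, use the comparability of $|v_1|$ and $|r_1'|$ from Proposition~\ref{prop: r1' v1} in the first two regimes to reduce to $\int (R/r_1)^{m+1}\,dr_1$, and handle the last regime by pulling out the (essentially constant) factor $\eta^{m+1}$ and integrating the $(r_1-r_0)^{-1/2}$ singularity explicitly. The paper cuts between regimes at $3c_0^2R\beta^{2/7}$ rather than $4c_0^2R\beta^{2/7}$, but both lie in the overlap where all the relevant bounds from Proposition~\ref{prop: r1' v1} hold, so the difference is immaterial.
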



\begin{proof}
We write

\begin{align*}
\begin{split}
 \int_{{T_0}}^t\eta^{m+1}(s)|v_1(s)|ds=\int_{r_1(t)}^{R_1}\eta^{m+1}\left|\frac{ds}{dr_1}\right||v_1|dr_1.
\end{split}
\end{align*}
According to \prop{prop: r1' v1} in the first two stages of the evolution, that is, when $r_1\geq 3c_0^2\beta^{2/7}R$, $|v_1|$ and $|r_1'|$ are comparable and the statement of the lemma follows from the identity above. In the final stage of the evolution when $r_1$ is between $r_0$ and $3c_0^2\beta^{2/7}R$ we get 

\begin{align*}
\begin{split}
   \int_{{T_0}}^t\eta^{m+1}(s)|v_1(s)|ds \lesssim \int_{3c_0^2\beta^{2/7}R}^{R_1}\eta^{m+1}dr_1+ \eta^m(t)c_0^{-1}\beta^{-\frac{1}{7}}R^{\frac{1}{2}}  \int_{r_1(t)}^{3c_0^2\beta^{2/7}R}\frac{dr_1}{\sqrt{r_1-r_0}}\lesssim R\eta^m(t).
\end{split}
\end{align*}
\end{proof}


\subsection{Bootstrap Assumptions and Estimates on the Error Terms}\label{subsec: bootstrap}

In this subsection we estimate various error terms appearing in the equations for $u_j$, $j\geq0$, in terms of the energies in Definition~\ref{def: energies}. We will do this under smallness bootstrap assumptions on $u$. The precise bootstrap assumptions, which are motivated by the analysis in Section~\ref{subsec: x_1 J} and the ODE \eqref{eq: h intro}, are as follows. Suppose $T>0$ is such that $u(t,p)$ is a solution of \eqref{eq: u} for $t\leq T$ and $r_1(t)\geq r_0$ for $t\leq T$, and let $\ell\geq 5$ be a fixed integer. We assume that for some fixed constants $C_1$, all $p,q\in S_R$, and all $t\leq T$
  
\begin{align}\label{eq: bootstrap}
\begin{cases}
&R^{-1}\|\frakD^\alpha u(t)\|_{L^2(S_R)} +\sqrt{\frac{R}{GM}}\|\partial_t\frakD^\alpha u(t)\|_{L^2(S_R)}   \leq C_1\eta^4(t)|v_1(t)|,\quad|\alpha|\leq\ell \\\\
& \frac{1}{2}\leq \frac{|\xi(t,p)-\xi(t,q)|}{|p-q|}\leq 2,\quad |\xi(t,p)-\bfx_1(t)|\leq 5R,\quad \frac{1}{2}\leq\frac{|N|}{|\sg|}\leq 2\\\\
&R^{-1}\|\frakD^\alpha(|N||\sg|^{-1})\|_{L^2(S_R)} \leq C_1\eta^3,\quad 1\leq|\alpha|\leq \ell\\\\
&R^{-2}\|\frakD^{\alpha}(\zeta-Rn)\|_{L^2(S_R)}\leq C_1\eta^3,\quad |\alpha|\leq \ell\\\\
&\frac{R}{GM}\left\| \frakD^\alpha\left(a-\frac{GM}{R^2}\right)\right\|_{L^2(S_R)} \leq C_1\eta^3 ,\quad |\alpha|\leq \ell\\\\
&R^{-2}\|\frakD^{\alpha}\zeta\|_{L^2(S_R)}+R^{-1}\|\frakD^{\gamma}n\|_{L^2(S_R)}\leq C_1\eta^3, \quad 1\leq|\alpha|\leq\ell+1, \quad 1\leq|\gamma|\leq \ell\\\\
&R^{-2}\|\frakD^\alpha h\|_{L^2(S_R)}+R^{-1}\|\frakD^\alpha\mu\|_{L^2(S_R)}+R\|\frakD^\alpha \nu\|_{L^2(S_R)}\leq C_1\eta^3,\quad |\alpha|\leq\ell
\end{cases}.
\end{align} 
Here we recall from \eqref{eq: mu def} and \eqref{eq: nu def} that
 
 \begin{align*}
\mu=1-\frac{R^3}{|\zeta|^3}\quad \mand\quad \nu=\frac{|\zeta|^2+R|\zeta|+R^2}{|\zeta|^3(|\zeta|+R)}-\frac{3}{2R^2}.
\end{align*}
 
 Before proceeding to the estimates, we observe that by the Sobolev embedding, the bootstrap assumptions \eqref{eq: bootstrap} imply the simple pointwise bound
 
 \begin{align}\label{eq: Du pointwise}
\|\frakD^\alpha u(t)\|_{L^\infty(S_R)}\lesssim C_1\eta^4(t)|v_1(t)|,\quad |\alpha|\leq\ell-2.
\end{align}

We start with the following simple estimate on the expression $n\cross\nabla f$.


\begin{lemma}\label{lem: n cross nabla}
Suppose the bootstrap assumptions \eqref{eq: bootstrap} hold. Then 

\begin{align*}
\begin{split}
| n\cross\nabla f| \lesssim |\sd\xi||\sd f|\lesssim \frac{1}{R}\sum_{i=1}^3 |\Omega_if|.
\end{split}
\end{align*}
\end{lemma}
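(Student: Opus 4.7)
The plan is to recognize $n\cross\nabla f$ as, essentially, the intrinsic surface gradient of $f$ on $\partial\calB_1$ (pulled back to $S_R$ via $\xi$), to compare this intrinsic gradient with the one on $(S_R,\sg)$ using the bootstrap, and then to invoke the classical identity that on the round sphere the three rotations $\Omega_{ij}$ form an overdetermined frame whose Gram matrix equals $R^2$ times the tangential projection. This dispatches the right-most inequality; the left-most is just Cauchy--Schwarz in coordinates.

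For the first inequality, I would work in local coordinates $(\alpha,\beta)$ on $S_R$ chosen orthonormal with respect to $\sg$ at the point under consideration. The definitional formula
\[
n\cross\nabla f = \frac{\xi_\beta f_\alpha - \xi_\alpha f_\beta}{|N|}
\]
together with Cauchy--Schwarz gives
\[
|\xi_\beta f_\alpha - \xi_\alpha f_\beta| \leq \bigl(|\xi_\alpha|^2+|\xi_\beta|^2\bigr)^{1/2}\bigl(f_\alpha^2+f_\beta^2\bigr)^{1/2} \;=\; |\sd\xi|\,|\sd f|,
\]
while the bootstrap bound $|N|/|\sg|\geq 1/2$ reads $|N|\geq 1/2$ in these coordinates. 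Dividing yields $|n\cross\nabla f|\lesssim |\sd\xi|\,|\sd f|$.

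For the second inequality, the bi-Lipschitz bootstrap $|\xi(t,p)-\xi(t,q)|\leq 2|p-q|$ immediately gives $|\sd\xi|\lesssim 1$ pointwise, so the task reduces to $|\sd f|\lesssim R^{-1}\sum_i |\Omega_i f|$. A direct computation from $\Omega_{ij}=\bfx_i\partial_j-\bfx_j\partial_i$ gives the pointwise identity on $S_R$
\[
\sum_{1\leq i<j\leq 3}(\Omega_{ij})^k(\Omega_{ij})^\ell \;=\; R^2\,\delta^{k\ell} - \bfx^k\bfx^\ell,
\]
which is precisely $R^2$ times the orthogonal projection onto $T_{\bfx}S_R$. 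Extending $f$ radially to be constant, so that $\bfx\cdot\nabla f=0$, this identity specializes to $\sum_{i<j}|\Omega_{ij}f|^2 = R^2\,|\snabla_{S_R} f|^2 = R^2\,|\sd f|^2$, whence $|\sd f|\leq R^{-1}\sum_{i<j}|\Omega_{ij}f|$, which is exactly the desired bound (modulo relabeling of the three rotations).

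There is no substantial obstacle; the entire argument is routine surface calculus once the bootstrap assumptions provide a uniform comparison between the Lagrangian-induced metric on $\partial\calB_1$ and the round metric on $S_R$. The only minor verification is that the bi-Lipschitz constant in \eqref{eq: bootstrap} translates to a pointwise bound on $|\sd\xi|$, which is standard for smooth $\xi$.
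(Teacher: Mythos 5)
Your proof is correct and follows the same route as the paper's: the first bound is Cauchy--Schwarz applied to the coordinate formula $n\cross\nabla f = |N|^{-1}(\xi_\beta f_\alpha - \xi_\alpha f_\beta)$ together with the lower bound $|N|/|\sg|\geq 1/2$ (the paper invokes the equivalent two-form identity $Q(f,g)\,dS = df\wedge dg$), and the second follows from the bootstrap bound $|\sd\xi|\lesssim 1$ plus the standard Gram identity $\sum_{i<j}(\Omega_{ij})^k(\Omega_{ij})^\ell = R^2\delta^{k\ell}-\bfx^k\bfx^\ell$ for the rotational frame on $S_R$, which the paper takes for granted but you usefully spell out.
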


 
 \begin{proof}
 The first estimate is a direct consequence of \eqref{eq: Q d}, and the second a consequence of the first and the fact that,  by \eqref{eq: bootstrap},  
 $|\sd\xi|\lesssim 1$.
 \end{proof}
 

More generally, we have the following estimate on the quadratic form $Q$.
 
 
 \begin{lemma}\label{lem: Q}
 Suppose the bootstrap assumptions \eqref{eq: bootstrap} hold. Then

\begin{align*}
|Q(f,g)|\lesssim|\sd f||\sd g|\lesssim\frac{1}{R^{2}}\left(\sum_{i=1}^{3}|\Omega_{i}f|\right)\left(\sum_{i=1}^{3}|\Omega_{i}g|\right).
\end{align*}
 \end{lemma}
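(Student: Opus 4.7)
The plan is to reduce both inequalities to pointwise linear algebra on the tangent space to the surface, using the coordinate-invariant identity $Q(f,g)\,dS = \sd f \wedge \sd g$ from \eqref{eq: Q d}, together with the observation that the three rotation fields $\Omega_1,\Omega_2,\Omega_3$ span the tangent space of $S_R$ at every point.

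For the first inequality, I would work pointwise on $\partial\calB_1$ in an orthonormal frame $\{\tau_1,\tau_2\}$ for the tangent plane, write $\sd f = (\tau_1 f)\tau_1^\ast + (\tau_2 f)\tau_2^\ast$ and similarly for $\sd g$, and read off
\begin{equation*}
Q(f,g) = (\tau_1 f)(\tau_2 g) - (\tau_2 f)(\tau_1 g).
\end{equation*}
The Lagrange identity $(a_1b_2-a_2b_1)^2 \le (a_1^2+a_2^2)(b_1^2+b_2^2)$ then gives $|Q(f,g)| \le |\sd f|\,|\sd g|$. Since the bootstrap assumption \eqref{eq: bootstrap} yields $\frac{1}{2}\le|N|/|\sg|\le 2$ together with the bi-Lipschitz bound on $\xi$, the intrinsic tangential gradients computed on $\partial\calB_1$ and on $S_R$ through the pullback by $\xi$ are comparable up to absolute constants, so this bound remains valid with $\sd$ interpreted as the tangential gradient on $S_R$ with constants of order one absorbed into $\lesssim$.

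For the second inequality, I would prove the pointwise bound $|\sd f|(p) \lesssim R^{-1}\sum_i|\Omega_i f|(p)$ on $S_R$. The key input is that for any $p \in S_R$ and any tangent vector $v$ (so $v\cdot p = 0$), the vector $\omega := R^{-2}(p\times v)$ satisfies $\omega\perp p$, $|\omega| = |v|/R$, and $v = \omega\times p$. Thus
\begin{equation*}
v\cdot\nabla f = (\omega\times p)\cdot\nabla f = \sum_{i=1}^{3}\omega_i\,(e_i\times p)\cdot\nabla f = \sum_{i=1}^{3}\omega_i\,\Omega_i f,
\end{equation*}
and Cauchy--Schwarz gives $|\sd f(v)| \le (|v|/R)\sqrt{\sum_i|\Omega_i f|^2} \le (|v|/R)\sum_i|\Omega_i f|$. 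Taking the supremum over unit $v$ yields the claimed estimate on $S_R$, which then transfers to $\partial\calB_1$ by the bi-Lipschitz property of $\xi$.

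Combining the two bounds gives the full statement. There is no real obstacle here: the lemma is a direct pointwise consequence of \eqref{eq: Q d} together with the rotation-spans-tangent-space fact already used in Lemma \ref{lem: n cross nabla}. The only thing to track carefully is that the factor of $R^{-1}$ in the second inequality is intrinsic to the scaling of the rotation fields on $S_R$ (namely $|\Omega_i|_p|=|e_i\times p|\le R$), and that the bootstrap assumptions ensure all comparisons between $S_R$- and $\partial\calB_1$-quantities cost only absolute constants.
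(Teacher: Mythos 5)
Your proof is correct and follows the same route as the paper's. The paper's proof of Lemma~\ref{lem: Q} is simply "this follows from the same argument as in the proof of Lemma~\ref{lem: n cross nabla}," which in turn invokes the identity $Q(f,g)\,dS = df\wedge dg$ from \eqref{eq: Q d} together with the bootstrap-derived bi-Lipschitz bounds on $\xi$ and the fact that $\Omega_1,\Omega_2,\Omega_3$ span the tangent space of $S_R$; you have spelled out exactly those two steps (the pointwise Lagrange identity for $|Q(f,g)|\le|\sd f||\sd g|$, and the decomposition of an arbitrary tangent vector in the rotation fields to get the $R^{-1}$ factor).
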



 \begin{proof}
 This follows from the same argument as in the proof of \lem{lem: n cross nabla}.
 \end{proof}


Using Lemma~\ref{lem: n cross nabla} we are able to compare $L^2$ norms on $S_R$ with $L^2$ norms on $\partial\calB_1$. For this, we first recall that by equation \eqref{eq: Nt}

\begin{align*}
\partial_t\frac{|N|}{|\sg|}=-\frac{|N|}{|\sg|} n\cdot(n\cross\nabla u).
\end{align*}
Since $\lim_{t\to T_0}\frac{|N|}{|\sg|}=1$, we conclude that

\begin{align}\label{eq: N sg}
\begin{split}
\frac{|N|}{|\sg|}-1=&-\int_{T_0}^t\frac{|N(s)|}{|\sg|} n(s)\cdot(n(s)\cross\nabla u(s))ds\\
=&-\int_{T_0}^t\left(\frac{|N(s)|}{|\sg|}-1\right) n(s)\cdot(n(s)\cross\nabla u(s))ds-\int_{T_{0}}^{t}n(s)\cdot(n(s)\cross\nabla u(s))ds.
\end{split}
\end{align}


\begin{lemma}\label{lem: L2 comp}
Suppose the bootstrap assumptions \eqref{eq: bootstrap} hold and $\beta$ is sufficiently large. Then

\begin{align*}
\begin{split}
\left| \frac{|N|}{|\sg|}-1\right| \lesssim \eta^3.
\end{split}
\end{align*}
Moreover, if $f$ is a function defined on $\partial\calB_1$ then

\begin{align*}
\frac{3}{4}\|f\circ\xi\|_{L^2(S_R)}\leq \|f\|_{L^2(\partial\calB_1)}\leq \frac{5}{4}\|f\circ\xi\|_{L^2(S_R)}.
\end{align*} 

\end{lemma}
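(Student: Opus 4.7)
The strategy is to first prove the pointwise estimate $|\,|N|/|\sg| - 1\,| \lesssim \eta^3$ by feeding the bootstrap bounds into the integral identity \eqref{eq: N sg}, and then to deduce the $L^2$ comparison through the change of variables $dS_{\partial\calB_1} = (|N|/|\sg|)\, dS_{S_R}$.

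For the first part, the key is to control $n(s)\cdot(n(s)\cross\nabla u(s))$ uniformly on $S_R$. By \lem{lem: n cross nabla} and the identity $\Omega_i u = \frakD_i u + \bfe_i\cross u$,
\begin{equation*}
\left|n\cross\nabla u\right| \lesssim R^{-1}\sum_i|\Omega_i u| \lesssim R^{-1}\left(|u| + \sum_i|\frakD_i u|\right),
\end{equation*}
and since $\ell \geq 5 \geq 2$, the pointwise bootstrap consequence \eqref{eq: Du pointwise} gives
\begin{equation*}
\|n\cdot(n\cross\nabla u)(s)\|_{L^\infty(S_R)} \lesssim R^{-1}\eta^4(s)|v_1(s)|.
\end{equation*}
Substituting into \eqref{eq: N sg} and using the bootstrap bound $|N|/|\sg| \leq 2$ to absorb the prefactor, we obtain
\begin{equation*}
\left|\tfrac{|N|}{|\sg|}(t) - 1\right| \lesssim \int_{T_0}^t \|n\cdot(n\cross\nabla u)(s)\|_{L^\infty(S_R)}\, ds \lesssim R^{-1}\int_{T_0}^t \eta^4(s)|v_1(s)|\, ds \lesssim \eta^3(t),
\end{equation*}
where the final step applies \lem{lem: eta integration} with $m=3$.

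For the second part, the identity $dS_{\partial\calB_1} = (|N|/|\sg|)\,dS_{S_R}$ yields $\|f\|_{L^2(\partial\calB_1)}^2 = \int_{S_R}(f\circ\xi)^2 (|N|/|\sg|)\,dS_{S_R}$. The previous step places the ratio $|N|/|\sg|$ inside $[1-C\eta^3,\, 1+C\eta^3]$, and since $\eta \leq \eta_+ = R/r_+$ can be made arbitrarily small (by choosing $\beta$, and hence $r_+/R$, sufficiently large in the scaling of \rem{rem: initial data}), this interval is contained in $[9/16,\, 25/16]$. Taking square roots gives the asserted two-sided comparison. The proof is essentially routine; the only point requiring care is ensuring the implicit constant in the $\eta^3$ bound of Step 1 is beaten by the smallness of $\eta_+^3$, which is exactly the role of the hypothesis that $\beta$ be sufficiently large.
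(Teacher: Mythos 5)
Your proof is correct and follows essentially the same route as the paper's: feed the pointwise bound on $n\cdot(n\cross\nabla u)$ coming from \eqref{eq: Du pointwise} and \lem{lem: n cross nabla} into the integral identity \eqref{eq: N sg}, then integrate in time via \lem{lem: eta integration} with $m=3$, and deduce the $L^2$ comparison from the change-of-variables formula $dS_{\partial\calB_1}=(|N|/|\sg|)\,dS_{S_R}$. The paper's proof is a terse citation of the same four ingredients; you have merely unpacked it, including the correct observation that the bootstrap bound $|N|/|\sg|\le 2$ makes the prefactor in \eqref{eq: N sg} harmless so no Gronwall-type splitting is needed.
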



\begin{proof}
The first estimate is a consequence of \eqref{eq: Du pointwise}, \lems{lem: eta integration} and \ref{lem: n cross nabla} , and \eqref{eq: N sg}. The second statement follows from writing

\begin{align*}
\|f\|_{L^2(\partial\calB_1)}^2=\int_{S_R}|f\circ\xi(p)|^2 \frac{|N(p)|}{|\sg(p)|}dS(p)=\|f\circ\xi\|_{L^2(S_R)}^2+\int_{S_R}|f\circ\xi(p)|^2 \left(\frac{|N(p)|}{|\sg(p)|}-1\right)dS(p),
\end{align*}
and using the bound on $\left| \frac{|N|}{|\sg|}-1\right|$.
\end{proof}


We now state the following important result which allows us to estimate error terms involving $\Hone$ and $\Kone$.


\begin{lemma}\label{lem: H estimates}

Suppose the bootstrap assumptions \eqref{eq: bootstrap} hold and that $\beta$ is sufficiently large. Then the following estimates hold for any function $f$ defined on $\partial\calB_1$ and any $k\leq\ell$:

\begin{align*}
&\|\frakD^{k}\Hone f\|_{L^2(S_R)}\lesssim \sum_{j\leq k}\|\frakD^jf\|_{L^2(S_R)},\\
&\|\frakD^{k}\Kone f\|_{L^2(S_R)}+\|\frakD^{k}(I+\Kone)^{-1} f\|_{L^2(S_R)}\lesssim \sum_{j\leq k}\|\frakD^jf\|_{L^2(S_R)},\\
&\|\frakD^{k-1}[\frakD,\Hone]f\|_{L^2(S_R)}+\|[\frakD^{k},\Hone]f\|_{L^2(S_R)}\lesssim \eta^3\sum_{j\leq k-1}\|\frakD^jf\|_{L^2(S_R)},\\
&\|\frakD^{k-1}[\frakD,n\cross\nabla]f\|_{L^2(S_R)}+\|[\frakD^{k},n\cross\nabla]f\|_{L^2(S_R)}\lesssim R^{-1}\eta^3\sum_{j\leq k}\|\frakD^jf\|_{L^2(S_R)},\\
&\|\frakD^{k-1}[n\cross\nabla,\Hone]f\|_{L^2(S_R)}\lesssim R^{-1}\eta^3\sum_{j\leq k-1}\|\frakD^jf\|_{L^2(S_R)},\\
&\|\frakD^k[\partial_t,\Hone]f\|_{L^2(S_R)}\lesssim R^{-1}\eta^4|v_1|\sum_{j\leq k}\|\frakD^jf\|_{L^2(S_R)},\\
&\|\frakD^k[\partial_t^2,\Hone]f\|_{L^2(S_R)}\lesssim \sqrt{\frac{GM}{R^5}}\eta^4|v_1|\sum_{j\leq k}\|\frakD^jf\|_{L^2(S_R)}+R^{-1}\eta^4|v_1|\sum_{j\leq k}\|\partial_t\frakD^jf\|_{L^2(S_R)},\\
&\|\nabla_nf\|_{L^2(S_R)}\lesssim \|n\cross\nabla f\|_{L^2(S_R)} \lesssim R^{-1}\sum_{j\leq 1}\|\frakD^jf\|_{L^2(S_R)}.
\end{align*}

\end{lemma}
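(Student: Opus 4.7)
The plan is to establish all eight estimates simultaneously by reducing each to a singular integral on the Lipschitz perturbation $\partial\calB_1$ of $S_R$, with a geometrically small prefactor extracted from the bootstrap hypotheses \eqref{eq: bootstrap}. The unifying observation is that differentiating the Cauchy kernel $K(\xi,\xi')=-\frac{1}{4\pi}\frac{\xi-\xi'}{|\xi-\xi'|^3}$ by either $\frakD$ or $\partial_t$ produces a kernel of Calder\'on-commutator type, with numerator involving $\frakD\zeta'-\frakD\zeta$ or $u'-u$, respectively, both of which are of size $\eta^3$ (or $R\eta^4|v_1|$) in the appropriate Lipschitz norm by \eqref{eq: bootstrap} and the pointwise bound \eqref{eq: Du pointwise}. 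Such integrals are $L^2(\partial\calB_1)$-bounded by the Calder\'on--Coifman--McIntosh--Meyer estimates collected in Appendix~\ref{app: CM}, and by \lem{lem: L2 comp} the $L^2(\partial\calB_1)$ and $L^2(S_R)$ norms are equivalent under \eqref{eq: bootstrap}. All estimates will be proved by induction on $k$.

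For the first two lines, I would write $\frakD^k\Hone f=\Hone\frakD^k f+[\frakD^k,\Hone]f$ and combine the $L^2$-boundedness of $\Hone$ with the commutator estimate (line three), which I turn to momentarily. The analogous bound for $\Kone=\Re\Hone$ is immediate. For $(I+\Kone)^{-1}$, I would first show boundedness on $L^2$ by perturbing from $(I+\bfK)^{-1}$ on $S_R$: writing $(I+\Kone)=(I+\bfK)(I+(I+\bfK)^{-1}(\Kone-\bfK))$, invertibility follows from the classical invertibility of $I+\bfK$ on $L^2(S_R)$ combined with $\|\Kone-\bfK\|\ll 1$, which holds under \eqref{eq: bootstrap} via direct estimation of the kernel difference. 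Higher derivatives are controlled inductively: letting $g=(I+\Kone)^{-1}f$, one has $\frakD^k g=(I+\Kone)^{-1}(\frakD^k f-[\frakD^k,\Kone]g)$, and the $\eta^3$-smallness of the commutator $[\frakD^k,\Kone]$ allows lower-order terms in $g$ to be absorbed.

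The commutator estimates form the heart of the lemma. For $[\frakD,\Hone]$ I would use the pointwise formula \eqref{comm D H final} from \lem{lem: frakD com}, which displays $[\frakD,\Hone]f$ as an integral operator whose kernel carries the Lipschitz-small factor $\frakD'\zeta'-\frakD\zeta$; iterating and applying Calder\'on commutator estimates yields the stated bound, with $\eta^3$ coming from the bootstrap estimate on $\frakD^\alpha\zeta$. The estimate for $[\frakD,n\cross\nabla]$ is handled directly via the pointwise identity \eqref{eq: D n cross nabla}, using the bootstrap bounds on $\frakD(|N||\sg|^{-1})$ and $\partial(\frakD\xi)=\partial(\frakD\zeta)$. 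The mixed commutator $[n\cross\nabla,\Hone]$ is treated by expressing $n\cross\nabla=\frac{1}{|N|}(\zeta_\beta\partial_\alpha-\zeta_\alpha\partial_\beta)$ and integrating by parts against the kernel of $\Hone$ as in the proof of \eqref{comm D H final}, reducing it once more to a singular integral with kernel carrying $\frakD\zeta$-differences.

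For the time-commutators, differentiating $K(\xi,\xi')$ in $t$ produces a kernel with $u-u'$ in the numerator, so $[\partial_t,\Hone]f$ is a Calder\'on-commutator-type integral whose norm is bounded by the Lipschitz norm of $u$; the bootstrap \eqref{eq: bootstrap} together with the Sobolev embedding \eqref{eq: Du pointwise} give the factor $R^{-1}\eta^4|v_1|$. For $[\partial_t^2,\Hone]$ I would use $[\partial_t^2,\Hone]=[\partial_t,[\partial_t,\Hone]]+2[\partial_t,\Hone]\partial_t$: the second piece yields the term in $\|\partial_t\frakD^j f\|_{L^2}$, while the first produces kernels with either $u_t-u_t'$ or $(u-u')\otimes(u-u')$ factors; using the bootstrap bound $\|\partial_t\frakD^\alpha u\|_{L^2}\lesssim\sqrt{GM/R}\,\eta^4|v_1|$ (and its Sobolev-embedded pointwise version for $|\alpha|\leq\ell-2$) extracts the natural scale $\sqrt{GM/R^5}\,\eta^4|v_1|=R^{-1}\sqrt{GM/R^3}\,\eta^4|v_1|$. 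Finally, for the Dirichlet-Neumann line, the bound $\|n\cross\nabla f\|_{L^2(S_R)}\lesssim R^{-1}\sum_{j\leq 1}\|\frakD^j f\|_{L^2(S_R)}$ is an immediate consequence of \lem{lem: n cross nabla} combined with $|\Omega_i f|\leq|\frakD_i f|+|f|$. The comparison $\|\nabla_n f\|_{L^2}\lesssim\|n\cross\nabla f\|_{L^2}$ for the harmonic extension then follows from a Rellich identity: integrating the divergence-free vector field $|\nabla\bff|^2\bfzeta-2(\bfzeta\cdot\nabla\bff)\nabla\bff$ over $\calB_1$ gives an identity comparing normal and tangential boundary derivatives, with bootstrap-controlled error terms since $\bfzeta\cdot n\geq(1-\eta^3)R$ by \eqref{eq: bootstrap}.

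The main obstacle is the careful bookkeeping needed to avoid circular dependencies among the estimates, since the boundedness of $\frakD^k\Hone$ uses the commutator bound for $[\frakD^k,\Hone]$, which in turn requires estimates on $\frakD^j\zeta$, $\frakD^j n$, and $\frakD^j(|N||\sg|^{-1})$ up to a specific order. The inductive argument must therefore proceed in tandem across all lines, using at each level only the lower-order bounds established in the previous step; this is precisely why the bootstrap hypotheses in \eqref{eq: bootstrap} are stated up to regularity $\ell+1$ for $\zeta$ but only $\ell$ for the other geometric quantities.
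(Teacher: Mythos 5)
Your proposal is essentially correct and follows the same strategy as the paper: every estimate is reduced to a Calder\'on-commutator-type singular integral whose kernel carries a Lipschitz-small factor ($\frakD'\zeta'-\frakD\zeta$, $u'-u$, or $n-R^{-1}\zeta$) extracted from \eqref{eq: bootstrap}, and then bounded via \props{prop: L2 C1} and \ref{prop: L2 C2} and \lem{lem: L2 comp}. The paper's proof is a terse appeal to the same chain of subsidiary lemmas (\lems{lem: frakD com}, \ref{lem: D com K}, \ref{lem: n cross nabla}, \ref{lem: L2 comp}, \ref{lem: DN K}, \ref{lem: H commutators}, \cor{cor: Dk H com}, equation \eqref{eq: n cross nabla H}, \thm{thm: K inverse}), plus the telescoping identity $[\frakD^k,\Hone]=\sum_{i}\frakD^i[\frakD,\Hone]\frakD^{k-1-i}$, which you also invoke.

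Two subsidiary steps are handled differently, and each variant is legitimate. For $(I+\Kone)^{-1}$ you propose a perturbation from $(I+\bfK)^{-1}$ on $S_R$, using that $\|\Kone-\bfK\|_{L^2\to L^2}\lesssim\eta^3\ll1$ under the bootstrap (a fact implicit in \lem{lem: operators}); the paper instead quotes the Verchota--Kenig invertibility result \thm{thm: K inverse}, whose operator bound depends only on the Lipschitz constant. Your perturbative route is more self-contained and is actually quantitatively sharper here, while the paper's appeal to \thm{thm: K inverse} is more robust, since it does not require closeness to a model domain. For the last line, you propose a Rellich identity (integrating $|\nabla\bff|^2\bfzeta-2(\bfzeta\cdot\nabla\bff)\nabla\bff$ over $\calB_1$), whereas the paper deduces $\|\nabla_nf\|_{L^2}\lesssim\|n\cross\nabla f\|_{L^2}$ from the layer-potential representation in \lem{lem: DN K} together with \thm{thm: K inverse}; these are two faces of the same harmonic-analysis fact, and under the bootstrap assumptions ($\bfzeta\cdot n\geq(1-\delta)R>0$ and $|\bfzeta_T|\lesssim R\eta^3$, so the domain is nearly starlike) the Rellich route closes cleanly. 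Your algebraic expansion $[\partial_t^2,\Hone]=[\partial_t,[\partial_t,\Hone]]+2[\partial_t,\Hone]\partial_t$ is a valid alternative to reading the bound off the explicit formula for $[\partial_t^2,\Hone]$ in \lem{lem: H commutators}, and yields the same two-term structure in the final estimate.
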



\begin{proof}

This is a corollary of \lems{lem: frakD com}, \ref{lem: D com K},  \ref{lem: n cross nabla}, \ref{lem: L2 comp}, \ref{lem: DN K}, and \ref{lem: H commutators}, \cor{cor: Dk H com}, equation \eqref{eq:  n cross nabla H}, \props{prop: L2 C1} and \ref{prop: L2 C2}, and \thm{thm: K inverse}. Note that the estimate on $[\frakD^k,\Hone]f$ follows  from the estimate on $\frakD^{k-1}[\frakD,\Hone]f$ by writing 

\begin{align*}
\begin{split}
 [\frakD^{k},\Hone]f= [\frakD,\Hone]\frakD^{k-1}f+\frakD[\frakD,\Hone]\frakD^{k-2}f+\dots+\frakD^{k-1}[\frakD,\Hone]f,
\end{split}
\end{align*}
and a similar argument can be used to estimate $[\frakD^k,n\cross\nabla]f$.
\end{proof}


Recall the definition $ u_\alpha=\frac{1}{2}(I+\Hone)\frakD^\alpha u$. As a corollary of \lems{lem: L2 comp} and \ref{lem: H estimates} we can control the $L^2(S_R)$ norms of $\frakD^\alpha u$ in terms of the $L^2(\partial\calB_1)$ norms of $\vecu_\alpha$ which appear in the energies.


\begin{corollary}\label{cor: u L2 comp}
Suppose the bootstrap assumptions \eqref{eq: bootstrap} hold and that $\beta$ is sufficiently large. Then for all $|\alpha|\leq \ell$

\begin{align}\label{eq: u L2 comp}
\begin{split}
 \|\frakD^\alpha u\|_{L^2(S_R)}\lesssim  \sum_{|\gamma|\leq|\alpha|}\|\vecu_\gamma\|_{L^2(\partial\calB_1)},\quad \mand\quad  \|\partial_t\frakD^\alpha u\|_{L^2(S_R)}\lesssim \sum_{|\gamma|\leq|\alpha|} \|\partial_t\vecu_\gamma\|_{L^2(\partial\calB_1)}.
\end{split}
\end{align}
\end{corollary}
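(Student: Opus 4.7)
The starting point is the observation that $u=(\bfv-\bfx_1')\circ\xi$ is the boundary restriction of the Clifford analytic (i.e.\ curl and divergence free) vector field $\bfu=\bfv-\bfx_1'$ in $\calB_1$, and hence $\Hone u=u$. Since $\frakD^\alpha$ preserves vector-valuedness when acting on a vector field, $\frakD^\alpha u$ is pure vector, whereas $u_\alpha=\tfrac12(I+\Hone)\frakD^\alpha u$ will in general acquire a nonzero scalar part. Writing $[\frakD^\alpha,\Hone]u=\frakD^\alpha u-\Hone\frakD^\alpha u$ (using $\Hone u=u$) and taking vector parts in the definition of $u_\alpha$, the plan is to derive the key algebraic identity
\begin{equation}\label{eq: key id proposal}
\frakD^\alpha u=\vecu_\alpha+\tfrac{1}{2}\vect\,[\frakD^\alpha,\Hone]u.
\end{equation}
For $\alpha=0$ this reduces to $u=\vecu_0$ and both estimates are trivial.

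Given \eqref{eq: key id proposal}, the first inequality follows by taking $L^2(S_R)$ norms, applying Lemma~\ref{lem: L2 comp} to exchange $L^2(\partial\calB_1)$ and $L^2(S_R)$ norms of $\vecu_\alpha$, and invoking the commutator bound
\[\|[\frakD^\alpha,\Hone]u\|_{L^2(S_R)}\lesssim \eta^3\sum_{j\leq|\alpha|-1}\|\frakD^j u\|_{L^2(S_R)}\]
from Lemma~\ref{lem: H estimates}. An induction on $|\alpha|$ then closes the estimate: the $\eta^3$ smallness ensures that the commutator contribution is strictly lower order, so that the inductive hypothesis for $|\gamma|\leq|\alpha|-1$ produces only a sum over $\|\vecu_\gamma\|_{L^2(\partial\calB_1)}$ for $|\gamma|\leq|\alpha|$.

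For the second estimate, I will differentiate \eqref{eq: key id proposal} in time, using that $\Omega_i$ and the constant $\bfe_i$ are time-independent so $[\partial_t,\frakD]=0$. This gives
\begin{equation*}
\partial_t\frakD^\alpha u=\partial_t\vecu_\alpha+\tfrac12\vect\bigl([\frakD^\alpha,\Hone]\partial_t u+[\frakD^\alpha,[\partial_t,\Hone]]u\bigr).
\end{equation*}
The term $[\frakD^\alpha,\Hone]\partial_t u$ is handled exactly as above, except that the inductive hypothesis is applied to $\partial_t u$ rather than $u$; the commutator $[\frakD^\alpha,[\partial_t,\Hone]]u$ is expanded as $\frakD^\alpha[\partial_t,\Hone]u-[\partial_t,\Hone]\frakD^\alpha u$ and each piece is bounded by the estimate $\|\frakD^k[\partial_t,\Hone]f\|_{L^2(S_R)}\lesssim R^{-1}\eta^4|v_1|\sum_{j\leq k}\|\frakD^jf\|_{L^2(S_R)}$ of Lemma~\ref{lem: H estimates}. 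Using the bootstrap bound $\|\frakD^j u\|_{L^2(S_R)}\lesssim R\eta^4|v_1|$ from \eqref{eq: bootstrap}, this second commutator becomes a genuine error dominated by $\partial_t\vecu_\alpha$-type terms.

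The main obstacle is keeping the induction consistent in the time-derivative step, since $\partial_t u$ itself must be controlled in $L^2(S_R)$ for lower-order multi-indices, and the $[\frakD^\alpha,[\partial_t,\Hone]]u$ term mixes the two estimates in \eqref{eq: u L2 comp}. This is resolved by carrying out a joint induction on $|\alpha|$ where at each stage both inequalities are established simultaneously, with the $\eta^4|v_1|$ smallness guaranteeing that the coupling between the two is strictly lower order.
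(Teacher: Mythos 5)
Your proposal matches the paper's proof almost exactly: the same identity $\frakD^\alpha u=\vecu_\alpha+\tfrac12\vect[\frakD^\alpha,\Hone]u$ (derived from $\Hone u=u$, the fact that $\frakD^\alpha u$ is pure vector, and $\Re(I+\Hone)\frakD^\alpha u=-\Re[\frakD^\alpha,\Hone]u$), the same differentiation in time, the same invocations of Lemma~\ref{lem: L2 comp} and Lemma~\ref{lem: H estimates}, and the same induction on $|\alpha|$. The paper in fact states the time-derivative step more tersely (``Similarly $\partial_t\frakD^{k+1}u=\partial_t\vecu_{k+1}+\tfrac12\vect\,\partial_t[\frakD^{k+1},\Hone]u$'') and simply points to the lemmas; your expansion $\partial_t[\frakD^\alpha,\Hone]u=[\frakD^\alpha,\Hone]\partial_t u+[\frakD^\alpha,[\partial_t,\Hone]]u$ is precisely what that step entails.

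One remark on your final sentence. The contribution of $[\frakD^\alpha,[\partial_t,\Hone]]u$, via the $\|\frakD^k[\partial_t,\Hone]f\|\lesssim R^{-1}\eta^4|v_1|\sum_{j\le k}\|\frakD^jf\|$ bound and the first estimate of the corollary (already proved at order $|\alpha|$), is naturally controlled by $R^{-1}\eta^4|v_1|\sum_{|\gamma|\le|\alpha|}\|\vecu_\gamma\|_{L^2(\partial\calB_1)}$, \emph{not} by $\sum\|\partial_t\vecu_\gamma\|$. No amount of bootstrap smallness converts a $\|\vecu_\gamma\|$ bound into a $\|\partial_t\vecu_\gamma\|$ bound, since there is no a priori lower bound on the latter; so ``dominated by $\partial_t\vecu_\alpha$-type terms'' is not quite accurate as stated. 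The clean way to read the corollary is as a single coupled inequality with both $\|\vecu_\gamma\|$ and $\|\partial_t\vecu_\gamma\|$ allowed on the right, which is all that is needed downstream in Proposition~\ref{prop: Du Eu} since the energy controls both. The paper's own proof is equally terse on this point, so this is a cosmetic rather than substantive issue.
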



\begin{proof}

We prove \eqref{eq: u L2 comp} inductively on $|\alpha|$. If $|\alpha|=0$, this follows from the fact that $\Hone u=u$ and $u$ is a vector as well as Lemma \ref{lem: L2 comp}. Assume \eqref{eq: u L2 comp} holds for all $|\alpha|\leq k \leq \ell-1$. We write $\frakD^{k+1}u$

\begin{align*}
\begin{split}
  \frakD^{k+1}u=&\frac{1}{2}(I+\Hone)\frakD^{k+1}u+\frac{1}{2}[\frakD^{k+1},\Hone]u=\vecu_{k+1}+\frac{1}{2}\Re(I+H)\frakD^{k+1}u+\frac{1}{2}[\frakD^{k+1},\Hone]u\\
  =&\vecu_{k+1}-\frac{1}{2}\Re[\frakD^{k+1},\Hone]u+\frac{1}{2}[\frakD^{k+1},\Hone]u=\vecu_{k+1}+\frac{1}{2}\vect[\frakD^{k+1},\Hone]u.
\end{split}
\end{align*}
Similarly $\partial_t\frakD^{k+1}u=\partial_t\vecu_{k+1}+\frac{1}{2}\vect\,\partial_t[\frakD^{k+1},\Hone]u.$ Estimate \eqref{eq: u L2 comp} for $|\alpha|=k+1$ now follows from \lems{lem: L2 comp} and \ref{lem: H estimates} and the induction hypothesis. 
\end{proof}


 To be able to use Lemma \ref{lem: pos energy} 
 to replace the $L^2$ norms on the right-hand side of \eqref{eq: u L2 comp}  by the energies defined in \defn{def: energies}, we need to show that the second fundamental form of $\partial\calB_1$ is positive. This is an easy consequence of the bootstrap assumptions.


\begin{lemma}\label{lem: 2nd fund form}

Suppose the bootstrap assumptions \eqref{eq: bootstrap} hold and that $\beta$ is sufficiently large. Then the second fundamental form of $\partial\calB_1$ is positive.

\end{lemma}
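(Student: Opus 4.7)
The plan is to show that the second fundamental form of $\partial\calB_1$ is pointwise a small perturbation of the second fundamental form of the round sphere of radius $R$ centered at $\bfx_1$, and hence inherits its strict positivity once $\eta$ is small. Work in arbitrary orientation-preserving local coordinates $(\alpha,\beta)$ on $S_R$, transported to $\partial\calB_1$ by the Lagrangian map $\xi$. In the sign convention in which the round sphere has positive second fundamental form, set
\begin{align*}
II_{ab}=\partial_a n\cdot\partial_b\xi,\qquad g_{ab}=\partial_a\xi\cdot\partial_b\xi.
\end{align*}
Since $\xi=\zeta+\bfx_1(t)$ and $\bfx_1$ is independent of the surface coordinates, $\partial_a\xi=\partial_a\zeta$. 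Introducing the auxiliary quantity $w:=n-R^{-1}\zeta$, whose norm measures the failure of $\partial\calB_1$ to be a sphere of radius $R$ centered at $\bfx_1$, the key algebraic identity is
\begin{align*}
II_{ab}-R^{-1}g_{ab}=\partial_a w\cdot\partial_b\zeta.
\end{align*}
For the round sphere $w\equiv 0$ and $II=R^{-1}g$ is strictly positive, so the problem reduces to showing that $|\partial w|$ is sufficiently small pointwise.

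The fourth line of the bootstrap \eqref{eq: bootstrap} says $\|\frakD^\alpha(\zeta-Rn)\|_{L^2(S_R)}\leq C_1R^2\eta^3$ for $|\alpha|\leq\ell$, i.e.\ $\|\frakD^\alpha w\|_{L^2(S_R)}\lesssim R\eta^3$. Writing $\Omega_i=\frakD_i+\bfe_i\cross$ and iterating the triangle inequality on $|\alpha|$, with the pointwise-bounded algebraic cross products absorbed into previously-controlled lower-order terms, one converts these bounds into $\|\Omega^\alpha w\|_{L^2(S_R)}\lesssim R\eta^3$ for $|\alpha|\leq\ell$. Since $\ell\geq 5$ and $S_R$ is two-dimensional, Sobolev embedding on $S_R$ yields
\begin{align*}
\|w\|_{L^\infty(S_R)}+\|\Omega w\|_{L^\infty(S_R)}\lesssim\eta^3.
\end{align*}
The three rotational fields $\Omega_1,\Omega_2,\Omega_3$ span $T_pS_R$ at every point, so each coordinate field $\partial_a$ is a bounded (with $R^{-1}$ scaling when $|\partial_ap|\sim R$) linear combination of them, giving $|\partial_aw|\lesssim R^{-1}\eta^3$ pointwise. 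Combined with the bi-Lipschitz bound $|\partial_b\zeta|\lesssim R$ from the second line of \eqref{eq: bootstrap}, this yields
\begin{align*}
|II_{ab}-R^{-1}g_{ab}|\lesssim\eta^3\qquad\mathrm{pointwise.}
\end{align*}

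Equivalently, raising one index by $g^{-1}$ (whose operator norm is $\lesssim R^{-2}$ by the bi-Lipschitz bound on $\xi$), the Weingarten map satisfies $W=R^{-1}\mathrm{Id}+O(R^{-1}\eta^3)$. Taking $\beta$ sufficiently large makes $\eta$ small enough that $W$ is strictly positive at every point of $\partial\calB_1$, proving the lemma. The main obstacle is purely the bookkeeping required to pass between $L^2$-bounds on $\frakD$-derivatives (the form in which the bootstrap provides information), pointwise bounds on ordinary coordinate derivatives, and the correct $R$-scalings in the trace and Sobolev embeddings; no nonlinear or time-integral argument is needed, since the bootstrap directly contains all the quantitative information required.
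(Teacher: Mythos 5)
Your proof is correct, but it takes a genuinely different route from the paper's. The paper writes the second fundamental form matrix $M(t)$ as $M(T_0)+\int_{T_0}^t\dot M(s)\,ds$, invokes the evolution equation \eqref{eq: Nt} for the normal, the bootstrap bounds on $u$, and the time-integral estimate of \lem{lem: eta integration} to show $M(t)$ remains close to its initial value $R^{-1}\mathrm{Id}$. You instead bypass the time integration entirely and prove the stronger, ``stateless'' assertion that the second fundamental form is a small pointwise perturbation of that of the round sphere whenever the geometric quantity $\zeta-Rn$ and its $\frakD$-derivatives are $L^2$-small: this follows from the clean algebraic identity $II_{ab}-R^{-1}g_{ab}=\partial_a(n-R^{-1}\zeta)\cdot\partial_b\zeta$, the fourth line of \eqref{eq: bootstrap}, and Sobolev embedding on $S_R$. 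Your version is more modular, since the conclusion depends only on the instantaneous geometry of $\partial\calB_1$ rather than on its evolution history, and it isolates the single bootstrap line that is actually responsible for the positivity; the paper's version is briefer in presentation and closer in spirit to the other time-integrated estimates of \prop{prop: bootstrap by E}. One minor slip worth fixing: your parenthetical ``with $R^{-1}$ scaling when $|\partial_a p|\sim R$'' is internally inconsistent. If both $|\partial_a p|\sim R$ and $|\Omega_i|\sim R$, the change of basis has $O(1)$ coefficients so $|\partial_a w|\lesssim\eta^3$ (not $R^{-1}\eta^3$), and then $|\partial_aw\cdot\partial_b\zeta|\lesssim R\eta^3$; since $R^{-1}g_{ab}\sim R$ in that normalization, the relative perturbation is still $\eta^3$ and the conclusion holds, but as written the two $R$-scalings do not match.
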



\begin{proof}
In arbitrary orientation-preserving local coordinates, we need to show that the eigenvalues of the matrix

\begin{align*}
\begin{split}
M(t)= -\pmat{\frac{1}{|\zeta_\alpha|^2}\zeta_{\alpha\alpha}(t)\cdot n(t)&\frac{1}{|\zeta_\alpha||\zeta_\beta|}\zeta_{\alpha\beta}(t)\cdot n(t)\\ \frac{1}{|\zeta_\alpha||\zeta_\beta|}\zeta_{\beta\alpha}(t)\cdot n(t)&\frac{1}{|\zeta_\beta|^2}\zeta_{\beta\beta}(t)\cdot n(t)} 
\end{split}
\end{align*}
are positive. Note that since $\partial\calB_1$ at time $t=T_0$ is the round sphere $S_R$, the eigenvalues of $M(T_0)$ are both equal to $R^{-1}$. Now the positivity of the eigenvalues of $M(t)$ follow from writing

\begin{align*}
\begin{split}
 M(t)=M(T_0)+\int_{T_0}^t\frac{dM(s)}{ds} ds,
\end{split}
\end{align*} 
and using the bootstrap assumptions and equation \eqref{eq: Nt}.

\end{proof}


We are now in the position to estimate the $L^2$ norms of $\frakD^k u$ and $\partial_t\frakD^k u $ in terms of the energies in \defn{def: energies}.


\begin{proposition}\label{prop: Du Eu}

Suppose the bootstrap assumptions \eqref{eq: bootstrap} hold and that $\beta$ is sufficiently large. Let $\calE_{\leq k}$ be as defined in \defn{def: energies}. Then for for all $3\leq k \leq \ell$ 

\begin{align*}
\begin{split}
 \sum_{j\leq k}\left(R^{-1}\|\frakD^j u\|_{L^2(S_R)}^2 +(GM)^{-1}R^{2}\|\partial_t\frakD^ju\|_{L^2(S_R)}^2\right)\lesssim \calE_{\leq k}.
\end{split}
\end{align*}

\end{proposition}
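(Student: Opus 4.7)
The plan is to combine the $L^2$ comparison in Corollary~\ref{cor: u L2 comp} with the coercivity estimate of Lemma~\ref{lem: pos energy}, using Lemma~\ref{lem: averages} inductively to handle the average contributions that appear in the coercivity estimate.

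First, by Corollary~\ref{cor: u L2 comp}, it suffices to bound $\sum_{|\gamma|\leq k}\bigl(\|\vecu_\gamma\|_{L^2(\partial\calB_1)}^2+(GM/R^2)^{-1}\|\partial_t\vecu_\gamma\|_{L^2(\partial\calB_1)}^2\bigr)$ by $R\calE_{\leq k}$. Under the bootstrap assumptions Lemma~\ref{lem: a} gives $a\sim GM/R^2$, so the kinetic term $\tfrac{1}{2}\int|\partial_t\vecu_\gamma|^2/a\,dS$ in $\calE_{\vecu_\gamma}$ already yields the time-derivative part of the estimate. For the spatial part, I apply Lemma~\ref{lem: pos energy} with $\bff=\vecu_\alpha$ (the hypothesis on the second fundamental form is provided by Lemma~\ref{lem: 2nd fund form}, and the smallness of $\delta$ by the bootstrap assumptions on $\zeta-Rn$ and $|\xi-\bfx_1|$) to obtain
\begin{equation*}
\calE_{\vecu_\alpha}-\tfrac{1}{2}\int_{\partial\calB_1}\tfrac{|\partial_t\vecu_\alpha|^2}{a}\,dS \geq \tfrac{c}{R}\|\vecu_\alpha\|_{L^2(\partial\calB_1)}^2 + (\text{error terms}).
\end{equation*}

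The error terms on the right-hand side of Lemma~\ref{lem: pos energy} are of three types, each of which I plan to absorb. (i) Terms involving $n-R^{-1}\zeta$ are $O(\eta^3)$ by the bootstrap assumption on $\zeta-Rn$, so by Cauchy--Schwarz they contribute $O(\eta^3 R^{-1}\|\vecu_\alpha\|_{L^2}^2)$, absorbable for $\beta$ large. (ii) Terms involving $f_0=\Re u_\alpha$: here I use that $u$ is a pure vector and that $\frakD^\alpha u$ therefore remains a pure vector (the cross product of vectors is a vector), so $\Re u_\alpha=\tfrac12\Re\Hone\frakD^\alpha u=\tfrac12\Re[\Hone,\frakD^\alpha]u$; Lemma~\ref{lem: H estimates} then gives $\|f_0\|_{L^2}+R\|n\cross\nabla f_0\|_{L^2}+R\|\nabla_n f_0\|_{L^2}\lesssim\eta^3\sum_{j<|\alpha|}\|\frakD^j u\|_{L^2}$, which combined with estimate \eqref{eq: f0 L2} gives an $O(\eta^3)$-smallness ready to be absorbed (or passed to a lower level of induction). (iii) The average term $CM\rho R^{-2}(\AV(\bfu_\alpha))^2$: for $|\alpha|=0$, $\AV(\bfu)=0$ by the definition of $\bfu=\bfv-\bfx_1'$. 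For $|\alpha|\geq1$, Lemma~\ref{lem: averages} gives $\AV(\bfu_\alpha)$ as a sum of boundary integrals of $u_\alpha$ and $\zeta-Rn$, while $\int_{\partial\calB_1}u_\alpha\,dS$ is in turn expressed via \eqref{eq: uk av 2} in terms of $\int_{\partial\calB_1}u_{\alpha-1}\,dS$ and the commutator term $\int[\frakD,\Hone]\frakD^{|\alpha|-1}u$, both of which are controlled by $R\sum_{|\gamma|<|\alpha|}\|\vecu_\gamma\|_{L^2(\partial\calB_1)}$ thanks to Lemmas~\ref{lem: L2 comp} and~\ref{lem: H estimates}.

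Combining these observations, I obtain an inequality of the form
\begin{equation*}
\sum_{|\alpha|\leq k}\|\vecu_\alpha\|_{L^2(\partial\calB_1)}^2 \;\lesssim\; R\,\calE_{\leq k} + \eta^3 \sum_{|\gamma|\leq k}\|\vecu_\gamma\|_{L^2(\partial\calB_1)}^2 + \sum_{|\gamma|<|\alpha|,\;|\alpha|\leq k}\|\vecu_\gamma\|_{L^2(\partial\calB_1)}^2.
\end{equation*}
For $\beta$ sufficiently large the $\eta^3$ term is absorbed into the left-hand side, and a straightforward induction on $|\alpha|$ (starting from the base case $|\alpha|=0$, which uses $\AV(\bfu)=0$) closes the estimate. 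Together with the kinetic-part bound and Corollary~\ref{cor: u L2 comp} this gives the desired estimate.

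The main obstacle will be the careful bookkeeping in handling the $f_0$ contributions and the averages: tracking the chain $\AV(\bfu_0)=0\to\int u_0\to\int u_1\to\cdots\to\int u_\alpha\to\AV(\bfu_\alpha)$ described in Remark~\ref{rmk: application of average}, and verifying at each stage that the derivative count does not exceed the available regularity $\ell$, which is the reason for the restriction $k\geq 3$ (providing enough room for the commutator identity $\Re u_\alpha=\tfrac12\Re[\Hone,\frakD^\alpha]u$ and the Sobolev pointwise control \eqref{eq: Du pointwise} of the various small quantities entering the error terms).
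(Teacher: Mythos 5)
Your proposal is correct and follows essentially the same path as the paper's proof: bound the kinetic part by $a\sim GM/R^2$; apply Lemma~\ref{lem: pos energy} via Lemma~\ref{lem: 2nd fund form} for the spatial part; control $f_0=\Re u_\alpha$ through the commutator identity $\Re u_\alpha=-\tfrac12\Re[\frakD^\alpha,\Hone]u$ and Lemma~\ref{lem: H estimates}, and control $n-R^{-1}\zeta$ by the bootstrap; close the averages via Lemma~\ref{lem: averages} along the inductive chain of Remark~\ref{rmk: application of average}; then absorb the $\eta^3$-small terms for $\beta$ large. The only cosmetic difference is that you package the passage between $\|\frakD^j u\|_{L^2(S_R)}$ and $\|\vecu_j\|_{L^2(\partial\calB_1)}$ through Corollary~\ref{cor: u L2 comp} up front, whereas the paper does the same comparison in-line; the substance is identical.
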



\begin{proof}

Note that since under the bootstrap assumptions $a\lesssim\frac{GM}{R^2}$ from \lem{lem: L2 comp} we have

\begin{align*}
\sum_{j\leq k}\|\partial_t\frakD^ju\|_{L^2(S_R)}^2\lesssim \sum_{j\leq k} \int_{\partial\calB_1}|\partial_t\vecu_j|^2dS\lesssim \frac{GM}{R^2}\sum_{j\leq k} \int_{\partial\calB_1}\frac{|\partial_t\vecu_j|^2}{a}dS.
\end{align*}
Therefore to prove the proposition it suffices to show that

\begin{align}\label{eq: Du Eu temp 1}
\frac{1}{R}\sum_{j\leq k}\|\frakD^ju\|_{L^2(S_R)}^2\lesssim \sum_{j\leq k} \calE_j^0,
\end{align}
where

\begin{align*}
\calE_j^0:=\frac{1}{2}\int_{\partial\calB_1}(n\cross\nabla \vecu_j)\cdot \vecu_j dS+\frac{GM}{2R^3}\int_{\partial\calB_1}\frac{|\vecu_j|^2}{a}dS-\frac{3GM}{2R^3}\int_{\partial\calB_1}\frac{(n\cdot \vecu_j)^2}{a}dS.
\end{align*}
By the bootstrap assumptions \eqref{eq: bootstrap} it suffices to prove \eqref{eq: Du Eu temp 1} with $\calE_j^0$ replaced by $\calE_j^1$, where 

\begin{align*}
\calE_j^1:=\frac{1}{2}\int_{\partial\calB_1}(n\cross\nabla \vecu_j)\cdot \vecu_j dS+\frac{1}{2R}\int_{\partial\calB_1}|\vecu_j|^2dS-\frac{3}{2R}\int_{\partial\calB_1}(n\cdot \vecu_j)^2dS.
\end{align*}
Indeed, using the bootstrap assumptions on $a-\frac{GM}{R^2}$ the difference between $\calE_j^0$ and $\calE_j^1$ can be absorbed into the left-hand side of \eqref{eq: Du Eu temp 1}. By \lem{lem: 2nd fund form} the second fundamental form of $\partial\calB_1$ is positive. Therefore by \lem{lem: pos energy} and with the same notation,

\begin{align*}
\calE_j^1\geq&\, \frac{c}{4R}\int_{\partial\calB_1}|\vecu_j|^2dS-\frac{C}{R}\int_{\partial\calB_1}(\vecu_j\cdot n)\vecu_j\cdot(n-R^{-1}\zeta)dS\\
&-\frac{C}{R}\int_{\partial\calB_1}|\vecu_j|^2(R^{-1}\zeta-n)\cdot n \,dS-\frac{C}{R^{2}}\int_{\partial\calB_{1}}(\vecu_{j}\times\zeta)\cdot(n-R^{-1}\zeta)\mathring{u}_{j}dS\\
&-\frac{C}{R^2}\int_{\calB_1}|\mathring{\bfu}_j|^2d\bfx-\frac{CM\rho}{R^2}|\AV(\vec{\bfu}_j)|^2
+\frac{C}{R^{2}}\int_{\partial\calB_{1}}|\zeta\cdot n||\mathring{u}_{j}|^{2}dS-CR\int_{\partial\calB_1}|n\cross\nabla\circu_j|^2dS.
\end{align*}
Using the bootstrap assumptions we conclude that

\begin{align*}
\frac{1}{R}\sum_{j\leq k}\|\frakD^ju\|_{L^2(S_R)}^2\lesssim &\sum_{j\leq k} \calE_j^1+\sum_{j\leq k}\left(\frac{C}{R^2}\int_{\calB_1}|\mathring{\bfu}_j|^2d\bfx+\frac{CM\rho}{R^2}|\AV(\vec{\bfu}_j)|^2\right)+\sum_{j\leq k}\frac{C}{R}\int_{\partial\calB_{1}}|\mathring{u}_{j}|^{2}dS\\
&+\sum_{j\leq k}CR\int_{\partial\calB_{1}}|n\cross\nabla\mathring{u}_{j}|^{2}dS.
\end{align*}
Now by equation \eqref{eq: f0 L2}, we have

\begin{align*}
 \int_{\calB_1}|\mathring{\bfu_j}|^2d\bfx\leq CR\int_{\partial\calB_1}|\mathring{u}_j|^2dS+CR^3\int_{\partial\calB_1}|\nabla_n \mathring{u}_j|^2dS+CM\rho(\AV(\mathring{\bfu}_j))^2.
\end{align*}
For the average $\AV(\mathring{\bfu}_j))^2$, we use \lem{lem: averages}. Taking real parts on both sides of \eqref{eq: uk av 1} and using the bootstrap assumptions \eqref{eq: bootstrap} we have

\begin{align*}
&\left(\AV(\mathring{\bfu}_{j})\right)^{2}\lesssim\frac{R^{4}}{M^{2}}\int_{\partial\calB_{1}}|\mathring{u}_{j}|^{2}dS+\frac{\eta^6R^4}{M^2}\sum_{j\leq k}\|\frakD^ju\|_{L^2(S_R)}^2,\\ 
&\frac{CM\rho}{R^{2}}\left(\AV(\mathring{\bfu}_{j})\right)^{2}\lesssim \frac{1}{R}\int_{\partial\calB_{1}}|\mathring{u}_{j}|^{2}dS+\frac{\eta^6}{R}\sum_{j\leq k}\|\frakD^ju\|_{L^2(S_R)}^2.
\end{align*}
Taking vector parts on both sides of \eqref{eq: uk av 1} and using bootstrap assumptions \eqref{eq: bootstrap} as well as \eqref{eq: uk av 2} inductively, we have

\begin{align*}
\frac{CM\rho}{R^{2}}\left(\AV(\vec{\bfu}_{j})\right)^{2}\lesssim \frac{C\rho}{M}\left(\int_{\partial\calB_{1}}\vec{u}_{j}dS\right)^{2}
\lesssim\frac{\eta(t)^{6}}{R}\sum_{l\leq j}\int_{\partial\calB_{1}}|\vec{u}_{l}|^{2}dS+\frac{\eta^6}{R}\sum_{j\leq k}\|\frakD^ju\|_{L^2(S_R)}^2,
\end{align*}
which, by the bootstrap assumptions and the argument in \rem{rmk: application of average}, can be absorbed by the left hand side $\frac{1}{R}\sum_{j\leq k}\|\frakD^{j}u\|^{2}_{L^{2}(S_{R})}$. So we get

\begin{align*}
\frac{1}{R}\sum_{j\leq k}\|\frakD^ju\|_{L^2(S_R)}^2\lesssim \sum_{j\leq k} \calE_j^1+\sum_{j\leq k}\left(\frac{C}{R}\int_{\partial\calB_1}|\mathring{u}_j|^2dS+CR\int_{\partial\calB_1}|\nabla_n \mathring{u}_j|^2dS +CR\int_{\partial\calB_{1}}|n\cross\nabla\mathring{u}_{j}|^{2}dS\right).
\end{align*} 
Since $\frakD^ju$ is a vector, $\mathring{u}_j=-\frac{1}{2}\Re[\frakD^j,\Hone]u$. So using Lemma \ref{lem: H estimates} we conclude that

\begin{align*}
\frac{1}{R}\sum_{j\leq k}\|\frakD^ju\|_{L^2(S_R)}^2\lesssim \sum_{j\leq k} \calE_j^1.
\end{align*}
\end{proof}


Using \prop{prop: Du Eu}, we can now estimate the quantities $h$, $\mu$, $\nu$, $\frakD\zeta$, $\frakD n$, $\frakD (|N||\sg|^{-1}-1)$, $Rn-\zeta$, and $a-\frac{GM}{R^2}$ appearing in the bootstrap assumptions \eqref{eq: bootstrap} in terms of the energies $\calE_j$. We will also derive a simple estimate for $n_t$ and $\partial_t(I-\Hone)\vecu_k$, where $u_k:=\frac{1}{2}(I+\Hone)\frakD^ku$, which appear in the energy identity \eqref{eq: model energy id}. Note that since $n_t$ appears only in the energy identity \eqref{eq: model energy id} and not in the nonlinearity in the equation \eqref{eq: u} for $u$, we do not need to estimates the higher derivatives  of $n_t$. Also note that in \eqref{eq: D zeta E} and \eqref{eq: N E} below we do not estimate the top order derivatives yet. We will consider the top order derivatives later, after estimating $|N|\partial_t(a|N|^{-1})$ in \prop{prop: at E}.


\begin{proposition}\label{prop: bootstrap by E}
Suppose that the bootstrap assumptions \eqref{eq: bootstrap} hold and that $\beta$ is sufficiently large. Let $u_k:=\frac{1}{2}(I+\Hone)\frakD^ku$. Then for all $3\leq k\leq\ell$ and some universal constant $C$

\begin{align}
 &\sum_{j\leq k} \|\frakD^j(\zeta-Rn)\|_{L^2(S_R)} \leq C R^{\frac{1}{2}} \int_{T_0}^t \calE_{\leq k}^{\frac{1}{2}}(s)ds+C\frac{R^2}{\sqrt{GM}}\calE_{\leq k}^{\frac{1}{2}}+20R^2\eta^3, \label{eq: zeta Rn E}\\
 &\sum_{1\leq j \leq k}\|\frakD^j \zeta\|_{L^2(S_R)}\leq C R^{\frac{1}{2}} \int_{T_0}^t \calE_{\leq k}^{\frac{1}{2}}(s)ds,\label{eq: D zeta E}\\
 &\sum_{1\leq j\leq k}\|\frakD^jn\|_{L^2(S_R)}\leq C R^{-\frac{1}{2}} \int_{T_0}^t \calE_{\leq k}^{\frac{1}{2}}(s)ds+C\frac{R}{\sqrt{GM}}\calE_{\leq k}^{\frac{1}{2}}+30R\eta^3,\label{eq: D n E}\\
  &\sum_{j \leq k}\|\frakD^j h\|_{L^2(S_R)}+R\sum_{ j\leq k}\|\frakD^j\mu\|_{L^2(S_R)}+R^3\sum_{ j\leq k}\|\frakD^j\nu\|_{L^2(S_R)}\leq C R^{\frac{1}{2}} \int_{T_0}^t \calE_{\leq k}^{\frac{1}{2}}(s)ds,\label{eq: h E}\\
 &\sum_{j\leq k}\|\frakD^j\left(a-GMR^{-2}\right)\|_{L^2(S_R)}    \leq C \frac{GM}{R^{\frac{5}{2}}}\int_{T_0}^t \calE_{\leq k}^{\frac{1}{2}}(s)ds+C\frac{\sqrt{GM}}{R}\calE_{\leq k}^{\frac{1}{2}}+\frac{10\,GM\eta^3}{R},\label{eq: a E}\\
 & \sum_{j\leq k-1}\|\frakD^j(|N||\sg|^{-1}-1)\|_{L^2(S_R)}\leq C R^{-\frac{1}{2}} \int_{T_0}^t \calE_{\leq k}^{\frac{1}{2}}(s)ds,\label{eq: N E}\\
 &\|n_t\|_{L^2(S_R)}\leq C R^{-\frac{1}{2}}\calE_{\leq3}^{\frac{1}{2}},\label{eq: nt E}\\
  &\|\partial_t(I-\Hone)\vecu_k\|_{L^2(S_R)}\leq C\left(\frac{\sqrt{GM}}{R}\eta^3+R^{-\frac{1}{2}}\eta^4|v_1|\right)\calE_{\leq k}^{\frac{1}{2}}.\label{eq: I-H uk}
\end{align}

\end{proposition}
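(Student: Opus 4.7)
The overall strategy is to establish the eight estimates in the order dictated by their interdependencies, ultimately reducing every quantity to $L^2$ norms of $\frakD^j u$ and $\partial_t\frakD^j u$, which \prop{prop: Du Eu} bounds by $\calE_{\leq k}$, together with commutator estimates from \lem{lem: H estimates}. A useful preliminary remark is that $\zeta(T_0,p)=p$ and $n(T_0,p)=p/R$, and from $\Omega_i\bfx=\bfe_i\cross\bfx$ a short induction shows $\frakD^j\zeta(T_0)=0$ and $\frakD^j n(T_0)=0$ for $j\geq 1$; similarly $h$, $\mu$, $\nu$, and $|N|/|\sg|-1$ vanish identically at $T_0$. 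With this vanishing initial data, \eqref{eq: D zeta E} follows from integrating $\zeta_t=u$ in time, applying Minkowski's inequality, and invoking \prop{prop: Du Eu}.

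For \eqref{eq: h E}, I differentiate $\tilh=|\zeta|^2-R^2$ in time to get $\tilh_t=2\zeta\cdot u$, write $h=\tilh/(|\zeta|+R)$, and apply $\frakD^j$ using the Leibniz rule from \lem{lem: product rule}; lower-order factors are controlled in $L^\infty$ by Sobolev on $S_R$ together with the pointwise consequences of the bootstrap assumptions \eqref{eq: bootstrap}, after which time integration yields the bound. The estimates for $\mu$ and $\nu$ follow because both are smooth bounded functions of $|\zeta|$ that vanish at $|\zeta|=R$, so each is $h$ times a factor with bootstrap-controlled derivatives. For \eqref{eq: N E}, I use the evolution equation
\[
\partial_t(|N|/|\sg|)=-(|N|/|\sg|)\,n\cdot(n\cross\nabla u)
\]
derived from \eqref{eq: Nt}, apply $\frakD^j$ for $j\leq k-1$, trade $n\cross\nabla$ for $R^{-1}\Omega$ via \lem{lem: n cross nabla} (the one lost derivative is what forces the cutoff at $k-1$), and integrate in time.

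The subtlest part is the coupled pair \eqref{eq: a E}--\eqref{eq: zeta Rn E}. My plan is to exploit the algebraic identity \eqref{eq: a}, observing that the numerator $b$ in \lem{lem: a} depends only on $u_t$, $\zeta$, $\tilh$, and the source term $\nabla\psi_2+\bfx_1''$, and crucially \emph{not} on $n$. Hence $\frakD^j(a-GM/R^2)$ is estimated directly using the already-established bounds on $\frakD^j\zeta$ and $\frakD^j\tilh$, the bound $\|\frakD^j u_t\|_{L^2(S_R)}\lesssim (GM/R^2)^{1/2}\calE_{\leq k}^{1/2}$ from \prop{prop: Du Eu}, and the pointwise estimate $|\nabla\psi_2+\bfx_1''|\lesssim GM\eta^3/R^2$ from \lem{lem: nabla psi} and \eqref{eq: x1'' 2}; the $L^2(S_R)$-norm of the latter produces the source contribution $\sim GM\eta^3/R$. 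Applying $\frakD^j$ to \eqref{eq: Rn zeta} then yields \eqref{eq: zeta Rn E}; the only subtlety is that $\frakD^j[(a-GM/R^2)n]$ contains $(a-GM/R^2)\frakD^j n$, but since $(R^3/GM)\,\|a-GM/R^2\|_{L^\infty}\lesssim R\eta^3\ll R$, this term is absorbed into the LHS of the rearranged identity $R\frakD^j n-\frakD^j\zeta=\{\text{RHS of }\eqref{eq: Rn zeta}\}$. Finally \eqref{eq: D n E} follows from triangle inequality applied to this identity together with \eqref{eq: D zeta E} and \eqref{eq: zeta Rn E}.

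For \eqref{eq: nt E}, I differentiate $n=N/|N|$ using \eqref{eq: Nt} to get $n_t=-(I-n\otimes n)(n\cross\nabla u)$, whence \lem{lem: n cross nabla} and \prop{prop: Du Eu} at $k=3$ yield the bound. For \eqref{eq: I-H uk}, $\Hone^2=I$ together with $\Hone u_k=u_k$ yields $(I-\Hone)\vecu_k=-(I-\Hone)\circu_k$; since $\frakD^k u$ is vector-valued its real part equals $-\tfrac12\Re[\frakD^k,\Hone]u$, so time differentiating reduces the bound to a combination of $[\partial_t,\Hone]$- and $[\frakD^k,\Hone]$-commutators handled by \lem{lem: H estimates}. \textbf{The main obstacle} across the whole proposition is the apparent top-order coupling between $\frakD^j(a-GM/R^2)$ and $\frakD^j(Rn-\zeta)$, which I resolve by using \eqref{eq: a} (which does not involve $n$ explicitly) to pin down $a$ first, after which \eqref{eq: Rn zeta} determines $n$.
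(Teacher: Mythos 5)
Your proposal is correct and follows essentially the same route as the paper. The central insight you identify as "the main obstacle" — breaking the apparent top-order coupling between $\frakD^j(a-GM/R^2)$ and $\frakD^j n$ by first pinning down $a$ via \eqref{eq: a}, whose right-hand side $b$ has no explicit dependence on $n$, and only then using \eqref{eq: Rn zeta} to control $Rn-\zeta$ — is exactly the paper's strategy. The only variations are cosmetic: you integrate $\zeta_t=u$ first and deduce the $h$ estimate from $\tilh_t=2\zeta\cdot u$, while the paper estimates $h$ first (from $\partial_t h=u\cdot\zeta/|\zeta|$, with derivative terms controlled by the bootstrap bounds rather than the already-improved $\zeta$ estimate) and then $\zeta$; both are valid. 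You are slightly more explicit about two points the paper leaves terse: the absorption of $(a-GM/R^2)\frakD^j n$ into the left-hand side of the rearranged identity (the paper handles this equivalently by using the bootstrap bound $\|\frakD^{j_2}n\|_{L^2}\lesssim C_1 R\eta^3$ for $j_2\geq 1$, which produces an $O(C_1^2 R^2\eta^6)$ contribution that is negligible once $\beta$ is large), and the reason the $|N||\sg|^{-1}-1$ bound loses one derivative. Your claim that $b$ depends "crucially \emph{not} on $n$" should be read with the caveat that the $\Hone$ appearing in $w$ has a kernel involving $n$; what matters, and what makes the argument non-circular, is that $\frakD^j\Hone$ and $[\frakD^j,\Hone]$ are controlled by \lem{lem: H estimates} under the standing bootstrap assumptions without requiring the improved bound on $\frakD^j n$.
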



 \begin{proof}
 We start with the estimates for $h$. Note that $\partial_th =\frac{u\cdot\zeta}{|\zeta|}$ so $|\partial_th|=| u|$. Since $h(T_0)=0$, we have 
 
 \begin{align*}
\begin{split}
| h(t)|\leq\int_{T_0}^t |u(s)|ds.
\end{split}
\end{align*}
It follows that

\begin{align*}
\begin{split}
 \|h\|_{L^2(S_R)}\leq \int_{T_0}^t\|u(s)\|_{L^2(S_R)}ds .
\end{split}
\end{align*}
The desired estimate for $\|h\|_{L^2(S_R)}$ now follows from \prop{prop: Du Eu} and \lem{lem: eta integration}. The estimates for the derivatives of $h$ follow similarly by differentiating $\partial_th =\frac{u\cdot\zeta}{|\zeta|}$ and using the bootstrap assumptions. For $\mu$ and $\nu$ we argue exactly the same way using the fact that $\mu(T_0)=\nu(T_0)=0$. To estimate the higher derivatives of $\zeta$, first note that $\frakD^\alpha\zeta(T_0)=0$ for all $|\alpha|\geq1$. It then follows that

\begin{align*}
\begin{split}
  \frakD^\alpha\zeta(t)=\int_{T_0}^t\frakD^\alpha u(s) ds,
\end{split}
\end{align*}
and the estimates on $\|\frakD^\alpha\zeta\|_{L^2(S_R)}$ follow from the assumptions \eqref{eq: bootstrap} and \prop{prop: Du Eu}. We next turn to $a-\frac{GM}{R^2}$ for which we use equation \eqref{eq: a} from  \lem{lem: a}. First note that in view of \eqref{eq: nabla bfpsi2} and \lem{lem: acce x1}, under the bootstrap assumptions \eqref{eq: bootstrap} and if $\beta$ is sufficiently large, we have

\begin{align}\label{eq: nabla psi2 eta}
\begin{split}
\sum_{j\leq k} \|\frakD^j(\nabla\psi_2+\bfx_1'')\|_{L^2(S_R)}\leq \frac{GM\eta^3}{R}. 
\end{split}
\end{align}
Since we have already proved \eqref{eq: D zeta E} and \eqref{eq: h E}, we can use \eqref{eq: a} together with \prop{prop: Du Eu}, the bootstrap assumptions \eqref{eq: bootstrap}, and \lems{lem: nabla psi} and \ref{lem: acce x1} to establish \eqref{eq: a E}. Here  $(I+\Hone)\zeta$ can be estimated by observing that $(I+\Hone)\zeta=(I+\Hone)(\zeta\mu)$ and using equation \eqref{eq: h E} and \lem{lem: H estimates}. We can now use \eqref{eq: a E} to prove \eqref{eq: zeta Rn E}. The argument is similar to the one used to prove \eqref{eq: a E}, but we now use \eqref{eq: Rn zeta} instead of \eqref{eq: a}, and use \eqref{eq: a E} to estimate the term involving $a-\frac{GM}{R^2}$ in \eqref{eq: Rn zeta}.  Estimate \eqref{eq: D n E} now follows from \eqref{eq: zeta Rn E} and \eqref{eq: D zeta E}. Estimate \eqref{eq: N E} follows from differentiating equation \eqref{eq: N sg} and using \lem{lem: n cross nabla} and \prop{prop: Du Eu}. For \eqref{eq: nt E} we use \eqref{eq: Nt} to get

\begin{align*}
\begin{split}
 n_t=-n\cross\nabla u+(n\cdot(n\cross\nabla u))n, 
\end{split}
\end{align*}
and use \prop{prop: Du Eu} and the bootstrap assumptions \eqref{eq: bootstrap}. Finally for \eqref{eq: I-H uk} we note that since $\Re\frakD^ku=0$

\begin{align*}
\begin{split}
 \partial_t(I-\Hone)\vecu_k=&-\frac{1}{2}\partial_t(I-\Hone)\Re(I+\Hone)\frakD^ku=\frac{1}{2}\partial_t(I-\Hone)\Re[\frakD^k,\Hone]u\\
  =&\frac{1}{2}(I-\Hone)\Re\partial_t[\frakD^k,\Hone]u-\frac{1}{2}[\partial_t,\Hone]\Re[\frakD^k,\Hone]u\\
  =&-\frac{1}{2}(I-\Hone)\Re\left\{ [\partial_t,\Hone]\frakD^ku+[\Hone,\frakD^k]\partial_tu+\frakD^k[\Hone,\partial_t]u \right\}\\
  &-\frac{1}{2}[\partial_t,\Hone]\Re[\frakD^k,\Hone]u.
  \end{split}
\end{align*}
The estimate \eqref{eq: I-H uk} now follows from the bootstrap assumptions \eqref{eq: bootstrap}, \lem{lem: H estimates}, and \prop{prop: Du Eu}.
\end{proof}


To estimate the nonlinearity in equation \eqref{eq: u} we still need to estimate $|N|\partial_t(a|N|^{-1})$ in terms of the energies. For this we will use the expression \eqref{eq: at} derived for $|N|\partial_t(a|N|^{-1})$ in \prop{prop: at}. However, to estimate the derivatives of this expression we need to compute the commutator of $(I+\Kone^\ast)^{-1}$ and $\frakD$. The same argument used in \lem{lem: D com K} to derive \eqref{eq: D com (I+K)-1} gives

\begin{align*}
\begin{split}
    [\frakD,(I+\Kone^\ast)^{-1}]f=-(I+\Kone^\ast)^{-1}[\frakD,\Kone^\ast](I+\Kone^\ast)^{-1}f.
\end{split}
\end{align*}
The commutator $[\frakD,\Kone^\ast]$ can now be computed using the fact that $\Kone-\Kone^\ast$ is small. That is, we rewrite the identity above as 

\begin{align}\label{eq: D com (I+K*)-1}
\begin{split}
    [\frakD,(I+\Kone^\ast)^{-1}]f=&-(I+\Kone^\ast)^{-1}[\frakD,\Kone](I+\Kone^\ast)^{-1}f\\
    &-(I+\Kone^\ast)^{-1}[\frakD,\Kone^\ast-\Kone](I+\Kone^\ast)^{-1}f.
\end{split}
\end{align}
A precise expression for the difference $\Kone^\ast-\Kone$ can be obtained by taking the  real part of identity \eqref{eq: H H*} in the following lemma, where we compute $\Hone-\Hone^\ast$ which also appears in the energy identity \eqref{eq: model energy id}.


\begin{lemma}\label{lem: H H*}
For any Clifford-algebra valued function $f$ we have

\begin{align}\label{eq: H H*}
\begin{split}
 (\Hone^\ast-\Hone)f=&\frac{\pv}{2\pi R}\int_{\partial\calB_1}\frac{(|\zeta'|^2-R^2)-(|\zeta|^2-R^2)}{|\xi'-\xi|^3}f'dS'\\
 &-(n-R^{-1}\zeta)\frac{\pv}{2\pi}\int_{\partial\calB_1}K(\xi'-\xi)f'dS'-\frac{\pv}{2\pi}\int_{\partial\calB_1}K(\xi'-\xi)(n'-R^{-1}\zeta')f'dS'.
\end{split}
\end{align}
Moreover, if the bootstrap assumptions \eqref{eq: bootstrap} hold and $\beta$ is sufficiently large then for any $k \leq \ell$

\begin{align}\label{eq: H H* L2}
\begin{split}
 \|\frakD^k(\Hone-\Hone^\ast)f\|_{L^2(S_R)}\lesssim \eta^3\sum_{j\leq k}\|\frakD^jf\|_{L^2(S_R)}
\end{split}
\end{align}
\end{lemma}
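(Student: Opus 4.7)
The identity \eqref{eq: H H*} will be established by a direct Clifford-algebra manipulation. From the definitions $\Hone f(\xi) = -\frac{\pv}{2\pi}\int \frac{\xi'-\xi}{|\xi'-\xi|^3} n' f' \,dS'$ and $\Hone^* f = n\,\Hone(nf)$, together with the vector identity $n' n' = -1$, we obtain
\begin{equation*}
(\Hone^* - \Hone)f(\xi) \;=\; \frac{\pv}{2\pi}\int_{\partial\calB_1}\frac{n(\xi)(\xi'-\xi) + (\xi'-\xi)n(\xi')}{|\xi'-\xi|^3}\, f(\xi')\,dS(\xi').
\end{equation*}
The plan is then to decompose $n(\xi) = R^{-1}\zeta(\xi) + (n(\xi) - R^{-1}\zeta(\xi))$ and similarly for $n(\xi')$ in the numerator. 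The two resulting terms containing $n-R^{-1}\zeta$ or $n'-R^{-1}\zeta'$ produce the last two integrals on the right of \eqref{eq: H H*}, with the $\xi$-dependent factor $n-R^{-1}\zeta$ pulled outside the integral since it depends only on $\xi$. The remaining piece $R^{-1}[\zeta(\xi'-\xi)+(\xi'-\xi)\zeta']$, using $\xi'-\xi = \zeta'-\zeta$ and the Clifford identities $\zeta\zeta = -|\zeta|^2$ and $\zeta'\zeta' = -|\zeta'|^2$, collapses after cancellation of the cross products $\pm \zeta\zeta'$ to the scalar $R^{-1}(|\zeta|^2 - |\zeta'|^2)$, which rearranges into the first term on the right of \eqref{eq: H H*}.

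For the norm estimate \eqref{eq: H H* L2}, the key observation is that each of the three kernels in \eqref{eq: H H*} carries a built-in smallness factor: $n-R^{-1}\zeta$, $n'-R^{-1}\zeta'$, or $(|\zeta'|^2 - R^2) - (|\zeta|^2-R^2) = 2R(h'-h) + (h')^2 - h^2$. By the bootstrap assumptions \eqref{eq: bootstrap} and the Sobolev embedding on $S_R$, these factors are pointwise of size $O(\eta^3)$ and $O(R^2\eta^3)$ respectively; in particular, the $h'-h$ expression combined with the $R^{-1}$ prefactor yields a Calder\'on-commutator kernel whose smallness parameter is precisely $\eta^3$. The case $k=0$ of \eqref{eq: H H* L2} then follows from the $L^2\!\to\! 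L^2$ boundedness of the resulting singular integrals, which is supplied by \prop{prop: L2 C2} for the first integral (where a single-variable difference quotient of $h$ sits in the kernel) and by \prop{prop: L2 C1} for the last two integrals.

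For $k \geq 1$ the plan is to apply $\frakD^k$ to \eqref{eq: H H*} and distribute derivatives via the product rule of \lem{lem: product rule}. Derivatives falling on the smallness factors remain of size $\eta^3$ in $L^2(S_R)$ by \eqref{eq: zeta Rn E} and \eqref{eq: h E} of \prop{prop: bootstrap by E} combined with the Sobolev embedding. Derivatives falling on the singular kernel $|\xi'-\xi|^{-3}$ or on the vector $\xi'-\xi$ are handled by iterating the commutator identity \eqref{comm D H final} of \lem{lem: frakD com}, producing additional Calder\'on-commutator-type integrals in which the extra kernel factor $\frakD'\zeta'-\frakD\zeta$ is itself controlled via \prop{prop: Du Eu} and the bootstrap assumptions. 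At most one derivative falls on $f'$, producing $\|\frakD^j f\|_{L^2(S_R)}$ for some $j \leq k$. I expect the main bookkeeping obstacle to be keeping track of the commutators that arise when $\frakD$ passes through the principal-value singular integrals; the essential point is that the small factor $\eta^3$ supplied by the original smallness factor is preserved through all such commutations, because each commutator produced by $\frakD$ is either itself of size $\eta^3$ or else a bounded operator on $L^2$. Summing the contributions yields the claimed estimate with constant $\eta^3 \sum_{j\leq k}\|\frakD^j f\|_{L^2(S_R)}$.
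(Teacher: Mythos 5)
Your derivation is correct and follows the paper's own approach: write $\Hone^\ast = n\Hone n$, use $n'n' = -1$ to get the kernel $n(\xi'-\xi) + (\xi'-\xi)n'$, split $n$ and $n'$ around $R^{-1}\zeta$ and $R^{-1}\zeta'$, and collapse the scalar piece; the $L^2$ estimate is obtained by the paper's pattern of isolating the $O(\eta^3)$ smallness factor in each kernel and invoking the singular-integral estimates together with the $\frakD$-commutator identities. Incidentally, your computed scalar $|\zeta|^2 - |\zeta'|^2$ carries the correct sign; the expression displayed in \eqref{eq: H H*} has the opposite sign because of a dropped minus in the paper's own intermediate computation of $\zeta K + K\zeta'$ (and there is also a spurious extra $1/(2\pi)$ in the last two terms of \eqref{eq: H H*}, since $K$ already contains it), all of which is harmless for \eqref{eq: H H* L2}. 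Two small citation notes: for the $k=0$ bound on the first integral the directly applicable tool is \prop{prop: L2 C1}, not \prop{prop: L2 C2} --- the kernel $(\tilh'-\tilh)/|\xi'-\xi|^3$ is odd and the integrand carries $f$, not $\frakD f$; \prop{prop: L2 C2} enters for $k\ge 1$ once a $\frakD$ has been moved onto $f$. Also, the $L^2$ control on $\frakD^\alpha h$ and $\frakD^\alpha(\zeta - Rn)$ that you invoke is already part of the hypothesis \eqref{eq: bootstrap}, so there is no need to pass through \prop{prop: bootstrap by E}.
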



\begin{proof}

By definition $\Hone^\ast=n\Hone n$ so with $K=K(\xi'-\xi)$

\begin{align*}
\begin{split}
 \Hone^\ast f=&n\,\pv\int_{\partial\calB_1}Kn'n'f'dS'=-\pv\int_{\partial\calB_1}nKf'dS'= \Hone f-\pv\int_{\partial\calB_1}(nK+Kn')f'dS'.
\end{split}
\end{align*}
Identity \eqref{eq: H H*} follows by observing that

\begin{align*}
\begin{split}
 nK+Kn'=&(n-R^{-1}\zeta)K+K(n'-R^{-1}\zeta')+R^{-1}(\zeta K+K\zeta')\\
 =&(n-R^{-1}\zeta)K+K(n'-R^{-1}\zeta')+\frac{1}{2\pi R}\frac{|\zeta|^2-|\zeta'|^2}{|\zeta'-\zeta|^3}.
\end{split}
\end{align*}
The proof of estimate \eqref{eq: H H* L2} using \eqref{eq: H H*} uses \props{prop: L2 C1} and \ref{prop: L2 C2} and \cor{cor: Dk H com}  as in the proof of \lem{lem: H estimates}.

\end{proof}


We can now estimate $|N|\partial_t(a|N|^{-1})$.


\begin{proposition}\label{prop: at E}

Suppose the bootstrap assumptions \eqref{eq: bootstrap} hold and that $\beta$ is sufficiently large. Then for any $3\leq k\leq \ell$ (the implicit constant below depends on $C_1$ in \eqref{eq: bootstrap})

\begin{align*}
\begin{split}
  \sum_{j\leq k}\|\frakD^j(|N|\partial_t(a|N|^{-1}))\|_{L^2(S_R)}\lesssim \frac{GM}{R^\frac{5}{2}}\eta^3\calE_{\leq k}^{\frac{1}{2}}.
\end{split}
\end{align*}

\end{proposition}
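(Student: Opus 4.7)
The starting point is Proposition \ref{prop: at}, which we invert to express
\begin{equation*}
|N|\partial_t(a|N|^{-1}) = -(I+\Kone^\ast)^{-1}\Phi,\qquad \Phi := \Phi_{1}+\Phi_{2}+\Phi_{3},
\end{equation*}
where $\Phi_1 = \Re n[\partial_t^2+an\cross\nabla,\Hone]u$, $\Phi_2 = -\tfrac{GM}{2R^3}\Re n[\partial_t,\Hone](I+\Hone)\zeta$, and $\Phi_3 = \Re n[\partial_t,\Hone](\nabla\psi_2+\bfx_1'')$. Each piece of $\Phi$ is already a commutator with $\Hone$, which is the structural source of the gain that produces the $\eta^3 \calE^{1/2}_{\leq k}$ factor on the right-hand side.

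The first task is to commute $\frakD^{j}$, $j\leq k$, through $(I+\Kone^{\ast})^{-1}$. Using identity \eqref{eq: D com (I+K*)-1} iteratively and splitting $[\frakD,\Kone^{\ast}]=[\frakD,\Kone]+[\frakD,\Kone^{\ast}-\Kone]$, Lemma \ref{lem: D com K} together with Lemma \ref{lem: H estimates} yields $\|[\frakD,\Kone]f\|_{L^2}\lesssim \eta^3\|f\|_{L^2}$, while Lemma \ref{lem: H H*} yields $\|[\frakD,\Kone^{\ast}-\Kone]f\|_{L^2}\lesssim \eta^3\|f\|_{L^2}$. Using also the $L^2$-boundedness of $(I+\Kone^\ast)^{-1}$ (Theorem \ref{thm: K inverse}), a straightforward induction on $j$ reduces the proof to establishing
\begin{equation*}
\sum_{j\leq k}\|\frakD^{j}\Phi\|_{L^{2}(S_{R})}\lesssim \frac{GM}{R^{5/2}}\eta^{3}\calE^{1/2}_{\leq k}.
\end{equation*}
The factors of $n$ in $\Phi$ can be pulled through the Leibniz expansion using $\|n\|_{L^\infty}\lesssim 1$ and the bootstrap bound $\|\frakD^{\gamma}n\|_{L^2}\lesssim R\eta^3$ for $|\gamma|\geq 1$, so they contribute at most lower-order $\eta^{3}$ factors.

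For $\Phi_1$, Lemma \ref{lem: H estimates} bounds $\|\frakD^j[\partial_t^2,\Hone]u\|_{L^2}$ by $\sqrt{GM/R^5}\,\eta^4|v_1|\sum\|\frakD^l u\|_{L^2}+R^{-1}\eta^4|v_1|\sum\|\partial_t\frakD^l u\|_{L^2}$, and by Proposition \ref{prop: Du Eu} these two sums are controlled by $R^{1/2}\calE^{1/2}$ and $\sqrt{GM}R^{-1}\calE^{1/2}$ respectively; combined with $|v_1|\lesssim \sqrt{GM/R}\,\eta^{1/2}$ from Proposition \ref{prop: r1' v1}, the result is $\lesssim GM\,\eta^{9/2}R^{-5/2}\calE^{1/2}$, which suffices since $\eta\leq 1$. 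The cross term $[an\cross\nabla,\Hone]u$ splits as $a[n\cross\nabla,\Hone]u+[a,\Hone](n\cross\nabla u)$: the first is bounded using $\|a\|_{L^\infty}\lesssim GM/R^2$ and the Lemma \ref{lem: H estimates} estimate $\|[n\cross\nabla,\Hone]f\|_{L^2}\lesssim R^{-1}\eta^3\sum\|\frakD^l f\|_{L^2}$, while the second uses $\|a-GM/R^2\|_{L^\infty}\lesssim GM\eta^3/R^2$ (obtained from \eqref{eq: a E} via the Sobolev embedding on $S_R$) together with $L^2$-boundedness of $\Hone$. For $\Phi_2$ we invoke the identity $(I+\Hone)\zeta=(I+\Hone)(\mu\zeta)$ and use a Leibniz argument: the bootstrap bounds $\|\frakD^l\mu\|_{L^\infty}\lesssim\eta^3$ (from $\|\frakD^l\mu\|_{L^2}\lesssim R\eta^3$ plus Sobolev) and $\|\frakD^l\zeta\|_{L^2}\lesssim R^2\eta^3$ for $l\geq 1$, combined with $\|\zeta\|_{L^\infty}\lesssim R$, give $\sum_{l\leq k}\|\frakD^l(\mu\zeta)\|_{L^2}\lesssim R^2\eta^3$. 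Analogously $\Phi_3$ is controlled using \eqref{eq: nabla psi2 eta}, which gives $\sum\|\frakD^l(\nabla\psi_2+\bfx_1'')\|_{L^2}\lesssim GM\eta^3/R$.

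The main obstacle lies in extracting a $\calE^{1/2}_{\leq k}$ factor from the $[\partial_t,\Hone]$ pieces in $\Phi_2$ and $\Phi_3$, because Lemma \ref{lem: H estimates} states those estimates with the bootstrap quantity $R^{-1}\eta^4|v_1|$ rather than with an energy. The key is that the factor $\eta^4|v_1|$ originates from the $L^\infty$ bound on the boundary velocity $u$, and by Sobolev embedding on $S_R$ combined with Proposition \ref{prop: Du Eu} we also have $\|u\|_{L^\infty(S_R)}\lesssim R^{-1}\sum_{l\leq 2}\|\frakD^l u\|_{L^2}\lesssim R^{-1/2}\calE^{1/2}_{\leq 2}$. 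Tracing through the proof of Lemma \ref{lem: H estimates} with this alternative $L^\infty$ control yields $\|\frakD^j[\partial_t,\Hone]g\|_{L^2}\lesssim R^{-3/2}\calE^{1/2}_{\leq k}\sum_{l\leq j}\|\frakD^l g\|_{L^2}$. Applying this with $g=(I+\Hone)(\mu\zeta)$ and the bound $\sum\|\frakD^l(\mu\zeta)\|_{L^2}\lesssim R^2\eta^3$ produces, after the multiplication by $GM/R^3$ inherent in $\Phi_2$, exactly $GM\eta^3 R^{-5/2}\calE^{1/2}_{\leq k}$, and the same mechanism handles $\Phi_3$ once one uses \eqref{eq: nabla psi2 eta} in place of the $\mu\zeta$ bound. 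Collecting all contributions and combining with the reduction from the first step completes the estimate.
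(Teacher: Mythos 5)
Your proposal fills in the paper's very terse proof correctly and takes the same approach; in particular, the key step of replacing the $\eta^4|v_1|$ coefficient appearing in Lemma~\ref{lem: H estimates} by the Sobolev--energy bound on $\|u\|_{L^\infty(S_R)}$ coming from Proposition~\ref{prop: Du Eu} is exactly what produces the $\calE_{\leq k}^{1/2}$ factor, and the decomposition into $\Phi_1,\Phi_2,\Phi_3$ with the commutator with $(I+\Kone^\ast)^{-1}$ handled via \eqref{eq: D com (I+K*)-1} and Lemma~\ref{lem: H H*} matches what the paper intends. Two small imprecisions worth flagging. First, you cite Proposition~\ref{prop: r1' v1} for $|v_1|\lesssim\sqrt{GM/R}\,\eta^{1/2}$, but that bound holds only in the intermediate regime $3c_0^2R\beta^{2/7}\leq r_1\leq Rc_0^{-2}\beta^{12/7}$; in the first regime $|v_1|$ stays comparable to $\sqrt{GM/R}\,c_0\beta^{-6/7}$ while $\eta\to 0$, so the stated inequality reverses. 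What actually suffices for the $\Phi_1$ estimate is the uniform bound $\eta\,|v_1|\lesssim\sqrt{GM/R}$, which holds in all three regimes (using $|v_1|\lesssim\sqrt{GM/R}$, a consequence of $c_0\gtrsim\beta^{-1/7}$, together with $\eta\leq 1$); this still gives $\|\frakD^k[\partial_t^2,\Hone]u\|_{L^2}\lesssim GM R^{-5/2}\eta^3\calE_{\leq k}^{1/2}$. Second, $\calE_{\leq 2}^{1/2}$ in the Sobolev step should read $\calE_{\leq 3}^{1/2}$, since Proposition~\ref{prop: Du Eu} is only stated for $k\geq 3$ and the $L^\infty$ bound on $\snabla u$ consumes three derivatives of $u$; this is harmless because $\calE_{\leq 3}\leq\calE_{\leq k}$ for the range of $k$ in the proposition.
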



\begin{proof}

This proposition follows from differentiating equation \eqref{eq: at} and applying the estimates in \lem{lem: H estimates}, \props{prop: Du Eu}, \props{prop: L2 C1}, and \ref{prop: L2 C2}, \cor{cor: Dk H com}, and the bootstrap assumptions \eqref{eq: bootstrap}. To estimate the contribution of $\nabla\psi_2+\bfx_1''$ we use \eqref{eq: nabla psi2 eta}, and to commute derivatives with $(I+\Kone^\ast)^{-1}$ we also use \eqref{eq: D com (I+K*)-1} and \lem{lem: H H*}. 

\end{proof}


Finally we use \prop{prop: at E} to estimate one more derivative in \eqref{eq: D zeta E} and \eqref{eq: N E}.


\begin{corollary}\label{cor: D ell zeta}

Under the assumptions of \props{prop: bootstrap by E} and \ref{prop: at E},

\begin{align*}
\begin{split}
 \|\frakD^{\ell+1}\zeta\|_{L^2(S_R)} + R \|\frakD^\ell(|N|^{-1}|\sg|-1)\|_{L^2(S_R)} \leq C R^{\frac{1}{2}} \int_{T_0}^t \calE_{\leq \ell}^{\frac{1}{2}}(s)ds.
\end{split}
\end{align*}

\end{corollary}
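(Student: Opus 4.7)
The corollary extends the estimates \eqref{eq: D zeta E} and \eqref{eq: N E} by one derivative, and these two bounds are closely related at the top order because $N=\zeta_\alpha\cross\zeta_\beta$. Since $\frakD^{\ell+1}\zeta(T_0)=0$ (because $\zeta(T_0,p)=p$ and $\frakD_i p=\Omega_i p-\bfe_i\cross p=0$), one has
$$\frakD^{\ell+1}\zeta(t)=\int_{T_0}^t \frakD^{\ell+1}u(s)\,ds,$$
and the task reduces to estimating $\|\frakD^{\ell+1}u\|_{L^2(S_R)}$---one order beyond what \prop{prop: Du Eu} provides. The missing derivative is supplied by the equation \eqref{eq: uk eq} at level $k=\ell$,
$$a\,n\cross\nabla\vecu_\ell = -\partial_t^2\vecu_\ell - \frac{GM}{R^3}\vecu_\ell + \frac{3GM}{2R^{3}}(I+\Hone)((n\cdot\vecu_\ell)n)+\tilg_\ell.$$
Under the bootstrap assumptions \eqref{eq: bootstrap}, $a$ is pointwise comparable to $GM/R^2$, and on $S_R$ the operator $n\cross\nabla$ is comparable to $R^{-1}\Omega$ (\lem{lem: n cross nabla} gives one direction, and the matching lower bound follows from the exact identity $n\cross\nabla=R^{-1}\Omega$ on $S_R$ together with the smallness of $n-R^{-1}\zeta$), so the $L^2(S_R)$ norm of the left-hand side is comparable to $GMR^{-3}\|\frakD\vecu_\ell\|_{L^2(S_R)}$.

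To avoid needing an energy one level higher, I would integrate the equation in time from $T_0$ to $t$. Using $\vecu_\ell(T_0)=0$ and integrating by parts,
$$\int_{T_0}^t\frac{\partial_s^2\vecu_\ell}{a(s)}\,ds = \frac{\partial_t\vecu_\ell(t)}{a(t)}-\frac{\partial_t\vecu_\ell(T_0)}{a(T_0)}-\int_{T_0}^t\partial_s(a^{-1})\,\partial_s\vecu_\ell\,ds,$$
the $\partial_t^2$ contribution is replaced by $\partial_t\vecu_\ell$---controlled by $\calE_{\leq\ell}^{1/2}$ via \prop{prop: Du Eu}---together with an admissible time integral (using \prop{prop: at E} to handle $\partial_t a$). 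The remaining terms on the right-hand side of the integrated equation, namely time integrals of $\vecu_\ell$, $(I+\Hone)((n\cdot\vecu_\ell)n)$, and $\tilg_\ell$, are bounded by $\int_{T_0}^t\calE_{\leq\ell}^{1/2}(s)\,ds$ using \prop{prop: Du Eu}, \prop{prop: bootstrap by E}, \lem{lem: H estimates}, and the control on $\tilg_\ell$ furnished by these together with \prop{prop: at E}.

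The quantity $\int_{T_0}^t a(s)(n\cross\nabla)(s)\vecu_\ell(s)\,ds$ must then be related to $(n\cross\nabla)(t)$ acting on $\int_{T_0}^t\vecu_\ell(s)\,ds$, at the cost of commutators $(n\cross\nabla)(t)-(n\cross\nabla)(s)$ and $a(t)-a(s)$, both small by \eqref{eq: Nt} and the bootstrap assumptions. The antiderivative is in turn expressible in terms of $\frakD^\ell\zeta(t)$ through $\Hone(t)$, modulo $[\partial_t,\Hone]$-commutators estimated by \lem{lem: H estimates}, and one application of $\Omega\sim R\,n\cross\nabla$ converts it into $\frakD^{\ell+1}\zeta(t)$ up to further small commutators. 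Once $\|\frakD^{\ell+1}\zeta\|_{L^2(S_R)}$ is in hand, the bound on $\|\frakD^\ell(|N|^{-1}|\sg|-1)\|_{L^2(S_R)}$ follows from differentiating the evolution identity \eqref{eq: N sg} $\ell$ times: the top-order contributions involve $\frakD^{\ell+1}\zeta$ through $n=N/|N|$ and $\frakD^{\ell+1}u$ through $n\cross\nabla u$, both now controlled, and a final application of \lem{lem: eta integration} delivers the required form.

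The principal difficulty is circularity: the operators $a$, $n$, $n\cross\nabla$, and $\Hone$ appearing in \eqref{eq: uk eq} all depend on $\zeta$ through $\ell+1$ derivatives, so estimating $\frakD^{\ell+1}u$ via the equation formally reintroduces $\frakD^{\ell+1}\zeta$ on the right-hand side. Closing the loop requires verifying that each such reintroduced term carries a smallness factor of order $\eta^3$ or $|N||\sg|^{-1}-1$---genuinely small under the bootstrap assumptions---so that the problematic terms can be absorbed on the left-hand side by a Gronwall-type argument.
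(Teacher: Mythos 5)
Your route is genuinely different from the paper's and, as sketched, has a real gap. The paper avoids the PDE for $u_\ell$ entirely and relies on two algebraic/kinematic observations. First, it controls $\frakD^\ell(|N|^{-1}|\sg|)$ \emph{before} $\frakD^{\ell+1}\zeta$, using the identity
\begin{align*}
\partial_t\frakD^\ell\bigl(a\,|\sg||N|^{-1}\bigr)=\frakD^\ell\Bigl(|\sg||N|^{-1}\cdot|N|\,\partial_t(a|N|^{-1})\Bigr)
=\sum_{k\leq\ell}c_{k,\ell}\,\frakD^k(|\sg||N|^{-1})\,\frakD^{\ell-k}\bigl(|N|\partial_t(a|N|^{-1})\bigr),
\end{align*}
so that Proposition~\ref{prop: at E}, the bootstrap assumptions, and a time integration deliver the bound on $\frakD^\ell(a|\sg||N|^{-1})$, and dividing by $a\approx GM/R^{2}$ (with \eqref{eq: a E} and absorption of the small leading coefficient $a-GM/R^2$) gives $\frakD^\ell(|N|^{-1}|\sg|-1)$. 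Second, for $\frakD^{\ell+1}\zeta$ the paper uses the kinematic fact $n\cross\nabla\zeta=-2n$, which immediately gives $n\cross\nabla\frakD^\ell\zeta=-[\frakD^\ell,n\cross\nabla]\zeta-2\frakD^\ell n$; the right-hand side involves only quantities already controlled by Proposition~\ref{prop: bootstrap by E} together with the freshly obtained $\frakD^\ell(|N||\sg|^{-1})$ (needed inside $[\frakD^\ell,n\cross\nabla]$ via \eqref{eq: D n cross nabla}). No dynamics, no Gronwall.

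Your proposal replaces this by integrating the quasilinear equation \eqref{eq: uk eq} for $\vecu_\ell$ in time and then commuting $n\cross\nabla$ past the time integral. That strategy might in principle be made to work, but as written there is a concrete gap, beyond the acknowledged circularity. In the final paragraph you claim that once $\|\frakD^{\ell+1}\zeta(t)\|_{L^2(S_R)}$ is in hand, the bound on $\frakD^\ell(|N|^{-1}|\sg|-1)$ follows from \eqref{eq: N sg} because ``the top-order contributions involve $\frakD^{\ell+1}\zeta$ through $n=N/|N|$ and $\frakD^{\ell+1}u$ through $n\cross\nabla u$, both now controlled.'' The second assertion is false: $\frakD^{\ell+1}u=\partial_t\frakD^{\ell+1}\zeta$, and a bound on $\frakD^{\ell+1}\zeta(t)$ at each time does not control its time derivative, nor is $\frakD^{\ell+1}u$ in the range of the bootstrap assumptions \eqref{eq: bootstrap} or of Proposition~\ref{prop: Du Eu}, which stop at $\ell$ derivatives. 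The same obstacle resurfaces in the Gronwall step: when you split $\int_{T_0}^t a(s)(n\cross\nabla)(s)\vecu_\ell(s)\,ds$ as a main term plus commutators involving $(n\cross\nabla)(s)-(n\cross\nabla)(t)$ and $a(s)-a(t)$, the residual integrands still carry $\frakD^{\ell+1}u(s)$ at each $s$, so the natural Gronwall variable is $\|\frakD^{\ell+1}u(s)\|_{L^2}$, not $\|\frakD^{\ell+1}\zeta\|_{L^2}$, and the two are not interchangeable. A further structural issue is the ordering: your estimate of $\tilg_\ell$ needs $\frakD^\ell N$, i.e.\ $\frakD^{\ell+1}\zeta$ and $\frakD^\ell(|N||\sg|^{-1})$ simultaneously, whereas the paper's proof uses the $|N|^{-1}|\sg|$ bound as an input to the $\frakD^{\ell+1}\zeta$ bound; reversing the order, as you propose, would have to demonstrate that every such reintroduction carries not just an $\eta^3$ smallness factor but also lands on the correct Gronwall variable. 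The kinematic identity $n\cross\nabla\zeta=-2n$ is the insight that lets the paper bypass all of this.
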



\begin{proof}
We first estimate $\frakD^\ell(|\sg||N|^{-1})$. Note that in view of \eqref{eq: bootstrap} and \eqref{eq: a E}  it suffices to show that

\begin{align*}
\begin{split}
 \|\frakD^{\ell}(a|\sg||N|^{-1})\|_{L^2(S_R)}\leq C\frac{GM}{R^{\frac{5}{2}}}\int_{T_0}^t\calE_{\leq\ell}^{\frac{1}{2}}(s)ds. 
\end{split}
\end{align*}
But this follows from the identity

\begin{align*}
\begin{split}
 \partial_t\frakD^\ell(a|\sg||N|^{-1})=\frakD^\ell\left(|N|^{-1}|\sg|\,|N|\partial_t(a|N|^{-1})\right)=\sum_{k\leq\ell} c_{k,\ell}\,\frakD^k(|\sg||N|^{-1})\frakD^{\ell-k}(|N|\partial_t(a|N|^{-1})),
\end{split}
\end{align*}
where $c_{k,\ell}$ are some constants, combined with \eqref{eq: bootstrap} and \props{prop: bootstrap by E} and \ref{prop: at E}. Next, to estimate $\frakD^{\ell+1}\zeta$ we first note that

\begin{align*}
\begin{split}
 n\cross\nabla\zeta=\frac{1}{|N|}(\zeta_\beta\zeta_\alpha-\zeta_\alpha\zeta_\beta) =-2n.
\end{split}
\end{align*}
It follows that

\begin{align}\label{eq: high der zeta temp 1}
\begin{split}
n\cross\nabla \frakD^{\ell}  \zeta= -[\frakD^{\ell},n\cross\nabla]\zeta-2\,\frakD^{\ell}n.
\end{split}
\end{align}
Now using \eqref{eq: D n cross nabla} we write

\begin{align*}
\begin{split}
 [\frakD^{\ell},n\cross\nabla]\zeta=&[\frakD^{\ell-1},n\cross\nabla]\frakD\zeta+\frakD^{\ell-1} [\frakD,n\cross\nabla]\zeta\\
 =&[\frakD^{\ell-1},n\cross\nabla]\frakD^\zeta+\frakD^{\ell-1}\left(\frac{1}{|N|}\left((\partial_{\beta}(\frakD\zeta))\zeta_{\alpha}-(\partial_{\alpha}(\frakD\zeta))\zeta_{\beta}\right)-\frac{\Omega(|N||\sg|^{-1})}{|N||\sg|^{-1}}n\cross\nabla \zeta\right)\\
 =:&\frac{1}{|N|}\left((\partial_{\beta}(\frakD^\ell\zeta))\zeta_{\alpha}-(\partial_{\alpha}(\frakD^\ell\eta))\zeta_{\beta}\right)+I,
\end{split}
\end{align*}
where $I$ consists of terms which are lower order in regularity, or which can be absorbed using the bootstrap assumptions \eqref{eq: bootstrap}, and the fact that we have already estimated $\frakD^\ell(|N||\sg|^{-1})$. Combining with \eqref{eq: high der zeta temp 1} and using the coordinate expression for $n\cross\nabla\frakD^\ell\zeta$ we get

\begin{align}\label{eq: high der zeta temp 2}
\begin{split}
\frac{2}{|N|}\left(\zeta_\beta\cross\partial_\alpha(\frakD^\ell\zeta)-\zeta_\alpha\cross\partial_\beta(\frakD^\ell\zeta)\right)=-I-2\,\frakD^\ell n,
\end{split}
\end{align}
Note that $\frakD^\ell\zeta=\frac{1}{2}\vect(I-\Hone)\frakD^\ell\zeta+\frac{1}{2}\vect(I+\Hone)\frakD^\ell\zeta$. Since

\begin{align*}
\begin{split}
 \zeta_\alpha\cross\partial_\beta\vect(I-\Hone)\frakD^\ell\zeta&-\zeta_\beta\cross\partial_\alpha\vect(I-\Hone)\frakD^\ell\zeta\\
 = & \zeta_\alpha\partial_\beta\vect(I-\Hone)\frakD^\ell\zeta-\zeta_\beta\partial_\alpha\vect(I-\Hone)\frakD^\ell\zeta\\
 &+ \zeta_\alpha\cdot\partial_\beta\vect(I-\Hone)\frakD^\ell\zeta-\zeta_\beta\cdot\partial_\alpha\vect(I-\Hone)\frakD^\ell\zeta,
\end{split}
\end{align*}
and using a similar computation for $(I+\Hone)\frakD^\ell\zeta$, we get

\begin{align*}
\begin{split}
 \frac{1}{|N|}(\zeta_\beta\cross\partial_\alpha\frakD^\ell\zeta-\zeta_\alpha\cross\partial_\beta\frakD^\ell\zeta)= &n\cross\nabla\frakD^\ell\zeta+\frac{1}{2|N|}\left(\zeta_\alpha\cdot\partial_\beta\vect(I-\Hone)\frakD^\ell\zeta-\zeta_\beta\cdot\partial_\alpha\vect(I-\Hone)\frakD^\ell\zeta\right)\\
 &+\frac{1}{2|N|}\left(\zeta_\alpha\cdot\partial_\beta\vect(I+\Hone)\frakD^\ell\zeta-\zeta_\beta\cdot\partial_\alpha\vect(I+\Hone)\frakD^\ell\zeta\right).
\end{split}
\end{align*}
Now by \eqref{eq: n cross nabla clifford vec} applied to $\calB_1$ and $\calB_1^c$,

\begin{align*}
\begin{split}
  &\frac{1}{2|N|}\left(\zeta_\alpha\cdot\partial_\beta\vect(I-\Hone)\frakD^\ell\zeta-\zeta_\beta\cdot\partial_\alpha\vect(I-\Hone)\frakD^\ell\zeta\right)=\nabla_n^{\mathrm{out}}\frac{\Re}{2}(I-\Hone)\frakD^\ell\zeta,\\
  &\frac{1}{2|N|}\left(\zeta_\alpha\cdot\partial_\beta\vect(I+\Hone)\frakD^\ell\zeta-\zeta_\beta\cdot\partial_\alpha\vect(I+\Hone)\frakD^\ell\zeta\right)=\nabla_n^{\mathrm{in}}\frac{\Re}{2}(I+\Hone)\frakD^\ell\zeta,
\end{split}
\end{align*}
where $\nabla^{\mathrm{out}}_n$ and $\nabla_n^{\mathrm{in}}$ are the Dirichlet-Neumann maps for $\calB_1^c$ and $\calB_1$, respectively, both with respect to the exterior normal $n$ to $\calB_1$. Moreover, since $\Hone\zeta$ is a pure vector (see \eqref{eq: nabla psi1}),

\begin{align*}
\begin{split}
 \Re(I-\Hone) \frakD^\ell\zeta=\Re[\frakD^\ell,\Hone]\zeta\quad\mand\quad\Re(I+\Hone)\frakD^\ell\zeta=-\Re[\frakD^\ell,\Hone]\zeta,
\end{split}
\end{align*}
which shows that the real parts $ \Re(I-\Hone) \frakD^\ell\zeta$ and $ \Re(I+\Hone) \frakD^\ell\zeta$ are lower order with respect to regularity compared to $\frakD^\ell\zeta$, so

\begin{align}\label{eq: high der zeta temp 3}
\begin{split}
   \frac{1}{|N|}(\zeta_\beta\cross\partial_\alpha\frakD^\ell\zeta-\zeta_\alpha\cross\partial_\beta\frakD^\ell\zeta)= n\cross\nabla\frakD^\ell\zeta+II.
\end{split}
\end{align}
where $II$ is lower order with respect to regularity compared to $\frakD^{\ell+1}\zeta$. Next we write

\begin{align*}
\begin{split}
 n\cross\nabla\frakD^\ell\zeta=\frac{1}{|\sg|}\left(\zeta_\beta(0)\partial_\alpha\frakD^\ell\zeta-\zeta_\alpha(0)\partial_\beta\frakD^\ell\zeta\right)+III,
\end{split}
\end{align*}
where $III$ denotes the error which can be absorbed using the bootstrap assumptions \eqref{eq: bootstrap}. Since $\zeta(0):S_R\to S_R$ is the identity map, combined with \eqref{eq: high der zeta temp 3} this gives us control of $\snabla\frakD^\ell\zeta$ where $\snabla$ is the intrinsic covariant differentiation operator on $S_R$, which in turn gives us control of $\frakD^{\ell+1}\zeta$.

\end{proof}


\subsection{Closing the Energy Estimates} \label{subsec: apriori}

 In this section we use the results in Sections~\ref{subsec: energy id}, \ref{subsec: x_1 J}, and \ref{subsec: bootstrap} to close the energy estimates for equation \eqref{eq: u}. More precisely, our goal is to prove the following result, where we use the notation introduced in Sectiona~\ref{subsec: energy id}, \ref{subsec: x_1 J}, and \ref{subsec: bootstrap}.

 
 \begin{proposition}\label{prop: a priori}
 
 Let $\ell\geq 5$ be a fixed integer, and suppose $\beta$ is sufficiently large. There exists a constant $C_0$ such that if $u$ is a solution to $\eqref{eq: u}$ on $[T_0,T)$ and $r_1(t)\geq r_0$ for all $t\in[T_0,T)$, then
 
 \begin{align}\label{eq: a priori}
\begin{split}
 \calE_{\leq \ell}(t)\leq C_0R\eta^8(t)|v_1(t)|^2,\qquad t\in[T_0,T).
\end{split}
\end{align}
 Moreover, there exists a constant $C_1>0$ such that the estimates \eqref{eq: bootstrap} are satisfied for all $t\in[T_0,T)$.
 \end{proposition}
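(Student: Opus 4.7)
The proof is a bootstrap/continuity argument which simultaneously improves the energy bound \eqref{eq: a priori} and the bootstrap assumptions \eqref{eq: bootstrap}. Fix $\ell\geq 5$, pick an absolute constant $C_{0}$ (depending only on $\ell$) large enough to accommodate the implicit constants produced below, and then $C_{1}=C_{1}(C_{0})$ sufficiently large. Let $T_{*}$ denote the supremum of times $\tau\in [T_{0},T)$ on which both \eqref{eq: bootstrap} (with $2C_{1}$ in place of $C_{1}$) and the bound $\calE_{\leq\ell}(s)\leq 2C_{0}\,R\eta^{8}(s)|v_{1}(s)|^{2}$ hold for every $s\in[T_{0},\tau]$. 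Since $u(T_{0})\equiv 0$ and $\xi(T_{0},\cdot)=\mathrm{Id}_{S_{R}}$, every quantity controlled by \eqref{eq: bootstrap} vanishes at $t=T_{0}$, so $T_{*}>T_{0}$ by continuity. The plan is to strictly improve both bounds on $[T_{0},T_{*})$, which by continuity forces $T_{*}=T$.

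\textbf{Energy identity and its driver.} On $[T_{0},T_{*})$, for each multi-index $\alpha$ with $|\alpha|\leq\ell$, \cor{cor: uk eq} yields the equation for $\vecu_{\alpha}=\tfrac{1}{2}(I+\Hone)\frakD^{\alpha}u$ with source $\tilg_{|\alpha|}$. Applying \prop{prop: energy id} with $f=\vecu_{\alpha}$ produces a differential identity for $\calE_{\vecu_{\alpha}}$ whose right-hand side is the main forcing $\langle \tilg_{|\alpha|},\partial_{t}\vecu_{\alpha}/a\rangle$ together with the quadratic corrections involving $\partial_{t}(|N|/a)$, the mismatches $\Hone-\Hone^{*}$ and $[\Hone,a^{-1}\partial_{t}]$, and $\vecQ(u,\vecu_{\alpha})$. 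Summing over $|\alpha|\leq \ell$ produces an identity for $\calE_{\leq\ell}$. Every contribution to $\tilg_{|\alpha|}$ other than $\frakD^{|\alpha|}F_{t}$ is, under \eqref{eq: bootstrap}, strictly lower order and can be absorbed by the commutator, Hilbert transform and Clifford-algebra estimates already established in \lem{lem: H estimates}, \lem{lem: H H*}, \prop{prop: Du Eu}, \prop{prop: bootstrap by E}, \prop{prop: at E} and \cor{cor: D ell zeta}.

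\textbf{The sharp bound for the source.} Carrying out the energy estimate naively for the principal source $\langle\frakD^{k}F_{t},\partial_{t}\vecu_{k}/a\rangle$ by Cauchy--Schwarz, using $\|F_{t}\|_{L^{2}(S_R)}\lesssim GM\eta^{4}|v_{1}|/R$ together with $\|\partial_{t}\vecu_{k}\|_{L^{2}}\lesssim \sqrt{GM/R}\,\calE_{\leq\ell}^{1/2}$ and \lem{lem: eta integration}, produces only $\calE_{\leq\ell}^{1/2}\lesssim \sqrt{GMR}\,\eta^{3}$, which is strictly weaker than the desired $\sqrt{R}\,\eta^{4}|v_{1}|$ in the relevant regime. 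To close at the sharp rate, one integrates by parts in time in the source contribution,
\begin{align*}
\int_{T_{0}}^{t}\langle \frakD^{k}F_{s},\,\partial_{s}\vecu_{k}/a\rangle\,ds = \bigl[\langle \frakD^{k}F,\,\partial_{s}\vecu_{k}/a\rangle\bigr]_{s=T_{0}}^{s=t} - \int_{T_{0}}^{t}\Big\langle \frakD^{k}F,\,\partial_{s}\Big(\tfrac{\partial_{s}\vecu_{k}}{a}\Big)\Big\rangle\,ds,
\end{align*}
and substitutes for $\partial_{s}^{2}\vecu_{k}$ using \eqref{eq: uk eq}. Because $\|\frakD^{k}F\|_{L^{2}(S_{R})}\lesssim GM\eta^{3}/R$ gains one full power of $\eta$ compared with $\|\frakD^{k}F_{t}\|_{L^{2}}$, a second application of \lem{lem: eta integration} (and \prop{prop: r1' v1} to express $\eta$ and $|v_{1}|$ in terms of $r_{1}$) converts this into the sharp upper bound $\calE_{\leq\ell}^{1/2}(t)\lesssim \sqrt{R}\,\eta^{4}(t)|v_{1}(t)|$ with an implicit constant independent of $C_{0}$ and $C_{1}$.

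\textbf{Closing the bootstrap and main obstacle.} Taking $C_{0}$ larger than the square of this implicit constant gives $\calE_{\leq\ell}(t)\leq \tfrac{1}{2}C_{0}R\eta^{8}|v_{1}|^{2}$ on $[T_{0},T_{*})$. Feeding this improved energy bound back through \prop{prop: bootstrap by E}, \prop{prop: at E} and \cor{cor: D ell zeta}, and using \lem{lem: eta integration} to convert $\int_{T_{0}}^{t}\calE_{\leq\ell}^{1/2}(s)\,ds\lesssim R^{3/2}\eta^{3}(t)$, shows that every quantity on the left of \eqref{eq: bootstrap} is bounded by a universal multiple of its right-hand side; choosing $C_{1}$ larger than this universal multiple improves \eqref{eq: bootstrap} from $2C_{1}$ to $C_{1}$. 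By continuity $T_{*}=T$, proving both \eqref{eq: a priori} and \eqref{eq: bootstrap}. The main obstacle is the time integration by parts just described: without exploiting the fact that $F_{t}$ is a total derivative and that $F$ itself is one power of $\eta$ smaller, the naive energy method overshoots by a factor of $\sqrt{GM}/(\eta|v_{1}|)$, which is large in the physical regime. A secondary technical point is the ordering of the estimates: \cor{cor: D ell zeta} controls $\frakD^{\ell+1}\zeta$ and $\frakD^{\ell}(|N||\sg|^{-1}-1)$ only after \prop{prop: at E} has bounded $\|\frakD^{\ell}(|N|\partial_{t}(a|N|^{-1}))\|_{L^{2}}$, which itself relies on the energy bound, so the steps must be sequenced carefully to avoid circularity.
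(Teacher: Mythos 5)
Your setup of the double continuity argument (bootstrapping both $\calE_{\leq\ell}$ and \eqref{eq: bootstrap} simultaneously with constants $C_0$ and $C_1$), and your observation that the naive Cauchy--Schwarz bound on the source pairing loses by roughly $(r_1/R)^{3/2}$, both match the paper's strategy. But the central step of your proof---the time integration by parts---is carried out in the \emph{wrong direction}, and with the direction you chose the estimate does not close.

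You write the source contribution as $\int_{T_0}^t\langle\frakD^k F_s,\partial_s\vecu_k/a\rangle\,ds$ and transfer the $\partial_s$ off $F_s$, producing a boundary term $\langle\frakD^k F,\partial_t\vecu_k/a\rangle$ at time $t$ and a bulk term with $\partial_s^2\vecu_k$ which you then replace via the equation. This does not improve the naive bound. Indeed the boundary term satisfies, using $\|\frakD^k F\|_{L^2}\lesssim GM\eta^3/R$, $a\sim GM/R^2$ and $\|\partial_t\vecu_k\|_{L^2}\lesssim R^{-1}\sqrt{GM}\,\calE_{\leq\ell}^{1/2}$,
\begin{align*}
\bigl|\langle\frakD^k F,\partial_t\vecu_k/a\rangle\bigr|\;\lesssim\;\frac{GM\eta^3}{R}\cdot\frac{R^2}{GM}\cdot\frac{\sqrt{GM}}{R}\,\calE_{\leq\ell}^{1/2}\;=\;\sqrt{GM}\,\eta^3\,\calE_{\leq\ell}^{1/2},
\end{align*}
and under the energy bootstrap $\calE_{\leq\ell}^{1/2}\lesssim C_0^{1/2}\sqrt{R}\,\eta^4|v_1|$ this is $C_0^{1/2}\sqrt{GMR}\,\eta^7|v_1|$. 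Comparing with the target $C_0 R\eta^8|v_1|^2$ and using $|v_1|^2\sim GM\eta/R$ near closest approach, the ratio of your boundary term to the target is $\sqrt{GM/R}/(\eta|v_1|)\sim(r_1/R)^{3/2}\gg 1$. Absorbing the $\partial_t\vecu_k/a$ factor into the energy via $ab\le\varepsilon a^2+b^2/(4\varepsilon)$ does no better, since it trades the boundary term for $\sim GM\eta^6$, which again beats $R\eta^8|v_1|^2\sim GM\eta^9$ by $\eta^{-3}$. Your bulk term has the same loss: after substituting the equation and integrating by parts in space, the leading contribution is controlled by $\|n\cross\nabla\frakD^k F\|\,\|\vecu_k\|\lesssim (GM\eta^3/R^2)\cdot C_0^{1/2}R\eta^4|v_1|$, and the time integral of this via \lem{lem: eta integration} gives $C_0^{1/2}GM\eta^6$, again $\eta^{-3}$ too big.

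The paper integrates by parts the opposite way: it transfers a time derivative \emph{onto} $F_t$ while leaving $\vecu_k$ alone,
\begin{align*}
\int_{T_0}^t\frac{R^2}{GM}\langle\partial_s\frakD^kF,\partial_s\vecu_k\rangle\,ds=\frac{R^2}{GM}\langle\partial_t\frakD^kF(t),\vecu_k(t)\rangle-\frac{R^2}{GM}\int_{T_0}^t\langle\partial_s^2\frakD^kF,\vecu_k\rangle\,ds+\dots,
\end{align*}
and never touches $\partial_t^2\vecu_k$. Now the boundary term pairs $\partial_tF$ with $\vecu_k$ rather than $F$ with $\partial_t\vecu_k$: with $\|\partial_t\frakD^kF\|\lesssim GM\eta^4|v_1|/R^2$ and $\|\vecu_k\|\lesssim C_0^{1/2}R\eta^4|v_1|$ it equals $C_0^{1/2}R\eta^8|v_1|^2$ on the nose. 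The gain is structural: $F$ varies on the slow orbital time scale, so passing from $F$ to $F_t$ costs a factor $\eta|v_1|/R$, while $\vecu_k$ oscillates on the fast free-fall time scale, so passing from $\partial_t\vecu_k$ to $\vecu_k$ gains a factor $\sqrt{R^3/GM}$; the mismatch between these two time scales is exactly $(r_1/R)^{3/2}$, and the paper's direction exploits it while yours fights it. In short, move the derivative onto the source (which is non-resonant and smooth in time), not onto the unknown (which oscillates fast).

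Two secondary slips that are worth fixing but are not the main issue: $\|F_t\|_{L^2(S_R)}\lesssim GM\eta^4|v_1|/R^2$, not $GM\eta^4|v_1|/R$ (your stated bound is off by a factor of $R$ and has the wrong units); and the naive estimate gives $\calE_{\leq\ell}^{1/2}\lesssim\sqrt{GM}\,\eta^3$, not $\sqrt{GMR}\,\eta^3$ (the latter again has the wrong units for $\calE^{1/2}$). Your description of the remaining structure---$T_1\le T_2$, closing the $C_1$-bootstrap through \prop{prop: bootstrap by E}, \prop{prop: at E}, \cor{cor: D ell zeta}, and the sequencing caution to avoid circularity---is consistent with the paper's Steps 1, 2, and 4.
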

 

The existence of a solution up to the point of closest approach $r_0$ is an immediate corollary of \prop{prop: a priori} and the local well-posendess result in \cite{Nor1}.


\begin{corollary}\label{cor: existence}
If $t_0$ is the time at which $r(t_0)=r_0$ then the solution to \eqref{eq: Euler} remains regular for $t\leq t_0$.
\end{corollary}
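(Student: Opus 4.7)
The plan is to run a standard continuation argument based on \prop{prop: a priori}, using the local well-posedness result from \cite{Nor1} as the starting point. First I would invoke local well-posedness to obtain a classical solution on some maximal interval $[T_0,T^*)$ on which the bootstrap assumptions \eqref{eq: bootstrap} hold and on which $r_1(t)\geq r_0$. The goal is to show $T^*\geq t_0$.

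Suppose for contradiction that $T^*<t_0$. By the definition of $r_0$ and \prop{prop: r1' v1}, $r_1$ is strictly decreasing on $[T_0,t_0]$, so $r_1(t)\geq r_1(T^*)>r_0$ on $[T_0,T^*]$. Applying \prop{prop: a priori} on $[T_0,T^*)$ yields the uniform estimate
\begin{equation*}
\calE_{\leq\ell}(t)\leq C_0 R\,\eta^8(t)|v_1(t)|^2,\qquad t\in[T_0,T^*),
\end{equation*}
together with the bootstrap estimates \eqref{eq: bootstrap} with a fixed constant $C_1$ throughout $[T_0,T^*)$. Combining \prop{prop: Du Eu}, \prop{prop: bootstrap by E}, \prop{prop: at E}, and \cor{cor: D ell zeta} then gives uniform control up to $t=T^*$ of the Sobolev norms $\|\frakD^{\alpha}u\|_{L^2(S_R)}$, $\|\partial_t\frakD^{\alpha}u\|_{L^2(S_R)}$ for $|\alpha|\leq\ell$, of $\frakD^{\alpha}\zeta$ for $|\alpha|\leq\ell+1$, and of $a-GMR^{-2}$ to the required order; the bootstrap assumptions also ensure the chord–arc and Taylor-sign conditions ($|N|/|\sg|\sim 1$, the bi-Lipschitz bound on $\xi$, positivity of $a$ via \lem{lem: a} and \eqref{eq: a E}, and positivity of the second fundamental form via \lem{lem: 2nd fund form}) hold uniformly up to $T^*$.

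By continuity these bounds extend to $t=T^*$, providing initial data at $T^*$ which satisfy all the hypotheses of the local well-posedness theorem of \cite{Nor1} (the presence of the second body $\calB_2$ enters only through the source terms $\nabla\psi_2+\bfx_1''$, which by \eqref{eq: nabla psi2 eta} are a smooth, uniformly controlled forcing). Hence the solution can be extended past $T^*$ as a classical solution of \eqref{eq: Euler} on some interval $[T^*,T^*+\delta)$ for some $\delta>0$. Moreover, since the estimates \eqref{eq: bootstrap} hold at $T^*$ with a constant strictly smaller than $C_1$ (for $C_1$ chosen large enough in \prop{prop: a priori}), they continue to hold on $[T^*,T^*+\delta')$ for some $\delta'\in(0,\delta]$ by continuity, and $r_1$ remains above $r_0$ on this interval since $r_1(T^*)>r_0$. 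This contradicts the maximality of $T^*$, so $T^*\geq t_0$, and the solution is classical on $[T_0,t_0]$.

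The main obstacle is verifying that the a priori bounds supplied by \prop{prop: a priori} are strong enough to match the regularity and structural hypotheses (Taylor sign, chord–arc, non-degeneracy of $|N|/|\sg|$) required for the local well-posedness result to extend the solution; but since $\ell\geq 5$, Sobolev embedding on $S_R$ produces $C^{2}$-control of the boundary and $C^{1}$-control of the velocity, which is more than enough for \cite{Nor1}. The only other minor subtlety is to check that the decaying bootstrap form \eqref{eq: bootstrap} (which degenerates as $r_1\to r_0$ through the factor $\eta^3$ and powers of $|v_1|$) is self-improving; this is automatic because the right-hand sides of \eqref{eq: a priori} and \eqref{eq: bootstrap} remain bounded on $[T_0,t_0]$ by \prop{prop: r1' v1}.
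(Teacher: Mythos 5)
Your argument is correct and is essentially the paper's proof, just written out in full detail: start from local well-posedness, invoke \prop{prop: a priori} to keep the Sobolev norms, Taylor-sign and chord–arc constants uniformly controlled as long as $r_1\geq r_0$, and then use the local theory again to continue past any putative blow-up time before $t_0$. The paper states this in three sentences and leaves the continuation bookkeeping implicit; your version spells out the maximal-time/contradiction framing, but no new idea is involved.
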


\begin{proof}
By the local well-posedness result of \cite{LinNor1, Nor1}\footnote{See also \cite{BMSW1} where the local well-posedness in dimension two and in the irrotational case was derived in a set up more similar to the one used here. The local well-posedness proof of \cite{BMSW1} can also be extended to three dimensions as in \cite{Wu99}.} the solution exists locally in time starting at $t=T_0$. By \prop{prop: a priori} the higher order derivatives of the solution and the Lipschitz constant of the boundary remain bounded as long as $r< r_0$, so referring back to \cite{LinNor1, Nor1} the solution can be extended to $t=t_0$. 

\end{proof}


 The proof of Proposition \ref{prop: a priori} uses the energy identity \eqref{eq: model energy id}, where we will treat the contributions from the source term $\partial_tF$ and the rest of the terms, to which we refer as ``error terms", separately. To understand the contribution of the source term we use the analysis from Section~\ref{subsec: x_1 J}, and for the error terms we use the estimates in Section~\ref{subsec: bootstrap}.
 

\begin{proof}[Proof of \prop{prop: a priori}]
We divide the proof into several steps.

\emph{Step 1.} Since $\calE_j(T_0)=0$ for all $j$, the estimate \eqref{eq: a priori} is trivially satisfied at $t=T_0$ for any choice of $C_0$. Given $C_0$ (to be chosen later) we let $T_1$ be the largest time in $[T_0,T]$ such that 

 \begin{align}\label{eq: a priori bootstrap}
\begin{split}
 \calE_{\leq \ell}(t)\leq \frac{1}{2}C_0R\eta^8(t)|v_1(t)|^2,\qquad t\in[T_0,T_1).
\end{split}
\end{align}
For another constant $C_1$ to be fixed later, we let $T_2$ be the largest time in $[T_0,T]$ such that the bootstrap assumptions \eqref{eq: bootstrap} are satisfied for all $t\in[T_0,T_2)$. Note that since the assumptions \eqref{eq: bootstrap} are trivially satisfied at $t=T_0$, we have $T_2>T_0$.

\emph{Step 2.} We claim that if $C_1$ and $\beta$ are chosen sufficiently large, then $T_1\leq T_2$. Assume for contradiction that $T_2<T_1$.
  Let $\beta$ be so large that the results in Section~\ref{subsec: bootstrap} hold. By \props{prop: Du Eu}, if $C_1$ is chosen sufficiently large relative to $C_0$, then the estimates on $\|\frakD^ku\|_{L^2(S_R)}$ and $\|\partial_t\frakD^ku\|_{L^2(S_R)}$ in \eqref{eq: bootstrap} hold with $C_1$ replaced by $C_1/2$ for $t< T_2$. We will now use \prop{prop: bootstrap by  E} and \cor{cor: D ell zeta} to show that  the remaining conditions in \eqref{eq: bootstrap} also hold with $C_1$ replaced by $C_1/2$ for $t< T_2$, which is a contradiction. Let $\beta$ so large that the conclusions of \prop{prop: r1' v1} hold. Then by \eqref{eq: a priori bootstrap} and \lem{lem: eta integration},

\begin{align*}
\begin{split}
 \int_{T_0}^t\calE_{\leq \ell}^{\frac{1}{2}}(s)ds\lesssim C_0^{\frac{1}{2}}R^{\frac{3}{2}}\eta^3(t).
\end{split}
\end{align*}
It follows from \prop{prop: bootstrap by E} and \cor{cor: D ell zeta} that if $C_1$ is chosen sufficiently large relative to $C_0$ then the bootstrap assumption on $h$, $\mu$, $\nu$, $\frakD\zeta$, $\frakD n$, $\zeta-Rn$, $a-\frac{GM}{R^2}$, and $|N||\sg|^{-1}-1$ are satisfied with $C_1$ replaced by $C_1/2$ for $t< T_2$. Note that the estimate on $|\zeta|=|\xi-\bfx_1|$ with $5R$ replaced by $3R$ follows from the estimate on $h$ if $\beta$ is sufficiently large. Similarly if $\beta$ is sufficiently large the estimates on $|N||\sg|^{-1}-1$ imply the estimate $\frac{2}{3}\leq \frac{|N|}{|\sg|^{-1}}\leq\frac{3}{2}$. Finally to show that $\frac{2}{3}\leq \frac{|\xi(p)-\xi(q)|}{|p-q|}\leq\frac{3}{2}$, we consider the map $\zeta: S_{R}\rightarrow \partial\calB_{1}$ and derive a pointwise estimate on the differential of $\zeta$ with respect to $p\in S_{R}$, which we denote by $d\zeta$. To estimate $d\zeta$ we view $\zeta$ as a map to all of $\bbR^3$ and identify the tangent space of $\bbR^3$ with $\bbR^3$.  Note that

\begin{align*}
\zeta(t,p)=\zeta(0,p)+\int_{T_{0}}^{t}\zeta_{t}(t',p)dt'=p+\int_{T_{0}}^{t}\zeta_{t}(t',p)dt',\quad \Rightarrow\quad d\zeta(t,p)=\iota+\int_{T_{0}}^{t}d\zeta_{t}(t',p)dt',
\end{align*}
where $\iota$ is the inclusion of the tangent space $T_pS_R$ into $\bbR^3$. Using the fact that $|d\zeta_{t}|\lesssim R^{-1}\left(\sum_{i=1}^3|\Omega_i\zeta_{t}|^2\right)^{\frac{1}{2}}$, Proposition \ref{prop: Du Eu}, and the Sobolev inequality, we have $\left|\int_{T_{0}}^{t}d\zeta_{t}(t',p)dt'\right|\lesssim\eta(t)^{3}$. So the differential $d\zeta$ is a small perturbation of the inclusion map. It follows that, viewed as a map from $T_pS_T$ to $T_{\zeta(p)}\partial\calB_1$, $d\zeta(p)$ has norm close to one. By the Inverse Function Theorem, as a map from $S_{R}$ to $\partial\calB_{1}$, $\zeta$ has an inverse and the differential of its inverse also has norm close to one. Using the mean value theorem for both $\zeta$ and its inverse, the desired estimate for $\frac{|\xi(p)-\xi(q)|}{|p-q|}=\frac{|\zeta(p)-\zeta(q)|}{|p-q|}$ follows.

\emph{Step 3.} In this and the next step we show that if $\beta$ and $C_0$ are sufficiently large, then $T_1=T$. We will do this by showing that of $\beta$ and $C_0$ are sufficiently large, then  

\begin{align}\label{eq: a priori closing bootstrap}
\begin{split}
  \calE_{\leq \ell}(t)\leq \frac{1}{4}C_0R\eta^8(t)|v_1(t)|^2,\qquad t\in [T_0,T_1). 
\end{split}
\end{align}
Note that since we have already shown that $T_1\leq T_2$, we may assume that the bootstrap assumptions \eqref{eq: bootstrap} hold for all $t<T_1$. Moreover, by taking $\beta$ sufficiently large we may assume that the conclusions of \props{prop: Du Eu} and \ref{prop: bootstrap by E} hold for all $t\leq T_1$. We treat the contribution of the source term $F_t$ to the energy identity \eqref{eq: model energy id} in this step, and leave the error terms to the following step. More precisely, we consider the contribution of  

\begin{align}\label{eq: a priori I def}
\begin{split}
 I:=\frac{R^2}{GM}\langle \partial_t\frakD^kF,\partial_t\vecu_k\rangle 
\end{split}
\end{align}
to the energy identity \eqref{eq: model energy id}, where we have replaced $a$ by its leading order contribution $\frac{GM}{R^2}$, and where the source term $F$ is defined in \eqref{eq: F}. We will show that if $\beta$ is large enough, then

\begin{align}\label{eq: a priori I bound}
\begin{split}
\left|\int_{T_0}^tI(s)ds\right|\lesssim C_0^{\frac{1}{2}}R\eta^8(t)|v_1(t)|^2.
\end{split}
\end{align}
Note that if $C_0$ us sufficiently large, then the right-hand side above is bounded by $\frac{1}{100}C_0\eta^8(t)|v_1(t)|^2$. Writing

\begin{align*}
\begin{split}
 I=\frac{R^2}{GM}\partial_t\langle \partial_t \frakD^kF,\vecu_k\rangle-\frac{R^2}{GM}\langle \partial_t^2 \frakD^kF,\vecu_k\rangle, 
\end{split}
\end{align*}
for any $t<T$ gives

\begin{align}\label{eq: a priori temp 1}
\begin{split}
 \int_{T_0}^t I(s)ds=&\frac{R^2}{GM}\langle\partial_t\frakD^kF(t),\vecu_k(t)\rangle-\frac{R^2}{GM}\int_{T_0}^t\langle\partial_s^2\frakD^kF(s),\vecu_k(s)\rangle ds=:I_1+I_2.
\end{split}
\end{align}
Using the results from Sections~\ref{subsec: x_1 J} and \ref{subsec: bootstrap}, we will obtain estimates on $\partial_tF$ and $\partial_t^2F$. Direct differentiation of \eqref{eq: F} gives
\begin{align*}
\begin{split}
 &\partial_tF=-\frac{3GM\eta^4r_1'}{8R^4}(\zeta-3(\zeta\cdot\bfxi_1)\bfxi_1)+\frac{GM\eta^3}{8R^3}(u-3(u\cdot\bfxi_1)\bfxi_1-3(\zeta\cdot \bfxi_1')\bfxi_1-3(\zeta\cdot\bfxi_1) \bfxi_1'),
\end{split}
\end{align*}
and

\begin{align*}
\begin{split}
 \partial_t^2F= &-\frac{3GM\eta^4r_1'}{4R^4}(u-3(u\cdot\bfxi_1)\bfxi_1-3(\zeta\cdot \bfxi_1')\bfxi_1-3(\zeta\cdot\bfxi_1) \bfxi_1')\\
 &+\frac{GM\eta^3}{8R^3}(u_t-3(u_t\cdot\bfxi_1)\bfxi_1-3(\zeta\cdot\bfxi_1'')\bfxi_1-3(\zeta\cdot\bfxi_1)\bfxi_1''-6(u\cdot\bfxi_1')\bfxi_1-6(u\cdot\bfxi_1)\bfxi_1'-6(\zeta\cdot\bfxi_1')\bfxi_1')\\
 &+\frac{3GM\eta^5(r_1')^2}{2R^5}(\zeta-3(\zeta\cdot\bfxi_1)\bfxi_1)-\frac{3GM\eta^4r_1''}{8R^4}(\zeta-3(\zeta\cdot\bfxi_1)\bfxi_1).
\end{split}
\end{align*}
Also

\begin{align*}
\begin{split}
  & \bfxi_1'=\frac{v_1}{r_1}-\frac{r_1'}{r_1}\bfxi_1,\\
  &\bfxi_1''=-\frac{2r_1'v_1}{r_1^2}+\frac{v_1'}{r_1}-\frac{r_1''\bfxi_1}{r_1}+\frac{2(r_1')^2\bfxi_1}{r_1^2},\\
  &r_1''=v_1'\cdot\bfxi_1+\frac{|v_1|^2}{r_1}-\frac{(v_1\cdot\bfxi_1)^2}{r_1}.
\end{split}
\end{align*}
Differentiating these relations using $\frakD$ and combining with \prop{prop: r1' v1} and the bootstrap assumptions \eqref{eq: bootstrap} we conclude that

\begin{align}\label{eq: Ft bound}
\begin{split}
 \sum_{j\leq k}\|\frakD^{j}\partial_tF(t)\|_{L^2(S_R)}\lesssim& \frac{GM\eta^3(t)}{R^3}\sum_{j\leq k}\|\frakD^ju(t)\|_{L^2(S_R)}+\frac{GM\eta^4(t)|r_1'(t)|}{R^2}+\frac{GM\eta^4(t)|v_1(t)|}{R^2}\\
 \lesssim &\frac{GM\eta^3(t)}{R^3}\sum_{j\leq k}\|\frakD^ju(t)\|_{L^2(S_R)}+\frac{GM\eta^4(t)|v_1(t)|}{R^2},
\end{split}
\end{align}
and

\begin{align}\label{eq: Ftt bound}
\begin{split}
   \sum_{j\leq k}\|\frakD^{j}\partial_t^2F(t)\|_{L^2(S_R)}\lesssim& \frac{GM\eta^3(t)}{R^3}\sum_{j\leq k}\|\partial_t\frakD^ju(t)\|_{L^2(S_R)}+\frac{GM\eta^4(t)(|r_1'(t)|+|v_1(t)|)}{R^4}\sum_{j\leq k}\|\frakD^ju(t)\|_{L^2(S_R)}\\
   &+\frac{GM\eta^5(|v_1|^2+|r_1'|^2)}{R^3}+\frac{GM\eta^4|v_1'|}{R^2}\\
   \lesssim&  \frac{GM\eta^3(t)}{R^3}\sum_{j\leq k}\|\partial_t\frakD^ju(t)\|_{L^2(S_R)}+\frac{GM\eta^4(t)|v_1(t)|}{R^4}\sum_{j\leq k}\|\frakD^ju(t)\|_{L^2(S_R)}\\
   &+\frac{GM\eta^5|v_1|^2}{R^3}+\frac{G^2M^2\eta^6}{R^4}.
\end{split}
\end{align}
Plugging \eqref{eq: Ft bound} back into \eqref{eq: a priori temp 1} and using \eqref{eq: a priori bootstrap} and \prop{prop: Du Eu} we get

\begin{align*}
\begin{split}
| I_1 |\lesssim& \eta^3(t)R^{-1}\sum_{j\leq k}\|\frakD^ju(t)\|_{L^2(S_R)}^2 +\eta^4(t)|v_1(t)|\sum_{j\leq k}\|\frakD^ju(t)\|_{L^2(S_R)}\\
 \lesssim& C_0R\eta^{11}(t)|v_1(t)|^2+C_0^{\frac{1}{2}}R\eta^8(t)|v_1(t)|^2\lesssim C_0^{\frac{1}{2}}R\eta^8(t)|v_1(t)|^2,
\end{split}
\end{align*}
where the last step follows if $\beta$ is sufficiently large. We next turn to $I_2$ in \eqref{eq: a priori temp 1}. By repeated applications of \prop{prop: Du Eu}, \eqref{eq: Ftt bound}, and \eqref{eq: a priori bootstrap} we get

\begin{align}\label{eq: a priori temp 2}
\begin{split}
\frac{R^2}{GM}\langle \partial^2_t\frakD^kF(s),\vecu_k(s)\rangle\lesssim &C_0\sqrt{\frac{GM}{R}}\eta^{11}(s)|v_1(s)|^2+C_0\eta^{12}(s)|v_1(s)|^3\\
&+C_0^{\frac{1}{2}}\eta^9(s)|v_1(s)|^3+\frac{GMC_0^{\frac{1}{2}}}{R}\eta^{10}(s)|v_1(s)|.
\end{split}
\end{align}
Now using \prop{prop: r1' v1} and \lem{lem: eta integration} we conclude that if $\beta$ is sufficiently large, then

\begin{align}\label{eq: I2 bound}
| I_2(t) |\lesssim C_0^{\frac{1}{2}}R\eta^8(t)|v_1(t)|^2.
\end{align}
Since $|v_1|$ has higher powers in \eqref{eq: a priori temp 2} compared to \lem{lem: eta integration}, we carry out this computation for the last two terms on the right had side of \eqref{eq: a priori temp 2}, $C_0^{\frac{1}{2}}\eta^{9}(s)|v_1(s)|^3$ and $\frac{GMC_{0}^{\frac{1}{2}}}{R}\eta(s)^{10}|v_{1}(s)|$. Note that  if $\beta>0$ is large enough, the first term on the right had side of \eqref{eq: a priori temp 2} is smaller than the last term and second term is smaller than the third term. Let

\begin{align*}
I_{2,3}:=C_0^{\frac{1}{2}}\int_{r_1(t)}^{R_1}\eta^{9}(s)|v_1(s)|^3\left|\frac{ds}{dr_1}\right| dr_1.
\end{align*}
We consider three different cases according to \prop{prop: r1' v1} depending on the value of $r_1(t)$. If $r_1\geq c_0^{-2}R\beta^{\frac{12}{7}}$ then

\begin{align*}
I_{2,3}\lesssim \frac{GMc_0^2C_0^{\frac{1}{2}}\beta^{-\frac{12}{7}} }{R}\int_{r_1(t)}^{\infty}\eta^9(r)dr\lesssim C_0^{\frac{1}{2}}R\eta^8(r_1)|v_1(r_1)|^2.
\end{align*}
Next, using this estimate, if $r_1\in[3c_0^2R\beta^{\frac{2}{7}},c_0^{-2}R\beta^{\frac{12}{7}}]$, then

\begin{align*}
I_{2,3}\lesssim &C_0^{\frac{1}{2}}R\eta^8(c_0^{-2}R\beta^{\frac{12}{7}})|v_1(c_0^{-2}R\beta^{\frac{12}{7}})|^2+\frac{GMC_0^{\frac{1}{2}}}{R}\int_{r_1}^{c_0^{-2}R\beta^{\frac{12}{7}}}\eta^{10}(r)dr\\
\lesssim &C_0^{\frac{1}{2}}R\eta^8(r_1)|v_1(r_1)|^2+C_0^{\frac{1}{2}}R\eta^8(r_1)\frac{GM}{R}\eta(r_1)\lesssim C_0^{\frac{1}{2}}R\eta^8(r_1)|v_1(r_1)|^2.
\end{align*}
Finally, using this estimate, if $r_1\in(r_0,4c_0^2R\beta^{\frac{2}{7}}]$, then

\begin{align*}
I_{2,3}\lesssim & C_0^{\frac{1}{2}}R\eta^8(4c_0^2R\beta^{\frac{2}{7}})|v_1(4c_0^2R\beta^{\frac{2}{7}})|^2+\frac{GMc_0^{-19}C_0^{\frac{1}{2}}}{R^{\frac{1}{2}}}\beta^{-\frac{19}{7}}\int_{r_1}^{4c_0^2R\beta^{\frac{2}{7}}}\frac{dr}{\sqrt{r-r_0}}\\
\lesssim & C_0^{\frac{1}{2}}R\eta^8(r_1)|v_1(r_1)|^2 +C_0^{\frac{1}{2}}Rc_0^{-16}\beta^{-\frac{16}{7}}\frac{GMc_0^{-2}}{R}\beta^{-\frac{2}{7}}\lesssim C_0^{\frac{1}{2}}R\eta^8(r_1)|v_1(r_1)|^2,
\end{align*}
completing the estimate for $I_{2,3}$. The estimate for

\begin{align*}
I_{2,4}:=\frac{GMC^{\frac{1}{2}}_{0}}{R}\int_{r_{1}(t)}^{r(T_{0})}\eta^{10}(s)|v_{1}(s)|\left|\frac{ds}{dr_{1}}\right|dr_{1}.
\end{align*}
follows from the one for $I_{2,3}$ and the observation that, in view of \eqref{eq: v1 bounds}, $|\eta|\lesssim(GM)^{-1}R|v_1|^2$. This completes the proof of \eqref{eq: a priori I bound}.

\emph{Step 4.} Let $I$ be as defined in \eqref{eq: a priori I def}, and let $II$ be defined according to the relations \eqref{eq: model energy id} and

\begin{align*}
\frac{d\calE_k}{dt}=:II-I.
\end{align*}
We will show that if $\beta$ is sufficiently large then

\begin{align}\label{eq: a priori II bound}
\left|\int_{T_0}^tII(s)ds\right|\leq \frac{1}{100} C_0R\eta^8(t)|v_1(t)|^2.
\end{align}
Combined with \eqref{eq: a priori I bound} this will complete the proof of \eqref{eq: a priori closing bootstrap} and hence of the proposition. We divide the terms in $II$ into several groups. Let $\tilg_k$ be as defined in \cor{cor: uk eq}, and set

\begin{align*}
\begin{split}
  &II_1:=\langle \tilg_k,\frac{\partial_t\vecu_k}{a}\rangle + I,\\
 & II_2:=\frac{1}{2}\left\langle \frac{1}{|N|}\partial_t\left(\frac{|N|}{a}\right)\partial_t\vecu_k,\partial_t\vecu_k\right\rangle+\frac{GM}{2R^3}\left\langle \frac{1}{|N|}\partial_t\left(\frac{|N|}{a}\right)\vecu_k,\vecu_k\right\rangle+\frac{3GM}{2R^3}\left\langle\frac{1}{|N|}\partial_t\left(\frac{|N|}{a}\right)\vecu_k\cdot n,\vecu_k\cdot n\right\rangle,\\
&II_3:=\frac{3GM}{R^3}\left\langle a^{-1}\vecu_k\cdot n,\vecu_k\cdot n_t\right\rangle,\\
&II_4:=-\frac{3GM}{2R^3}\langle (n\cdot \vecu_k)n,(\Hone^\ast-\Hone)(a^{-1}\partial_t\vecu_k)\rangle,\\
&II_5:=\frac{3GM}{2R^{3}}\langle(n\cdot \vecu_k)n,a^{-1}\partial_{t}(I-H_{\partial\calB_{1}})\vecu_k\rangle,\\
 &II_6:=-\frac{3GM}{2R^3}\langle ( n\cdot \vecu_k) n,[\Hone,a^{-1}\partial_t]\vecu_k\rangle-\frac{1}{2}\langle \vecQ(u,\vecu_k),\vecu_k\rangle.
\end{split}
\end{align*}
The first term $II_1$ contains the error terms from the nonlinearity in the equation \eqref{eq: uk eq} for $\vecu_k$ and the remaining terms $II_2,\dots, II_6$ contain the other error terms arising in the energy identity \eqref{eq: model energy id}. Using the bootstrap assumptions \eqref{eq: bootstrap} and \eqref{eq: a priori bootstrap}, \props{prop: Du Eu}, \ref{prop: bootstrap by E}, and \ref{prop: at E}, \cor{cor: D ell zeta}, and \lems{lem: H H*} and Lemma \ref{lem: Sobolev} we get (with constants possibly depending on  $C_1$)

\begin{align*}
\begin{split}
& |II_2|\lesssim  R^{-1}\eta^7|v_1|\calE_{\leq k}\lesssim C_0 \eta^{15}|v_1|^3,\\
 &|II_3|\lesssim R^{-\frac{3}{2}}\calE_{\leq 3}^{\frac{1}{2}}\calE_{\leq k}\lesssim C_0^{\frac{3}{2}}\eta^{12}|v_1|^3,\\
 &|II_4|\lesssim \sqrt{\frac{GM}{R}}R^{-1}\eta^3\calE_{\leq k}\lesssim C_0\sqrt{\frac{GM}{R}}\eta^{11}|v_1|^2,\\
 &|II_5|\lesssim \sqrt{\frac{GM}{R}}R^{-1}\eta^3\calE_{\leq k}+R^{-1}\eta^4|v_1|\calE_{\leq k}\lesssim  C_0\sqrt{\frac{GM}{R}}\eta^{11}|v_1|^2+ C_0\eta^{12}|v_1|^3,\\
 & |II_6|\lesssim R^{-1}\eta^4|v_1|\calE_{\leq k}\lesssim C_0\eta^{12}|v_1|^3.
\end{split}
\end{align*}
Arguing as in the proof of \eqref{eq: I2 bound} we conclude that if $\beta$ is sufficiently large (depending on $C_0$ and $C_1$), then

\begin{align}\label{eq: a priori II bound temp 1}
\begin{split}
\sum_{j=2}^6\int_{T_0}^t|II_j(s)|ds\leq \frac{1}{300} C_0R\eta^8(t)|v_1(t)|^2.
\end{split}
\end{align}
We now turn to the error terms form the nonlinearity in $II_1$. Let $g_0$ be as defined in \cor{cor: uk eq}, that is, in the notation of \prop{prop: u eq},

\begin{align*}
\begin{split}
g_0=-\partial_tF-\partial_t  E_1+\partial_t\left(\frac{1}{|\calB_{1}|}\int_{\calB_{1}}\bfE_1(t,\bfx)d\bfx\right)+E_2 +\partial_t\left(\frac{a}{|N|}\right)N.
\end{split}
\end{align*} 
Repeated applications of \lem{lem: H estimates}, \props{prop: Du Eu} and \ref{prop: bootstrap by E} , and \cor{cor: D ell zeta} give

\begin{align*}
\begin{split}
 \sum_{0\leq j\leq k}\|\tilg_j-\frakD^j g_0\|_{L^2(S_R)}\lesssim & \frac{GM}{R^{\frac{5}{2}}}\eta^3\calE_{\leq k}^{\frac{1}{2}}+\frac{\sqrt{GM}}{R^2}\eta^4|v_1|\calE_{\leq k}^{\frac{1}{2}}+\frac{GM}{R^{\frac{7}{2}}}\eta^4|v_1| \int_{T_0}^t\calE_{\leq k}^{\frac{1}{2}}(s)ds +\frac{GM}{R^2}\eta^7|v_1|\\
 \lesssim & \frac{C_0^{\frac{1}{2}}GM}{R^{2}}\eta^7|v_1|+\frac{\sqrt{GMC_0}}{R^\frac{3}{2}}\eta^8|v_1|^2.
\end{split}
\end{align*}
Here to estimate the term $[\partial_t^2,[\frakD^k,\Hone]]u$ we have used the fact that

\begin{align*}
\begin{split}
[\partial_t^2,[\frakD^k,\Hone]]u=\frakD^k[\partial_t^2,\Hone]u-[\partial_t^2,\Hone]\frakD^ku,
\end{split}
\end{align*}
and applied \lem{lem: H estimates}.  Use \lem{lem: eta integration} as in the proof of \eqref{eq: I2 bound} now shows that if $\beta$ is sufficiently large, then

\begin{align*}
\begin{split}
 \int_{T_0}^t \left| \langle\tilg_k(s)-\frakD^kg_0(s),\frac{\partial_s\vecu_k(s)}{a(s)}\rangle \right| ds\leq \frac{1}{400}C_0\eta^8(t)|v_1(t)|^2.
\end{split}
\end{align*}
For the contribution of $\frakD^kg_0$ note that

\begin{align*}
\begin{split}
 \langle\frakD^kg_0,\frac{\partial_t\vecu_k}{a}\rangle+I=&\langle\partial_t\frakD^kF,\left(\frac{R^2}{GM}-\frac{1}{a}\right)\partial_t\vecu_k\rangle -\langle \partial_t \frakD^k E_1, \frac{\partial_t\vecu_k}{a}\rangle+\langle \partial_t \frakD^k \left(\frac{1}{|\calB_{1}|}\int_{\calB_{1}}\bfE_1(t,\bfx)d\bfx\right), \frac{\partial_t\vecu_k}{a}\rangle\\
 &+\langle \frakD^k E_2, \frac{\partial_t\vecu_k}{a}\rangle +\langle \frakD^k(\partial_t(a|N|^{-1})N),\frac{\partial_t\vecu_k}{a}\rangle\\
 =:&II_{1,1}+II_{1,2}+II_{1,3}+II_{1,4}+II_{1,5}.
\end{split}
\end{align*}
The contribution of $II_{1,1}$ is smaller that the contribution of $I$ computed in Step 3. The contribution of $II_{1,2}$ and $II_{1,3}$ can also be estimated similarly. Indeed $E_1$ was obtained by subtracting the leading order term $F$ from $\nabla \psi_2+\bfx_1''$ so the contributions of $E_1$ and $\frac{1}{|\calB_{1}|}\int_{\calB_{1}}\bfE_1(t,\bfx)d\bfx$ are also smaller than that of $F$ which we estimated in Step 3. Note that since the fluid is incompressible and irrotational, and $\bfE_1$ is a function of $\bfx-\bfx_1$, the time derivative of $\frac{1}{|\calB_{1}|}\int_{\calB_{1}}\bfE_1(t,\bfx)d\bfx$ can be computed as

\begin{align}\label{eq: bfE1 time der bound}
\begin{split}
\partial_t\left(\frac{1}{|\calB_{1}|}\int_{\calB_{1}}\bfE_1(t,\bfx)d\bfx\right)=&\frac{1}{|\calB_1|}\int_{\calB_1}(\nabla\bfE_1)(t,\bfx)\cdot(\bfv(t,\bfx)-\bfx_1') d\bfx+\frac{1}{|\calB_1|}\int_{\calB_1}(\partial_t\bfE_1)(t,\bfx)d\bfx\\
&=\frac{1}{|\calB_1|}\int_{\partial\calB_1}E_1n\cdot\zeta_tdS+\frac{1}{|\calB_1|}\int_{\calB_1}(\partial_t\bfE_1)(t,\bfx)d\bfx.
\end{split}
\end{align}
The time derivative $(\partial_t\bfE_1)(t,\bfx)$ can be computed similarly using the fact that the fluid is incompressible and irrotational in $\calB_2$, and the fact that in the integral expression for $\bfE_1$ in \eqref{eq: bfE1} the integrand depends only on $\bfy-\bfx_2$, where $\bfy\in\calB_2$ is the variable of integration.
Next, recalling the definition of $E_2$ from \eqref{eq: E2}, from repeated applications of the bootstrap assumptions \eqref{eq: bootstrap} and \eqref{eq: a priori bootstrap}, \prop{prop: Du Eu}, and \lem{lem: H estimates}, we get

\begin{align*}
\begin{split}
 \|\frakD^kE_2\|_{L^2(S_R)} \lesssim \frac{GM}{R^{\frac{5}{2}}}\eta^3\calE_{\leq k}^{\frac{1}{2}}+\frac{GM}{R^2}\eta^7|v_1|\lesssim \frac{C_0^{\frac{1}{2}}GM}{R^2}\eta^7|v_1|.
\end{split}
\end{align*}
Using \lem{lem: eta integration} as in the proof of \eqref{eq: I2 bound} we conclude that if $\beta$ is sufficiently large, then

\begin{align*}
\begin{split}
 \int_{T_0}^t \left|\langle II_{2,4}(s),\frac{\partial_s\vecu_k(s)}{a(s)}\rangle \right|ds\leq \frac{1}{800}C_0\eta^8(t)|v_1(t)|^2.
 \end{split}
\end{align*}
Finally direct application of \prop{prop: at E} gives

\begin{align*}
\|II_{2,5}\|_{L^2(S_R)}\lesssim \sqrt{\frac{GM}{R^3}}\eta^3\calE_{\leq k}\lesssim \sqrt{\frac{GM}{R^3}}C_0\eta^{11}|v_1|^2.
\end{align*}
It follows that if $\beta$ is sufficiently large then

\begin{align*}
 \int_{T_0}^t \left|\langle II_{2,5}(s),\frac{\partial_s\vecu_k(s)}{a(s)}\rangle \right|ds\leq \frac{1}{1000}C_0\eta^8(t)|v_1(t)|^2,
\end{align*}
completing the proof of the proposition.
\end{proof}



  \section{Tidal Energy}\label{sec: tidal capture}


In this final section we prove \thm{thm: tidal capture}. For this we will express the tidal energy in terms of the height function $h=|\zeta|-R$ and its time derivative $\partial_th$. The first subsection below is devoted to the analysis of the dynamics of $h$ and $\partial_th$. We then use this analysis in the second subsection to prove \thm{thm: tidal capture}.


\subsection{The height function}\label{subsec: h}


A crucial step in the proof of \thm{thm: tidal capture} is to analyze the behavior of the tidal energy in terms of the height function  

\begin{align}\label{def h}
h(t,\omega):=h(\zeta(t,\omega)):=|\zeta(t,\omega)|-R,\quad \omega\in\bbS^{2}.
\end{align}
Recall also the definition of the \emph{modified height function} 

\begin{align}\label{def tilh}
\tilh(t,\omega):=|\zeta(t,\omega)|^{2}-R^{2},
\end{align}
and the relation $\tilh=h(h+2R)=2Rh+h^{2}$. Our goal in this section is to derive the equation satisfied by $h$. Let $\bfphi$ and $\circbfphi$ denote the velocity potentials for $\bfv$ and $\bfx_1'$, respectively, that is,

\begin{align*}
&\bfv(t,\bfx)=-\nabla\bfphi(t,\bfx),\\
&\bfx_1'(t)=-\nabla\circbfphi(t,\bfx),\qquad \circbfphi(t,\bfx):=-\bfx_1'\cdot(\bfx-\bfx_1).
\end{align*}
We will denote $\bfx-\bfx_1$ by $\bfzeta$. In Lagrangian coordinates, let $\phi(t,p)=\bfphi(t,\xi(t,p))$ and $\rphi(t,p):=\circbfphi(t,\zeta(t,p))$. The modified height function satisfies,

\begin{align*}
\begin{split}
 \tilh_t=  2\zeta_{t}\cdot\zeta= 2R\zeta_{t}\cdot n+2\zeta_{t}\cdot(\zeta-Rn)
 =-2R\nabla_n(\phi-\rphi)+2\zeta_{t}\cdot(\zeta-Rn).
\end{split}
\end{align*}
Differentiating this equation in time we get

\begin{align}\label{eq tilh tt 1}
\begin{split}
\tilh_{tt}=&-2R\nabla_n(\phi_t-\rphi_t)-2R[\partial_t,\nabla_n](\phi-\rphi)+2\zeta_{tt}\cdot(\zeta-Rn) +2\zeta_{t}\cdot(\zeta_{t}-Rn')\\
=&:-2R\nabla_n (\phi_t-\rphi_t)+\frakR_1,
\end{split}
\end{align}
with 

\begin{align}\label{eq: frakR1}
\frakR_{1}:=-2R[\partial_t,\nabla_n](\phi-\rphi)+2u_{t}\cdot(\zeta-Rn) +2u\cdot(u-Rn').
\end{align}
To express $\phi-\rphi$ in terms of $h$, we use the Bernoulli equation

\begin{align*}
\partial_t\bfphi=\bfpsi_{1}+\bfpsi_{2}+\bfp+\frac{1}{2}|\bfv|^2,\quad \Rightarrow\quad \phi_t=\psi_{1}+\psi_{2}-\frac{1}{2}|\zeta_t+\bfx'_{1}|^2.
\end{align*}
On the other hand, by the definition of $\rphi$,

\begin{align*}
\rphi_{t}=-\zeta_{t}\cdot\bfx'_{1}-\zeta\cdot\bfx''_{1},
\end{align*}
so

\begin{align*}
\phi_{t}-\rphi_{t}=\psi_{1}+\psi_{2}-\frac{1}{2}|\zeta_{t}|^{2}-\frac{1}{2}|\bfx'_{1}|^{2}+\zeta\cdot\bfx''_{1}.
\end{align*}
Equation \eqref{eq tilh tt 1} is therefore equivalent to

\begin{align}\label{eq tilh tt 2}
\tilh_{tt}=-2R\nabla_n\psi_1-2R(\bfx''_1\cdot n+\nabla_n\psi_2)
+\frakR_1+\frakR_2,
\end{align}
where

\begin{align}\label{eq: frakR2}
\begin{split}
 \frakR_2= R\nabla_n|u|^2.
\end{split}
\end{align}
Next we derive an expression for $\psi_{1}$. Recall from \lem{lem: nabla psi} that

\begin{align*}
\nabla\psi_1=\frac{GM}{R^3}\zeta-\frac{GM}{2R^3}(I+\Hone)\zeta.
\end{align*}
This formula will be useful for analyzing $\frakR_1$, but since $\bfpsi_{1}$ is not harmonic inside $\calB_1$, $\nabla_n\psi_1$ cannot be computed by taking the inner product of this identity with $n$. Instead, to compute $\nabla_n\psi_1$ we argue as follows. First, since

\begin{align*}
\calD\bfpsi_1=\nabla\bfpsi_1=\calD((\nabla\bfpsi_1)\bfzeta)
\end{align*}
outside of $\calB_1$ (here $(\nabla\bfpsi_1)\zeta$ denotes the Clifford product),

\begin{align*}
(I+\Hone)\psi_1=(I+\Hone)((\nabla\bfpsi_1)\zeta).
\end{align*}
Using the formula $\nabla\psi_1=\frac{GM}{R^3}\zeta-\frac{GM}{2R^3}(I+\Hone)\zeta$ above we get

\begin{align}\label{eq: psi1 temp 1}
(I+\Hone)\psi_1=-\frac{GM}{R^3}(I+\Hone)(|\zeta|^2)-\frac{GM}{2R^3}(I+\Hone)(((I+\Hone)\zeta)\zeta).
\end{align}
Now recall from \eqref{eq: mu def}--\eqref{eq: nu def} that with $\mu=1-\frac{R^3}{|\zeta|^3}=\frac{3}{2R^2}\tilh+\tilh\nu$,

\begin{align*}
(I+\Hone)\zeta=(I+\Hone)(\zeta\mu).
\end{align*}
Going back to equation \eqref{eq: psi1 temp 1} we get

\begin{align}\label{eq: psi1 temp 2}
(I+\Hone)\psi_1=-\frac{GM}{R^3}(I+\Hone)(\tilh+R^2)+B+A_1,
\end{align}
where

\begin{align*}
B:=-\frac{3GM}{4R^5}(I+\Hone)(((I+\Hone)(\tilh\zeta))\zeta)
\end{align*}
and

\begin{align*}
A_1:=-\frac{GM}{2R^3}(I+\Hone)(((I+\Hone)(\tilh \nu \zeta))\zeta).
\end{align*}
To compute $B$ we keep in mind that $\Hone n f+n \Hone f$ and $\zeta-Rn$ are small quantities for any $f$, and write

\begin{align*}
B=&-\frac{3GM}{4R^5}(I+\Hone)[(\zeta)(I-\Hone)(\tilh)(\zeta)]\overbrace{-\frac{3GM}{4R^5}(I+\Hone)\left[\left(\zeta\Hone \tilh+\Hone(\zeta\tilh)\right)\zeta\right]}^{A_2}\\
=&A_2-\frac{3GM}{4R^5}(I+\Hone)[(\zeta)^2(I+\Hone)\tilh-2(\zeta)^2\Kone \tilh] \overbrace{-\frac{3GM}{2R^5}(I+\Hone)[(\zeta)((\zeta)\cdot(\Hone \tilh - \Kone \tilh))]}^{A_3}\\
=&A_2+A_3+\frac{3GM}{4R^3}(I+\Hone)[(I+\Hone)\tilh-2\Kone \tilh]\overbrace{+\frac{3GM}{4R^5}(I+\Hone)[\tilh(I+\Hone)\tilh-2\tilh\Kone \tilh]}^{A_4}\\
=&A_2+A_3+A_4+\frac{3GM}{2R^3}(I+\Hone)(I-\Kone)\tilh.
\end{align*}
Plugging back into \eqref{eq: psi1 temp 2} we get

\begin{align*}
(I+\Hone)\psi_1=-\frac{GM}{R^3}(I+\Hone)(\tilh+R^2)+\frac{3GM}{2R^3}(I+\Hone)(I-\Kone)\tilh+A_1+A_2+A_3+A_4.
\end{align*}
Taking real parts and applying $(I+\Kone)^{-1}$ gives

\begin{align}\label{eq: psi1}
\begin{split}
 \psi_1= -\frac{GM}{R}+\frac{\txtg}{2R}(I-3\Kone)\tilh+\tilde{\frakR_{3}}=-\frac{GM}{R}+\txtg(I-3\Kone)h+\frakR_{3},
\end{split}
\end{align}
where $\txtg=\frac{GM}{R^2}$,

\begin{align*}
\begin{split}
 \tilde{\frakR_{3}} = (I+\Kone)^{-1}\left(\Re(A_1+A_2+A_3+A_4)\right),
\end{split}
\end{align*}
and

\begin{align}\label{eq: frakR3}
\begin{split}
 \frakR_{3}= \tilde{\frakR_{3}}+\frac{\txtg}{2R}(I-3\Kone)h^2.
\end{split}
\end{align}
It follows that

\begin{align}\label{DN psi1 on boundary}
\nabla_n\psi_{1}=\txtg\nabla_n (I-3\Kone)h
+\nabla_n \frakR_{3}.
\end{align}

The formula above gives us an expression for the first term on the right-hand side of \eqref{eq tilh tt 2}. Since $\bfpsi_2$ is harmonic in $\calB_1$, the second term on the righthand side of \eqref{eq tilh tt 2} can simply be computed by using the relation $\nabla_{n}\psi_{2}+n\cdot\bfx''_{1}=n\cdot(\nabla\psi_2
+\bfx_1'')$. Indeed, this identity combined with \lem{lem: nabla psi} and the fact that $\bfx_1''=-\frac{\rho}{M}\int_{\calB_1}\nabla\bfpsi_2 d\bfx$, gives

\begin{align}\label{eq: force 1}
\begin{split}
\nabla_n\psi_{2}+n\cdot\bfx''_{1}=&\frac{GM\eta(t)^{3}}{8R^{3}}(\zeta\cdot n-3(\zeta\cdot\bfxi_{1})(\bfxi_{1}\cdot n))+n\cdot\left(E_1-\frac{\rho}{M}\int_{\calB_1}\bfE_{1}(t,\bfx)d\bfx\right)\\
=&\frac{GM\eta(t)^{3}}{8R^{2}}\left(1-\frac{3(\bfxi_{1}\cdot\zeta)^{2}}{R^{2}}\right)+\frakR_4,
\end{split}
\end{align}
where

\begin{align}\label{eq: frakR4}
\frakR_4:=\frac{GM\eta^3}{8R^3}\left((\zeta-Rn)\cdot n-3(\zeta\cdot\bfxi_1)(\bfxi_1\cdot(n-R^{-1}\zeta))\right)+n\cdot\left(E_1-\frac{\rho}{M}\int_{\calB_1}\bfE_{1}(t,\bfx)d\bfx\right).
\end{align}
Inserting \eqref{DN psi1 on boundary} and \eqref{eq: force 1} back into \eqref{eq tilh tt 2}, we get the desired equation for $h$ which we summarize in the following proposition.


\begin{proposition}\label{prop: h eq}
Let $\frakR_1,\dots,\frakR_4$ be as defined in \eqref{eq: frakR1}, \eqref{eq: frakR2}, \eqref{eq: frakR3}, and \eqref{eq: frakR4}, respsectively. Then $h$ satisfies 

\begin{align}\label{eq h tt 1}
 \begin{split}
  h_{tt}+ \emph{\txtg} \nabla_n(I-3\Kone) h=-\frac{GM\eta^3}{8R^{2}}\left(1-3(\bfxi_{1}\cdot R^{-1}\zeta)^{2}\right)+\frac{\frakR_1}{2R}+\frac{\frakR_2}{2R}-\nabla_n\frakR_3-\frakR_4-\frac{1}{R}h_{t}^{2}-\frac{1}{R}hh_{tt}.
 \end{split}
\end{align}
\end{proposition}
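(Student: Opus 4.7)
My plan is to work first with the modified height function $\tilh = |\zeta|^2 - R^2$, which is algebraically more convenient than $h$ itself because differentiating $|\zeta|^2$ avoids the square root. Directly computing $\tilh_t = 2\zeta_t\cdot\zeta$ and decomposing $\zeta = Rn + (\zeta-Rn)$ gives $\tilh_t = -2R\nabla_n(\phi-\rphi) + 2\zeta_t\cdot(\zeta-Rn)$, where $\phi$ is the velocity potential of $\bfv$ and $\rphi$ is the potential for the rigid translation $\bfx_1'$. A second time differentiation produces $\tilh_{tt} = -2R\nabla_n(\phi_t - \rphi_t) + \frakR_1$, with $\frakR_1$ built from $[\partial_t,\nabla_n](\phi-\rphi)$ and quadratic terms in $u$. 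Using the Bernoulli equation $\phi_t = \psi_1 + \psi_2 - \frac12|\zeta_t+\bfx_1'|^2$ together with $\rphi_t = -\zeta_t\cdot\bfx_1' - \zeta\cdot\bfx_1''$, the $|\zeta_t+\bfx_1'|^2$ cross terms cancel nicely and I obtain $\tilh_{tt} = -2R\nabla_n\psi_1 - 2R(\bfx_1''\cdot n + \nabla_n\psi_2) + \frakR_1 + \frakR_2$, where $\frakR_2 = R\nabla_n|u|^2$ collects the remaining quadratic kinetic contribution.

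The main obstacle is the third step: extracting the principal self-interaction term $\txtg (I-3\Kone)h$ from $\psi_1$, since Lemma 2.3 provides $\nabla\psi_1$ in Clifford-analytic form but not $\psi_1$ directly. My approach is to exploit the Clifford-analytic identity $\calD\bfpsi_1 = \calD((\nabla\bfpsi_1)\bfzeta)$ valid outside $\calB_1$, which yields $(I+\Hone)\psi_1 = (I+\Hone)((\nabla\bfpsi_1)\zeta)$. Substituting the formula for $\nabla\psi_1$ from Lemma 2.3 and using $(I+\Hone)\zeta = (I+\Hone)(\zeta\mu)$ with $\mu=\frac{3}{2R^2}\tilh + \tilh\nu$, I can isolate the leading piece $-\frac{3GM}{4R^5}(I+\Hone)[(\Hone\tilh+\tilh)(\zeta)^2]$. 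Repeatedly using the near-Clifford-analyticity relations $\zeta \approx Rn$ and $n\Hone f + \Hone(nf) \approx 0$, each substitution producing terms absorbed into $A_1,\dots,A_4$, reduces this to $\frac{3GM}{2R^3}(I+\Hone)(I-\Kone)\tilh$ plus controlled errors. Taking real parts and applying $(I+\Kone)^{-1}$ then yields $\psi_1 = -GM/R + \txtg(I-3\Kone)h + \frakR_3$ after converting from $\tilh$ to $h$.

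The fourth step is more direct: from Lemma 2.3 and Lemma 2.5, $\nabla\psi_2 + \bfx_1'' = F + E_1 - \frac{1}{|\calB_1|}\int_{\calB_1}\bfE_1 d\bfx$, and pairing with $n$ and expanding $\zeta = Rn + (\zeta-Rn)$ at appropriate places gives the leading tidal scalar $\frac{GM\eta^3}{8R^2}(1 - 3(\bfxi_1\cdot R^{-1}\zeta)^2)$ plus the error $\frakR_4$ as defined.

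Finally, I convert from $\tilh$ to $h$ via $\tilh = 2Rh + h^2$, whence $\tilh_{tt} = 2Rh_{tt} + 2h_t^2 + 2hh_{tt}$. Dividing by $2R$ and bringing the quadratic corrections $-\frac1R h_t^2 - \frac1R hh_{tt}$ to the right-hand side produces exactly \eqref{eq h tt 1}. The novel work is really concentrated in step three; steps one, two, four, and five are bookkeeping once the correct identifications of small quantities $\mu$, $\tilh\nu$, $\zeta-Rn$, and $\Hone^\ast-\Hone$ are made.
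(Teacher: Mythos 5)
Your overall strategy is the right one and matches the paper step for step: pass to $\tilh=|\zeta|^2-R^2$ and differentiate twice, invoke the Bernoulli equation, extract the restoring operator from $\psi_1$ via the Clifford-analytic identity $(I+\Hone)\psi_1 = (I+\Hone)((\nabla\bfpsi_1)\zeta)$ combined with $(I+\Hone)\zeta=(I+\Hone)(\zeta\mu)$, handle $\nabla\psi_2+\bfx_1''$ via Lemmas 2.3 and 2.5, and finally convert from $\tilh$ to $h$ via $\tilh=2Rh+h^2$. Steps one, two, four, and five are correct as described.

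The gap is in step three, and it is not cosmetic. You claim that the leading piece is $-\frac{3GM}{4R^5}(I+\Hone)\bigl[(\Hone\tilh+\tilh)(\zeta)^2\bigr]$ and that near-Clifford-analyticity relations reduce this to $\frac{3GM}{2R^3}(I+\Hone)(I-\Kone)\tilh$. But if one actually evaluates your displayed expression — with $(\Hone\tilh+\tilh)\zeta^2=-|\zeta|^2(I+\Hone)\tilh\approx -R^2(I+\Hone)\tilh$ — the factor $(I+\Hone)$ in front collapses to $\frac{3GM}{2R^3}(I+\Hone)\tilh$ (using $(I+\Hone)^2=2(I+\Hone)$), and the $-\Kone\tilh$ contribution never appears. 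That contribution is precisely what produces the $-3\Kone$ in the final operator $\nabla_n(I-3\Kone)$, and hence the correct spring constants $a_\ell=\frac{\txtg}{R}\frac{2\ell(\ell-1)}{2\ell+1}$ (in particular, that $a_0=a_1=0$ and $a_2>0$); without it you would obtain $\nabla_n h$ alone, which gives different $a_\ell$ and breaks the subsequent analysis of $h_2$. The issue traces to a sign: the correct split is
\begin{align*}
\bigl((I+\Hone)(\tilh\zeta)\bigr)\zeta = \zeta(I-\Hone)(\tilh)\,\zeta + \bigl(\zeta\Hone\tilh + \Hone(\zeta\tilh)\bigr)\zeta,
\end{align*}
in which the second bracket is small because $\zeta\approx Rn$ and $n\Hone + \Hone n$ is small; the leading factor carries $(I-\Hone)\tilh$, not $(I+\Hone)\tilh$. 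One then computes the Clifford sandwich $\zeta w\zeta$ separately for the scalar part and the vector part of $w=(I-\Hone)\tilh$: for a scalar $w_0$, $\zeta w_0\zeta = w_0\zeta^2$, but for a vector $\vecw$, $\zeta\vecw\zeta=-2(\zeta\cdot\vecw)\zeta-\zeta^2\vecw$. Adding these produces $\zeta^2\bigl[(I+\Hone)\tilh-2\Kone\tilh\bigr]$ plus a small term proportional to $\zeta\cdot(\Hone-\Kone)\tilh$, and this is the source of the $-2\Kone\tilh$. With your $(I+\Hone)$ in place of $(I-\Hone)$, the sandwich computation either trivializes (if you treat $(I+\Hone)\tilh$ as commuting with $\zeta$) or yields the wrong sign on the vector part, and either way the $\Kone$ term is lost. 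The rest of your sketch is sound once this is repaired.
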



\begin{proof}
This follows by combining equations \eqref{eq tilh tt 1}--\eqref{eq: frakR4}.
\end{proof}


Except for the term $[\partial_t,\nabla_n](\phi-\rphi)$  in $\frakR_1$, all the error terms in \eqref{eq h tt 1} can be estimated using the results in Section~\ref{sec: energy}. The commutator $[\partial_t,\nabla_n]$ is also calculated in \lem{lem: H commutators}. We next derive an expression for $\phi-\rphi$ in terms of $h$ which is of independent interest for future calculations. Note that $\bfphi-\circbfphi$ is the solution to the Neumann problem

\begin{align}\label{neumann for potential}
\begin{split}
&\Delta(\bfphi-\circbfphi)=0,~\quad\qquad\qquad\qquad\quad \textrm{in}\quad \calB_{1},\\
&\bfn\cdot\nabla(\bfphi-\circbfphi)=-\bfn\cdot(\bfv-\bfx'_{1}(t))\quad \textrm{on}\quad \partial\calB_{1}.
\end{split}
\end{align}
Therefore by equation \eqref{eq: K Neumann},

\begin{align}\label{potential formula}
\begin{split}
\phi-\rphi=&\,2\Sone(I-\Kone^{*})^{-1}
\left(n\cdot u\right)=\frac{2}{R}\Sone(I-\Kone^{*})^{-1}(\zeta\cdot u+(Rn-\zeta)\cdot u)\\
=&\frac{1}{R}\Sone(I-\Kone^{*})^{-1}(\tilh_{t}+2(Rn-\zeta)\cdot u)\\
=&\frac{2}{R}\Sone(I-\Kone^{*})^{-1}(Rh_{t}+hh_t+(Rn-\zeta)\cdot u).
\end{split}
\end{align}
To estimate $[\partial_t,\nabla_n](\phi-\rphi)$,  we only need the expression $\phi-\rphi=2\Sone(I-\Kone^\ast)^{-1}(n\cdot u)$ together with \lems{lem: H commutators} and \ref{lem: frakD kernel}. However, the entire expression will be needed in the next subsection.

To analyze equation \eqref{eq h tt 1} we will decompose $h$ into spherical harmonics, but in order to do this effectively we need to replace the non-local terms $\nabla_n$ and $\Kone$ on the left-hand side of the equation by the corresponding operators on $S_R$. In the following lemma we calculate the resulting errors.


\begin{lemma}\label{lem: operators}
Let $\bfK$, $\bfS$, and $\bfD$ denote the double-layered potential, single-layered potential, and  Dirichlet-Neumann operators on $S_R$, respectively. Let $f:\bbR\times S_R\to\bbR$ and $F:\bbR\times\partial\calB_1\to\bbR$ be related as $f(t,p)=F(t,\xi(t,p))$. Then

\begin{align*}
&\Kone F- \bfK f=-\int_{T_0}^t\left(\pv\int_{S_R}\left(((u'-u)\cdot\nabla) K(\xi'-\xi)\cdot n'+K(\xi'-\xi)\cdot n_t'  \right)(s)\frac{|N'|}{|\sg'|}F'(t)dS(p')\right)ds\\
&\quad\qquad\qquad\qquad -\int_{T_0}^t\left(\pv\int_{S_R}\left(K(\xi'-\xi)\cdot n'\right)(s) \frac{\partial_t|N'|}{|\sg'|}F'(t)dS(p')\right)ds,\\
&\Sone F-\bfS f=R(\bfK f-\Kone F)+\pv\int_{\partial\calB_1}\left(\frac{1}{4\pi}\frac{h'-h}{|\xi'-\xi|^3}-K(\xi'-\xi)\cdot(Rn'-\zeta')\right)F'dS',\\
&\Kone^\ast F-\bfK f=\Kone F-\bfK f+\pv\int_{\partial\calB_1}\left(\frac{1}{2\pi R}\frac{h-h'}{|\xi'-\xi|^3}+K(\xi'-\xi)\cdot((n'-R^{-1}\zeta')+(n-R^{-1}\zeta))\right)F'dS',\\
&\nabla_n F-\bfD f=\left((I+\Kone^\ast)^{-1}-(I+\bfK)^{-1}\right)\pv\int_{\partial\calB_1}(n\cross K)\cdot (n'\cross\nabla)F'dS'\\
&\quad\qquad\qquad\qquad+(I+\bfK)^{-1}\pv\int_{S_R}(n\cross K)\cdot (n'\cross\nabla)F' \left(\frac{|N'|}{|\sg'|}-1\right)dS'\\
&\quad\qquad\qquad\qquad+(I+\bfK)^{-1}\left(\int_{T_0}^t\left(\pv\int_{S_R}
\left(n_s(s)\cross K(s)+n(s)\cross((u'(s)-u(s))\cdot\nabla)K(s)\right)\cdot n'(s)\cross\nabla F'(t) dS'\right)ds\right)\\
&\quad\qquad\qquad\qquad+(I+\bfK)^{-1}
\left(\int_{T_0}^t\left(\pv\int_{S_R}
n(s)\cross K(s)\cdot\left(|N(s)|^{-1}(u_\beta(s)F_\alpha(t)-u_\alpha(s)F_\beta(t))\right)dS'\right)ds\right)\\
&\quad\qquad\qquad\qquad-(I+\bfK)^{-1}\left(\int_{T_0}^t
\left(\pv\int_{S_R}n(s)\cross K(s)\cdot\left(\frac{\partial_s|N(s)|}{|N(s)|^{2}}(\xi_\beta(s)F_\alpha(t)-\xi_\alpha(s)F_\beta(t))\right)dS'\right)ds
\right).
\end{align*}
\end{lemma}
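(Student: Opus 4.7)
The common strategy is to express each operator on the moving surface $\partial\calB_1(t)$ via pullback by the Lagrangian map $\xi(t,\cdot):S_R\to\partial\calB_1(t)$ as a time-dependent family of operators on the fixed sphere $S_R$. Since $\xi(T_0,\cdot)$ is the identity on $S_R$ (with $n(T_0,p')=p'/R$ and $|N(T_0)|=|\sg|$), this family coincides with the corresponding $S_R$-operator at $t=T_0$, so each of the four differences can be written as the time integral from $T_0$ to $t$ of the $s$-derivative of the pulled-back operator applied to the $t$-frozen datum $\widetilde{F}(p'):=F(t,\xi(t,p'))$.

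For $\Kone F-\bfK f$ this gives the first identity at once: the integrand
\begin{equation*}
K(\xi(s,p')-\xi(s,p))\cdot n(s,p')\cdot\tfrac{|N(s,p')|}{|\sg(p')|}
\end{equation*}
has three $s$-dependent factors, and the product rule produces precisely the three terms on the right-hand side --- the kernel contributes $((u'-u)\cdot\nabla)K$, the normal contributes $n_s'$, and the Jacobian contributes $\partial_s|N'|/|\sg'|$.

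For $\Sone F-\bfS f$ we exploit the elementary identity $\bfS g=R\,\bfK g$ valid on $S_R$, which follows from the computation $(p'-p)\cdot p'=\tfrac{1}{2}|p'-p|^2$ when $|p|=|p'|=R$. Writing $\Sone F-\bfS f=(\Sone F-R\,\Kone F)+R(\Kone F-\bfK f)$, the second term is handled by the first step, while the first reduces to an algebraic manipulation of the kernel, using
\begin{equation*}
|\xi'-\xi|^2-2R(\xi'-\xi)\cdot n'=\bigl(|\zeta|^2-|\zeta'|^2\bigr)-2(\zeta'-\zeta)\cdot(Rn'-\zeta'),
\end{equation*}
from which the $h'-h$ and $Rn'-\zeta'$ error terms emerge upon rewriting $|\zeta|^2-|\zeta'|^2$ in terms of $h$. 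For $\Kone^\ast F-\bfK f$ we combine the formula $\Kone^\ast-\Kone=\Re(\Hone^\ast-\Hone)$ from Lemma~\ref{lem: H H*} with the expression for $\Kone F-\bfK f$ already obtained.

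The identity for $\nabla_n F-\bfD f$ is the most delicate, and is the main technical obstacle. The starting point is the Clifford-analytic representation $\nabla_n F=(I+\Kone^\ast)^{-1}\pv\int_{\partial\calB_1}(n\times K)\cdot(n'\times\nabla)F'\,dS'$, together with its $S_R$-counterpart $\bfD f=(I+\bfK)^{-1}\pv\int_{S_R}(n\times K)\cdot(n'\times\nabla)f'\,dS'$. Denoting the inner integrals by $\mathcal{I}$ and $\mathcal{I}_0$ respectively and writing
\begin{equation*}
\nabla_n F-\bfD f=\bigl((I+\Kone^\ast)^{-1}-(I+\bfK)^{-1}\bigr)\mathcal{I}+(I+\bfK)^{-1}\bigl(\mathcal{I}-\mathcal{I}_0\bigr)
\end{equation*}
isolates the first line of the claimed identity. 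Pulling $\mathcal{I}$ back to $S_R$, the difference $\mathcal{I}-\mathcal{I}_0$ decomposes once more by the time-integral trick, but now with an integrand carrying both the $s$-dependent kernel $n(s)\times K(s)$ and the tangential factor $n'(s)\times\nabla F'(t)$. Rewriting $n\times\nabla F=|N|^{-1}(\xi_\beta F_\alpha-\xi_\alpha F_\beta)$ so that the $s$-derivative can be applied cleanly to $\xi_\alpha,\xi_\beta$ and to $|N|^{-1}$ produces the four remaining terms: the Jacobian error $(|N'|/|\sg'|-1)$, the $n_s\times K$ and $((u'-u)\cdot\nabla)K$ contributions, and the $u_\alpha,u_\beta$ and $\partial_s|N|/|N|^2$ contributions. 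Throughout, the main care is in keeping the $s$-dependence of the operator strictly separate from the $t$-frozen datum $F$ when computing the time integrals.
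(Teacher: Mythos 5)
Your proposal follows essentially the same route as the paper: pull each operator back to $S_R$ via the Lagrangian map, use that the pulled-back family coincides with the $S_R$-operator at $t=T_0$, and write the discrepancy either as the time integral of the $s$-derivative (identities one and four) or as a purely algebraic kernel manipulation (identities two and three). Your use of \lem{lem: H H*} to get the third identity is a minor repackaging of the paper's direct kernel computation $(n+n')\cdot K(\xi'-\xi)=\frac{1}{2\pi R}\frac{\tilh-\tilh'}{|\xi'-\xi|^3}+K(\xi'-\xi)\cdot((n'-R^{-1}\zeta')+(n-R^{-1}\zeta))$, but it is equivalent.

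One slip worth fixing: you state $\bfS g=R\,\bfK g$, but from the paper's definitions (and \prop{spherical harmonics for SR}) the correct relation is $\bfS=-R\,\bfK$; indeed the very computation $(p'-p)\cdot p'=\tfrac{1}{2}|p'-p|^2$ you cite gives $\bfK$ the kernel $+\tfrac{1}{4\pi R|\xi'-\xi|}$ while $\bfS$ has kernel $-\tfrac{1}{4\pi|\xi'-\xi|}$. Consequently your decomposition should read $\Sone F-\bfS f=\left(\Sone F+R\,\Kone F\right)+R\left(\bfK f-\Kone F\right)$ rather than $\left(\Sone F-R\,\Kone F\right)+R\left(\Kone F-\bfK f\right)$; as written yours produces $\Sone F-R\,\bfK f$, which differs from $\Sone F-\bfS f=\Sone F+R\,\bfK f$ by $2R\,\bfK f$. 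Once that sign is corrected, the kernel identity you display for $|\xi'-\xi|^2-2R(\xi'-\xi)\cdot n'$ does yield the stated error term, so the argument closes.
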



\begin{proof}
Recall that by assumption $\partial\calB_1(T_0)=\bfx_1(T_0)+S_R$ and by definition $\zeta(T_0,p)=p$ for all $p\in S_R$. It follows that

\begin{align*}
\Kone F (\xi)=&-\pv \int_{S_R}K(\xi-\xi')\cdot n(p') \frac{|N(p')|}{|\sg(p')|}F(\xi')dS(p')\\
=&\bfK f(p)-\int_{T_0}^t\left(\pv\int_{S_R}\frac{\partial}{\partial s}\left(K(\xi-\xi')\cdot n' \frac{|N'|}{|\sg'|}\right)(s)\,F'(t)dS(p')\right)ds\\
=&\bfK f(p)-\int_{T_0}^t\left(\pv\int_{S_R}\left(((u'-u)\cdot\nabla) K(\xi'-\xi)\cdot n'+K(\xi'-\xi)\cdot n_t'  \right)(s)\frac{|N'|}{|\sg'|}F'(t)dS(p')\right)ds\\
&-\int_{T_0}^t\left(\pv\int_{S_R}\left(K(\xi'-\xi)\cdot n'\right)(s) \frac{\partial_t|N'|}{|\sg'|}F'(t)dS(p')\right)ds.
\end{align*}
The second and third identities follow by inspection of \eqref{eq: K def} and \eqref{eq: K* def}, and noting that

\begin{align*}
-K(\xi'-\xi)\cdot n'=&\frac{1}{2\pi R}\frac{(\zeta'-\zeta)\cdot\zeta'}{|\zeta'-\zeta|^3}-K(\xi'-\xi)\cdot(n'-R^{-1}\zeta')\\
=&\frac{1}{4\pi R}\frac{1}{|\xi'-\xi|}+\frac{1}{4\pi R}\frac{\tilh'-\tilh}{|\xi'-\xi|^3}-K(\xi'-\xi)\cdot (n'-R^{-1}\zeta'),
\end{align*}
and similarly

\begin{align*}
(n+n')\cdot K(\xi'-\xi)=\frac{1}{2\pi R}\frac{\tilh-\tilh'}{|\xi'-\xi|^3}+K(\xi'-\xi)\cdot((n'-R^{-1}\zeta')+(n-R^{-1}\zeta)).
\end{align*}
For the last identity, we use \lem{lem: DN K} to write

\begin{align*}
\nabla_n F=&(I+\bfK)^{-1}\pv\int_{S_R}(n\cross K)\cdot (n'\cross\nabla)F'dS'\\
&+\left((I+\Kone^\ast)^{-1}-(I+\bfK)^{-1}\right)\pv\int_{\partial\calB_1}(n\cross K)\cdot (n'\cross\nabla)F'dS'\\
&+(I+\bfK)^{-1}\pv\int_{S_R}(n\cross K)\cdot (n'\cross\nabla)F' \left(\frac{|N'|}{|\sg'|}-1\right)dS'\\
=&\bfD f +\left((I+\Kone^\ast)^{-1}-(I+\bfK)^{-1}\right)\pv\int_{\partial\calB_1}(n\cross K)\cdot (n'\cross\nabla)F'dS'\\
&+(I+\bfK)^{-1}\pv\int_{S_R}(n\cross K)\cdot (n'\cross\nabla)F' \left(\frac{|N'|}{|\sg'|}-1\right)dS'\\
&+(I+\bfK)^{-1}\left(\int_{T_0}^t\left(\pv\int_{S_R}\left(n_s(s)\cross K(s)+n(s)\cross((u'(s)-u(s))\cdot\nabla)K(s)\right)\cdot \left(n'(s)\cross\nabla\right) F'(t) dS'\right)ds\right)\\
&+(I+\bfK)^{-1}\left(\int_{T_0}^t\left(\pv\int_{S_R}\left(n(s)\cross K(s)\right)\cdot\left(|N(s)|^{-1}(u_\beta(s)F_\alpha(t)-u_\alpha(s)F_\beta(t))\right)dS'\right)ds\right)\\
&-(I+\bfK)^{-1}\left(\int_{T_0}^t\left(\pv\int_{S_R}\left(n(s)\cross K(s)\right)\cdot(\partial_s|N(s)|)|N(s)|^{-2}(\xi_\beta(s)F_\alpha(t)-\xi_\alpha(s)F_\beta(t))dS\right)ds\right).
\end{align*}
\end{proof}


Combining the previous lemma with \prop{prop: h eq} we arrive at our final equation for $h$ which we record in the following corollary.


\begin{corollary}\label{cor: h eq}
Let 

\begin{align}
&f(t,\omega):=-\frac{\emph{\txtg}\eta^3}{8}\left(1-3(\bfxi_{1}\cdot \omega)^{2}\right),\label{eq: f definition}\\
&\frakR_5:=-\frac{1}{R}h_t^2-\frac{1}{R}hh_{tt}+\emph{\txtg} \left(\bfD(I-3\bfK)-\nabla_n(I-3\Kone)\right)h+\frac{3\emph{\txtg}\eta^3}{8R^2}\left(\bfxi_1\cdot\int_{T_0}^tu(s)ds\right)^2\nonumber\\
&\qquad\qquad+\frac{3\emph{\txtg}\eta^3}{4R^2}(\bfxi_1\cdot\omega)\left(\bfxi_1\cdot\int_{T_0}^tu(s)ds\right),\label{eq: frakR5}
\end{align}
 and $\frakR:= \frac{\frakR_1}{2R}+\frac{\frakR_2}{2R}-\nabla_n\frakR_3-\frakR_4+\frakR_5$, where $\frakR_1,\dots,\frakR_4$ are as in \prop{prop: h eq}. Then $h:\bbR\times S_R\to\bbR$ satisfies 

\begin{align}\label{eq: h eq}
(\partial_t^2+\emph{\txtg}\bfD(I-3\bfK))h(t,\omega)=f(t,\omega)+\frakR,
\end{align}
and the remainder $\frakR$ satisfies the estimates

\begin{align*}
&\|\frakR\|_{L^2(S_R)}\lesssim GM R^{-1}\eta^4,\\
&\|\partial_t\frakR\|_{L^2(S_R)}\lesssim GM R^{-2}\eta^5|v_1|,\\
&\|\partial_t^2\frakR\|_{L^2(S_R)}\lesssim (GM)^{\frac{3}{2}}R^{-\frac{7}{2}}\eta^6|v_1|,\\
&\|\partial_t^3\frakR\|_{L^2(S_R)}\lesssim (GM)^{2}R^{-5}\eta^7|v_1|.
\end{align*}
\end{corollary}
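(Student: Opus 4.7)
The plan is to build \eqref{eq: h eq} directly from \prop{prop: h eq} by two cosmetic rewritings, and then to extract the size of $\frakR$ by identifying which summand dominates.

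First, I would derive the equation itself. Starting from \eqref{eq h tt 1}, recall that $\zeta(t,p)=p+\int_{T_0}^t u(s,p)\,ds$, so that with $\omega = p/R$ we may write $R^{-1}\zeta = \omega + R^{-1}\int_{T_0}^t u(s)\,ds$ pointwise on $S_R$. Squaring yields
\[
(\bfxi_1 \cdot R^{-1}\zeta)^2 = (\bfxi_1\cdot\omega)^2 + \tfrac{2}{R}(\bfxi_1\cdot\omega)\Bigl(\bfxi_1\cdot\int_{T_0}^t u\,ds\Bigr) + \tfrac{1}{R^2}\Bigl(\bfxi_1\cdot\int_{T_0}^t u\,ds\Bigr)^2,
\]
so the source term in \eqref{eq h tt 1} separates as $f(t,\omega)$ plus the two cross-terms appearing in \eqref{eq: frakR5}. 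Next, I would replace the non-local $\nabla_n(I-3\Kone)$ on the round-sphere-based right-hand side by $\bfD(I-3\bfK)$; the discrepancy $\mathrm{g}(\bfD(I-3\bfK) - \nabla_n(I - 3\Kone))h$ is precisely the remaining piece of $\frakR_5$. Collecting $\frakR_1/(2R)+\frakR_2/(2R)-\nabla_n\frakR_3-\frakR_4+\frakR_5$ into $\frakR$ establishes \eqref{eq: h eq}.

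The heart of the argument is then the $L^2$ bound on $\frakR$, and here the claim is that the dominant contribution is $n\cdot(E_1-\rho M^{-1}\int_{\calB_1}\bfE_1)$ inside $\frakR_4$. Since $E_1$ is the order $\eta^2$ Taylor remainder in \lem{lem: nabla psi}, one has $|E_1|\lesssim GM\eta^4 R^{-2}$ pointwise on $\partial\calB_1$, and hence $\|n\cdot E_1\|_{L^2(S_R)}\lesssim GM\eta^4/R$, which matches the target. All the remaining pieces I would show are strictly smaller. For the genuinely nonlinear parts of $\frakR_1, \frakR_2, \frakR_3, \frakR_5$, the key point is that each is at least quadratic in the small quantities $u, u_t, h, \zeta-Rn, n-R^{-1}\zeta, \mu, \nu$; their $L^2\times L^\infty$ pairings are bounded via \prop{prop: a priori}, \prop{prop: bootstrap by E}, and \lem{lem: Sobolev}, together with $\|h\|_{H^\ell}\lesssim R^2\eta^3$ and $\|u\|_{H^\ell}\lesssim R\eta^4|v_1|$. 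For the commutator $\mathrm{g}(\bfD(I-3\bfK)-\nabla_n(I-3\Kone))h$ I would insert the integral representations of \lem{lem: operators}; each integrand carries a factor of the form $u(s)$ or $(u'-u)$, and integration in $s\in[T_0,t]$ combined with \lem{lem: eta integration} converts this into an $R\eta^3$-sized prefactor times $\|h\|_{L^2}$, which is far below $GM\eta^4/R$. The leading $\frakR_3$ term $(\mathrm{g}/2R)(I-3\Kone)h^2$ is similarly $O(GM\eta^6/R)$ by the $H^1\to L^\infty$ Sobolev estimate for $h$. To handle $\nabla_n\frakR_3$, one first replaces $(I+\Kone)^{-1}$ with $(I+\bfK)^{-1}$ modulo a small error, then applies \lem{lem: H estimates} to each $A_i$.

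For $\partial_t^k\frakR$ with $k=1,2,3$, the strategy is the same, with the observation that each time derivative improves by one of two scalings: either $\partial_t\eta \sim \eta^2|v_1|/R$ when differentiating the parameter $\eta$ and center-of-mass quantities (controlled via \prop{prop: r1' v1} and \lem{lem: 3rd derivative}); or when differentiating a fluid quantity, the bootstrap assumption trades $\|u\|_{L^2}\lesssim R\eta^4|v_1|$ for $\|u_t\|_{L^2}\lesssim\sqrt{GM/R}\,\eta^4|v_1|$, and iteratively for higher derivatives one uses the equation \eqref{eq: u} to control $u_{tt}, u_{ttt}$ in terms of spatial derivatives already estimated. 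The leading $n\cdot E_1$ term still dominates, yielding successively $GMR^{-2}\eta^5|v_1|$, $(GM)^{3/2}R^{-7/2}\eta^6|v_1|$, and $(GM)^2 R^{-5}\eta^7|v_1|$, after using $|v_1|\lesssim\sqrt{GM/R}$ to convert pure $|v_1|^k$ factors into the asymmetric powers stated in the corollary.

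The hardest part, I expect, will be the bookkeeping for $\partial_t^2\frakR$ and $\partial_t^3\frakR$: there are many terms (especially from $\frakR_3$, where $(I+\Kone)^{-1}$ must be repeatedly commuted with $\partial_t$ using \lem{lem: H H*} and the analogue of \eqref{eq: D com (I+K*)-1}), and one must ensure that no single term picks up an unexpected factor of $(GM/R)^{1/2}/|v_1|$, which would violate the stated $(GM)^{k/2}$-power. The commutator representation $[\partial_t, \nabla_n](\phi-\rphi)$ in $\frakR_1$, computed via \lem{lem: H commutators}, is another delicate point because differentiating the inverse $(I-\Kone^*)^{-1}$ in the formula \eqref{potential formula} for $\phi-\rphi$ produces highest-order time derivatives of $u$ which must be handled using the equation \eqref{eq: u} to shed a derivative onto lower-order quantities.
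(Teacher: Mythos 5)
Your proposal is correct and follows essentially the same route as the paper: equation \eqref{eq: h eq} is indeed obtained from \eqref{eq h tt 1} by expanding $(\bfxi_1\cdot R^{-1}\zeta)^2$ about $\omega$ and swapping $\nabla_n(I-3\Kone)$ for $\bfD(I-3\bfK)$, and the dominant contribution to $\|\frakR\|_{L^2(S_R)}$ is precisely the $n\cdot\bigl(E_1-\rho M^{-1}\int_{\calB_1}\bfE_1\bigr)$ piece of $\frakR_4$, coming from the pointwise bound $|\bfE_1|\lesssim GM\eta^4/R^2$; the scaling logic for $\partial_t^k\frakR$ (gaining either $\partial_t\eta/\eta$ or a fluid derivative, then using $|v_1|\lesssim\sqrt{GM/R}$) matches what the paper invokes via \prop{prop: r1' v1}, \lem{lem: 3rd derivative}, and \eqref{eq: bfE1 time der bound}. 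The one point where your sketch is lighter than the paper is the treatment of $\nabla_n\frakR_3$: the paper does not merely apply \lem{lem: H estimates} termwise but introduces a specific cancellation, rewriting $\zeta\Hone\tilh+\Hone(\zeta\tilh)$ as $Rn(\Hone-\Hone^\ast)\tilh+(\zeta-Rn)\Hone\tilh+\Hone((\zeta-Rn)\tilh)$ (so that \lem{lem: H H*} applies) and similarly cancelling the $\zeta\cross K+K\cross\zeta'$ piece of $\zeta\cdot(\Hone-\Kone)\tilh$ using the explicit kernel; without these manipulations the naive termwise bound on $A_2,A_3$ would not close at the claimed $GM\eta^6/R$ level, so you should make this explicit when fleshing out the estimate for $\frakR_3$.
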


\begin{proof}
Equation \eqref{eq: h eq} is just a rewriting of \eqref{eq h tt 1}. The estimates on the remainder $\frakR$ can be proved much in the same way as the estimates in Section~\ref{sec: energy}. We will use \prop{prop: a priori} to obtain the estimates. Using \lems{lem: DN K}, \ref{lem: H commutators}, and \ref{lem: frakD kernel} and equations \eqref{potential formula} and \eqref{eq: frakR1} we get
\begin{align*}
\|\frakR_1\|_{L^2(S_R)}\lesssim \sqrt{GMR}\, \eta^7|v_1|.
\end{align*}
For $\frakR_2$, by \lems{lem: DN K} and \ref{lem: frakD kernel} and \eqref{eq: frakR2} we have 

\begin{align*}
\|\frakR_2\|_{L^2(S_R)}\lesssim R\eta^8|v_1|^2.
\end{align*}
Next, using \eqref{eq: frakR3} and \lems{lem: DN K} and \ref{lem: frakD kernel}, we get

\begin{align*}
\|\nabla_n\frakR_3\|_{L^2(S_R)}\lesssim \frac{GM}{R}\eta^6.
\end{align*}  
Here to estimate $A_2$ we have written

\begin{align*}
 \zeta \Hone\tilh+\Hone(\zeta\tilh)=Rn(\Hone-\Hone^\ast)\tilh+(\zeta-Rn)\Hone\tilh+\Hone((\zeta-Rn)\tilh),
\end{align*}
and applied \lem{lem: H H*}. Similarly for $A_3$ we have used the expression $K=K(\xi'-\xi)=-\frac{1}{2\pi}\frac{\zeta'-\zeta}{|\zeta'-\zeta|^3}$ for the kernel of $\Hone$ to write

\begin{align*}
\zeta\cdot(\Hone-\Kone)\tilh=&\zeta\cdot\pv\int_{\partial\calB_1}K\cross(n'-R^{-1}\zeta')\tilh'dS'+R^{-1}\zeta\cdot\pv\int_{\partial\calB_1}(\zeta\cross K+K\cross\zeta')\tilh'dS'\\
=&\zeta\cdot\pv\int_{\partial\calB_1}K\cross(n'-R^{-1}\zeta')\tilh'dS'.
\end{align*}
For $\frakR_4$ by \eqref{eq: frakR4} we have

\begin{align*}
\|\frakR_4\|_{L^2(S_R)}\lesssim \frac{GM}{R}\eta^6+\frac{GM}{R}\eta^4,
\end{align*}
where the last term on the right is the contribution of $E_1$. Finally by \eqref{eq: frakR5} and \lem{lem: operators}

\begin{align*}
\|\frakR_5\|_{L^2(S_R)}\lesssim \sqrt{\frac{GM}{R}}\eta^7|v_1|+\frac{GM}{R}\eta^6,
\end{align*}
finishing the proof of the estimate on $\|\frakR\|_{L^2(S_R)}$.  The estimates for the time derivatives are similar where we additionally use \prop{prop: r1' v1}, \lem{lem: 3rd derivative}, and \lem{lem: frakD kernel} and to estimate the time derivative of $\bfE_1$ we use the same argument as in \eqref{eq: bfE1 time der bound}.

\end{proof}


Equation \eqref{eq: h eq} is our working equation for the remainder of this section. Our goal now is to use this equation to obtain a lower bound for $\|\partial_th\|_{L^2(S_R)}$. For this, we decompose $h$ and $\frakR$ into spherical harmonics (see Appendix~\ref{app: Clifford})

\begin{align*}
&h=\sum_{\ell=0}^\infty h_\ell,\qquad \slashed{\Delta} h_\ell=\frac{\ell(\ell+1)}{R^2}h_\ell,\\
&\frakR=\sum_{\ell=0}^\infty\calR_\ell,\qquad \slashed{\Delta}\frakR_\ell=\frac{\ell(\ell+1)}{R^{2}}\calR_\ell.
\end{align*}
Using \prop{spherical harmonics for SR} and the facts that $\bfD h_\ell =\frac{\ell}{R}h_\ell$ and that $f$ belongs to the second eigenspace of the Laplacian, $\calY_2$, we can write equation \eqref{eq: h eq} as

\begin{align}\label{eq: h ell}
\begin{split}
&\partial_t^2 h_\ell+a_\ell h_\ell=f_\ell,\\
&f_2=f+{\calR_2},\qquad f_\ell:=\calR_\ell,\quad \ell\neq 2,\\
&\lim_{t\to T_0}h_\ell(t)=\lim_{t\to T_0}\partial_th_\ell(t)=0,\qquad \forall \ell\geq0,
\end{split}
\end{align}
where 

\begin{align*}
a_\ell:= \frac{\txtg}{R}\frac{2\ell(\ell-1)}{2\ell+1}.
\end{align*}
The following is the main result of this section.


\begin{proposition}\label{prop: h ell}
The following estimates hold for $r\in[r_0,10r_0]$:

\begin{align}\label{eq: h ell estimate 1}
\begin{split}
&R^2\eta^3\lesssim \|h_2\|_{L^2(S_R)}\lesssim 2\eta^3,\\
& \|h-h_2\|_{L^2(S_R)}\lesssim R^2\eta^4,
\end{split}
\end{align}
and

\begin{align}\label{eq: h ell estimate 2}
\begin{split}
&R\eta^4|v_1|\lesssim \|\partial_th_2\|_{L^2(S_R)}\lesssim R\eta^4|v_1|,\\
& \|\partial_t(h-h_2)\|_{L^2(S_R)}\lesssim R\eta^5|v_1|.
\end{split}
\end{align}
In particular $\| h(t)\|_{L^2(S_R)}\gtrsim R^2\eta^3(t)$ and $\|\partial_t h(t)\|_{L^2(S_R)}\gtrsim R\eta^4(t)|v_1(t)|$.

\end{proposition}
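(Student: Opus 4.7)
The plan is to analyze the mode-by-mode dynamics of $h$ under equation \eqref{eq: h eq}, treating each spherical harmonic projection as a driven scalar harmonic oscillator. The starting observation is that
\begin{equation*}
f(t,\omega) = -\frac{\txtg\eta^{3}}{8}\bigl(1 - 3(\bfxi_{1}\cdot\omega)^{2}\bigr)
\end{equation*}
has vanishing spherical mean and is a quadratic polynomial in $\omega$, so it sits entirely in the degree-two spherical harmonic eigenspace. Hence in \eqref{eq: h ell} only the $\ell=2$ equation carries $f$ as a source; all other modes are driven solely by $\calR_{\ell}$. For each $\ell\geq 2$ (where $a_{\ell}>0$), the zero-data Duhamel formula gives
\begin{equation*}
h_{\ell}(t) = \frac{1}{\sqrt{a_{\ell}}}\int_{T_{0}}^{t}\sin\bigl(\sqrt{a_{\ell}}(t-s)\bigr) f_{\ell}(s)\,ds,\qquad \partial_{t}h_{\ell}(t) = \int_{T_{0}}^{t}\cos\bigl(\sqrt{a_{\ell}}(t-s)\bigr) f_{\ell}(s)\,ds.
\end{equation*}
The central mechanism is a timescale separation: the driving timescale of $f$ is $\sim r_{1}/|v_{1}|$ while the natural period is $\sim\sqrt{R^{3}/GM}$, and since $\eta\ll 1$ the latter is much shorter, so each oscillator responds quasi-statically with $h_{\ell}(t)\approx f_{\ell}(t)/a_{\ell}$.

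To make this rigorous for $\ell=2$, I would integrate by parts twice in the sine Duhamel integral to obtain
\begin{equation*}
h_{2}(t) = \frac{f_{2}(t)}{a_{2}} + \mathrm{(boundary~terms~at~}T_{0}\mathrm{)} - \frac{1}{a_{2}^{3/2}}\int_{T_{0}}^{t}\sin\bigl(\sqrt{a_{2}}(t-s)\bigr) f_{2}''(s)\,ds.
\end{equation*}
By \cor{cor: h eq} the error source satisfies $\|\calR_{2}\|_{L^{2}(S_{R})}\lesssim GM\eta^{4}/R$, so $f_{2}=f+\calR_{2}$ is dominated by $f$. A direct spherical integration, $\int_{\bbS^{2}}(1-3(\bfxi_{1}\cdot\omega)^{2})^{2}\,d\omega = 8\pi/5$, combined with $a_{2}=4GM/(5R^{3})$, yields $\|f_{2}/a_{2}\|_{L^{2}(S_{R})}\sim R^{2}\eta^{3}$ with matching upper and lower bounds, which is exactly the $h_{2}$ estimate in \eqref{eq: h ell estimate 1}. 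The boundary terms at $T_{0}$ scale as $R^{2}\eta(T_{0})^{3}$ and vanish as $T_{0}\to-\infty$ (and are acceptably small for finite but large $R_{1}$). The remainder integral is handled by differentiating $f$ twice and using \prop{prop: r1' v1} (for $r_{1}''$ and $v_{1}'$) together with \cor{cor: h eq} (for $\partial_{t}^{2}\calR_{2}$), producing $\|f_{2}''\|_{L^{2}(S_{R})}\lesssim G^{2}M^{2}\eta^{6}/R^{4}$, and then applying \lem{lem: eta integration} to the time integral, giving a remainder $\lesssim R^{2}\eta^{4}$.

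The analogous IBP applied to the cosine Duhamel representation gives $\partial_{t}h_{2}(t) = f_{2}'(t)/a_{2}$ plus controlled boundary and remainder terms. Differentiating,
\begin{equation*}
f'(t,\omega) = -\frac{\txtg}{8}\bigl[3\eta^{2}\eta'\bigl(1-3(\bfxi_{1}\cdot\omega)^{2}\bigr) - 6\eta^{3}(\bfxi_{1}\cdot\omega)(\bfxi_{1}'\cdot\omega)\bigr];
\end{equation*}
since $\bfxi_{1}'\perp\bfxi_{1}$ the two summands lie in mutually orthogonal subspaces of the degree-two eigenspace (the $Y_{2,0}$ axis versus $\mathrm{span}\{Y_{2,\pm 1}\}$ in a frame aligned with $\bfxi_{1}$), so their squared $L^{2}$ norms add and
\begin{equation*}
\|f'\|_{L^{2}(S_{R})}^{2} \sim \frac{G^{2}M^{2}\eta^{8}}{R^{4}}\Bigl((r_{1}')^{2} + \frac{J^{2}}{r_{1}^{2}}\Bigr) = \frac{G^{2}M^{2}\eta^{8}|v_{1}|^{2}}{R^{4}},
\end{equation*}
whence $\|\partial_{t}h_{2}\|_{L^{2}(S_{R})}\sim R\eta^{4}|v_{1}|$ up to a remainder $\lesssim R\eta^{5}|v_{1}|$. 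For $\ell\geq 3$, the source $f_{\ell}=\calR_{\ell}$ and its time derivative are small by \cor{cor: h eq}, so the same IBP scheme yields $\|h_{\ell}\|_{L^{2}}\lesssim R^{2}\eta^{4}$ and $\|\partial_{t}h_{\ell}\|_{L^{2}}\lesssim R\eta^{5}|v_{1}|$; the two degenerate modes $\ell=0,1$ (where $a_{\ell}=0$) are constrained by volume conservation of $\calB_{1}$ (for $\ell=0$) and by $\bfx_{1}$ being the center of mass of $\calB_{1}$ (for $\ell=1$), each forcing the corresponding projection of $h$ to be quadratically small in $h$ and hence of size $O(R^{2}\eta^{4})$. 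Summing over $\ell\neq 2$ yields the second lines of \eqref{eq: h ell estimate 1} and \eqref{eq: h ell estimate 2}.

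The hard part is closing the IBP remainders uniformly in $T_{0}$ and uniformly down to $r_{1}=r_{0}$, where $r_{1}'$ vanishes: this requires the sharp asymptotics of \prop{prop: r1' v1} showing that $|r_{1}'|^{-1}$ has only an integrable $(r_{1}-r_{0})^{-1/2}$ singularity near closest approach, combined with \lem{lem: eta integration} to convert time integrals of $\eta$-powers weighted by $|v_{1}|$ into the desired decay rates in $\eta$. Once this is in place, the rest is essentially unpacking the spherical harmonic decomposition and invoking the a-priori control on $\frakR$ and its time derivatives from \cor{cor: h eq}.
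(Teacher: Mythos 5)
Your proposal follows the same route as the paper: Duhamel representation for each spherical harmonic mode, integration by parts to exploit the adiabatic separation between the forcing timescale $\sim r_1/|v_1|$ and the natural period $\sim\sqrt{R^3/GM}$, the observation that $f$ lives entirely in $\calY_2$, and the use of volume conservation and the center-of-mass constraint to control $\ell=0,1$. The lower bound for $\|\partial_t h_2\|_{L^2}$ is also obtained the same way — the paper cancels the cross term in $|\partial_t f|^2$ by noting that $\int_{S_R}(1-c_2(\bfxi_1\cdot\omega)^2)^2\,dS$ is time-independent, while you argue that $\bfxi_1'\perp\bfxi_1$ (valid since $|\bfxi_1|\equiv 1$) places the two summands of $\partial_t f$ in orthogonal subspaces of $\calY_2$; these are equivalent, and your phrasing is cleaner. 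One small point worth flagging: the paper's integration-by-parts formula \eqref{ht ell 1} for $\partial_t h_\ell$ goes all the way to $f_\ell'''$, which is why Corollary \ref{cor: h eq} provides bounds up to $\partial_t^3\frakR$; your sketch stops at $f_2''$ for the sine integral and says ``analogous'' for the cosine one, which would need one more integration by parts to bring the error below the main term $f'_\ell/a_\ell$ by a full factor of $\eta$. This is not a gap in the idea, just a bookkeeping detail you'd need to carry through.
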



\begin{proof}
The proofs of \eqref{eq: h ell estimate 1} and \eqref{eq: h ell estimate 2} are almost identical, so we provide the details only for the latter. Solving the ODE \eqref{eq: h ell} we get

\begin{align*}
h_\ell(t)=\frac{1}{\sqrt{a_\ell}}\int_{T_0}^t\sin(\sqrt{a_\ell}(t-s))f_\ell(s)ds.
\end{align*}
Differentiating in time and using several integration-by-parts, for $\ell\geq2$ we arrive at 
\begin{align}\label{ht ell 1}
\begin{split}
\partial_{t}h_{\ell}(t)=&\int_{T_{0}}^{t}\cos(\sqrt{a_{\ell}}(t-s))f_{\ell}(s)ds\\
=&-\frac{1}{\sqrt{a_{\ell}}}\int_{T_{0}}^{t}\frac{d}{ds}(\sin(\sqrt{a_{\ell}}(t-s)))f_{\ell}(s)ds\\
=&\frac{1}{\sqrt{a_{\ell}}}\sin\left(\sqrt{a_{\ell}}(t-T_{0})\right)f_{\ell}(T_{0})+\frac{1}{\sqrt{a_{\ell}}}\int_{T_{0}}^{t}\sin(\sqrt{a_{\ell}}(t-s))f'_{\ell}(s)ds\\
=&\frac{1}{a_{\ell}}\int_{T_{0}}^{t}\frac{d}{ds}(\cos(\sqrt{a_{\ell}}(t-s)))f_{\ell}'(s)ds+\frac{1}{\sqrt{a_{\ell}}}\sin\left(\sqrt{a_{\ell}}(t-T_{0})\right)f_{\ell}(T_{0})\\
=&\frac{f_{\ell}'(t)}{a_{\ell}}-\frac{1}{a_{\ell}}\int_{T_{0}}^{t}\cos(\sqrt{a_{\ell}}(t-s))f_{\ell}''(s)ds\\
&+\frac{1}{\sqrt{a_{\ell}}}\sin\left(\sqrt{a_{\ell}}(t-T_{0})\right)f_{\ell}(T_{0})-\frac{1}{a_{\ell}}\cos\left(\sqrt{a_{\ell}}(t-T_{0})\right)f'_{\ell}(T_{0})\\
=&\frac{f_{\ell}'(t)}{a_{\ell}}+\frac{1}{a_{\ell}^{3/2}}\int_{T_{0}}^{t}\frac{d}{ds}(\sin(\sqrt{a_{\ell}}(t-s)))f_{\ell}''(s)ds\\
&+\frac{1}{\sqrt{a_{\ell}}}\sin\left(\sqrt{a_{\ell}}(t-T_{0})\right)f_{\ell}(T_{0})-\frac{1}{a_{\ell}}\cos\left(\sqrt{a_{\ell}}(t-T_{0})\right)f'_{\ell}(T_{0})\\
=&\frac{f_{\ell}'(t)}{a_{\ell}}-\frac{1}{a_{\ell}^{3/2}}\int_{T_{0}}^{t}\sin(\sqrt{a_{\ell}}(t-s))f_{\ell}'''(s)ds-\frac{1}{a_{\ell}^{3/2}}\sin\left(\sqrt{a_{\ell}}(t-T_{0})\right)f''_{\ell}(T_{0})\\
&+\frac{1}{\sqrt{a_{\ell}}}\sin\left(\sqrt{a_{\ell}}(t-T_{0})\right)f_{\ell}(T_{0})-\frac{1}{a_{\ell}}\cos\left(\sqrt{a_{\ell}}(t-T_{0})\right)f'_{\ell}(T_{0}).
\end{split}
\end{align} 
It follows from the estimates in \cor{cor: h eq}, \prop{prop: r1' v1}, and \lem{lem: 3rd derivative},  that

\begin{align*}
\int_{S_{R}}|\partial_{t}h_{2}|^{2}dS=\frac{1}{a_{2}^{2}}\int_{S_{R}}|f'_{2}|^{2}dS+O\left(|v_{1}|^{2}\eta^{10}\right).
\end{align*}
Here we want to show that the first term on the right hand side above has a lower bound of order $O\left(|v_{1}|^{2}\eta^{8}\right)$. Since $\partial_{t}f_{2}=\partial_{t}f+\partial_{t}\calR_{2}$ and $\|\partial_{t}\calR_{2}\|_{L^{2}(S_{R})}^{2}\lesssim |v_{1}|^{2}\eta^{10}$, the problem reduces to deriving a lower bound of order $O\left(|v_{1}|^{2}\eta^{8}\right)$ for $\int_{S_{R}}|\partial_{t}f|^{2}dS$. According to \eqref{eq: f definition} we can write $f$ and $\partial_{t}f$ in the schematic forms

\begin{align*}
 f(t,\omega)=c_{1}\eta^{3}(t)(1-c_{2}(\bfxi_{1}(t)\cdot\omega)^{2}),\quad \Rightarrow\quad \partial_{t}f=c_{1}\left(\eta^{3}(t)\right)'(1-c_{2}(\bfxi_{1}(t)\cdot\omega)^{2})-c_{1}c_{2}\eta^{3}(t)\left((\bfxi_{1}\cdot\omega)^{2}\right)',
\end{align*}
where $c_{1}, c_{2}$ are constants depending on $G,M,R$. This gives the following formula for $|\partial_{t}f|^{2}$:

\begin{align}\label{pt f2}
\begin{split}
 |\partial_{t}f|^{2}=&c_{1}^{2}\left(3\eta^{2}(t)\eta'(t)\right)^{2}(1-c_{2}(\bfxi_{1}\cdot\omega)^{2})^{2}+4c_{1}^{2}c_{2}^{2}\eta^{6}(t)(\bfxi_{1}\cdot\omega)^{2}(\xi'_{1}\cdot\omega)^{2}\\
 &-2c_{1}^{2}c_{2}
 \left(\eta^{3}(t)\right)'\eta^{3}(t)(1-c_{2}(\bfxi_{1}\cdot\omega)^{2})\left((\bfxi_{1}\cdot\omega)^{2}\right)'
 \end{split}
\end{align}
The integral of the second line above on $S_R$ is zero. Indeed,

\begin{align*}
-2c_{1}^{2}\left(\eta^{3}(t)\right)'\eta^{3}(t)\int_{S_{R}}(1-c_{2}(\bfxi_{1}\cdot\omega)^{2})c_{2}\left((\bfxi_{1}\cdot\omega)^{2}\right)'
dS=&c_{1}^{2}\left(\eta^{3}(t)\right)'\eta^{3}(t)\int_{S_{R}}\left((1-c_{2}(\bfxi_{1}\cdot\omega)^{2})^{2}\right)'dS\\
=&c_{1}^{2}\left(\eta^{3}(t)\right)'\eta^{3}(t)\frac{d}{dt}\left(\int_{S_{R}}(1-c_{2}(\bfxi_{1}\cdot\omega)^{2})^{2}dS\right).
\end{align*}
But due to the symmetry of $S_{R}$, the integral $\int_{S_{R}}(1-c_{2}(\bfxi_{1}\cdot\omega)^{2})^{2}dS$ does not depend on $\bfxi_{1}$ and hence on time, so the last term above vanishes. Going back to \eqref{pt f2} and using the symmetry of $S_R$ again, and using the values of $c_1$ and $c_2$ from \eqref{eq: f definition}, we see conclude that

\begin{align*}
\int_{S_R}|\partial_tf|^2dS\gtrsim& \left(\frac{GM}{R}\right)^2\eta^{4}(t)(\eta'(t))^{2}+\left(\frac{GM}{R}\right)^2\eta^{6}(t)|\bfxi_1'(t)|^2\\
\gtrsim& \left(\frac{GM}{R}\right)^2R^{-2}\eta^8(t)\left(|r_1'(t)|^2+|v_1-r_1'\bfxi_1|^2\right)\gtrsim \left(\frac{GM}{R}\right)^2R^{-2}\eta^8(t)|v_1|^2.
\end{align*}
For the last inequality above we simply observe that the estimate is trivial if $|r_1'|\geq \frac{1}{4}|v_1|$, and if $|r_1'|\leq \frac{1}{4}|v_1|$, then $|v_1-r_1'\bfxi_1|\geq \frac{3}{4}|v_1|$.  Returning to \eqref{ht ell 1} we have proved that

\begin{align}\label{eq: ht final temp 1}
&R\eta^4|v_1|\lesssim\|\partial_th_2\|_{L^2(S_R)}\lesssim R\eta^4|v_1|,\\
&\|\partial_th_\ell\|_{L^2(S_R)}\lesssim \frac{R^3}{GM}\|\partial_t\calR_\ell\|_{L^2(S_R)},\quad \ell\geq3.
\end{align}
Since $a_0=a_1=0$ this argument does not apply to $\ell=0,1$. Instead, to estimate $h_0$ and $h_1$ we argue as follows. First, since the fluid is incompressible
 
 \begin{align*}
 \begin{split}
4\pi R^{3}=&\int_{\partial\calB_{1}}n\cdot\zeta dS=\int_{S_{R}}n\cdot\zeta\frac{|N|}{|\sg|}dS=\int_{S_{R}}n\cdot\zeta dS+\int_{S_{R}}n\cdot\zeta\left(\frac{|N|}{|\sg|}-1\right)dS\\
 =&\frac{1}{R}\int_{S_{R}}\zeta\cdot\zeta dS+\int_{S_{R}}\left(n-\frac{1}{R}\zeta\right)\cdot\zeta dS+\int_{S_{R}}n\cdot\zeta\left(\frac{|N|}{|\sg|}-1\right)dS\\
 =&4\pi R^{3}+\frac{1}{R}\int_{S_{R}}\tilh dS+\int_{S_{R}}\left(n-\frac{1}{R}\zeta\right)\cdot\zeta dS+\int_{S_{R}}n\cdot\zeta\left(\frac{|N|}{|\sg|}-1\right)dS\\
 =&4\pi R^3+2\int_{S_R}hdS+\frac{1}{R}\int_{S_R}h^2dS+\int_{S_{R}}\left(n-\frac{1}{R}\zeta\right)\cdot\zeta dS+\int_{S_{R}}n\cdot\zeta\left(\frac{|N|}{|\sg|}-1\right)dS.
\end{split}
 \end{align*}
 We can now solve for $h_0=\frac{1}{4\pi}\int_{S_R}h dS$ from this equation to estimate $h_0$, and differentiating in time we obtain the desired estimate for $\partial_th_0$.  Similarly, for $h_1$, since $\int_{\calB_1}(\bfx-\bfx_1)d\bfx=0$, 

\begin{align*}
0=&\int_{\partial\calB_{1}}|\zeta|^{2}n dS=\int_{\partial\calB_{1}}\tilh n dS=2\int_{\partial\calB_{1}}h \zeta dS+2\int_{\partial\calB_1}h(Rn-\zeta)dS+\int_{\partial\calB_{1}}h^{2}n dS\\
=&2\int_{\partial\calB_{1}}h\omega dS+2\int_{\partial\calB_{1}}h(\zeta(t)-\zeta(0))dS
+2\int_{\partial\calB_1}h(Rn-\zeta)dS+\int_{\partial\calB_{1}}h^{2}n dS\\
=&2\int_{S_{R}}h\omega dS+2\int_{S_{R}}h\omega\left(\frac{|N|}{|\sg|}-1\right)dS+2\int_{\partial\calB_{1}}h(\zeta(t)-\zeta(0))dS
+2\int_{\partial\calB_1}h(Rn-\zeta)dS+\int_{\partial\calB_{1}}h^{2}n dS.
\end{align*}
Since $\{\omega^j\}_{j=1,2,3}$ form a basis for $\calY_1$, we can solve for $h_1$ from this relation, and differentiating in time we can estiamte $\partial_th_1$. Combining with \eqref{eq: ht final temp 1} we get the desired bound

\begin{align*}
\|\partial_t(h-h_2)\|_{L^2(S_R)} \lesssim R\eta^5|v_1|.
\end{align*}
\end{proof}



\subsection{Proof of \thm{thm: tidal capture}}\label{subsec: tidal capture}


Recall that the total energy 

\begin{align*}
\begin{split}
 \scE :=\frac{1}{2}\int_{\calB_{1}}|\bfv(t,\bfx)|^{2}d\bfx+\frac{1}{2}\int_{\calB_{1}}\bfpsi_{1}(t,\bfx)d\bfx+\frac{1}{2}\int_{\calB_{1}}\bfpsi_{2}(t,\bfx)d\bfx,
\end{split}
\end{align*}
is conserved during the evolution. To see this we compute the time derivative of each of the terms in the definition of $\scE$. First,

\begin{align}\label{time derivative ET 1}
 \begin{split}
  \frac{1}{2}\frac{d}{dt}\left(\int_{\calB_{1}}|\bfv(t,\bfx)|^{2}d\bfx\right)=&\int_{\calB_{1}}\bfv\cdot\left(\partial_{t}\bfv+\bfv\cdot\nabla\bfv\right)d\bfx=-\int_{\calB_{1}}\bfv\cdot\left(\nabla P+\nabla\bfpsi_{1}+\nabla\bfpsi_{2}\right)d\bfx\\
  =&-\int_{\calB_{1}}\bfv\cdot\left(\nabla\bfpsi_{1}+\nabla\bfpsi_{2}\right)d\bfx.
  \end{split}
 \end{align}
For the contribution of $\bfpsi_1$ we have

\begin{align}\label{time derivative ET 4}
\begin{split}
\frac{d}{dt}\left(\int_{\calB_{1}}\bfpsi_{1}(t,\bfx)d\bfx\right)=&G\rho\int_{\calB_{1}}\int_{\calB_{1}}\frac{\bfx-\bfy}{|\bfx-\bfy|^{3}}\cdot\left(\bfv(\bfx)-\bfv(\bfy)\right)d\bfy d\bfx\\
=&\int_{\calB_{1}}\bfv(\bfx)\cdot\nabla\bfpsi_{1}(\bfx)d\bfx
-G\rho\int_{\calB_{1}}\bfv(\bfy)\cdot\left(\int_{\calB_{1}}\frac{\bfx-\bfy}{|\bfx-\bfy|^{3}}d\bfx\right)d\bfy\\
=&2\int_{\calB_{1}}\bfv(\bfx)\cdot\nabla\bfpsi_{1}(\bfx)d\bfx.
\end{split}
\end{align}
For $\bfpsi_2$

\begin{align}\label{time derivative ET 2}
 \begin{split}
\frac{d}{dt}\left(\int_{\calB_{1}}\bfpsi_{2}(t,\bfx)d\bfx\right)=&G\rho\int_{\calB_{1}}\int_{\calB_{2}}\frac{\bfx-\bfy}{|\bfx-\bfy|^{3}}\cdot\left(\bfv(\bfx)-\bfv(\bfy)\right)d\bfy d\bfx \\
=&\int_{\calB_{1}}\bfv(\bfx)\cdot\nabla\bfpsi_{2}(\bfx)d\bfx+G\rho\int_{\calB_{2}}\bfv(\bfy)\cdot\left(\int_{\calB_{1}}\frac{\bfy-\bfx}{|\bfy-\bfx|^{3}}d\bfx\right)d\bfy\\
=&\int_{\calB_{1}}\bfv(\bfx)\cdot\nabla\bfpsi_{2}(\bfx)d\bfx+\int_{\calB_{2}}\bfv(\bfy)\cdot\nabla\bfpsi_{1}(\bfy)d\bfy\\
=&\int_{\calB_{1}}\bfv(\bfx)\cdot\nabla\bfpsi_{2}(\bfx)d\bfx+\int_{\calB_{1}}\bfv(-\bfy)\cdot\nabla\bfpsi_{1}(-\bfy)d\bfy.
 \end{split}
\end{align}
Note that $\bfv(-\bfy)=-\bfv(\bfy)$ and 

\begin{align*}
\nabla\bfpsi_{1}(-\bfy)=G\rho\int_{\calB_{1}}\frac{-\bfy-\bfz}{|-\bfy-\bfz|^{3}}d\bfz=G\rho\int_{\calB_{2}}\frac{\bfz-\bfy}{|\bfz-\bfy|^{3}}d\bfz=-\nabla\bfpsi_{2}(\bfy),
\end{align*}
so \eqref{time derivative ET 2} becomes

\begin{align}\label{time derivative ET 3}
\begin{split}
\frac{d}{dt}\left(\int_{\calB_{1}}\bfpsi_{2}(t,\bfx)d\bfx\right)=2\int_{\calB_{1}}\bfv(\bfx)\cdot\nabla\bfpsi_{2}(\bfx)d\bfx.
\end{split}
\end{align}
Multiplying \eqref{time derivative ET 4} and \eqref{time derivative ET 3} by $2^{-1}$ and adding them to \eqref{time derivative ET 1}, we see that $\frac{d\scE}{dt}=0$. 

Recall also that $\widetilde{\scEtidal}$ is defined as

\begin{align*}
\begin{split}
  \widetilde{\scEtidal}(t):=\frac{1}{2|\calB_1|}\int_{\calB_1}|\bfv|^2d\bfx-\frac{1}{2}|\bfx_1'|^2+\frac{1}{2|\calB_1|}\int_{\calB_1}\bfpsi_1d\bfx+\frac{3GM}{5R}.
\end{split}
\end{align*}
We divide $\widetilde{\scEtidal}$ into its kinetic and potential parts $\widetilde{\scEtidal}:=\scEtidal^0+\scEtidal^1$ where

\begin{align*}
&\scEtidal^0:=\frac{1}{2|\calB_1|}\int_{\calB_1}|\bfv|^2d\bfx-\frac{1}{2}|\bfx_1'|^2,\\
&\scEtidal^1:=\frac{1}{2|\calB_1|}\int_{\calB_1}\bfpsi_1d\bfx+\frac{3GM}{5R}.
\end{align*}


\begin{proof}[Proof of \thm{thm: tidal capture}]

The existence part of the theorem was already proved in \cor{cor: existence}. We now prove \eqref{eq: tidal waves}. First,
\begin{align}\label{kinetic difference 1}
\begin{split}
|\calB_1|\scEtidal^0=&\frac{1}{2}\int_{\calB_{1}}\left(|\bfv|^{2}-|\bfx'_{1}|^{2}\right)d\bfx=\frac{1}{2}\int_{\calB_{1}}\left(|\bfv-\bfx'_{1}+\bfx'_{1}|^{2}-|\bfx'_{1}|^{2}\right)d\bfx\\
=&\frac{1}{2}\int_{\calB_{1}}|\bfv-\bfx'_{1}|^{2}d\bfx+\int_{\calB_{1}}\bfx'_{1}\cdot(\bfv-\bfx'_{1})d\bfx=\frac{1}{2}\int_{\calB_{1}}|\bfv-\bfx'_{1}|^{2}d\bfx\\
=&\frac{1}{2}\int_{\calB_{1}}|\nabla(\bfphi-\circbfphi)|^{2}d\bfx=\frac{1}{2}\int_{\partial\calB_{1}}(\bfphi-\circbfphi)\nabla_{\bfn}(\bfphi-\circbfphi)dS\\
=&-\frac{1}{2}\int_{\partial\calB_{1}}(\bfphi-\circbfphi)\bfn\cdot(\bfv-\bfx'_{1})dS=-\frac{1}{2}\int_{\partial\calB_{1}}(\phi-\rphi)(n\cdot\zeta_{t})dS\\
=&-\frac{1}{4R}\int_{\partial\calB_{1}}(\phi-\rphi)\tilh_{t}dS-\frac{1}{2}\int_{\partial\calB_{1}}(\phi-\rphi)\left(\left(n-\frac{1}{R}\zeta\right)\cdot\zeta_{t}\right)dS\\
=&-\frac{1}{2}\int_{S_R}(\phi-\rphi)h_{t}dS+I,
\end{split}
\end{align}
where

\begin{align*}
\begin{split}
 I:=-\frac{1}{2}\int_{S_R}(\phi-\rphi)h_{t}\left(|N||\sg|^{-1}-1\right)dS -\frac{1}{2R}\int_{\partial\calB_{1}}(\phi-\rphi)h\,h_{t}dS-\frac{1}{2}\int_{\partial\calB_{1}}(\phi-\rphi)\left(\left(n-\frac{1}{R}\zeta\right)\cdot\zeta_{t}\right)dS.
\end{split}
\end{align*}
To understand the contribution of the main term we let $\bfP_\ell$ denote the projection on the $\ell$th eigenspace, $\calY_\ell$, of $\slashed{\Delta}$ and use \eqref{potential formula} to write

\begin{align*}
\begin{split}
\bfD(\phi-\rphi)=-h_t-\frac{1}{R}hh_t+(R^{-1}\zeta-n)\cdot u + (\bfD-\nabla_n)(\phi-\rphi).  
\end{split}
\end{align*}
Therefore,

\begin{align}\label{potential in use}
\begin{split}
\phi-\rphi=&\frac{2}{R}\bfS(I-\bfK)^{-1}(Rh_{t}+hh_t+(Rn-\zeta)\cdot u-R(\bfD-\nabla_n)(\phi-\rphi))\\
=&-\frac{1}{2}\bfP_2(Rh_{t}+hh_t+(Rn-\zeta)\cdot u-R(\bfD-\nabla_n)(\phi-\rphi))\\
&+\frac{2}{R}\bfS(I-\bfK)^{-1}(I-\bfP_2)(Rh_{t}+hh_t+(Rn-\zeta)\cdot u-R(\bfD-\nabla_n)(\phi-\rphi))\\
=&-\frac{R}{2}\partial_th_2+II,
\end{split}
\end{align}
where

\begin{align*}
\begin{split}
II:=&-\frac{1}{2}\bfP_2(hh_t+(Rn-\zeta)\cdot u-R(\bfD-\nabla_n)(\phi-\rphi))\\
&+\frac{2}{R}\bfS(I-\bfK)^{-1}(I-\bfP_2)(Rh_{t}+hh_t+(Rn-\zeta)\cdot u-R(\bfD-\nabla_n)(\phi-\rphi)).
\end{split}
\end{align*}
Combining with \eqref{kinetic difference 1} we get

\begin{align*}
\begin{split}
 |\calB_1|\scEtidal^0=\frac{R}{4}\int_{S_R}|\partial_t h_2 |^2dS+I-\frac{1}{2}\int_{S_R}II\times h_t dS.
\end{split}
\end{align*}
It follows from \props{prop: h ell}, \ref{prop: L2 C1}, \ref{prop: L2 C2}, \ref{prop: a priori}, \cor{cor: Dk H com}, \lem{lem: operators}, and \eqref{potential formula}, that for some universal constant $C_0>0$,

\begin{align}\label{eq: scEtidal0 estimate}
\begin{split}
 \scEtidal^0(t)\approx R^{-2}\|\partial_th\|_{L^2(S_R)}^2\geq C_0\eta^8(t)|v_1(t)|^2.
\end{split}
\end{align}
Here the last estimate is valid for $r\in(r_0,10r_0)$. For $\scEtidal^1$ we argue differently. First note that since $\calB_1$ is a ball of radius $R$ at $t=T_0$,

\begin{align}\label{eq: scEtidal1 identity1}
\scEtidal^1(t)=\int_{T_0}^t\frac{d}{ds}\left(\frac{1}{2|\calB_1|}\int_{\calB_1(s)}\bfpsi_1(s,\bfx)d\bfx\right)ds.
\end{align}
Recall that acceleration of $\calB_1$ due to the self-gravitational force from $\calB_1$ is zero, that is,

\begin{align*}
\int_{\calB_1}\nabla\bfpsi_1d\bfx=0.
\end{align*}
Using this observation, \eqref{time derivative ET 4}, \eqref{eq: psi1},and the curl and divergence free properties of $\bfv$ we compute

\begin{align*}
\frac{d}{dt}\left(\frac{1}{2|\calB_1|}\int_{\calB_1}\bfpsi_1d\bfx\right)=&\frac{1}{|\calB_1|}\int_{\calB_1}\bfv\cdot\nabla\bfpsi_1d\bfx=\frac{1}{|\calB_1|}\int_{\calB_1}(\bfv-\bfx_1')\cdot\nabla\bfpsi_1d\bfx=\frac{1}{|\calB_1|}\int_{\calB_1}\nabla\cdot(\bfpsi_1(\bfv-\bfx_1'))d\bfx\\
=&\frac{1}{|\calB_1|}\int_{\partial\calB_1}\zeta_t\cdot n\psi_1dS=\frac{1}{|\calB_1|}\int_{\calB_1}\zeta_t\cdot n\left(\psi_1+\frac{GM}{R}\right)dS\\
=&\frac{R^{-1}}{2|\calB_1|}\int_{\calB_1}\partial_t\tilh \left(\psi_1+\frac{GM}{R}\right)dS+\frac{1}{|\calB_1|}\int_{\calB_1}\zeta_t\cdot(n-R^{-1}\zeta)\left(\psi_1+\frac{GM}{R}\right)dS\\
=&\frac{1}{|\calB_1|}\int_{\calB_1}\partial_th\left(\psi_1+\frac{GM}{R}\right)dS+\frac{R^{-1}}{|\calB_1|}\int_{\calB_1}h\,\partial_th\left(\psi_1+\frac{GM}{R}\right)dS\\
&+\frac{1}{|\calB_1|}\int_{\calB_1}\zeta_t\cdot(n-R^{-1}\zeta)\left(\psi_1+\frac{GM}{R}\right)dS\\
=&\frac{\txtg}{|\calB_1|}\int_{\calB_1}\partial_th(I-3\Kone)h\,dS+III,
\end{align*}
where

\begin{align*}
III:=&\frac{R^{-1}}{|\calB_1|}\int_{\calB_1}h\,\partial_th\left(\psi_1+\frac{GM}{R}\right)dS+\frac{1}{|\calB_1|}\int_{\calB_1}\zeta_t\cdot(n-R^{-1}\zeta)\left(\psi_1+\frac{GM}{R}\right)dS\\
&+\frac{1}{|\calB_1|}\int_{\calB_1}\frakR_3\partial_th\,dS.
\end{align*}
Letting

\begin{align*}
IV:=\frac{\txtg}{|\calB_1|}\int_{\calB_1}\partial_th(I-3\Kone)h\,dS-\frac{\txtg}{|\calB_1|}\int_{S_R}\partial_th_2(I-3\bfK)h_2\,dS,
\end{align*}
and using \prop{prop: K S2} we arrive at

\begin{align*}
\frac{d}{dt}\left(\frac{1}{2|\calB_1|}\int_{\calB_1}\bfpsi_1d\bfx\right)=\frac{\txtg}{5|\calB_1|}\partial_t\|h_2\|_{L^2(S_R)}^2+III+IV.
\end{align*}
Inserting this into \eqref{eq: scEtidal1 identity1} and applying \prop{prop: h ell}, it follows as in the proof of \eqref{eq: scEtidal0 estimate} that for some universal constant $C_0>0$

\begin{align*}
\scEtidal^1(t)\approx\frac{GM}{R^5}\|h(t)\|_{L^2(S_R)}^2\geq C_0\frac{GM}{R}\eta^6.
\end{align*}
This completes the proof of \eqref{eq: tidal waves}. On the other hand, using the notation introduced in Subsection~\ref{subsec: x_1 J},

\begin{align*}
\begin{split}
 \tilde{\scE}=\lim_{t\to T_0}\tilde{\scE}= \frac{1}{2}c_0^2\frac{GM}{ R}\beta^{-\frac{12}{7}}.
\end{split}
\end{align*}
In view of \prop{prop: r1' v1}, if $t^\ast$ is such that $r_1(t^\ast)=3c_0^2R\beta^{\frac{2}{7}}$, and if $c_0$ is sufficiently small, then $\widetilde{\scEorbital}(t^\ast)=\tilde{\scE}-\widetilde{\scEtidal}(t^\ast)\leq -c_1\frac{GM}{R}\eta^6(t^\ast)$ for some positive constant $c_1>0$, completing the proof of the theorem.
\end{proof}



\appendix


\section{Clifford Analysis and Layered Potentials}\label{app: Clifford}

\subsection{The Clifford Algebra $C\ell_{0,2}(\bbR)$}

In this appendix we recall basic algebraic properties of the Clifford algebra $\calC:=C\ell_{0,2}(\bbR)$. We refer the reader to \cite{GM} for  a much more complete treatment, and to \cite{Wu99, Wu11} and the references therein for earlier use of Clifford analysis in study of incompressible free boundary problems. The algebra $\calC$ is the associative algebra generated by the four basis elements $\{1,e_1,e_2,e_3\}$ over $\bbR$, satisfying the relations

\begin{align}\label{eq: Clifford relations}
\begin{split}
 1e_i=e_i, \quad e_ie_j=-e_je_i,\quad i\neq j,~i,j=1,2,3, \quad e_1e_2=e_3,\quad e_i^2=-1,~i=1,2,3.   
\end{split}
\end{align}
Every element $c\in\calC$ has a unique representation $\xi=\xi^0+\sum_{i=1}^3\xi^ie_i$. Sometimes we also write $e_0$ for $1$ so that  $\xi=\sum_{i=0}^3\xi^ie_i$. We call $\xi^0$ the real part of $\xi$ and denote it by $\Re \xi$, and $\sum_{i=1}^3\xi^ie_i$ the vector part of $\xi$ and denote it by $\vect \xi$. An element $\xi\in\calC$ is referred to as a Clifford vector. If $\Re\xi=0$ we say that $\xi$ is a vector (or pure vector) and if $\vect\xi=0$ we say that $\xi$ is a real number or a scalar (or pure scalar). We identify the real numbers $\bbR$ with Clifford vectors using the relation $a\mapsto a1$, and identify vectors in $\bbR^3$ with Clifford numbers using the relation $\bfv^i\bfe_i\mapsto\bfv^ie_i$. Here $\{\bfe_1,\bfe_2,\bfe_3\}$ is the standard basis for $\bbR^3$. The dot product of two Clifford vectors $\eta$ and $\xi$ is defined as

\begin{align*}
\begin{split}
 \xi\cdot\eta:=\Re\xi \Re\eta+ \vect\xi\cdot\vect\eta=\sum_{i=0}^3\xi^i\eta^i. 
\end{split}
\end{align*}
If $\xi$ and $\eta$ are vectors, then, in view of \eqref{eq: Clifford relations}, the usual (or Clifford) product of $\xi$ and $\eta$ is

\begin{align*}
\begin{split}
 \xi\eta=-\xi\cdot\eta+\xi\cross\eta.
\end{split}
\end{align*}

\subsection{Layered Potentials and the Hilbert Transform}

The Clifford differentiation operator $\calD$, acting on Clifford algebra-valued functions, is defines as

\begin{align*}
\begin{split}
 \calD=\sum_{i=1}^3\partial_{x^i}e_i,
\end{split}
\end{align*}
where $x=(x^1,x^2,x^3)$ are the usual rectangular coordinates in $\bbR^3$. If $\bff$ is a Clifford algebra-valued function we denote the real part of $\bff$ by $\circbff:=\Re \bff$, and the vector part of $\bff$ by $\vecbff:=\vect \bff=\sum_{i=1}^3\bff^ie_i$. The Clifford derivative of $\bff$ then satisfies

\begin{align*}
\begin{split}
 \calD\bff=\nabla\circbff+\nabla\cross\vecbff -\nabla\cdot \vecbff.
\end{split}
\end{align*}
Moreover, by direct computation we see that $\calD^2\bff=-\Delta\bff=-\sum_{i=0}^3\Delta\bff^ie_i$.

Let $\Omega$ be a $C^2$, bounded, and simply-connected domain in $\bbR^3$ with boundary $\Sigma$ and complement $\Omega^c$. We denote the exterior normal vector to $\Sigma$ by $n$ and the induced volume form on $\Sigma$ by $dS$. We say that a function $\bff$ defined on $\Omega$ is Clifford analytic, if $\calD\bff=0$. Note that if $\bff$ is vector-valued, this is equivalent to $\bff$ being curl and divergence free. In general the computation above shows that the components of a Clifford analytic function are harmonic. The following simple observations are used many times in this work. For any function $\bff$ defined in $\overline{\Omega}$,

\begin{align*}
\begin{split}
 n\calD \bff=-n\cdot\nabla\bff+n\cross\nabla\bff. 
\end{split}
\end{align*}
Since the components of a Clifford analytic function are harmonic, it follows that if $f$ is the restriction to $\Sigma$ of a Clifford analytic function $\bff$ in $\Omega$, then

\begin{align}\label{eq: nabla cross dot n}
\begin{split}
 \nabla_nf=n\cross\nabla f, 
\end{split}
\end{align} 
where $\nabla_n$ denotes the Dirichlet-Neumann map of $\Omega$. Moreover, writing $f=\vecf+\circf$, with $\circf=\Re f=\circbff\vert_{\Sigma}$ and $\vecf=\vect f=\vecbff\vert_\Sigma$, since $\nabla\cross\vecbff=\nabla \circbff$ in $\Omega$, we have (the same identity holds if $\Omega$ is unbounded but $\bff$ decays at infinity)

\begin{align}\label{eq: n cross nabla clifford}
\begin{split}
 \frac{1}{|N|}(\xi_\beta \cdot \vecf_\alpha-\xi_\alpha\cdot\vecf_\beta)=\frac{1}{|N|}\xi_\alpha^i\xi_\beta^j(\partial_i{\vecf}^j-\partial_j{\vecf}^i)=n\cdot\nabla \circf = \nabla_n\circf.
\end{split}
\end{align}
In particular, when $f$ is a vector-valued Clifford analytic function

\begin{align}\label{eq: n cross nabla clifford vec}
\begin{split}
 n\cross\nabla f= \frac{1}{|N|}(\xi_\beta\cross f_\alpha-\xi_\alpha\cross f_\beta). 
\end{split}
\end{align}

We next turn to to the definition of the Hilbert transform and layered potentials. Let $\Gamma(x):=-\frac{1}{4\pi|x|}$, $x\in\bbR^3$,  be the fundamental solution of the Laplace equation in $\bbR^3$, and let

\begin{align*}
\begin{split}
 K(x):=-2\calD\Gamma(x)=-\frac{1}{2\pi}\frac{x}{|x|^3},\qquad x\in\bbR^3.
\end{split}
\end{align*}
For a Clifford algebra-valued function $f$ defined on $\Sigma$ we define the Hilbert transform of $f$ as

\begin{align*}
\begin{split}
 H_\Sigma f(\xi)=\pv\int_\Sigma K(\xi'-\xi)n(\xi') f(\xi')dS(\xi'),\qquad \xi\in\Sigma.
\end{split}
\end{align*}
We often use the shorthand $\int_{\Sigma}Kn'f'dS'$ for the integral above. Similarly, the Cauchy integral of $f$ is defined for interior points $\eta\in\Omega$ as

\begin{align*}
\begin{split}
 C_\Sigma f(\eta):= \frac{1}{2}\int_\Sigma K(\xi'-\eta)n(\xi')f(\xi')dS(\xi'),\qquad \eta\in\Omega.
\end{split}
\end{align*}
The Hilbert transform satisfies $H_{\Sigma}^2=\mathrm{Id}$ and $H_\Sigma 1=1$. The following theorem summarizes the relation between the Hilbert transform and Clifford analyticity.


\begin{theorem}\label{thm: Cuachy}[See \cite{GM} Chapter 2 and \cite{Wu99} Remark 1]
If $f$ is the restriction to $\Sigma$ of a Clifford analytic function $\bff$ defined in a neighborhood of $\Omega$, then $\bff(\eta)=C_\Sigma f(\eta)$ for every $\eta\in\Omega$. Conversely, if $f\in C^1(\Sigma,\calC)$, then $C_\Sigma f$ is Clifford analytic in $\Omega$ and continuous on $\barOmega$. Moreover, if $f\in C^1(\Sigma,\calC)$, then 

\begin{align*}
\begin{split}
 C_\Sigma f(\xi)=\frac{1}{2}f(\xi)+\frac{1}{2}H_\Sigma f(\xi) ,\qquad \xi\in\Sigma.
\end{split}
\end{align*}
Finally, $f$ is the restriction to $\Sigma$ of a Clifford analytic function $\bff\in C^0(\barOmega,\calC)$, if and only if $f=H_\Sigma f$. Similarly, $f$ is the restriction to $\Sigma$ of a Clifford analytic function $\bff\in C^0(\overline{\Omega^c},\calC)$ with $\lim_{|x|\to\infty}\bff(x)=0$, if and only if $f=-H_\Sigma f$.
\end{theorem}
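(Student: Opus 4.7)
The plan is to follow the classical Clifford-Cauchy template, which mirrors the complex analysis proof step by step, with the kernel $K(x) = -2\calD\Gamma(x) = -\frac{1}{2\pi}\frac{x}{|x|^3}$ playing the role of $\frac{1}{2\pi i z}$. Two essential facts about this kernel underlie everything. First, it is a two-sided fundamental solution of the Dirac operator: distributionally $\calD_y K(y-x) = 2\delta_{y=x}$, and pointwise $\calD K=0$ off the diagonal because $\calD^2 = -\Delta$ and $\Gamma$ is harmonic; a direct componentwise computation using $v^2=-|v|^2$ for a vector $v$ also yields $K\calD=0$ away from the singularity. Second, the Clifford–Stokes identity, obtained by applying the divergence theorem componentwise, reads
\[
\int_\Omega \bigl[(\bfg \calD)\bff + \bfg(\calD\bff)\bigr] d\xi = \int_\Sigma \bfg\, n\, \bff\, dS
\]
for $C^1$ Clifford-valued $\bfg,\bff$ on $\overline{\Omega}$.

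With these in hand, I would first prove the Cauchy reproducing property and its converse. Fix $\eta\in\Omega$ and small $\epsilon>0$, and apply the Clifford–Stokes identity on $\Omega_\epsilon := \Omega\setminus B_\epsilon(\eta)$ with $\bfg(\xi')=K(\xi'-\eta)$. Both volume terms vanish ($\calD\bff=0$ by hypothesis, $K\calD=0$ off the diagonal), leaving a boundary integral over $\Sigma\cup\partial B_\epsilon(\eta)$. On the small sphere the outward normal of $\Omega_\epsilon$ is $n(\xi')=-(\xi'-\eta)/\epsilon$, so $K(\xi'-\eta)n(\xi') = \frac{1}{2\pi\epsilon^4}(\xi'-\eta)(\xi'-\eta) = -\frac{1}{2\pi\epsilon^2}$, and the small-sphere contribution converges to $-2\bff(\eta)$ as $\epsilon\to 0$, giving $\bff(\eta) = C_\Sigma f(\eta)$. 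For the converse, $\calD_\eta K(\xi'-\eta)=0$ for $\xi'\neq\eta$, so differentiation under the integral sign yields $\calD C_\Sigma f = 0$ in $\Omega$; continuity up to $\overline{\Omega}$ is standard for layer potentials with $C^1$ density on a $C^2$ surface.

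The central analytic step is the Plemelj–Sokhotski jump formula. For $\xi\in\Sigma$ approach $\eta_t := \xi - tn(\xi)\in\Omega$, $t\downarrow 0$, and split
\[
C_\Sigma f(\eta_t) = \tfrac12\int_\Sigma K(\xi'-\eta_t)n(\xi')\bigl[f(\xi')-f(\xi)\bigr]dS(\xi') + \tfrac{f(\xi)}{2}\int_\Sigma K(\xi'-\eta_t)n(\xi')dS(\xi').
\]
The second integral equals $2$ by applying the already-proved reproducing formula to the constant function $\bff\equiv 1$. The first integrand has kernel of size $|\xi'-\xi|^{-2}$ against a $C^1$ increment of size $|\xi'-\xi|$, leaving an $L^1$ singularity of size $|\xi'-\xi|^{-1}$ on the two-dimensional surface $\Sigma$; dominated convergence and cancellation of the odd part of $K(\xi'-\xi)n(\xi')$ under tangential reflection give the principal value $\tfrac12 H_\Sigma f(\xi) - \tfrac12 f(\xi)$, whose sum with the second integral's $f(\xi)$ yields $\lim_{t\downarrow 0} C_\Sigma f(\eta_t) = \tfrac12 (f+H_\Sigma f)$. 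The two characterizations follow immediately: if $\bff$ extends Clifford analytically to a neighborhood of $\overline{\Omega}$ then the reproducing formula gives $f = C_\Sigma f|_\Sigma = \tfrac12(I+H_\Sigma)f$, which rearranges to $f = H_\Sigma f$; conversely, assuming $f=H_\Sigma f$, the function $\bff := C_\Sigma f$ is Clifford analytic in $\Omega$ by the converse direction and has boundary value $\tfrac12(I+H_\Sigma)f = f$ by the jump. For the exterior statement I would apply Clifford–Stokes on $(\Omega^c\cap B_R)\setminus B_\epsilon(\eta)$, use $|\bff|\to 0$ and $|K|=O(|x|^{-2})$ to kill the $\partial B_R$ term as $R\to\infty$, and observe that the orientation of $\Sigma$ as boundary of $\Omega^c$ is reversed; this produces the opposite sign and the condition $f=-H_\Sigma f$.

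The main obstacle I anticipate is the careful justification of the Plemelj–Sokhotski limit: the $C^2$ regularity of $\Sigma$ is essential to ensure $n(\xi')-n(\xi)=O(|\xi'-\xi|)$ so that the tangential odd part of $K(\xi'-\xi)n(\xi')$ cancels and the principal value exists, while the $C^1$ regularity of $f$ supplies the extra factor of $|\xi'-\xi|$ needed to render the subtracted integrand absolutely convergent. Once the principal value is controlled uniformly in $t$ near $0$, the remaining steps are algebraic manipulations of the identity $(I+H_\Sigma)(I-H_\Sigma) = 0$ (equivalently $H_\Sigma^2 = I$), which itself follows from applying the jump formula from both sides of $\Sigma$.
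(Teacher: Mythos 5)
The paper cites this theorem to \cite{GM} and \cite{Wu99} without giving a proof, so there is no in-paper argument to compare against; your outline is the standard Clifford analogue of the Cauchy--Plemelj machinery and is correct as an approach. Two small points would need attention in a full write-up. First, in the splitting step you pull $f(\xi)$ to the left and write $\tfrac{f(\xi)}{2}\int_\Sigma K\,n\,dS$; since the Clifford product is noncommutative the constant must stay on the right, $\tfrac12\bigl(\int_\Sigma K\,n\,dS\bigr)f(\xi)$. This is harmless here only because the integral equals the scalar $2$ and hence commutes, but as written the intermediate identity is not quite correct. Second, converting the absolutely convergent term $\tfrac12\int_\Sigma K\,n\,(f'-f)\,dS'$ into $\tfrac12 H_\Sigma f-\tfrac12 f$ secretly uses $\pv\int_\Sigma K(\xi'-\xi)n(\xi')\,dS'=1$, i.e.\ $H_\Sigma 1=1$ on $\Sigma$; your reproducing-formula argument only gives the value $2$ at interior points, and the ``cancellation under tangential reflection'' argument shows existence of the principal value but not its numerical value. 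A clean way to get $H_\Sigma 1=1$ is to apply Clifford--Stokes with $\bff\equiv 1$ and $\eta\in\Omega^c$ (no singularity) to conclude $C_\Sigma 1\equiv 0$ in $\Omega^c$, then run the same jump calculation from the exterior side. Finally, the equivalences in the theorem are stated for $\bff\in C^0(\barOmega,\calC)$ rather than for $\bff$ extending analytically past $\Sigma$; to cover this weaker hypothesis, apply the Cauchy formula on slightly shrunken interior domains and pass to the limit using continuity of $\bff$ up to $\Sigma$. None of these affects the soundness of the method; they are exactly the details that \cite{GM} handles.
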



The Hilbert transform is also related to the classical layered potentials for the Laplace operator on $\Omega$. Recall that the double-layered potential, $K_\Sigma$, and the single-layered potential, $S_\Sigma$, are defined as

\begin{align}\label{eq: K def}
\begin{split}
 &K_\Sigma f(\xi)=-\pv\int_\Sigma K(\xi'-\xi) \cdot n(\xi') f(\xi')dS(\xi'),\qquad \xi\in\Sigma,\\
 &S_\Sigma f(\xi)=-\frac{1}{4\pi}\int_{\Sigma} \frac{f(\xi')}{|\xi'-\xi|}dS(\xi'),~\qquad\qquad\qquad\quad\quad \xi\in\bbR^3,
\end{split}
\end{align}
where $f$ is a real-valued function defined on $\Sigma$. By direct inspection, we have

\begin{align*}
\begin{split}
 K_\Sigma f = \Re H_\Sigma f\quad\mand\quad \calD S_{\Sigma} f = -C_\Sigma (nf). 
\end{split}
\end{align*}
We define the formal $L^2(\Sigma,dS)$ adjoint of $H_\Sigma$ by $H^\ast_\Sigma:=nH_\Sigma n$. Then the $L^2(\Sigma,dS)$ adjoint of $K_\Sigma$ satisfies

\begin{align}\label{eq: K* def}
\begin{split}
 K^\ast_\Sigma f(\xi)=\pv\int_\Sigma n(\xi)\cdot K(\xi'-\xi)f(\xi')dS(\xi')=\Re H^\ast_\Sigma f(\xi),\qquad \xi\in\Sigma. 
\end{split}
\end{align}
As shown in \cite{Ke1,Ke2,Ver}, the operator $I+ K_\Sigma:L^2(\Sigma,dS)\to L^2(\Sigma,dS)$ and its adjoint $I+ K^\ast_\Sigma:L^2(\Sigma,dS)\to L^2(\Sigma,dS)$ are bounded and invertible. 


\begin{theorem}\label{thm: K inverse} For any bounded domain $\Omega\subseteq \bbR^3$ with Lipschitz boundary $\Sigma$, the operators $I+ K_\Sigma,~I+ K^\ast_\Sigma:L^2(\Sigma,dS)\to L^2(\Sigma,dS)$ are bounded and invertible. Similarly, with $L^2_0(\Sigma,dS)$ denoting the space of $L^2$ functions with zero average on $\Sigma$, the operators $I- K_\Sigma,~I- K^\ast_\Sigma:L^2_0(\Sigma,dS)\to L^2_0(\Sigma,dS)$ are bounded and invertible. Moreover the operator bounds $\|(I\pm K^\ast_\Sigma)^{-1}\|_{2,2}$ and $\|(I\pm K_\Sigma)^{-1}\|_{2,2}$ depend only on the Lipschitz constant for $\Sigma$.

\end{theorem}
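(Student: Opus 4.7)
The plan is to prove Theorem~\ref{thm: K inverse} along the lines of Verchota's classical argument, which reduces the invertibility of $I\pm K_\Sigma$ and $I\pm K^\ast_\Sigma$ to the $L^2$ boundedness of the Cauchy integral on Lipschitz surfaces and a Rellich-type identity. The result is duality-symmetric, so it suffices to treat $I\pm K^\ast_\Sigma$; the statements for $I\pm K_\Sigma$ then follow by taking adjoints.

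First I would record the $L^2(\Sigma,dS)$ boundedness of $K_\Sigma$, $K^\ast_\Sigma$, and the Hilbert transform $H_\Sigma$. On a Lipschitz surface $\Sigma$ these are singular integral operators whose kernels satisfy the Calder\'on--Zygmund estimates, and their $L^2$ boundedness is the Coifman--McIntosh--Meyer theorem; the operator bounds depend only on the Lipschitz constant of $\Sigma$. Next I would recall the jump relations for the single-layer potential: if $u(x)=S_\Sigma f(x)$ for $x\in\mathbb{R}^3\setminus\Sigma$, then $u$ is harmonic in $\Omega$ and in $\Omega^c$, is continuous across $\Sigma$, and has non-tangential normal derivatives
\begin{equation*}
\partial_n u\big|_{\pm}(\xi)=\Bigl(\pm\tfrac{1}{2}I+K^\ast_\Sigma\Bigr)f(\xi),\qquad\xi\in\Sigma,
\end{equation*}
with $+$ denoting the limit from $\Omega^c$ and $-$ from $\Omega$. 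These jump formulas, which also give a corresponding identity for $K_\Sigma$ via the double-layer potential $\int_\Sigma K(\xi'-\xi)\cdot n(\xi')\,g(\xi')\,dS(\xi')$, are the bridge between the operators in the statement and harmonic analysis on $\Omega$ and $\Omega^c$.

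The heart of the argument is the Rellich identity. Choose a $C^1$ vector field $\mu$ defined in a neighbourhood of $\Sigma$ with $\mu\cdot n\ge c>0$ on $\Sigma$ (which exists for Lipschitz $\Sigma$); $\mu(x)=x-x_0$ for an interior point $x_0$ is a standard choice. For a harmonic function $u$ in $\Omega$ with sufficient decay, integration by parts on $\mathrm{div}(\mu\,|\nabla u|^2-2(\mu\cdot\nabla u)\nabla u)$ yields
\begin{equation*}
\int_\Sigma(\mu\cdot n)|\nabla u|^2\,dS=2\int_\Sigma(\mu\cdot\nabla u)\,\partial_n u\,dS+\int_\Omega\bigl(\mathrm{div}\,\mu\,|\nabla u|^2-2(\nabla\mu)(\nabla u,\nabla u)\bigr)\,d\bfx.
\end{equation*}
Splitting $\nabla u$ into its tangential and normal components on $\Sigma$ and using $\mu\cdot n\ge c$, this identity together with the analogous one on $\Omega^c$ produces the two-sided comparison $\|\partial_n u|_\pm\|_{L^2(\Sigma)}\approx \|\nabla_T u\|_{L^2(\Sigma)}$, where $\nabla_T$ is the tangential gradient, with constants depending only on the Lipschitz character of $\Sigma$. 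Since $u$ is continuous across $\Sigma$, $\nabla_T u$ is the same from both sides, so subtracting the $\pm$ estimates gives
\begin{equation*}
\bigl\|\bigl(\tfrac{1}{2}I+K^\ast_\Sigma\bigr)f\bigr\|_{L^2(\Sigma)}\approx\bigl\|\bigl(-\tfrac{1}{2}I+K^\ast_\Sigma\bigr)f\bigr\|_{L^2(\Sigma)}\quad\text{for all }f\in L^2(\Sigma).
\end{equation*}
Combining this equivalence with the algebraic identity $(\tfrac{1}{2}I+K^\ast_\Sigma)-(-\tfrac{1}{2}I+K^\ast_\Sigma)=I$ and the triangle inequality yields $\|f\|_{L^2}\lesssim \|(\pm\tfrac{1}{2}I+K^\ast_\Sigma)f\|_{L^2}$ on $L^2_0(\Sigma,dS)$ for the $(-)$ case (where one needs the mean-zero condition because constants are in the kernel of $-\tfrac{1}{2}I+K^\ast_\Sigma$) and on all of $L^2(\Sigma,dS)$ for the $(+)$ case. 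This is the main obstacle, since the Rellich identity must be justified for Lipschitz $\Sigma$ via non-tangential maximal function estimates and non-tangential convergence, following Verchota.

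The lower bounds just established give injectivity and closed range. For surjectivity I would invoke the method of continuity: for $t\in[0,1]$ consider the family $K_t^\ast$ obtained by deforming $\Sigma$ through a homotopy of Lipschitz surfaces to a sphere (for which invertibility is classical, as $K^\ast_{S_R}$ has a computable spectrum in terms of spherical harmonics). The Rellich estimate is uniform along the homotopy, so $\tfrac{1}{2}I+K^\ast_t$ has uniformly bounded inverse; standard perturbation arguments then propagate invertibility from $t=0$ to $t=1$. The same reasoning, restricted to $L^2_0$, handles $-\tfrac{1}{2}I+K^\ast_\Sigma$. Adjoining gives the results for $I\pm K_\Sigma$, with all constants depending only on the Lipschitz constant of $\Sigma$. (After renormalisation by a factor of $2$ we obtain the statement in the form quoted in the theorem.)
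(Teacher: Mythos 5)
The paper does not give its own proof of Theorem~\ref{thm: K inverse}; it simply cites Kenig and Verchota for this result, which belongs to the standard theory of layer potentials on Lipschitz domains. Your sketch correctly identifies Verchota's route: Coifman--McIntosh--Meyer $L^2$ boundedness of the Cauchy integral and its scalar descendants, the jump relations for layer potentials, the Rellich identity producing the two-sided comparison $\|\partial_n u|_+\|_{L^2(\Sigma)}\approx\|\partial_n u|_-\|_{L^2(\Sigma)}$ via the common tangential gradient, the triangle-inequality consequence giving lower bounds, and a continuity/perturbation argument to pass from the sphere to a general Lipschitz surface while keeping the constants under control. This is indeed the argument the cited references carry out.

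Two small inaccuracies are worth flagging, though they do not affect the overall structure of the sketch. First, the assertion that \emph{constants} lie in the kernel of $-\tfrac{1}{2}I+K^\ast_\Sigma$ is only true for the sphere. For a general Lipschitz $\Sigma$, constants are in the kernel of the double-layer operator $-\tfrac{1}{2}I+K_\Sigma$ (equivalently, $I-K_\Sigma$ in the paper's normalisation, since the interior Gauss integral equals $2\pi$ a.e.), whereas the kernel of $-\tfrac{1}{2}I+K^\ast_\Sigma$ is spanned by the one-dimensional equilibrium density $w_0$, which satisfies $\int_\Sigma w_0\,dS\neq0$ but is not constant. The $L^2_0$ restriction in the theorem for $I-K^\ast_\Sigma$ is therefore justified by $w_0\notin L^2_0$ together with a Fredholm index count, not by constants being in the kernel. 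Second, the Rellich equivalence $\|(\tfrac{1}{2}I+K^\ast_\Sigma)f\|\approx\|(-\tfrac{1}{2}I+K^\ast_\Sigma)f\|$ cannot hold on all of $L^2(\Sigma)$ (the right-hand side vanishes on the kernel); the exterior Rellich identity requires the single-layer potential to decay like $|x|^{-2}$ at infinity, which in turn needs $\int_\Sigma f\,dS=0$. So the equivalence, and hence the triangle-inequality lower bound, is established directly only on $L^2_0$, and the invertibility of $\tfrac{1}{2}I+K^\ast_\Sigma$ on the full space $L^2(\Sigma)$ requires the extra observation that its range contains a complement of $(\tfrac{1}{2}I+K^\ast_\Sigma)L^2_0$. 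These are the points on which Verchota's argument spends its care, and a complete write-up would need to attend to them.
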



The double-layered potential $K_\Sigma$ is related to the Dirichlet problem on $\Omega$ in the following way. If $f\in L^2(\Sigma,dS)$ then the unique solution to the Dirichlet problem

\begin{align*}
\begin{split}
 \Delta u=0\quad in ~\Omega, \qquad u\vert_\Sigma=f, 
\end{split}
\end{align*}
is given by

\begin{align*}
\begin{split}
 u(\eta)=\frac{1}{2\pi}\int_\Sigma \frac{(\xi-\eta)\cdot n(\xi)}{|\xi-\eta|^3}(I+K_\Sigma)^{-1}f(\xi)dS(\xi),\qquad \eta\in\Omega. 
\end{split}
\end{align*}
 See \cite{Ke1}, \cite{Ke2}, or \cite{Ver} for a proof of this fact. Similarly, if  $f\in L^2(\Sigma,dS)$ then the unique solution to the Neumann problem

\begin{align*}
\begin{split}
 \Delta u=0\quad in ~\Omega, \qquad \nabla_n u\vert_\Sigma=f, 
\end{split}
\end{align*}
is given by

\begin{align}\label{eq: K Neumann}
\begin{split}
 u(\eta)=\frac{1}{2\pi}\int_\Sigma \frac{1}{|\xi-\eta|}(I-K^\ast_\Sigma)^{-1}f(\xi)dS(\xi)=-2S_\Sigma(I-K_\Sigma^\ast)^{-1}f(\eta),\qquad \eta\in\Omega. 
\end{split}
\end{align}
See  \cite{Ke1}, \cite{Ke2}, or \cite{Ver} for a proof. Note that since $f=\nabla_nu$, $\int_{\Sigma}fdS=\int_\Omega\Delta udx=0$, so $f$ belongs to the domain of $(I-K_\Sigma^\ast)^{-1}$. In the following lemma we also provide an expression for the Dirichlet-Neumann map $\nabla_n$ of $\Sigma$ in terms of layered potentials.


\begin{lemma}\label{lem: DN K} For any differentiable function $f$ on $\Sigma$, the Dirichlet-Neumann map, $\nabla_n$, satisfies

\begin{align*}
\nabla_n f=(I+K_{\Sigma}^\ast)^{-1}\,\pv\int_{\Sigma}(n\cross K)\cdot (n'\cross\nabla) f' dS'.
\end{align*}
Moreover, if $\bff$ denotes the harmonic extension of $f$ to the interior, $\Omega$, and $\calD f$ the restriction of $\calD \bff$ to $\Sigma$, then

\begin{align*}
\begin{split}
 \calD f=(I+H_\Sigma)(n\,(I-K_\Sigma^\ast)^{-1}\nabla_nf ).
\end{split}
\end{align*}

\end{lemma}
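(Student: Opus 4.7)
The plan is to deduce both identities from the Cauchy integral representation for Clifford analytic functions (Theorem~\ref{thm: Cuachy}), applied to two different objects. The second identity emerges from the harmonic extension $\bff$ of $f$ via the Neumann layer-potential representation; the first identity emerges instead from $\calD\bff$, which is Clifford analytic in $\Omega$ precisely because $\calD^2\bff = -\Delta\bff = 0$. The conceptual point is that it is $\calD\bff$, and not $\bff$ itself, that carries the Clifford analytic structure useful for recovering $\nabla_n f$ from tangential data.

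For the second identity I would argue as follows. Since $\bff$ is the harmonic extension of $f$ with Neumann data $\nabla_n f$ on $\Sigma$, formula~\eqref{eq: K Neumann} gives $\bff = -2\,S_\Sigma(I-K_\Sigma^\ast)^{-1}\nabla_n f$ in $\Omega$. Setting $g := (I-K_\Sigma^\ast)^{-1}\nabla_n f$ and using the identity $\calD S_\Sigma g = -C_\Sigma(ng)$ recorded in this appendix, one gets $\calD\bff = 2\,C_\Sigma(ng)$ in $\Omega$. Taking the interior trace and invoking the Plemelj-type formula $C_\Sigma = \frac{1}{2}(I+H_\Sigma)$ on $\Sigma$ (Theorem~\ref{thm: Cuachy}) then yields $\calD f = (I+H_\Sigma)\bigl(n(I-K_\Sigma^\ast)^{-1}\nabla_n f\bigr)$, which is the second identity.

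For the first identity, set $\bfw := \calD\bff$ and $w := \bfw|_\Sigma = \nabla\bff|_\Sigma$, a pure vector on $\Sigma$. Theorem~\ref{thm: Cuachy} gives $w = H_\Sigma w$, i.e.
\begin{align*}
w(\xi) = \pv\!\int_\Sigma K(\xi'-\xi)\, n(\xi')\, w(\xi')\, dS(\xi').
\end{align*}
The key algebraic step is to expand the Clifford product $n(\xi')w(\xi')$ using $ab = -a\cdot b + a\times b$ for Clifford vectors, together with $n(\xi')\cdot w(\xi') = \nabla_n f(\xi')$ and $n(\xi')\times w(\xi') = n(\xi')\times \nabla f(\xi')$ (the normal component of $w'$ drops out of the cross product, so this last quantity depends only on the boundary data). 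This gives $n(\xi')w(\xi') = -\nabla_n f(\xi') + n(\xi')\times \nabla f(\xi')$. Substituting into the Cauchy formula and extracting the vector part (the scalar part vanishes automatically since $w$ is a pure vector) yields
\begin{align*}
w(\xi) = -\pv\!\int_\Sigma K(\xi'-\xi)\,\nabla_n f(\xi')\, dS(\xi') + \pv\!\int_\Sigma K(\xi'-\xi)\times \bigl(n(\xi')\times\nabla f(\xi')\bigr)\, dS(\xi').
\end{align*}
Dotting with $n(\xi)$ and using the triple-product identity $n\cdot(K\times v) = (n\times K)\cdot v$, the first term on the right becomes $-K_\Sigma^\ast\nabla_n f(\xi)$ by~\eqref{eq: K* def}, while the second becomes precisely the integral on the right-hand side of the stated identity. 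Rearranging and applying $(I+K_\Sigma^\ast)^{-1}$, whose boundedness follows from Theorem~\ref{thm: K inverse}, completes the proof.

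The main (and essentially only) obstacle is bookkeeping: scalar and vector parts of successive Clifford products must be tracked carefully, and one must check at each step that $n'\cdot w'$ and $n'\times w'$ coincide with the intrinsic boundary objects $\nabla_n f(\xi')$ and $n(\xi')\times\nabla f(\xi')$, so that the final formula depends on $f$ only through $f|_\Sigma$ and its Dirichlet-Neumann data. Beyond this, the argument reduces to the vector triple-product identity, the definitions of the layer potentials, and the invertibility of $I + K_\Sigma^\ast$ on $L^2(\Sigma)$.
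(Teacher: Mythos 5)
Your proof is correct, and it is essentially the standard argument for this identity; the paper itself does not give a proof but simply cites equation (3.13) of \cite{Wu99}, which is precisely the computation you reproduce (Cauchy's formula for $\calD\bff$, the decomposition $n'w' = -\nabla_n f' + n'\cross\nabla f'$, the triple product identity, and invertibility of $I+K^\ast_\Sigma$). Your derivation of the second identity from \eqref{eq: K Neumann} and $\calD S_\Sigma g = -C_\Sigma(ng)$ is likewise correct, modulo the implicit (and harmless) observation that the Dirichlet extension and the Neumann layer-potential representation agree up to an additive constant, which $\calD$ annihilates.
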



\begin{proof}
See \cite{Wu99}, equation (3.13), for the proof of this statement.
\end{proof}


Finally, we restrict attention to the case where $\Omega$ is a ball in $\bbR^3$, $\Omega=B_1(0)$, with the standard sphere as boundary, $\partial\Omega=\bbS^2$, and present a formula for the double-layered potential $\bbK:=K_{\bbS^2}$ in terms of spherical harmonics. We start by fixing our notation for spherical harmonics. Recall that $L^2(\bbS^2)$ admits a direct sum decomposition $L^2(\bbS^2)=\bigoplus_{\ell=0}^\infty \mathring{\calY}_\ell$ into the eigenspaces of the Laplacian $\ringsDelta$ on $\bbS^2$. In other words, any smooth function $f\in L^2(\bbS^2)$ admits a unique decomposition $f=\sum_{\ell=0}^\infty f_\ell$, $f_\ell\in\mathring{\calY}_\ell$, such that

\begin{align*}
\begin{split}
 -\ringsDelta f_\ell=\ell(\ell+1)f_\ell. 
\end{split}
\end{align*}
The $\ell$th eigenspace $\mathring{\calY}_\ell$ has dimension $2\ell+1$, and using the usual polar coordinates $\xi=(\cos\varphi\sin\theta,\sin\varphi\sin\theta,\cos\theta)$ for $\xi\in\bbS^2$, an orthonormal basis for $\mathring{\calY}_\ell$ is given by

\begin{align*}
\begin{split}
 Y_{\ell}^m(\xi)=Y_\ell^m(\theta,\varphi)=(-1)^m\sqrt{\frac{2\ell+1}{4\pi}\frac{(\ell-m)!}{(\ell+m)!}} \,P_\ell^m(\cos\theta)e^{im\varphi},\qquad m=-\ell,\dots,\ell,
\end{split}
\end{align*}
where $P_\ell^m$ are the Legendre polynomials $P_\ell^m(x)=\frac{1}{2^\ell \ell!}(1-x^2)^{m/2}\frac{d^{\ell+m}}{dx^{\ell+m}}(x^2-1)^\ell$. Using this basis we can decompose a function $f\in L^2(\bbS^2)$ as

\begin{align*}
\begin{split}
 f(\xi)=\sum_{\ell=0}^\infty\sum_{m=-\ell}^\ell f_{m}^\ell Y_\ell^m(\xi),\qquad f_{m}^\ell:=\int_{\bbS^2}f(\xi')\overline{Y_{\ell}^m}(\xi') dS(\xi').
\end{split}
\end{align*}
Our interest is in understanding the action of the double-layered potential $\bbK$ on $\mathring{\calY}_\ell$.


\begin{proposition}\label{prop: K S2} Let $\bbK$ and $\bbS$ be the double-layered and single-layered potentials on $\bbS^2$, respectively. $\bbK$ is self-adjoint, that is, $\bbK^\ast=\bbK$, and for any $f_\ell\in\mathring{\calY}_\ell$

\begin{align*}
\bbK f_\ell=-\bbS f_\ell= (1-4\ringsDelta)^{-\frac{1}{2}}f_\ell:=\frac{1}{2\ell+1} f_\ell.
\end{align*}

\end{proposition}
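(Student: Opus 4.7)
The plan is to exploit the key simplification that occurs on the unit sphere: for $\xi'\in \bbS^2$ one has $n(\xi')=\xi'$, and the identity
\[
(\xi'-\xi)\cdot\xi' \;=\; 1-\xi\cdot\xi' \;=\; \tfrac12|\xi'-\xi|^2
\]
collapses the dipole kernel $K(\xi'-\xi)\cdot n(\xi')$ to a multiple of $\tfrac1{|\xi'-\xi|}$. Concretely, from the definition \eqref{eq: K def} and the above identity,
\[
-K(\xi'-\xi)\cdot n(\xi') \;=\; \frac{1}{2\pi}\frac{(\xi'-\xi)\cdot\xi'}{|\xi'-\xi|^3} \;=\; \frac{1}{4\pi|\xi'-\xi|}.
\]
Because this expression is manifestly symmetric in $\xi$ and $\xi'$, we immediately obtain $\bbK=\bbK^{*}$. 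Comparing the formula with the definition of $\bbS$ in \eqref{eq: K def} also yields $\bbK f = -\bbS f$ for functions defined on $\bbS^2$.

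Next I would spectrally resolve the resulting operator, whose kernel is, up to the factor $1/(4\pi)$, the Newton kernel restricted to the sphere. The classical generating function for Legendre polynomials, combined with the fact that $|\xi|=|\xi'|=1$, gives
\[
\frac{1}{|\xi-\xi'|} \;=\; \sum_{\ell=0}^\infty P_\ell(\xi\cdot\xi'),
\]
and the addition theorem for spherical harmonics expresses each summand as
\[
P_\ell(\xi\cdot\xi') \;=\; \frac{4\pi}{2\ell+1}\sum_{m=-\ell}^{\ell} Y_\ell^m(\xi)\,\overline{Y_\ell^m(\xi')}.
\]
Substituting this into the integral representation of $\bbK$ and applying orthonormality of $\{Y_\ell^m\}$ in $L^2(\bbS^2)$, every term with index different from the harmonic degree of $f_\ell$ vanishes, leaving
\[
\bbK f_\ell(\xi) \;=\; \sum_{k=0}^\infty \frac{1}{2k+1}\sum_{m=-k}^{k} Y_k^m(\xi)\int_{\bbS^2}\overline{Y_k^m(\xi')}f_\ell(\xi')\,dS(\xi') \;=\; \frac{1}{2\ell+1}f_\ell.
\]

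Finally, the operator identity $\bbK = (1-4\ringsDelta)^{-1/2}$ on $\mathring{\calY}_\ell$ is a purely algebraic check: since $-\ringsDelta f_\ell = \ell(\ell+1)f_\ell$, one has
\[
1 - 4\ringsDelta \big|_{\mathring{\calY}_\ell} \;=\; 1 + 4\ell(\ell+1) \;=\; (2\ell+1)^2,
\]
whose positive square root is $2\ell+1$. There is no real obstacle here; the only point to be careful about is the justification of the pointwise series expansion (which is standard, the series converging absolutely and uniformly on compact subsets of $\{\xi\neq \xi'\}$ and in the $L^2$ sense after integration against an $L^2$ function), and the principal value in the definition of $\bbK$, which is unnecessary here because the reduced kernel $\tfrac{1}{4\pi|\xi'-\xi|}$ is weakly singular and therefore integrable on $\bbS^2$.
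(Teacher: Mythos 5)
Your approach is essentially the paper's: you use the same key observation that on $\bbS^2$ the normal is $n(\xi')=\xi'$, so the dipole kernel collapses to the weakly singular $\frac{1}{4\pi|\xi'-\xi|}$, from which self-adjointness and $\bbK=-\bbS$ follow at once; you then spectrally resolve $\bbK$ via the Legendre expansion of the Newtonian kernel and the addition theorem.

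The one genuine flaw is your justification of the term-by-term integration. You claim that $\frac{1}{|\xi-\xi'|}=\sum_{\ell\geq 0}P_\ell(\xi\cdot\xi')$ ``converges absolutely and uniformly on compact subsets of $\{\xi\neq\xi'\}$.'' This is false: for $\xi\cdot\xi'=\cos\theta$ with $\theta\in(0,\pi)$, the large-$\ell$ asymptotics of Legendre polynomials give $|P_\ell(\cos\theta)|\sim C(\theta)\,\ell^{-1/2}$, so $\sum_\ell |P_\ell(\cos\theta)|$ diverges; the series converges only conditionally (e.g.\ by Dirichlet's test). Moreover the claim that the interchange is licensed ``in the $L^2$ sense after integration against an $L^2$ function'' is also suspect, since $\frac{1}{|\xi-\xi'|}\notin L^2(\bbS^2)$ in $\xi'$ for fixed $\xi$ (only $L^1$). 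The paper sidesteps both issues by first writing
\[
\bbK f_\ell(\xi)=\lim_{r\to 1^+}\frac{1}{4\pi}\int_{\bbS^2}\frac{f_\ell(\xi')}{|\xi'-r\xi|}\,dS(\xi'),
\]
which is justified by monotonicity and dominated convergence, and then expanding $\frac{1}{|\xi'-r\xi|}$ for $r>1$, where the series $\sum_\ell r^{-\ell-1}P_\ell(\xi\cdot\xi')$ \emph{does} converge absolutely and uniformly, so the term-by-term integration is clean. Alternatively, you could compute the Fourier coefficients $\langle Y_k^m,\bbK f_\ell\rangle$ by Fubini (valid since the kernel is $L^1$ and the spherical harmonics are bounded) and use the known mean value identity $\int_{\bbS^2}\frac{Y_k^m(\xi)}{|\xi'-\xi|}\,dS(\xi)=\frac{4\pi}{2k+1}Y_k^m(\xi')$. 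Either fix would repair the argument; as written, the justification of the key interchange is not correct.
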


 
 \begin{proof}
 
 The self-adjointness of $\bbK$ follows by inspection of formulas \eqref{eq: K def} and \eqref{eq: K* def}, and the observation that when $\Sigma=\bbS^2$ the exterior normal is given by $n(\xi)=\xi$. Using this same observation we get
 
 \begin{align*}
\begin{split}
 \bbK f_\ell(\xi)=\frac{1}{4\pi}\int_{\bbS^2}\frac{f_\ell(\xi')}{|\xi'-\xi|}dS(\xi') =-\bbS f_\ell(\xi).
\end{split}
\end{align*}
Since for any $\xi,\xi'\in\bbS^2$, $\frac{d}{dr}|\xi'-r\xi|^{-1}=|\xi'-r\xi|^{-3}(\xi'\cdot\xi-r)\leq 0$, by the dominate convergence theorem
 
 \begin{align*}
\begin{split}
 \bbK f_\ell(\xi)=\lim_{r\to1^+}\frac{1}{4\pi}\int_{\bbS^2}\frac{f_\ell(\xi')}{|\xi'-r\xi|}dS(\xi'). 
\end{split}
\end{align*}
 We now use the following representation for $|\xi'-r\xi|$, valid for any $r>1$:
 
 \begin{align*}
\begin{split}
 \frac{1}{|\xi'-r\xi|} =\frac{4\pi}{r}\sum_{k=0}^\infty\frac{r^{-k}}{2k+1}\sum_{m=-k}^kY_{k}^m(\xi)\overline{Y_{k}^{m}(\xi')}.
\end{split}
\end{align*}
This relation can be verified by direct computation using properties of Legendre polynomials. See for instance \cite{MathPhysBook1}. Plugging this  back into the representation for $\bbK$ above and using the fact that $f_\ell\in \mathring{\calY}_\ell$, with $f_\ell(\xi)=\sum_{m=-\ell}^\ell f^\ell_mY_\ell^m(\xi)$, we get

\begin{align*}
\begin{split}
 \bbK f_\ell(\xi)=\lim_{r\to 1^+}\frac{r^{-\ell-1}}{2\ell+1}\sum_{m=-\ell}^\ell f_m^\ell Y_\ell^m(\xi)=\frac{1}{2\ell+1}f_\ell.
\end{split}
\end{align*}
\end{proof}

The analysis above can be extended to $S_R$, the sphere of radius $R$. We denote the $\ell$th eigenspace of the Laplacian $\slashed{\Delta}$ of $S_R$ by $\calY_\ell$, so that $\slashed{\Delta} f_\ell=-R^{-2}\ell(\ell+1)f_\ell$ for $f_\ell\in\calY_\ell$. Let $\bfK$ and $\bfS$ be the double and single layered potentials on $S_R$, respectively. The following proposition is the analogue of \prop{prop: K S2} for $S_R$.
 
 \begin{proposition}\label{spherical harmonics for SR}
  Let $\bfK$ and $\bfS$ be the double-layered and single-layered potentials on $S_{R}$, respectively. Then $\bfK$ and $\bfS$ are self-adjoint and $\bfS=-R\bfK$. Moreover, for any $f_\ell\in\calY_\ell$ we have
  
  \begin{align*}
  \bfK=\frac{1}{2\ell+1}f_\ell. 
  \end{align*}
\end{proposition}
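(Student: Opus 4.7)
The plan is to deduce the proposition from its analogue on the unit sphere, \prop{prop: K S2}, by a direct scaling argument. The essential observation is that on a sphere, the exterior normal is radial, which collapses the double-layer kernel to a multiple of the single-layer kernel; the proposition then reduces to checking one clean identity and performing a change of variables.

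First I would derive the identity $\bfS=-R\bfK$. For $\xi,\xi'\in S_R$ the exterior normal at $\xi'$ is $n(\xi')=R^{-1}\xi'$, so the kernel of $\bfK$ is
\begin{align*}
-K(\xi'-\xi)\cdot n(\xi')=\frac{1}{2\pi R}\frac{(\xi'-\xi)\cdot\xi'}{|\xi'-\xi|^3}.
\end{align*}
Since $|\xi|=|\xi'|=R$, the polarization identity gives $(\xi'-\xi)\cdot\xi'=\tfrac{1}{2}(|\xi'|^2+|\xi'-\xi|^2-|\xi|^2)=\tfrac{1}{2}|\xi'-\xi|^2$, so the kernel simplifies to $\frac{1}{4\pi R}|\xi'-\xi|^{-1}$. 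Comparing with the definition of $\bfS$ in \eqref{eq: K def} immediately yields $\bfK=-R^{-1}\bfS$, i.e.\ $\bfS=-R\bfK$. Self-adjointness of $\bfK$ (and hence of $\bfS$) is then automatic, because the kernel $\frac{1}{4\pi R}|\xi'-\xi|^{-1}$ is manifestly symmetric in $\xi$ and $\xi'$.

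For the eigenfunction statement, I would use the diffeomorphism $\Phi:\bbS^2\to S_R$, $\Phi(\omega)=R\omega$. Given $f_\ell\in\calY_\ell$, set $\tilf_\ell:=f_\ell\circ\Phi$. Since $\slashed{\Delta}=R^{-2}\ringsDelta$ under this identification, $\tilf_\ell$ is an eigenfunction of $\ringsDelta$ with eigenvalue $-\ell(\ell+1)$, hence $\tilf_\ell\in\mathring{\calY}_\ell$. Changing variables $\xi'=R\omega'$ so that $dS(\xi')=R^2\,dS(\omega')$ and $|\xi'-\xi|=R|\omega'-\omega|$, the representation derived above gives
\begin{align*}
\bfK f_\ell(R\omega)=\frac{1}{4\pi R}\int_{S_R}\frac{f_\ell(\xi')}{|\xi'-\xi|}dS(\xi')=\frac{1}{4\pi}\int_{\bbS^2}\frac{\tilf_\ell(\omega')}{|\omega'-\omega|}dS(\omega')=\bbK\tilf_\ell(\omega),
\end{align*}
and by \prop{prop: K S2} this equals $(2\ell+1)^{-1}\tilf_\ell(\omega)=(2\ell+1)^{-1}f_\ell(R\omega)$. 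Hence $\bfK f_\ell=(2\ell+1)^{-1}f_\ell$.

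There is no real obstacle here beyond bookkeeping; the only subtlety worth flagging is the polarization identity $(\xi'-\xi)\cdot\xi'=\tfrac12|\xi'-\xi|^2$, which is exactly what makes the double layer trivially related to the single layer on a sphere, and is the reason the whole statement reduces to the unit-sphere case by a one-line change of variables.
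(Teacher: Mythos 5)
Your proof is correct and essentially follows the paper's route, which simply says the argument is the same as for \prop{prop: K S2}: the radial normal on a sphere plus the polarization identity collapse the double-layer kernel to $\frac{1}{4\pi R}|\xi'-\xi|^{-1}$, giving self-adjointness and $\bfS=-R\bfK$ at once. The one small divergence is that for the eigenvalue you rescale to the unit sphere and invoke \prop{prop: K S2} directly, rather than re-running the $r\to 1^{+}$ limit and spherical-harmonic expansion of $|\xi'-r\xi|^{-1}$ on $S_R$; this is a slightly cleaner way to finish, and both arguments are valid.
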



\begin{proof}
The proof is the same as that of \prop{prop: K S2}.
\end{proof}


\subsection{Commutator Formulas}

We present a number of commutator formulas that are frequently used in this paper. The proofs of these formulas are similar to the ones in the case where the domain is diffeomorphic to $\bbR^2$ where they were derived by Wu in \cite{Wu99, Wu11}. 

The following important identity is from \cite{Wu99} (see equation (3.5) on page 453): Suppose $\xi,\xi',\eta\in\bbR^3$ are arbitrary vectors with $\xi\neq\xi'$, and let $K=K(\xi'-\xi)$. Then in local coordinates $(\alpha,\beta)$ on $\Sigma$ 

\begin{align}\label{eq: Wu2 3.5}
\begin{split}
  -(\eta\cdot\nabla)K (\xi'_{\alpha'}\cross\xi'_{\beta'})+(\xi'_{\alpha'}\cdot\nabla)K(\eta\cross\xi'_{\beta'})+(\xi'_{\beta'}\cdot\nabla)K (\xi'_{\alpha'}\cross\eta)=0.
\end{split}
\end{align}
The proof of this identity is identical to the one in \cite{Wu99} so we omit it. Before stating the commutator formulas for the Hilbert transform $H_\Sigma$ we recall the following notation from \eqref{eq: Q scalar}:

\begin{align*}
\begin{split}
 Q(f,g)=\frac{1}{|N|}(f_\alpha g_\beta-f_\beta g_\alpha), 
\end{split}
\end{align*}
where $(\alpha,\beta)$ are orientation preserving local coordinates on $\Sigma$.


\begin{lemma}\label{lem: H commutators}

Let $f\in C^1(\bbR\times\Sigma)$ be a Clifford algebra-valued function, and let $a\in C^0(\bbR\times\Sigma)$ be real-valued. Then

\begin{align*}
\begin{split}
& [\partial_t,H_{\Sigma}]f=\int_{\Sigma} K(\xi'-\xi)(\xi_t-\xi'_t)\cross (n'\cross\nabla f') dS',\\
&[an\cross\nabla,H_{\Sigma}]f=\int_{\Sigma}K(\xi'-\xi)(an-a'n')\cross(n'\cross\nabla f')dS',\\
&[\partial_t^2,H_{\Sigma}]f=\int_{\Sigma}K(\xi'-\xi)(\xi_{tt}-\xi'_{tt})\cross(n'\cross\nabla f')dS'+2\int_{\Sigma}K(\xi'-\xi)(\xi_{t}-\xi'_{t})\cross(n'\cross\nabla f_t')dS'\\
&\qquad\qquad\quad+\int_{\Sigma} K(\xi'-\xi)(\xi_t-\xi'_t)Q(f',\xi'_t)dS'+\int_{\Sigma}\partial_{t}K(\xi'-\xi)(\xi_{t}-\xi'_{t})\times(n'\times\nabla f')dS.\\
&[\partial_t,\nabla_n]f=(I+K_\Sigma^\ast)^{-1}\Re\left\{-n_t H_{\Sigma}\calD f-n[\partial_t,H_{\Sigma}]\calD f+nH_\Sigma (n_tn\,\calD f)\right\}\\
&\qquad\qquad\quad+(I+K_\Sigma^\ast)^{-1}\int_{\Sigma}n\cross K(\xi'-\xi)\cdot \left(Q(f',u')-\frac{\partial_t|N'|}{|N'|}n'\cross\nabla f'\right)dS'.
\end{split}
\end{align*}

\end{lemma}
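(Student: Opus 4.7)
The plan is to derive all four commutator identities by direct computation from the integral representation of $H_\Sigma$, using the key algebraic identity \eqref{eq: Wu2 3.5} to convert bulk kernel-derivative terms into boundary-integration-by-parts terms. The approach is intrinsic and follows \cite{Wu99,Wu11}, adapted to the setting where $\partial\calB_1$ is diffeomorphic to $S^2$; since \eqref{eq: Wu2 3.5} is purely local and coordinate-invariant, the absence of a global flat structure is not an obstruction.

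For the first identity, I write
\[H_\Sigma f(\xi) = \iint K(\xi'-\xi)\bigl(\xi'_\alpha \times \xi'_\beta\bigr)\, f'\, d\alpha\, d\beta,\]
and differentiate in $t$. Three groups of terms appear: (i) $\partial_t K = (\xi'_t-\xi_t)\cdot\nabla K$ acting on the original surface element; (ii) the time derivative of $\xi'_\alpha\times\xi'_\beta$, giving $u'_\alpha\times\xi'_\beta+\xi'_\alpha\times u'_\beta$; and (iii) $\partial_t f'$, reassembling into $H_\Sigma \partial_t f$. Applying \eqref{eq: Wu2 3.5} with $\eta=\xi_t-\xi'_t$ rewrites $(\xi_t-\xi'_t)\cdot\nabla K\,(\xi'_\alpha\times\xi'_\beta)$ as $\partial_{\alpha'}\bigl[K\,((\xi_t-\xi'_t)\times\xi'_\beta)\bigr]+\partial_{\beta'}\bigl[K\,(\xi'_\alpha\times(\xi_t-\xi'_t))\bigr]$ up to terms that cancel the $u'_\alpha\times\xi'_\beta+\xi'_\alpha\times u'_\beta$ contribution. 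Integrating by parts and using the coordinate formula $n'\times\nabla f' = |N'|^{-1}(\xi'_\beta f'_\alpha - \xi'_\alpha f'_\beta)$ produces exactly $\int K\,(\xi_t-\xi'_t)\times(n'\times\nabla f')\,dS'$.

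The second identity is handled similarly: $an\times\nabla$ acts on $H_\Sigma f$ only through the $\xi$-dependence of $K$, so adding and subtracting $a'n'\times\nabla'K$ turns $[an\times\nabla,H_\Sigma]f$ into $\int[(an-a'n')\cdot\nabla K]\,n' f'\,dS'$ modulo a piece that integrates by parts on $\Sigma$, and \eqref{eq: Wu2 3.5} with $\eta=an-a'n'$ converts it to the claimed form. The third identity follows by differentiating the first once more: $\partial_t$ either hits $\xi_t-\xi'_t$ (yielding $\xi_{tt}-\xi'_{tt}$), the kernel $K$ (yielding the $\partial_t K$ factor in the last term), the factor $n'\times\nabla f'$ (yielding both $n'\times\nabla f'_t$ and, via the time derivative of $|N'|^{-1}(\xi'_\beta\cdot-\xi'_\alpha\cdot)$, the $Q(f',\xi'_t)$ term), or $f'$ itself (absorbed into the doubling of the cross term with $f'_t$).

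The main obstacle is the fourth commutator. Here I use $-\nabla_n f = \Re(n\,\calD f)$, where $\calD f$ denotes the boundary restriction of the Clifford derivative of the harmonic extension $\bff$ of $f$; since $\bff$ is harmonic, $\calD\bff$ is Clifford analytic in $\calB_1$, so $\calD f = H_\Sigma \calD f$ and hence $-\nabla_n f = \Re(n H_\Sigma\calD f)$. Differentiating in $t$ produces $\Re(n_t H_\Sigma\calD f)$, $\Re(n[\partial_t,H_\Sigma]\calD f)$ via the already-established first identity, and $\Re(n H_\Sigma\partial_t\calD f)$. The delicate point is that the Lagrangian time derivative $\partial_t\calD f$ differs from $\calD(\partial_t f)$, because the harmonic extension of $\partial_t f$ is not $\partial_t\bff$; unpacking this discrepancy in terms of $u$ and $\nabla\bff|_\Sigma$, then expressing $\nabla\bff|_\Sigma$ through $n'\times\nabla f'$ and $\nabla_n f$, yields the surface-integral correction containing $Q(f',u')$ and $\partial_t|N'|/|N'|$. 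Finally, the adjoint relation $H_\Sigma^*=nH_\Sigma n$ lets us rewrite $\Re(nH_\Sigma(n_tn\calD f))$ and move the factor $(I+K_\Sigma^*)$ to the left, giving the stated representation with $(I+K_\Sigma^*)^{-1}$ acting on the right-hand side. The careful bookkeeping of real versus vector parts and of the Lagrangian/Eulerian discrepancy at the boundary is where the calculation is most intricate, but each ingredient reduces to the formulas already proved in the first three parts combined with the structural identity $H_\Sigma^* = n H_\Sigma n$.
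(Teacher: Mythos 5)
Your derivation of the first three commutators is essentially the paper's own route: the paper declares the proof to be that of Lemma~1.2 in \cite{Wu11} (with integration by parts recast invariantly via \lem{lem: int by parts}), and what you describe -- writing $H_\Sigma$ in Lagrangian coordinates, differentiating, inserting the algebraic identity \eqref{eq: Wu2 3.5} with $\eta=\xi'_t-\xi_t$ (respectively $\eta=a'n'-an$), and integrating by parts so that the derivative of $\eta$ cancels the $u'_\alpha\cross\xi'_\beta+\xi'_\alpha\cross u'_\beta$ piece of $\partial_t(\xi'_\alpha\cross\xi'_\beta)$ -- is precisely Wu's argument. Your wording slightly conflates the two steps (the identity \eqref{eq: Wu2 3.5} keeps the derivative on $K$; only the subsequent integration by parts moves it onto $\eta\cross\xi'_\beta$ and $f'$), but the substance matches the paper's cited proof.

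For $[\partial_t,\nabla_n]f$, however, there is a genuine gap. The paper follows \cite{Wu99} (eq.\ (3.16) of Lemma 3.2), whose starting point is the double-layer representation of \lem{lem: DN K}, $\nabla_n f=(I+K_\Sigma^\ast)^{-1}\pv\int_\Sigma(n\cross K)\cdot(n'\cross\nabla)f'\,dS'$; differentiating this in $t$, and using $\partial_t(I+K_\Sigma^\ast)^{-1}=-(I+K_\Sigma^\ast)^{-1}[\partial_t,K_\Sigma^\ast](I+K_\Sigma^\ast)^{-1}$ together with the time derivative of the kernel, is precisely what produces both the $H_\Sigma$ block in braces and the surface integral with $Q(f',u')$ and $\partial_t|N'|/|N'|$, with the $(I+K_\Sigma^\ast)^{-1}$ present from the outset. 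Your proposed starting point $-\nabla_n f=\Re(nH_\Sigma\calD f)$ is an exact identity, but it contains no factor of $(I+K_\Sigma^\ast)$, so the instruction to ``move the factor $(I+K_\Sigma^\ast)$ to the left'' has no content -- there is nothing in your computation that would produce this prefactor. Moreover the decisive step, unpacking $\partial_t\calD f-\calD\partial_t f$ (which encodes the fact that the harmonic extension of $\partial_t f$ is not $\partial_t$ of the harmonic extension of $f$) into the concrete surface integrals of the stated form, is only asserted and not carried out; that calculation is exactly where the substance of the fourth identity lies, and without it the proposal does not establish the result.
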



\begin{proof}
The proof is the same as the proof of Lemma~1.2 in \cite{Wu11}. The last statement has the same proof as equation (3.16) in Lemma 3.2 of \cite{Wu99}. Note that the analogues of the integration by parts in the proof of Lemma~1.2 in \cite{Wu11} in our case can be carried out using the invariant formulation in \lem{lem: int by parts}.
\end{proof}


\begin{remark}
The expressions of the form $\eta\cross (n\cross\nabla f)$ in the identities above should be understood componentwise. That is,

\begin{align*}
\begin{split}
 \eta\cross(n\cross\nabla f)=\frac{1}{|N|}\left( (\eta\cross\xi_\beta) f_\alpha - (\eta\cross\xi_\alpha) f_\beta\right).
\end{split}
\end{align*} 
\end{remark}


To show that the commutator between $n\cross\nabla$ and $H_{\Sigma}$ is small in the case where $\Sigma$ is close to a round sphere, we need an additional identity. This identity which is from the proof of Proposition~2.2 in \cite{Wu11} states that for any differentiable function $f$

\begin{align*}
\begin{split}
  \int_\Sigma K(\xi'-\xi)(\xi'-\xi)\cross (n'\cross \nabla f')dS'=0.
\end{split}
\end{align*}
We omit the simple proof which uses \eqref{eq: Wu2 3.5}  and integration by parts, and can be found in \cite{Wu11}, Proposition~2.2. Note that when $\Sigma$ is a round sphere the vector $\xi$ points in the same direction as the normal vector, so comparing with the expression for $[n\cross\nabla, H_{\Sigma}]f$ in \lem{lem: H commutators} we see that $[n\cross\nabla, H_{S_R}]f=0$. More generally, for arbitrary $\Sigma$ we get

\begin{align}\label{eq:  n cross nabla H}
\begin{split}
  [n\cross\nabla ,H_{\Sigma}]f=\frac{1}{R}\int_{\Sigma}K(\xi'-\xi)((Rn-\xi)-(Rn'-\xi'))\cross(n'\cross\nabla f')dS'.
\end{split}
\end{align}


\section{Analytic Preparations}\label{app: CM}

In this appendix we collect some general estimates that are used in the paper. In this section $\snabla$ denotes the covariant differentiation operator with respect to the standard metric on $S_R$ and $\Omega_i$ denotes the rotational vectorfield about the axis $\bfe_i$ in $\bbR^3$. $\Omega_i$ is tangent to $S_R$ and in coordinates is given by $\bfx^k\partial_\ell-\bfx^\ell\partial_k$ for some $k$ and $\ell$. Since the three vectorfields $\Omega_1$, $\Omega_2$, and $\Omega_3$ span the tangent space to $S_R$ at each point, for any function $f:S_R\to\bbR$ we have the pointwise estimate

\begin{align*}
\begin{split}
  |\snabla f|\lesssim \frac{1}{R}\sum_{i=1}^3|\Omega_if|.
\end{split}
\end{align*}
The following lemma is just the standard Sobolev estimate on $S_R$ which we record for reference.


\begin{lemma}\label{lem: Sobolev}
For any $f\in H^2(S_R)$ 

\begin{align*}
\begin{split}
 \|f\|_{L^\infty(S_R)}\lesssim \frac{1}{R} \|f\|_{L^2}+\frac{1}{R}\sum_{i=1}^3\|\Omega_i f\|_{L^2(S_R)}+\frac{1}{R}\sum_{i,j=1}^3 \|\Omega_i\Omega_jf\|_{L^2(S_R)}.
\end{split}
\end{align*}
\end{lemma}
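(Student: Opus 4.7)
The plan is to reduce to the unit sphere by rescaling and then invoke the standard Sobolev embedding $H^{2}(\mathbb{S}^{2})\hookrightarrow L^{\infty}(\mathbb{S}^{2})$, the only real task being to translate intrinsic covariant derivatives into the vector field derivatives $\Omega_{i}$, $\Omega_{i}\Omega_{j}$. First I would set $\tilde f(\omega):=f(R\omega)$ for $\omega\in \mathbb{S}^{2}$. Since the rotational vector fields $\Omega_{i}$ are invariant under the scaling $\omega\mapsto R\omega$, they pull back to the analogous rotational vector fields on $\mathbb{S}^{2}$, and a change of variables gives $\|f\|_{L^{\infty}(S_{R})}=\|\tilde f\|_{L^{\infty}(\mathbb{S}^{2})}$ together with $\|\Omega^{\alpha}f\|_{L^{2}(S_{R})}=R\,\|\Omega^{\alpha}\tilde f\|_{L^{2}(\mathbb{S}^{2})}$ for any multi-index $\alpha$. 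Thus the claimed inequality is equivalent to the scale-free bound
\begin{equation*}
\|\tilde f\|_{L^{\infty}(\mathbb{S}^{2})}\lesssim \|\tilde f\|_{L^{2}(\mathbb{S}^{2})}+\sum_{i=1}^{3}\|\Omega_{i}\tilde f\|_{L^{2}(\mathbb{S}^{2})}+\sum_{i,j=1}^{3}\|\Omega_{i}\Omega_{j}\tilde f\|_{L^{2}(\mathbb{S}^{2})},
\end{equation*}
and every factor of $R^{-1}$ in the original statement is accounted for automatically.

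Next I would invoke the usual Sobolev embedding $H^{2}(\mathbb{S}^{2})\hookrightarrow L^{\infty}(\mathbb{S}^{2})$ (which holds because $\dim \mathbb{S}^{2}=2<2\cdot 2$), so that
\begin{equation*}
\|\tilde f\|_{L^{\infty}(\mathbb{S}^{2})}\lesssim \|\tilde f\|_{L^{2}(\mathbb{S}^{2})}+\|\mathring{\snabla}\tilde f\|_{L^{2}(\mathbb{S}^{2})}+\|\mathring{\snabla}^{2}\tilde f\|_{L^{2}(\mathbb{S}^{2})},
\end{equation*}
where $\mathring{\snabla}$ denotes intrinsic covariant differentiation on $\mathbb{S}^{2}$. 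It then suffices to bound the right-hand side here by the corresponding $\Omega$-derivative norms. The first-derivative bound $|\mathring{\snabla}\tilde f|\lesssim \sum_{i}|\Omega_{i}\tilde f|$ is immediate from the fact, already used throughout the paper, that $\{\Omega_{1},\Omega_{2},\Omega_{3}\}$ span $T_{\omega}\mathbb{S}^{2}$ at every point with uniform bounds on the coefficients expressing $\mathring{\snabla}\tilde f$ in this over-determined frame (e.g.\ by using the orthogonal projection of the Euclidean gradient of a smooth extension and checking pointwise invertibility of the Gram matrix of $\{\Omega_{i}\}$ restricted to the tangent plane at each point).

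The one step that needs mild care is the second-derivative bound. Here I would argue pointwise as follows: for any smooth function $g$ on $\mathbb{S}^{2}$,
\begin{equation*}
\Omega_{i}\Omega_{j}g=\mathring{\snabla}^{2}g(\Omega_{i},\Omega_{j})+(\mathring{\snabla}_{\Omega_{i}}\Omega_{j})g.
\end{equation*}
Since the $\Omega_{i}$ are Killing and their covariant derivatives are smooth vector fields on $\mathbb{S}^{2}$ with uniformly bounded coefficients in the frame $\{\Omega_{k}\}$, we obtain the pointwise estimate
\begin{equation*}
|\mathring{\snabla}^{2}g|\lesssim \sum_{i,j}|\Omega_{i}\Omega_{j}g|+\sum_{k}|\Omega_{k}g|,
\end{equation*}
again using the fact that $\{\Omega_{i}\otimes \Omega_{j}\}$ spans the space of symmetric $2$-tensors on $T_{\omega}\mathbb{S}^{2}$ at each point with uniformly invertible Gram matrix. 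Applying this with $g=\tilde f$ and integrating in $L^{2}$ closes the estimate. The main (and only) obstacle is precisely this linear-algebra observation that the over-determined families $\{\Omega_{i}\}$ and $\{\Omega_{i}\otimes\Omega_{j}\}_{\mathrm{sym}}$ control, pointwise and uniformly on $\mathbb{S}^{2}$, the intrinsic covariant derivatives of order one and two, respectively; everything else is standard Sobolev embedding and rescaling.
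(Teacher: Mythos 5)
The paper states this lemma without proof, asserting it as ``the standard Sobolev estimate on $S_R$,'' so there is no paper proof to compare against. Your argument---rescaling to $\mathbb{S}^2$, applying the standard embedding $H^2(\mathbb{S}^2)\hookrightarrow L^\infty(\mathbb{S}^2)$, and showing that the $\Omega_i$-derivatives control the intrinsic covariant derivatives pointwise---is correct and is the natural argument the authors implicitly rely on. Two small remarks on streamlining: for the first-derivative step one does not even need the Gram-matrix projection argument, because on $\mathbb{S}^2$ one has the \emph{exact} pointwise identity $\sum_{1\le i<j\le 3}(\Omega_{ij}f)^2=|\mathring{\snabla} f|^2$ (from $\sum_{i<j}(x^i\partial_j f-x^j\partial_i f)^2=|x|^2|\nabla f|^2-(x\cdot\nabla f)^2$ restricted to $|x|=1$); and for the second-derivative step a cleaner alternative to the symmetric-tensor Gram-matrix argument is the classical identity $\mathring{\slashed{\Delta}}=\sum_{i<j}\Omega_{ij}^2$ on $\mathbb{S}^2$, which together with $L^2$ elliptic regularity gives $\|\mathring{\snabla}^2\tilde f\|_{L^2}\lesssim\|\mathring{\slashed{\Delta}}\tilde f\|_{L^2}+\|\tilde f\|_{L^2}\lesssim\sum_{i,j}\|\Omega_i\Omega_j\tilde f\|_{L^2}+\|\tilde f\|_{L^2}$. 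Your version via $\Omega_i\Omega_j g=\mathring{\snabla}^2 g(\Omega_i,\Omega_j)+(\mathring{\snabla}_{\Omega_i}\Omega_j)g$ is also valid; one should just note, as you do implicitly, that at the poles of a given $\Omega_i$ that field vanishes, but the remaining two still span the tangent plane, so the relevant restricted Gram matrix stays uniformly bounded below.
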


When the axis of rotation is not important we simply write $\Omega f$ instead of $\Omega_i f$, and $\Omega^2f$ instead of $\Omega_i\Omega_jf$, etc.  We next turn to estimates on singular integral operators, due to Calderon, Coifman, David, McIntosh, and Meyer (see \cite{Calderon53}, \cite{CM-Adv}, and \cite{CM-Ann}). The general setup is as as follows. Let $J:S_R:\to \bbR^k$, $F:\bbR^k\to\bbR$, and $A:S_R\to\bbR$ be smooth functions. We want to estimate singular and nonsingular integrals of the following forms:

\begin{align}\label{eq: def C1}
\begin{split}
 C_1f(p):=\pv\int_{S_R}F\left(\frac{J(p)-J(q)}{|p-q|}\right) \frac{\prod_{i=1}^N (A_i(p)-A_i(q))}{|p-q|^{N+2}}f(q)dS(q),
\end{split}
\end{align}
where $dS$ denotes the surface measure on $S_R$, and where we assume that the kernel $$k_1(p,q)=F\left(\frac{J(p)-J(q)}{|p-q|}\right) \frac{\prod_{i=1}^N (A_i(p)-A_i(q))}{|p-q|^{N+2}}$$ is odd, that is, $k_1(p,q)=-k_1(q,p)$. similarly, 

\begin{align}\label{eq: def C2}
C_{2}f(p):=\textrm{p.v.}\int_{S_{R}}F\left(\frac{J(p)-J(q)}{|p-q|}\right)\frac{\prod_{i=1}^{N}\left(A_{i}(p)-A_{i}(q)\right)}{|p-q|^{N+1}}\frakD f(q)dS(q),
\end{align}
where we assume that the kernel $$k_2(p,q)=F\left(\frac{J(p)-J(q)}{|p-q|}\right)\frac{\prod_{i=1}^{N}\left(A_{i}(p)-A_{i}(q)\right)}{|p-q|^{N+1}}$$ is even, that is, $k_2(p,q)=k_2(q,p)$.  Here $\frakD f=\Omega f$ where $\Omega$ is a rotational vectorfield defined earlier, but we have chosen the notation $\frakD f$ instead of $\Omega f$ for consistency with the main body of the article. To prove the desired estimates we fix a finite covering $\{U_\alpha\}$ of $S_R$ with geodesic balls of radius $r\ll R$, such that if $U_\alpha\cap U_\beta=\emptyset$, then $\inf_{\{x\in U_\alpha,~ y\in U_\beta\}} |x-y| >cR$ for some absolute constant $c$. We  let $\{\chi_\alpha\}$ be a smooth partition of unity with respect to the finite covering $\{U_\alpha\}$. Before stating the main estimates we remark that with $\bfd$ denoting the geodesic distance on $S_R$ we have

\begin{align*}
\begin{split}
  |p-q|\leq \bfd(p,q)\leq C|p-q|,\qquad \forall p,q\in S_R, 
\end{split}
\end{align*}
and for some absolute constant $C>0$. We now state the estimate on $C_1$. Recall that $\snabla$ denotes the standard covariant differentiation operator on $S_R$.


\begin{proposition}\label{prop: L2 C1} With the same notation as \eqref{eq: def C1}
\begin{align}\label{eq: C1 type 1}
\begin{split}
 \|C_1f\|_{L^2(S_R)}\leq C\prod_{i=1}^N\left(\|\snabla A_i\|_{L_\infty(S_R)}+R^{-1}\|A_i\|_{L^\infty(S_R)}\right)\|f\|_{L^2(S_R)},
\end{split}
\end{align}
and

\begin{align}\label{eq: C1 type 2}
\begin{split}
   \|C_1f\|_{L^2(S_R)}\leq C\left(\|\snabla A_1\|_{L^2(S_R)}+R^{-1}\|A_1\|_{L^2(S_R)}\right)\prod_{i=2}^N\left(\|\snabla A_i\|_{L_\infty(S_R)}+R^{-1}\|A_i\|_{L^\infty(S_R)}\right)\|f\|_{L^\infty(S_R)},
\end{split}
\end{align}
where the constants depend on $F$ and $\|\snabla J\|_{L^\infty}$.
\end{proposition}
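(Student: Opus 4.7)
The plan is to reduce both estimates to the classical Calderón–Coifman–McIntosh–Meyer (CCMM) theorem for singular integrals with multilinear Lipschitz commutators on $\mathbb{R}^2$, using the fixed partition of unity $\{\chi_\alpha\}$ associated with the covering $\{U_\alpha\}$. First, I split $C_1 f(p) = \sum_\alpha \chi_\alpha(p)\, C_1 f(p)$. For each $\alpha$, I decompose the integration variable $q$ into a ``near'' region $U_\alpha^\ast$ (the union of $U_\alpha$ with its immediate neighbors in the covering) and the complementary ``far'' region, on which $|p-q|\gtrsim R$ by the separation hypothesis on the patches.

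On the far region the kernel is pointwise bounded by $C R^{-N-2}\prod_i|A_i(p)-A_i(q)|$ times the uniform bound on $F$ along the relevant image of $J$. Using the trivial estimate $|A_i(p)-A_i(q)|\leq 2\|A_i\|_{L^\infty}$ together with $|U_\alpha^\ast|\lesssim R^2$ and Schur's test, the far piece contributes a bounded operator of norm $\lesssim \prod_i R^{-1}\|A_i\|_{L^\infty}$, which is absorbed into the right-hand side of both \eqref{eq: C1 type 1} and \eqref{eq: C1 type 2}. For the near piece I flatten $U_\alpha^\ast$ via geodesic normal coordinates centered at the patch center; since $r\ll R$, the metric has the form $g_{ij}=\delta_{ij}+O(r^2/R^2)$, the volume element is $\sqrt{|g|}=1+O(r^2/R^2)$, and $|p-q|_{\bbR^3}$ agrees with the Euclidean distance in the chart up to $O(r^2/R^2)$ corrections. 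Substituting these expansions, extending the resulting integrand by zero outside the chart, and collecting the curvature errors into terms whose kernels are less singular produces an operator on $\bbR^2$ whose principal part has exactly the CCMM form
\begin{equation*}
\widetilde{C}_1 f(p)=\mathrm{p.v.}\int_{\bbR^2}F\!\left(\frac{J(p)-J(q)}{|p-q|}\right)\frac{\prod_{i=1}^N(A_i(p)-A_i(q))}{|p-q|^{N+2}}f(q)\,dq,
\end{equation*}
with an odd kernel, plus lower-order correction terms controlled again by Schur's test.

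For estimate \eqref{eq: C1 type 1}, I invoke the Calderón–Coifman–McIntosh–Meyer theorem in its standard multilinear form: under the oddness hypothesis, $\widetilde{C}_1$ is bounded $L^2(\bbR^2)\to L^2(\bbR^2)$ with operator norm at most $C(F,\|\snabla J\|_{L^\infty})\prod_{i=1}^N\|\nabla A_i\|_{L^\infty}$. Summing over patches (with bounded multiplicity) gives \eqref{eq: C1 type 1}. The role of the $R^{-1}\|A_i\|_{L^\infty}$ terms is to absorb both the far-field contribution above and the extension errors incurred when cutting off the $A_i$ to the chart, since a cut-off function differs from one by a gradient bounded by $R^{-1}$.

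The main obstacle is estimate \eqref{eq: C1 type 2}, where one $A_i$ must be put in $L^2$ while $f$ is in $L^\infty$. Here I exploit the oddness of the kernel together with the multilinear Coifman–Meyer theory: regarding $(A_1,\dots,A_N,f)\mapsto \widetilde{C}_1 f$ as a multilinear form in $(N+1)$ arguments with an odd kernel of Calderón commutator type, the classical theory gives boundedness in which \emph{any one} of the $N+1$ Lipschitz/$L^2$-type inputs may be placed in $L^2$ while the remaining ones carry their natural Lipschitz or $L^\infty$ norms. Swapping the role of $A_1$ and $f$ in this multilinear bound (equivalently, using the oddness of the kernel to move a gradient from the singular factor onto $A_1$ via an integration by parts in the spirit of the Hilbert transform identity $\int \frac{A(x)-A(y)}{(x-y)^2}dy \sim HA'(x)$) yields the bilinear bound $\|\widetilde{C}_1f\|_{L^2}\lesssim \|\nabla A_1\|_{L^2}\|f\|_{L^\infty}\prod_{i\geq 2}\|\nabla A_i\|_{L^\infty}$. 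The delicate point is propagating the $R^{-1}\|A_1\|_{L^2}$ correction through the integration by parts and the patchwise summation; this requires keeping the cut-off of $A_1$ carefully and handling the boundary contribution from the partition of unity, which contributes at worst the advertised $R^{-1}\|A_1\|_{L^2}$ term. Assembling these bounds over $\alpha$ concludes the proof.
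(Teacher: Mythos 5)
Your architecture — partition of unity, near/far splitting with Schur's test on the far piece, flattening of the near piece, and invocation of the Calder\'on--Coifman--McIntosh--Meyer multilinear estimate on $\bbR^2$ — matches the paper's. The gap is in the flattening step. You expand in geodesic normal coordinates, write $g_{ij}=\delta_{ij}+O(r^2/R^2)$ and $|p-q|_{\bbR^3}=|x-y|(1+O(r^2/R^2))$, and assert that after substitution ``the curvature errors'' land in ``kernels which are less singular.'' That is not what happens: the factor relating $|p-q|_{\bbR^3}^{-(N+2)}$ to $|x-y|^{-(N+2)}$ is a bounded \emph{multiplicative} correction $g(x,y)$, uniformly of size $1+O(r^2/R^2)$, not an additive lower-order term. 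The resulting near-field kernel is $g(x,y)\,F(\cdot)\,\prod(A_i(x)-A_i(y))/|x-y|^{N+2}$, which is \emph{not} of the exact CCMM form unless you can express $g(x,y)$ as a function of a Lipschitz difference quotient. Similarly $F(\,(J\circ\varphi(x)-J\circ\varphi(y))/|\varphi(x)-\varphi(y)|\,)$ is not directly of the form $F((\tilJ(x)-\tilJ(y))/|x-y|)$. Your proposal never closes this, and simply extending by zero and applying the Euclidean theorem is not justified.

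The paper handles this with a precise algebraic device that your proof misses: it augments the function $J$ by the chart itself, $\tilJ := (J\circ\varphi,\ \varphi)$, and replaces $F$ by a new function $\tilF$ on $\bbR^{k+1}$ chosen so that for $|z_{k+1}|\geq\delta$
\begin{align*}
\tilF(z_1,\dots,z_{k+1})=\frac{1}{|z_{k+1}|^{3}}\,F\!\left(\frac{z_1}{|z_{k+1}|},\dots,\frac{z_k}{|z_{k+1}|}\right).
\end{align*}
Because the bi-Lipschitz bounds on $\varphi$ force $|\varphi(x)-\varphi(y)|/|x-y|$ to stay in a fixed annulus, this yields the \emph{exact} identity
\begin{align*}
F\!\left(\frac{J(\varphi(x))-J(\varphi(y))}{|\varphi(x)-\varphi(y)|}\right)\frac{A(\varphi(x))-A(\varphi(y))}{|\varphi(x)-\varphi(y)|^{3}}
=\tilF\!\left(\frac{\tilJ(x)-\tilJ(y)}{|x-y|}\right)\frac{\tilA(x)-\tilA(y)}{|x-y|^{3}},
\end{align*}
with no error term, so the flat CCMM theorem (Proposition~2.6 of \cite{Wu11}, which already contains both the $L^2$ and the $\|f\|_{L^\infty}$, $\|\snabla A_1\|_{L^2}$ variants) applies verbatim. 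A second, smaller point: your derivation of \eqref{eq: C1 type 2} appeals to an unspecified ``swap the roles of $A_1$ and $f$'' argument via an integration by parts in the spirit of the Hilbert-transform identity; this is not self-contained and is precisely the place where the paper instead cites the existing sphere-localized flat estimate. To make the proposal rigorous, either adopt the $\tilJ,\tilF$ construction above, or replace the normal-coordinates heuristic by an explicit argument that the bounded multiplicative factor $g(x,y)$ can be incorporated into the CCMM framework.
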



\begin{proof}
For simplicity of notation we assume $N=1$. Writing

\begin{align*}
\begin{split}
C_1f(p)=\sum_\beta \pv\int_{U_\beta}  F\left(\frac{J(p)-J(q)}{|p-q|}\right) \frac{ A(p)-A(q)}{|p-q|^{3}}\chi_\beta(q)f(q)dS(q),
\end{split}
\end{align*}
and using the triangle inequality in $L^2(S_R)$ we estimate

\begin{align*}
\begin{split}
  \|C_1f\|_{L^2(S_R)}^2\lesssim& \sum_{\alpha,\beta}\int_{U_\alpha} \chi_\alpha(p) \left(\pv\int_{U_\beta}F\left(\frac{J(p)-J(q)}{|p-q|}\right) \frac{A(p)-A(q)}{|p-q|^{3}}\chi_\beta(q)f(q)dS(q)\right)^2 dS(p)\\
  =:&I_{\alpha\beta}.
\end{split}
\end{align*}
We estimate $I_{\alpha\beta}$ differently according to whether $U_\alpha$ and $U_\beta$ intersect nontrivially. If $U_\alpha\cap U_\beta=\emptyset$ then $|p-q|\gtrsim R$, so we simply have

\begin{align*}
\begin{split}
 I_{\alpha\beta}\lesssim R^{-4}\|A\|_{L^\infty(S_R)}^2\left(\int_{S_R} f(q)dS(q)\right)^2\lesssim R^{-2} \|A\|_{L^\infty(S_R)}^2\|f\|_{L^2(S_R)}^2,
\end{split}
\end{align*}
in accordance with \eqref{eq: C1 type 1}. Similarly

\begin{align*}
\begin{split}
  I_{\alpha\beta}\lesssim R^{-6}\int_{S_R}\|f\|_{L^\infty(S_R)}^2\left(\int_{S_R}A(q)dS(q)+R^2A(p)\right)^2dS(p)\lesssim R^{-2}\|f\|_{L^\infty(S_R)}^2\|A\|_{L^2(S_R)}^2,
\end{split}
\end{align*}
in accordance with \eqref{eq: C1 type 2}. Next, if $U_\alpha\cap U_\beta\neq \emptyset$ then we define $U$  to be a slight enlargement of $U_\alpha\cup U_\beta$ and let $\varphi:\bbB\to U\subseteq \bbR^3$ be a diffeomorphism onto its image, satisfying 

\begin{align}\label{eq: chart estimates}
\begin{split}
 c_1\leq \frac{|\varphi(x)-\varphi(y)|}{|x-y|} \leq c_2,\quad\forall x,y\in\bbB \qquad\mathrm{and}\qquad c_3\leq |\varphi'(x)|\leq c_4,\quad \forall x\in\bbB.
\end{split}
\end{align}
for some absolute constants $c_1, \dots, c_4$, where $\bbB=\{x\in\bbR^2 \mathrm{~s.t.~} |x|\leq r\}$, and $|\varphi'|:=|\partial_1\varphi\cross\partial_2\varphi|$.
We can now use the coordinate function $\varphi$ to write the integrals $I_{\alpha\beta}$ as integrals on $\bbR^2$. For this we first extend $\varphi$ to a map from all of $\bbR^2$ to $\bbR^3$ in such a way that \eqref{eq: chart estimates} holds on all of $\bbR^2$ and $\varphi(x)\notin U_\alpha\cup U_\beta$ for $x\notin\bbB$. We then have

\begin{align*}
\begin{split}
 &I_{\alpha\beta}=\\
 &\int_{\bbR^2} \chi_\alpha(\varphi(x)) \left(\pv\int_{\bbR^2}F\left(\frac{J(\varphi(x))-J(\varphi(y))}{|\varphi(x)-\varphi(y)|}\right) \frac{A(\varphi(x))-A(\varphi(y))}{|\varphi(x)-\varphi(y)|^{3}}\chi_\beta(\varphi(y))f(\varphi(y))|\varphi'(y)|dy\right)^2 |\varphi'(x)|dx.
\end{split}
\end{align*}
Let $\tilA:\bbR^2\to\bbR$, $\tilf:\bbR^2\to\bbR$, and $\tilJ:\bbR^2\to\bbR^{k+1}$ be

\begin{align*}
\begin{split}
 \tilA =A\circ \varphi,\quad \tilJ=(J\circ\varphi,\varphi), \quad \tilf=|\varphi'|f\circ\varphi.
\end{split}
\end{align*}
We also define $\tilF\in C^\infty(\bbR^{k+1},\bbR)$ in such away that outside the interval $|z_{k+1}|\leq \delta$, $\delta\ll c_1$,

\begin{align*}
\begin{split}
 \tilF (z_1,\dots,z_{k+1})=\frac{1}{|z_{k+1}|^3}F\left(\frac{z_1}{|z_{k+1}|},\dots,\frac{z_k}{|z_{k+1}|}\right).
\end{split}
\end{align*}
It follows that

\begin{align*}
\begin{split}
 F\left(\frac{J(\varphi(x))-J(\varphi(y))}{|\varphi(x)-\varphi(y)|}\right) \frac{A(\varphi(x))-A(\varphi(y))}{|\varphi(x)-\varphi(y)|^{3}} =\tilF\left(\frac{\tilJ(x)-\tilJ(y)}{|x-y|}\right)\frac{\tilA(x)-\tilA(y)}{|z-y|^3}.
\end{split}
\end{align*}
Since $|\varphi'|,|\chi_\alpha|, |\chi_\beta|\lesssim1,$ and $\|\nabla\tilA\|_{L^p(\bbR^2)}\lesssim\|\snabla A\|_{L^p(S_R)}$ for $p=2,\infty$, the contribution of $I_{\alpha\beta}$ can be bounded using Proposition~2.6 in \cite{Wu11}.

\end{proof}


For $C_{2}f$ we have the following estimate.


\begin{proposition}\label{prop: L2 C2}
With the same notation as \eqref{eq: def C2}
\begin{align}\label{eq: C2 type 1}
\begin{split}
 \|C_2f\|_{L^2(S_R)}\leq C\prod_{i=1}^N\left(\|\snabla A_i\|_{L_\infty(S_R)}+R^{-1}\|A_i\|_{L^\infty(S_R)}\right)\|f\|_{L^2(S_R)},
\end{split}
\end{align}
and

\begin{align}\label{eq: C2 type 2}
\begin{split}
   \|C_2f\|_{L^2(S_R)}\leq C\left(\|\snabla A_1\|_{L^2(S_R)}+R^{-1}\|A_1\|_{L^2(S_R)}\right)\prod_{i=2}^N\left(\|\snabla A_i\|_{L_\infty(S_R)}+R^{-1}\|A_i\|_{L^\infty(S_R)}\right)\|f\|_{L^\infty(S_R)},
\end{split}
\end{align}
where the constants depend on $F$ and $\|\snabla J\|_{L^\infty}$.
\end{proposition}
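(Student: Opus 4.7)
The proof parallels that of Proposition~\ref{prop: L2 C1}. The plan is to localize via a partition of unity $\{\chi_\alpha\}$ subordinate to the cover $\{U_\alpha\}$, split the estimate into off-diagonal and diagonal chart-pair contributions, and on each piece reduce to a planar estimate. Writing
\[
C_2 f(p) = \sum_{\alpha,\beta} \chi_\alpha(p)\,\pv\int_{S_R}\chi_\beta(q)\,k_2(p,q)\,\frakD f(q)\,dS(q)
\]
and using the triangle inequality in $L^2(S_R)$, one treats each $I_{\alpha\beta}$ separately exactly as in the proof of Proposition~\ref{prop: L2 C1}.

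For pairs with $U_\alpha \cap U_\beta = \emptyset$ the kernel is nonsingular, $|p-q| \gtrsim R$, and since $\Omega$ is a divergence-free Killing vectorfield on the closed surface $S_R$ we may integrate by parts in $q$ without any boundary contribution. This moves $\frakD$ onto $k_2$ (and onto the cutoff $\chi_\beta$, at cost $R^{-1}\snabla\chi_\beta$), producing a bounded kernel of size $R^{-(N+2)}\prod\|A_i\|_{L^\infty} + R^{-(N+1)}\sum_j\|\snabla A_j\|_{L^\infty}\prod_{i\neq j}\|A_i\|_{L^\infty}$. Cauchy--Schwarz then yields contributions compatible with both \eqref{eq: C2 type 1} and \eqref{eq: C2 type 2}.

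For pairs with $U_\alpha \cap U_\beta \neq \emptyset$ we pull back to $\bbR^2$ via a bi-Lipschitz chart $\varphi: \bbB \to U$ satisfying \eqref{eq: chart estimates}, setting $\tilA_i = A_i\circ\varphi$, $\tilJ = (J\circ\varphi,\varphi)$, $\tilf = |\varphi'|\,f\circ\varphi$, and extending $F$ to $\tilF$ exactly as in the proof of Proposition~\ref{prop: L2 C1}. The pushforward $\tilde\Omega = \varphi_*\Omega = \sum_j c_j(x)\partial_j$ is smooth, divergence-free, with $c_j$ and $\nabla c_j$ bounded uniformly. Expanding $\tilde\Omega\tilf = \sum_j c_j\partial_j\tilf$ and absorbing the bounded factors $c_j\tilde\chi_\beta$ into the cutoff of $\tilf$, the chart-pair contribution takes the planar form
\[
\pv\int_{\bbR^2} \tilF\!\left(\tfrac{\tilJ(x)-\tilJ(y)}{|x-y|}\right)\frac{\prod_{i=1}^N (\tilA_i(x)-\tilA_i(y))}{|x-y|^{N+1}}\,\partial_j\bigl(\tilde\chi_\beta\tilf\bigr)(y)\,dy.
\]
One more integration by parts in $y$ converts the even kernel of singularity $|x-y|^{-1}$ into an odd kernel of singularity $|x-y|^{-2}$, producing (after expanding the derivative of the product $\tilde F\prod/|x-y|^{N+1}$) precisely a sum of Calder\'on-Coifman-McIntosh-Meyer commutator integrals of the type already handled by Proposition~\ref{prop: L2 C1} (and its planar source in \cite{CM-Adv,CM-Ann} or Wu's Proposition~2.6 in \cite{Wu11}). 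Applying these planar $L^2$ bounds and translating back via \eqref{eq: chart estimates} yields \eqref{eq: C2 type 1} and \eqref{eq: C2 type 2}.

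The main technical point is to verify that integration by parts against $\tilde\Omega$ in the chart generates only bounded first-order symbols: since $\Omega$ is smooth and $\varphi$ is bi-Lipschitz with bounded Jacobian, the coefficients $c_j$ and their derivatives are bounded independently of $R$ after proper normalization, and divergence-freeness of $\tilde\Omega$ ensures that no unbounded terms arise from $\partial_j c_j$. The derivative of $\tilde\chi_\beta$ produces at worst an $O(R^{-1})$ factor consistent with the right-hand sides of \eqref{eq: C2 type 1}--\eqref{eq: C2 type 2}, completing the reduction to the planar odd-kernel estimate already invoked in the proof of Proposition~\ref{prop: L2 C1}.
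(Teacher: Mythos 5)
Your proof is correct in substance, but it takes a longer route than the paper's. The paper's proof is essentially one line: since $\snabla\cdot\Omega=0$ on $S_R$, one can integrate by parts \emph{globally} on $S_R$, moving $\frakD=\Omega$ from $f$ onto the even kernel $k_2$; because $k_2$ is even and vanishes to order $N$ at the diagonal, $\Omega_q k_2$ is odd with singularity $|p-q|^{-(N+2)}$, so $C_2f$ becomes a finite sum of operators of the form $C_1$ (with $N$ or $N-1$ difference factors, the extra factors of $\snabla J$ or $\snabla A_i$ accounting for the $\snabla$-norms on the right), and Proposition~\ref{prop: L2 C1} is then invoked as a black box. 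You instead localize first, pull each diagonal chart pair back to $\bbR^2$, and perform the integration by parts there, re-deriving the reduction to the planar Coifman--David--McIntosh--Meyer bounds from scratch. This is a valid reorganization — the off-diagonal pieces and the planar reduction are exactly as in Proposition~\ref{prop: L2 C1} — but it duplicates work that is already encapsulated in Proposition~\ref{prop: L2 C1}.

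One inaccuracy worth flagging: you assert that $\tilde\Omega=\varphi_*\Omega$ is divergence-free on $\bbR^2$. That is not true in general — $\Omega$ is divergence-free with respect to the round metric on $S_R$, so $\tilde\Omega$ is divergence-free with respect to $\varphi^*g$ (equivalently, the measure $|\varphi'|\,dx$), not with respect to Lebesgue measure. Integrating by parts against Lebesgue therefore does produce a $\nabla\cdot\tilde\Omega$ term. This does not break your argument: $\nabla\cdot\tilde\Omega$ is bounded on the chart by \eqref{eq: chart estimates}, so the resulting kernel $(\nabla\cdot\tilde\Omega)\,\tilde k_2$ has integrable singularity $|x-y|^{-1}$ in two dimensions and is controlled by Schur's test. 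But the justification should be ``$\nabla\cdot\tilde\Omega$ is bounded,'' not ``it vanishes.'' If you instead keep the measure $|\varphi'|\,dx$ throughout (rather than absorbing $|\varphi'|$ into $\tilf$ at the start), the integration by parts is clean and no extra term appears. Either way the estimates \eqref{eq: C2 type 1} and \eqref{eq: C2 type 2} follow, but the paper's global integration by parts on $S_R$ avoids the issue entirely.
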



\begin{proof}

This follows from Proposition~\ref{prop: L2 C1} and integration by parts. Note that since $\snabla\cdot\Omega=0$, for any differentiable functions $f$ and $g$ defined on $S_R$ we have $\int_{S_R}f\Omega gdS=-\int_{S_{R}} g\Omega f dS$. 
\end{proof}

To estimate the derivatives of integral operators such as $C_1$ and $C_2$ we need to find a convenient expression for these derivatives. We derive such an expression in the next lemma, both in the case where the domain is $S_R$ and when the domain is the free surface boundary $\partial\calB_1$. The latter is of separate interest in the paper.


\begin{lemma}\label{lem: frakD kernel}
\begin{enumerate}
\item Suppose $L=L(\xi,\xi')$ is a vector-valued kernel on $\partial\calB_1\times\partial\calB_1$ such that $|\xi-\xi'|^2L(\xi,\xi')$ is continuous and $L$ is differentiable away from $\xi'=\xi$.  Then for any  Clifford algebra-valued differentiable function $g$ on $\partial\calB_1$,

\begin{align}\label{eq: D H temp 2}
\begin{split}
\frakD\int_{\partial\calB_1} Lg'dS'-\int_{\partial\calB_1}L\frakD'g'dS'=&\int_{\partial\calB_1}\left((\Omega+\Omega')L-\bfe\cross L\right)g'dS'\\
 &+\int_{\partial\calB_1}\frac{Lg'}{|N'||\sg'|^{-1}}\Omega'(|N'||\sg'|^{-1})dS'.
\end{split}
\end{align}

\item Suppose $L=L(\xi,\xi')$ is a vector-valued kernel on $S_R\times S_R$ such that $|\xi-\xi'|^2L(\xi,\xi')$ is continuous and $L$ is differentiable away from $\xi'=\xi$.  Then for any  Clifford algebra-valued differentiable function $g$ on $S_R$,

\begin{align}\label{eq: D H temp 2}
\begin{split}
\frakD\int_{S_R} Lg'dS'-\int_{S_R}L\frakD'g'dS'=&\int_{S_R}\left((\Omega+\Omega')L-\bfe\cross L\right)g'dS'.
\end{split}
\end{align}
\end{enumerate}
\end{lemma}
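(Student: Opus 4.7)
Both identities follow by pulling $\frakD$ inside the integral, reorganizing the rotational derivatives symmetrically between the primed and unprimed variables, and then integrating by parts in the primed variable. Part (2) is the specialization of (1) to the case where the Jacobian factor $|N||\sg|^{-1}$ is identically one, so I focus on (1). First, the plan is to regard $\int_{\partial\calB_1}L(\xi,\xi')g(\xi')\,dS'$ as a Clifford-valued function of $\xi$ and apply $\frakD$ under the integral sign. Using the product rule from Lemma~\ref{lem: product rule} together with the facts that $L$ is vector-valued and $g'$ is independent of $\xi$, this gives
\begin{align*}
\frakD_\xi\bigl(L(\xi,\xi')g(\xi')\bigr) = (\Omega L - \bfe\cross L)g' - L(\bfe\cross\vect g'),
\end{align*}
and after subtracting $\int L\,\frakD' g'\,dS'$, the two $\bfe\cross\vect g'$ contributions (one from each side) cancel exactly, leaving
\begin{align*}
\frakD\int Lg'\,dS' - \int L\,\frakD' g'\,dS' = \int\bigl[(\Omega L - \bfe\cross L)g' - L(\Omega' g')\bigr]\,dS'.
\end{align*}

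Next, I would write $\Omega L = (\Omega+\Omega')L - \Omega' L$ to recover the symmetric combination $\int((\Omega+\Omega')L - \bfe\cross L)g'\,dS'$, at the cost of a residue $-\int (\Omega' L)g'\,dS' - \int L(\Omega' g')\,dS'$. The last step rewrites this residue by the integration-by-parts formula from the second half of Lemma~\ref{lem: product rule}, applied to the Clifford-valued product $Lg'$ treated as a function of the primed variable: since $\Omega$ is a rotational field on $S_R$ and hence divergence-free with respect to the intrinsic measure, pulling back along the Lagrangian parametrization $\xi:S_R\to\partial\calB_1$ gives
\begin{align*}
\int (\Omega' L)g'\,dS' + \int L(\Omega' g')\,dS' = -\int \frac{Lg'}{|N'||\sg'|^{-1}}\,\Omega'(|N'||\sg'|^{-1})\,dS',
\end{align*}
which is exactly the remaining term in the desired identity. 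For (2), the reference surface $S_R$ is not deformed, so $|N||\sg|^{-1}\equiv 1$, its $\Omega'$-derivative vanishes, and the extra term drops out.

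The main obstacle will be the rigorous justification of the differentiation under the integral and the integration by parts, since the hypothesis only requires $|\xi-\xi'|^2 L$ to be continuous, allowing $L$ to have a singularity of order $|\xi-\xi'|^{-2}$; the integrals must therefore be understood in the principal-value sense. The plan is to perform each manipulation on the truncated surface $\partial\calB_1\setminus D_\epsilon(\xi)$, where $D_\epsilon(\xi)$ is a small geodesic disc of radius $\epsilon$, and then pass to the limit $\epsilon\to 0^+$. The delicate point is showing that the boundary contributions on $\partial D_\epsilon$ produced by both operations vanish in that limit, which in all the applications in this paper (where $L$ is the Cauchy-type kernel $K(\xi'-\xi)$ or a product of it with smooth factors) follows from the standard odd-symmetry cancellation underlying principal-value integrals.
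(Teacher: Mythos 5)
Your proof is correct and follows essentially the same strategy as the paper's: differentiate under the integral with $\frakD_\xi$ using the Leibniz rule of Lemma~\ref{lem: product rule}, split $\Omega_\xi=(\Omega_\xi+\Omega_{\xi'})-\Omega_{\xi'}$, and integrate by parts in $\xi'$ to trade $-\int(\Omega'L)g'\,dS'$ for the $L\,\Omega'g'$ term plus the Jacobian correction. The one organizational difference is that you treat general Clifford-valued $g'$ in a single pass by noting that the $-L(\bfe\cross\vect g')$ contribution from $\frakD_\xi g'$ cancels identically against the $-\bfe\cross\vect g'$ part of $\frakD' g'$ in the subtracted term, whereas the paper handles the scalar and vector cases of $g'$ separately (the vector case requiring explicit Jacobi/BAC--CAB manipulations of cross products to repackage $-\bfe\cross(L\cross g')+g'\cdot(\bfe\times L)+(L\times\bfe)\times g'-\bfe\times(g'\times L)$ into $-(\bfe\cross L)g'$); your unified bookkeeping is a touch cleaner and arrives at the same identity.
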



\begin{proof}

The second part is a consequence of the first in the special case where $\partial\calB_1$ is a round sphere, so we concentrate on the first. If $g'$ is scalar-valued, then by Lemma~\ref{lem: product rule}

\begin{align*}
\begin{split}
  \frakD\int_{\partial\calB_1}Lg'dS'=&\int_{\partial\calB'}(\Omega L-\bfe\cross L)g'dS'\\
  =&\int_{\partial\calB_1}((\Omega+\Omega')L-\bfe\cross L)g'dS'+\int_{\partial\calB_1}L\Omega'g'dS'\\
  &+\int_{\partial\calB_1}\frac{Lg'}{|N'||\sg'|^{-1}}\Omega'(|N'||\sg'|^{-1})dS',
\end{split}
\end{align*}
proving \eqref{eq: D H temp 2} when $g'$ is scalar-valued. If $g'$ is vector-valued, then

\begin{align*}
\begin{split}
 \frakD\int_{\partial\calB_1} L g'dS' =&-\int_{\partial\calB_1}(\Omega L)\cdot g'dS'+\int_{\partial\calB_1}(\Omega L)\cross g'dS'-\int_{\partial\calB_1}\bfe\cross(L\cross g')dS'\\
 =&\int_{\partial\calB_1}((\Omega+\Omega') L)g'dS'-\int_{\partial\calB_1}\bfe\cross(L\cross g')dS'\\
 &+\int_{\partial\calB_1}L\Omega'g'dS'+\int_{\partial\calB_1}\frac{Lg'}{|N'||\sg'|^{-1}}\Omega'(|N'||\sg'|^{-1})dS'\\
 =&\int_{\partial\calB_1}((\Omega+\Omega') L)g'dS'-\int_{\partial\calB_1}\bfe\cross(L\cross g')dS'+\int_{\partial\calB_1}L(\bfe\cross g')dS'\\
 &+\int_{\partial\calB_1}L\frakD'g'dS'+\int_{\partial\calB_1}\frac{Lg'}{|N'||\sg'|^{-1}}\Omega'(|N'||\sg'|^{-1})dS'\\
  =&\int_{\partial\calB_1}((\Omega+\Omega') L)g'dS'-\int_{\partial\calB_1}\bfe\cross(L\cross g')dS'+\int_{\partial\calB_1}g'\cdot(\bfe\times L)dS'\\
 &+\int_{\partial\calB_{1}}(L\times\bfe)\times g'dS'-\int_{\partial\calB_{1}}\bfe\times(g'\times L)dS'\\&+\int_{\partial\calB_1}L\frakD'g'dS'+\int_{\partial\calB_1}\frac{Lg'}{|N'||\sg'|^{-1}}\Omega'(|N'||\sg'|^{-1})dS'\\
  =&\int_{\partial\calB_1}((\Omega+\Omega') L)g'dS'-\int_{\partial\calB_1}(\bfe\cross L)g'dS'\\
 &+\int_{\partial\calB_1}L\frakD'g'dS'+\int_{\partial\calB_1}\frac{Lg'}{|N'||\sg'|^{-1}}\Omega'(|N'||\sg'|^{-1})dS',
\end{split}
\end{align*}
proving \eqref{eq: D H temp 2} when $g'$ is vector-valued.

\end{proof}


Combining the previous lemma (in the case where $\partial\calB_1$ is $S_R$ and $\xi$ is the identity mapping) with Sobolev estimates and Proposition~\ref{prop: L2 C1} we get the following $L^\infty$ estimate.


\begin{proposition}\label{prop: Linfty C1}

There is a constant $C=C(F, \|\snabla J\|_{L^{\infty}}, \|\snabla^2J\|_{L^{\infty}}, \|\snabla^3J\|_{L^{\infty}})$ such that

\begin{align*}
\|C_{1}f\|_{L^{\infty}(S_{R})}\leq CR^{-N}\prod_{i=1}^{N}\left(\sum_{k=0}^{3}R^{k}\|\snabla^{k}A_{i}\|_{L^{\infty}(S_{R})}\right)\left(\sum_{k=0}^{2}R^{k}\|\snabla^{k}f\|_{L^{\infty}(S_{R})}\right).
\end{align*}

\end{proposition}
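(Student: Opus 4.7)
The plan is to reduce the $L^\infty$ bound to $L^2$ bounds on $\Omega^k C_1 f$ for $k\le 2$ via the Sobolev embedding of Lemma~\ref{lem: Sobolev}, and to control each such $L^2$ norm by differentiating under the integral using Lemma~\ref{lem: frakD kernel}(2) followed by Proposition~\ref{prop: L2 C1}. Concretely, Lemma~\ref{lem: Sobolev} gives
\[
\|C_1 f\|_{L^\infty(S_R)}\;\lesssim\; R^{-1}\|C_1 f\|_{L^2(S_R)}+ R^{-1}\sum_{i}\|\Omega_i C_1 f\|_{L^2(S_R)}+ R^{-1}\sum_{i,j}\|\Omega_i\Omega_j C_1 f\|_{L^2(S_R)},
\]
so it suffices to bound each of these three $L^2$ norms by the right-hand side of the proposition multiplied by $R$.

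The key structural fact is that the combined rotation $\Omega+\Omega'$ annihilates $|p-q|$ (since $(\Omega+\Omega')|p-q|^2 = 2(p-q)\cdot(\bfe\times(p-q))=0$) and acts on any scalar difference $A_i(p)-A_i(q)$ or $J(p)-J(q)$ simply by replacing it with $(\Omega A_i)(p)-(\Omega A_i)(q)$ or $(\Omega J)(p)-(\Omega J)(q)$. Applied to the scalar kernel
\[
k_1(p,q)=F\!\left(\tfrac{J(p)-J(q)}{|p-q|}\right)\frac{\prod_{i=1}^{N}(A_i(p)-A_i(q))}{|p-q|^{N+2}},
\]
Lemma~\ref{lem: frakD kernel}(2) (with the $\bfe\times L$ contribution absent because $k_1$ is scalar) then gives
\[
\Omega C_1 f(p)\;=\;\int_{S_R} k_1(p,q)(\Omega f)(q)\,dS(q)\;+\;\int_{S_R}\bigl[(\Omega+\Omega')k_1(p,q)\bigr]f(q)\,dS(q),
\]
and a direct computation shows that $(\Omega+\Omega')k_1$ is a finite sum of kernels of \emph{exactly} the same form as $k_1$, in which either one of the $A_i$'s has been replaced by $\Omega A_i$, or the profile $F$ has been replaced by an appropriate partial derivative and $J$ by $\Omega J$ absorbed into an extra ``$A$''--factor. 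Iterating this identity once more, I would express $\Omega^k C_1 f$ for $k\le 2$ as a finite sum of operators of type $C_1$ applied to $\Omega^{j} f$ with $j\le k$, in which the $A_i$'s are replaced by products of their rotational derivatives of total order at most $k$, and where the new profile functions depend only on $F$ and $\snabla^{\le k} J$.

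Each resulting operator then falls under Proposition~\ref{prop: L2 C1}. For every term I would invoke \eqref{eq: C1 type 2}, placing the $L^2$ weight on one of the $A$-factors (choosing one that carries at least one rotational derivative, so that $\|\snabla A_i\|_{L^2}$ is available and can be controlled by $R\|\snabla A_i\|_{L^\infty}$ on $S_R$) and $L^\infty$ norms on all remaining $A$-factors and on $\Omega^j f$. After summing the contributions, each $L^2$-bound on $\Omega^k C_1 f$ becomes
\[
\|\Omega^k C_1 f\|_{L^2(S_R)}\;\le\;C R^{1-N}\prod_{i=1}^N\Bigl(\sum_{j=0}^{k+1}R^j\|\snabla^j A_i\|_{L^\infty}\Bigr)\Bigl(\sum_{j=0}^{k}R^j\|\snabla^j f\|_{L^\infty}\Bigr),
\]
where the power $R^{1-N}$ accounts for the $N+2$ powers of $|p-q|^{-1}$ and the surface measure on $S_R$, and the extra $R$ comes from passing $\|A\|_{L^2}\lesssim R\|A\|_{L^\infty}$ or $\|\Omega^j f\|_{L^2}\lesssim R\|\Omega^j f\|_{L^\infty}$. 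Plugging the three cases $k=0,1,2$ into the Sobolev estimate yields the stated bound.

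The main technical obstacle is the bookkeeping at $k=2$: verifying that after two iterations of the commutator identity the resulting kernels still have the scalar/odd structure required by Proposition~\ref{prop: L2 C1}, that all derivative counts on the $A_i$'s and on $f$ add up correctly, and that the powers of $R$ produced by repeated use of $\|\,\cdot\,\|_{L^2(S_R)}\lesssim R\|\,\cdot\,\|_{L^\infty(S_R)}$ combine with the $R^{-1}$ factors from the Sobolev step to give precisely the $R^{-N}$ and the weighted sums $\sum_{k=0}^3 R^k\|\snabla^k A_i\|_{L^\infty}$, $\sum_{k=0}^2 R^k\|\snabla^k f\|_{L^\infty}$ of the statement. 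No new analytic ingredient beyond Proposition~\ref{prop: L2 C1} and Lemma~\ref{lem: Sobolev} is required.
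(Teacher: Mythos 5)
Your proposal is correct and follows the same route as the paper's one-line proof: apply the Sobolev embedding of Lemma~\ref{lem: Sobolev}, commute rotations through the kernel via Lemma~\ref{lem: frakD kernel} using $(\Omega+\Omega')|p-q|=0$, bound the resulting $L^2$ norms by Proposition~\ref{prop: L2 C1}, and use $L^\infty(S_R)\hookrightarrow L^2(S_R)$ to convert the remaining $L^2$ factor. The only (inconsequential) deviation is invoking \eqref{eq: C1 type 2} rather than \eqref{eq: C1 type 1}; with type 1 one only needs to upgrade the single factor $\|f\|_{L^2}\lesssim R\|f\|_{L^\infty}$, which avoids the bookkeeping of choosing which $A_i$-slot carries the $L^2$ weight and also covers $N=0$.
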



\begin{proof}
The proof is immediate from Lemmas~\ref{lem: Sobolev} and~\ref{lem: frakD kernel} and Proposition~\ref{prop: L2 C1}. Note that we have used the embedding $L^\infty(S_R)\hookrightarrow L^2(S_R)$ to replace the $L^2$ norms on the right-hand side in Proposition~\ref{prop: L2 C1} by $L^\infty$ norms.
\end{proof}


In applications we often encounter integrals similar to $C_1$ and $C_2$ which are defined on $\partial\calB_1$ rather than $S_R$. The estimates in \props{prop: L2 C1}--\ref{prop: Linfty C1} can be transferred to this case if we parameterize $\partial\calB_1$ by $\xi$. More precisely, we have the following corollary.


\begin{corollary}\label{cor: Dk H com}
Suppose for some $\ell\geq5$, $\xi$ satisfies

\begin{align*}
\sup_{p,q\in S_R}\frac{|p-q|}{|\xi(p)-\xi(q)|}\leq c_0,\quad\sum_{|\alpha|\leq\ell}\|\frakD^\alpha(|N||\sg|^{-1})\|_{L^2(S_R)}\leq c_1,\quad
\sum_{|\alpha|\leq \ell}\|\frakD^\alpha\xi\|_{L^2(S_R)}\leq c_2,
\end{align*}
where $N=\xi_\alpha\cross\xi_\beta$, for some constants $c_0$, $c_1$, and $c_2$.

\begin{align*}
h(\xi)=\int_{\partial\calB_1}K(\xi'-\xi)(g(\xi')-g(\xi))f(\xi')dS(\xi')
\end{align*}
for some given functions $f$ and $g$. Then there is a constant $C=C(c_0,c_1,c_2)$ such that for all $k\leq \ell$

\begin{align*}
\begin{split}
  \|\frakD^kh\|_{L^2(\partial\calB_1)}\leq \frac{C}{R}\sum_{j\leq \max\{k,3\} } \|\frakD^jg\|_{L^2(\partial\calB_1)} \sum_{ j\leq\max\{k-1,3\}} \|\frakD^jf\|_{L^2(\partial\calB_1)}.
\end{split}
\end{align*}
\end{corollary}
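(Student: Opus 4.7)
The plan is to prove Corollary \ref{cor: Dk H com} by iteratively commuting the operator $\frakD$ with the integral defining $h$ using Lemma \ref{lem: frakD kernel}, and then applying the Calderón-type singular integral bounds from Propositions \ref{prop: L2 C1}, \ref{prop: L2 C2}, and \ref{prop: Linfty C1}, with the $L^\infty$ bounds on lower-order derivatives coming from the Sobolev estimate in Lemma \ref{lem: Sobolev}.

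First I would apply Lemma \ref{lem: frakD kernel}(1) once. With $L = K(\xi'-\xi)$ and the role of ``$g'$'' played by $(g(\xi')-g(\xi))f(\xi')$, the application of $\frakD$ produces four kinds of terms: (a) a term with $\frakD' ((g'-g)f')$ inside, which splits into a term with $(\frakD g)' f'$, one with $\frakD g$ times the kernel integrated against $f'$ (subtracting off $\frakD g$ on the $\xi$ side), and one with $(g'-g)\frakD' f'$; (b) a term where $(\Omega+\Omega')K - \bfe\cross K$ acts on the difference, which by the identity from the proof of Lemma \ref{lem: frakD com} equals $(\frakD\xi' - \frakD\xi)\cdot\nabla K$, introducing a new difference factor $A(\xi')-A(\xi) = \frakD\xi' - \frakD\xi$ in the numerator together with an extra power $|\xi'-\xi|^{-1}$ in the denominator; and (c) a term involving $\Omega'(|N'||\sg'|^{-1})/(|N'||\sg'|^{-1})$, which by the hypotheses on $|N||\sg|^{-1}$ gives a controllable factor. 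I would then iterate this $k$ times. The outcome is that $\frakD^k h$ expands as a finite sum of integrals of the schematic form
\begin{equation*}
\int_{\partial\calB_1} F\!\left(\tfrac{\xi'-\xi}{|\xi'-\xi|}\right) \frac{(g(\xi')-g(\xi))\prod_{i=1}^{M}(A_i(\xi')-A_i(\xi))}{|\xi'-\xi|^{M+3}}\,(\frakD^{a}g)(\cdot)\,(\frakD^{b}f)(\cdot)\,\Phi\,dS(\xi'),
\end{equation*}
(or analogous expressions with no $g(\xi')-g(\xi)$ factor and $\frakD^a g$ evaluated at $\xi$ or $\xi'$), where each $A_i$ is a component of $\xi$ or $\frakD^{\beta_i}\xi$ with $|\beta_i|\leq k$, where $\Phi$ gathers products of $\frakD^\gamma(|N||\sg|^{-1})$-type factors with $|\gamma|\leq k$, and where the total derivative count satisfies $a+b+\sum_i|\beta_i|\leq k+1$ with $a\leq k$ and $b\leq k-1$.

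Next I would pull everything back to $S_R$ via $\xi$, so that each of the resulting integrals fits the template of $C_1$ in \eqref{eq: def C1} (or $C_2$ in \eqref{eq: def C2} after an integration by parts to move one derivative off the kernel). The crucial bookkeeping is to place exactly one factor at top $L^2$ order and bound all remaining factors in $L^\infty$: concretely, if some $|\beta_{i_0}|$ is large (close to $k$), we put $A_{i_0} = \frakD^{\beta_{i_0}}\xi$ at the ``$A_1$'' position of Proposition \ref{prop: L2 C1}/\ref{prop: L2 C2} and bound it via the hypothesis $\|\frakD^{\beta_{i_0}}\xi\|_{L^2(S_R)}\leq c_2$; if instead $a$ or $b$ is large, we keep $\frakD^a g$ or $\frakD^b f$ in $L^2$ and use the singular integral estimate in the $f\mapsto C_1 f$ (resp. $\frakD f \mapsto C_2 f$) mode. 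All the remaining auxiliary factors $A_i = \frakD^{\beta_i}\xi$ with $|\beta_i|\leq k-1$ must be bounded in $W^{1,\infty}$; by the Sobolev estimate \lem{lem: Sobolev}, $\|\snabla\frakD^{\beta_i}\xi\|_{L^\infty}\lesssim R^{-1}\sum_{|\alpha|\leq 3}\|\frakD^{\alpha+\beta_i}\xi\|_{L^2}$, which is controlled by the hypothesis as long as $|\beta_i|+3\leq\ell$, i.e.\ so long as $k\leq\ell$. The same Sobolev argument controls the $\Phi$ factor via the hypothesis on $\frakD^\gamma(|N||\sg|^{-1})$. The asymmetric cutoffs $\max\{k,3\}$ and $\max\{k-1,3\}$ in the final estimate arise precisely because: (i) one derivative must always be ``spent'' turning the $g(\xi')-g(\xi)$ Hölder cancellation into genuine regularity of $g$, giving the extra derivative on $g$ compared to $f$; and (ii) when $k$ is small ($k\leq 3$) we need a minimum of three derivatives on the auxiliary factors to run Sobolev embedding, so we allow $f$ and $g$ up to three derivatives in that regime.

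The main obstacle I expect is the careful combinatorial bookkeeping: verifying that in every single term produced by the iteration of Lemma \ref{lem: frakD kernel}, one can route exactly one factor to the top-$L^2$ slot while keeping all the rest uniformly bounded via Sobolev, and doing this in a way that is uniform in $k\leq\ell$. A second, more technical issue is handling the $g(\xi)$ and $\frakD^a g(\xi)$ terms that appear at the boundary of each difference (these are values at the fixed point $\xi$ rather than $\xi'$): for these one uses that $g(\xi)$ times the principal value integral $\pv\int K(\xi'-\xi) f'\,dS'$ is again a singular integral of Calderón type in $f$, which can again be handled by Proposition \ref{prop: L2 C1} with the observation that $R^{-1}|\xi|\lesssim 1$ plays the role of a smooth bounded factor. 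Once these structural checks are done, the stated estimate follows by summing the finitely many terms produced by the iteration, absorbing the constants $c_0, c_1, c_2$ from the hypothesis into $C$.
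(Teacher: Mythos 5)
Your proposal follows essentially the same route as the paper's own (very terse) proof: pull the integral back to $S_R$ so it matches the $C_1$/$C_2$ template, commute $\frakD^k$ through the singular integral via Lemma~\ref{lem: frakD kernel}, and then bound the resulting terms using Propositions~\ref{prop: L2 C1}, \ref{prop: L2 C2} together with Sobolev embedding (Lemma~\ref{lem: Sobolev}) to control all but one factor in $L^\infty$. The paper simply names these ingredients and defers the combinatorial bookkeeping to the analogous two-dimensional argument in \cite{BMSW1}, whereas you have spelled out that bookkeeping in detail; the only cosmetic difference is that you apply the commutation lemma on $\partial\calB_1$ first and then change variables, while the paper changes variables first, but since the $\Omega'(|N'||\sg'|^{-1})$ correction on $\partial\calB_1$ and the differentiation of the Jacobian $|N'|/|\sg'|$ on $S_R$ produce the same terms, the two orderings are equivalent.
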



\begin{proof}
To be able to use \props{prop: L2 C1} and \ref{prop: L2 C2} we write $F$ (identifying $h\circ\xi$ with $h$ as usual) as

\begin{align*}
\begin{split}
 h(p)=-\frac{1}{2\pi}\int_{S_R}\frac{|p'-p|^3}{|\xi(p')-\xi(p)|^3}\frac{(\xi(p')-\xi(p))(g(p')-g(p))}{|p'-p|^3}\frac{|N(p')|}{|\sg(p')|}f(p')dS(p'). 
\end{split}
\end{align*}
Similarly for the $L^2(\partial\calB_1)$ norms we write

\begin{align*}
\begin{split}
 \|\frakD^jh\|_{L^2(\partial\calB_1)}^2=\int_{S_R}|\frakD^jh(p)|^2\frac{|N(p)|}{|\sg(p)|}dS(p). 
\end{split}
\end{align*}
The statement now follows from \props{prop: L2 C1} and \ref{prop: L2 C2}, together with \lem{lem: frakD kernel}, and the Sobolev embedding. We refer the reader to Lemma~2.5 in \cite{BMSW1} for a similar argument in dimension two.
\end{proof}


\bibliographystyle{plain}
\bibliography{ref}

 \bigskip

\centerline{\scshape Shuang Miao}
\smallskip
{\footnotesize
 \centerline{B\^atiment de Math\'ematiques, \'Ecole Polytechnique F\'ed\'erale de Lausanne}
\centerline{Station 8, CH-1015, Lausanne, Switzerland}
\centerline{\email{shuang.miao@epfl.ch}}
} 

 \medskip

\centerline{\scshape Sohrab Shahshahani}
\medskip
{\footnotesize
 \centerline{Department of Mathematics and Statistics, University of Massachusetts}
\centerline{Lederle Graduate Research Tower, 710 N. Pleasant Street,
Amherst, MA 01003-9305, U.S.A.}
\centerline{\email{sohrab@math.umass.edu}}
}

\end{document}